\theoremstyle{plain}
\newtheorem{thm}{Theorem}[section]
\newtheorem{cor}{Corollary}[section]
\newtheorem{lem}{Lemma}[section]
\newtheorem{prop}{Proposition}[section]
\theoremstyle{remark}
\theoremstyle{definition}
\newtheorem{defn}{Definition}[section]
\newtheorem{notn}{Notation}[section]
\newtheorem{rem}{Remark}[section]
\newtheorem{ba}{Basic Assumption}[section]
\title{Polarized orbifolds associated to quantized Hamiltonian torus actions}
\author{Roberto Paoletti
\footnote{\noindent{\bf Address:}
Dipartimento di Matematica e Applicazioni, Universit\`a degli Studi
di Milano-Bicocca, Via R. Cozzi 55, 20125 Milano,
Italy; {\bf e-mail}: roberto.paoletti@unimib.it }}
\date{}
\begin{document}
\maketitle

\begin{abstract}
Suppose given an holomorphic and Hamiltonian action of a compact torus $T$ on
a polarized Hodge manifold $M$. Assume that the action lifts to the quantizing line bundle, 
so that there is an induced unitary representation of $T$ on
the associated Hardy space. If in addition
the moment map is nowhere zero, for each weight $\boldsymbol{\nu}$ 
the $\boldsymbol{\nu}$-th isotypical component in the Hardy space of the
polarization is finite-dimensional. 
Assuming that the moment map is 
transverse to the ray through $\boldsymbol{\nu}$, we give a gometric interpretation of
the isotypical components associated to the weights $k\,\boldsymbol{\nu}$, $k\rightarrow
+\infty$, in terms
of certain polarized orbifolds associated to the Hamiltonian action and the weight. 
These orbifolds are generally not reductions of $M$ in the usual
sense, but arise rather as quotients of certain loci in the unit circle bundle of the
polarization; this construction generalizes the one of weighted projective spaces
as quotients of the unit sphere, viewed as the domain of the Hopf map.
\end{abstract}

\section{Introduction}

Let $M$ be a $d$-dimensional connected complex projective manifold, 
with complex structure $J$.
Let
$(A,h)$ be a positive holomorphic line bundle on $(M,J)$; 
the curvature of the unique covariant derivative on $A$ compatible with both
the Hermitian metric $h$ and the complex structures has the form 
$\Theta=-2\pi\,\imath\,\omega$,
where $\omega$ is a K\"{a}hler form on $(M,J)$. 
Let $\mathrm{d}V_M:=\omega^{\wedge d}/d!$ be the associated volume form on $M$.

Let $A^\vee$ be the dual line bundle of $A$, endowed with the dual Hermitian metric $h^\vee$.
As is well-known, positivity of $(A,h)$ is equivalent to the unit disc
bundle $D\subset A^\vee$ being a strictly pseudoconvex domain \cite{grauert}.
We shall denote by $X:=\partial D\subset A^\vee$ the unit circle bundle of $h^\vee$, and by
$\alpha\in \Omega^1(X)$ the (normalized) connection 1-form on $X$. Thus,
$X$ is a principal $S^1$-bundle on $M$, with the structure $S^1$-action
$\rho^{X}:S^1\times X\rightarrow X$ given by 
clockwise fiber rotation. If $\pi:X\rightarrow M$ is the bundle projection, and
$-\partial_\theta\in \mathfrak{X}(X)$ is the generator of $\rho^{X}$, then 
\begin{equation}
\label{eqn:key properties alpha}
\mathrm{d}\alpha=2\,\pi^*(\omega),\quad \alpha (\partial_\theta)=1.
\end{equation}
Let $\mathrm{d}V_X:=(\alpha/2\pi)\wedge \,\pi^*(\mathrm{d}V_M)$ be the associated volume form on $X$

Then $\alpha$ is a contact form on $X$, and $X$ is a CR manifold, with CR structure
supported by the horizontal tangent bundle
\begin{equation}
\label{eqn:horizontal tangent bundle}
Hor(X):=\ker(\alpha)\subset TX.
\end{equation}

Let $H(X)\subseteq L^2(X)$ denote the Hardy space of $X$. 
Since $\rho^X$ preserves $\alpha$ and the CR structure, it induces a unitary
representation $\hat{\rho}^X$ of $S^1$ on $H(X)$, given by
$$
\hat{\rho}^X_{e^{\imath\,\vartheta}}(s)(x):=
s\left(\rho^{X}_{e^{-\imath\,\vartheta}}(x)\right)=
s\left(e^{\imath\,\vartheta}\,x\right)
\quad \left(x\in X,\, e^{\imath\,\vartheta}\in S^1,\,s\in H(X)\right).
$$
The induced isotypical decomposition is the Hilbert space direct sum
\begin{equation}
\label{eqn:isotypical Hardy}
H(X)=\bigoplus_{k=0}^{+\infty}H(X)_k,
\end{equation}
where 
$$
H(X)_k:=\left\{s\in H(X)\,:\,s\left(e^{\imath\,\theta}\,x\right)=
e^{\imath\, k\,\theta}\,s(x)\,\quad\,\forall\,x\in X,\,e^{\imath\,\theta}\in 
S^1\right\}.
$$
It is well-known that there are 
natural unitary isomorphisms
$H(X)_k\cong H^0(M,A^{\otimes k})$, the latter being the space of global
holomorphic sections of $A^{\otimes k}$.

Furthermore, let $T\cong (S^1)^r$ be an $r$-dimensional compact torus,
with Lie algebra and coalgebra $\mathfrak{t}$ and $\mathfrak{t}^\vee$,
respectively. 
We shall equivariantly identify 
$\mathfrak{t}\cong \mathfrak{t}^\vee\cong \imath\,\mathbb{R}^r$. 
Suppose given an 
Hamiltonian and holomorphic action 
$\mu^M:T\times M\rightarrow M$ of $T$ on the K\"{a}hler manifold
$(M,J,2\,\omega)$. Let 
$\Phi:M\rightarrow \mathfrak{t}^\vee\cong \imath\,\mathbb{R}^r$
be the moment map.

It is standard that 
$\mu^M$ and $\Phi$ generate an infinitesimal contact and CR
action of $\mathfrak{t}$ on $X$, so defined \cite{ko}. If $\xi\in \mathfrak{t}$, let
$\xi_M\in \mathfrak{X}(M)$ be the Hamiltonian vector field induced by $\xi$ on
$M$, and define a vector field $\xi_X\in \mathfrak{X}(X)$ by setting

\begin{equation}
\label{eqn:xiX}
\xi_X:=\xi_M^\sharp-\langle\Phi\circ \pi,\xi\rangle\,\partial_\theta\in \mathfrak{X}(X);
\end{equation}
here $V^\sharp\in \mathfrak{X}(X)$ denotes the horizontal lift to $X$ of a vector field
$V\in \mathfrak{X}(M)$, with respect to $\alpha$.
The $\xi_X$'s are commuting contact vector fields on $X$, whose flow preserves the CR structure,
and the map $\xi\mapsto \xi_X$ is a morphism of 
Lie algebras
$\mathfrak{t}\rightarrow \mathfrak{X}(X)$.

Let us make the stronger hypothesis that $\mu^M$ lifts to an actual
contact and CR action of $T$ on $X$, 
$\mu^X:T\times X\rightarrow X$,
and that
$\mathrm{d}\mu^X(\xi)=\xi_X$ for any $\xi\in \mathfrak{t}$.
Then $\mu^X$ determines a unitary representation $\hat{\mu}^X$
of $T$ on $H(X)$,
given by
\begin{equation}
\label{eqn:unitary action mu}
\hat{\mu}^X_{\mathbf{t}}(s)(x):=
s\left(\mu^{X}_{\mathbf{t}^{-1}}(x)\right)
\quad \left(x\in X,\, \mathbf{t}\in T,\,s\in H(X)\right).
\end{equation}
By the Peter-Weyl Theorem \cite{st}, $\hat{\mu}^X$ induces a unitary and equivariant splitting of
$H(X)$ into isotypical components. 

Let us regard
any $\boldsymbol{\nu}\in \mathbb{Z}^r$ 
as an integral weight on $T$, associated to
the character 
$$
\chi_{\boldsymbol{\nu}}(\mathbf{t}):=\mathbf{t}^{\boldsymbol{\nu}},
$$
where for $\mathbf{t}=(t_1,\ldots,t_r)\in T$ we set 
$\mathbf{t}^{\boldsymbol{\nu}}:=\prod_{j=1}^r t_j^{\nu_j}$.
For any $\boldsymbol{\nu}\in \mathbb{Z}^r$ , let us consider
the $\boldsymbol{\nu}$-th isotypical component
$$
H(X)^{\hat{\mu}}_{\boldsymbol{\nu}}:=
\left\{
s\in H(X)\,:\,\hat{\mu}_{\mathbf{t}}(s)=
\chi_{\boldsymbol{\nu}}(\mathbf{t})\cdot s\quad\forall\,\mathbf{t}\in T
\right\}.
$$
Then we have an equivariant Hilbert space direct sum
\begin{equation}
\label{ref:HTdecomposition}
H(X)=\bigoplus_{\boldsymbol{\nu}\in \mathbb{Z}^r}H(X)^{\hat{\mu}}_{\boldsymbol{\nu}}.
\end{equation}

In the special case where $T=S^1$, $\mu^M$ is trivial, and $\Phi=\imath$,
$\imath\in \mathfrak{t}$ is mapped to $-\partial_\theta$, and so $\mu^X=\rho^X$; hence
(\ref{ref:HTdecomposition}) reduces to (\ref{eqn:isotypical Hardy}), that is,
$H(X)_k=H(X)^{\hat{\rho}}_k$ with $k$ in place of $\boldsymbol{\nu}$.

In general, it may happen that
$H(X)^{\hat{\mu}}_{\boldsymbol{\nu}}\cap H(X)_k\neq (0)$ for several $k$'s,
so that $H(X)^{\hat{\mu}}_{\boldsymbol{\nu}}$ does not correspond to a space of
holomorphic sections of some power of $A$.
Furthermore, $H(X)^{\hat{\mu}}_{\boldsymbol{\nu}}$ may be infinite-dimensional.
The latter circumstance does not occur, however, if $\mathbf{0}\not\in \Phi(M)$
(see \S 2 of \cite{pao-ijm}). 
We shall make the following Basic Assumption (henceforth referred to
as BA): 

\begin{ba}
\label{ba:transv}
$\Phi$ and $\boldsymbol{\nu}$ satisfy the following properties:

\begin{enumerate}
\item $\boldsymbol{\nu}\neq \mathbf{0}$ is coprime, that is,
$\mathrm{l.c.d.}(\nu_1,\ldots,\nu_r)=1$;

\item $\Phi$ is nowhere vanishing, that is, $\mathbf{0}\not\in \Phi(M)$;

\item $\Phi$ is transverse to the ray $\mathbb{R}_+\cdot \imath\,\boldsymbol{\nu}$, and
$
M_{\boldsymbol{\nu}}:=\Phi^{-1}(\mathbb{R}_+\cdot \imath\,\boldsymbol{\nu})\neq \emptyset$.
\end{enumerate}
If the previous properties are satisfied, then 
$\mu^X$ is generically locally free \cite{pao-ijm}; perhaps after replacing $T$
with its quotient
by a finite subgroup, we may and will assume without loss of generality 
that $\mu^X$ 
is generically free.
\end{ba}

Let us assume that BA holds. Then
$H(X)^{\hat{\mu}}_{k\,\boldsymbol{\nu}}=(0)$ for all $k\le 0$
(\S 2 of \cite{pao-ijm}).
We are interested in the sequence of spaces of finite-dimensional vector spaces
$\big(H(X)^{\hat{\mu}}_{k\,\boldsymbol{\nu}}\big)_{k=1}^{+\infty}$ associated to the weights on
the ray $\mathbb{R}_+\cdot \,\boldsymbol{\nu}$.
The corresponding
\lq equivariant Szeg\"{o} projectors\rq\,
$\Pi^{\hat{\mu}}_{k\,\boldsymbol{\nu}}:L^2(X)\rightarrow H(X)^{\hat{\mu}}_{k\,\boldsymbol{\nu}}$
are smoothing operators (that is, they have $\mathcal{C}^\infty$ integral kernels).
Furthemore, $M_{\boldsymbol{\nu}}\subseteq M$ is a 
$T$-invariant 
coisotropic connected compact submanifold
of real codimension $r-1$ \cite{pao-ijm}. The local and global 
asymptotics 
for $k\rightarrow +\infty$ of 
the integral kernels $\Pi^{\hat{\mu}}_{k\,\boldsymbol{\nu}}$ and their concentration
behaviour along $M_{\boldsymbol{\nu}}$ were studied in \cite{pao-ijm}, \cite{pao-lower},
and related variants in the presense of additional symmetries where investigated in \cite{cam}.

Our present aim is to clarify the geometric significance of the sequence
$\big(H(X)^{\hat{\mu}}_{k\,\boldsymbol{\nu}}\big)_{k=1}^{+\infty}$, generalizing the
interpretation of the sequence $\big(H(X)_k\big)$ in terms of the spaces
$H^0(M,A^{\otimes k})$. We shall prove the following:

\begin{thm}
\label{thm:main}
Assume BA holds. Then there exists a $(d+1-r)$-dimensional connected compact complex orbifold 
$N_{\boldsymbol{\nu}}$, and a positive holomorphic orbifold line bundle $B_{\boldsymbol{\nu}}$
on $N_{\boldsymbol{\nu}}$, naturally constructed from $A$, $\boldsymbol{\nu}$ and $\Phi$,
such that the following holds:

\begin{enumerate}
\item for $k\ge 1$, there is a natural injection 
$\delta_k:H(X)^{\hat{\mu}}_{k\,\boldsymbol{\nu}}\hookrightarrow 
H^0\big(N_{\boldsymbol{\nu}},B_{\boldsymbol{\nu}}^{\otimes k}\big)$;
\item $\delta_k$ is an isomorphism if $k\gg 0$.
\end{enumerate}
\end{thm}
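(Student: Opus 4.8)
The plan is to realize $N_{\boldsymbol{\nu}}$ as a quotient of the locus $X_{\boldsymbol{\nu}} \subset X$ determined by $\Phi$ and $\boldsymbol{\nu}$, and to match holomorphic sections of powers of $B_{\boldsymbol{\nu}}$ with the equivariant Hardy spaces via Fourier analysis. First I would fix the one-dimensional subtorus direction: since $\boldsymbol{\nu}\in\mathbb{Z}^r$ is coprime, the line $\mathbb{R}\cdot\boldsymbol{\nu}$ integrates to a circle subgroup $S^1_{\boldsymbol{\nu}}\subseteq T$, and I would choose a complementary subtorus $T'$ so that $T\cong S^1_{\boldsymbol{\nu}}\times T'$ (up to finite cover, already absorbed by BA). The combined circle action on $X$ generated by $-\partial_\theta$ together with the lifted $S^1_{\boldsymbol{\nu}}$-action has moment-like function $\langle\Phi\circ\pi,\imath\boldsymbol{\nu}\rangle + \|\boldsymbol{\nu}\|^2$ (in suitable normalization); transversality of $\Phi$ to $\mathbb{R}_+\cdot\imath\boldsymbol{\nu}$ together with $\mathbf{0}\notin\Phi(M)$ guarantees that the relevant level set, call it $X_{\boldsymbol{\nu}}\subseteq X$, is a smooth compact submanifold on which the residual torus (the $S^1$-extension of $T'$, combined with the fiber rotation) acts locally freely. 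I would then define $N_{\boldsymbol{\nu}}$ as the quotient orbifold of $X_{\boldsymbol{\nu}}$ by this residual action, and $B_{\boldsymbol{\nu}}$ as the orbifold line bundle associated to the remaining circle factor. The CR structure on $X$ restricted to $X_{\boldsymbol{\nu}}$ descends, because the quotienting directions are spanned by contact vector fields preserving the CR structure, and the quotient inherits a complex structure; positivity of $B_{\boldsymbol{\nu}}$ follows by computing its curvature from $\mathrm{d}\alpha = 2\pi^*\omega$ restricted to the quotient, where the transversality hypothesis ensures nondegeneracy.

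Next I would construct the map $\delta_k$. An element $s\in H(X)^{\hat\mu}_{k\boldsymbol{\nu}}$ transforms under $S^1_{\boldsymbol{\nu}}$ by the character $k\boldsymbol{\nu}$ and is $T'$-invariant; restricting $s$ to $X_{\boldsymbol{\nu}}$ and pushing down through the quotient $X_{\boldsymbol{\nu}}\to N_{\boldsymbol{\nu}}$ yields an orbifold section of $B_{\boldsymbol{\nu}}^{\otimes k}$, and the CR-holomorphy of $s$ (membership in the Hardy space) translates into holomorphy of the descended section. Injectivity of $\delta_k$ is the crucial analytic point: a section in the kernel vanishes on $X_{\boldsymbol{\nu}}$, and one must deduce it vanishes on all of $X$. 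This is where I expect the main obstacle. The argument should run through a unique continuation / microlocal concentration principle: by the results of \cite{pao-ijm} and \cite{pao-lower}, the Szeg\H{o} kernels $\Pi^{\hat\mu}_{k\boldsymbol{\nu}}$ concentrate along $M_{\boldsymbol{\nu}} = \pi(X_{\boldsymbol{\nu}})$, so a holomorphic section whose restriction to the circle bundle over $M_{\boldsymbol{\nu}}$ vanishes must vanish identically — intuitively, $X_{\boldsymbol{\nu}}$ is a ``maximally totally real'' or determining set for the relevant piece of the Hardy space. Alternatively, one can argue that $s$, being $\hat\mu$-isotypical and CR-holomorphic, is determined by its $\partial_\theta$-Fourier decomposition combined with the $T$-equivariance, and that the constraint forces its values off $X_{\boldsymbol{\nu}}$ to be obtained by the (real-analytic, in fact holomorphic) flow of the torus action, hence recoverable from the restriction.

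Finally, for surjectivity when $k\gg 0$, I would reverse the construction: given $\sigma\in H^0(N_{\boldsymbol{\nu}}, B_{\boldsymbol{\nu}}^{\otimes k})$, pull it back to an equivariant CR function $\tilde\sigma$ on $X_{\boldsymbol{\nu}}$, and the problem is to extend $\tilde\sigma$ to a genuine element of $H(X)^{\hat\mu}_{k\boldsymbol{\nu}}$, i.e.\ to a global CR-holomorphic function on $X$. The natural candidate is $\Pi^{\hat\mu}_{k\boldsymbol{\nu}}$ applied to any smooth equivariant extension of $\tilde\sigma$; one must show this lands on the right isotype (automatic by equivariance of the projector) and that for $k$ large it genuinely restricts back to $\tilde\sigma$ on $X_{\boldsymbol{\nu}}$ up to a negligible error that can be corrected by a Neumann-series / fixed-point argument. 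The large-$k$ hypothesis enters precisely because the off-diagonal decay of $\Pi^{\hat\mu}_{k\boldsymbol{\nu}}$ away from $M_{\boldsymbol{\nu}}$, quantified in \cite{pao-ijm}, makes the correction operator a contraction only for $k$ sufficiently large; for small $k$ there may be a genuine discrepancy between the dimensions, which is why only an injection is claimed in part (1). A dimension count via a Riemann–Roch / Kawasaki–Riemann–Roch estimate on $N_{\boldsymbol{\nu}}$ against the known Weyl-type asymptotics of $\dim H(X)^{\hat\mu}_{k\boldsymbol{\nu}}$ from \cite{pao-ijm} would serve as an independent consistency check that both sides have the same leading-order growth $\sim c\,k^{d+1-r}$, pinning down that $\delta_k$ is eventually onto.
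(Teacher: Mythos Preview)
Your construction of $N_{\boldsymbol{\nu}}$ and $B_{\boldsymbol{\nu}}$ is in the right spirit, but both the injectivity and surjectivity arguments have genuine gaps, and the paper's route is substantially different and cleaner.

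\textbf{Injectivity.} Your proposed argument --- that $s\in H(X)^{\hat\mu}_{k\boldsymbol{\nu}}$ vanishing on $X_{\boldsymbol{\nu}}$ must vanish on $X$ via the concentration of $\Pi^{\hat\mu}_{k\boldsymbol{\nu}}$ along $M_{\boldsymbol{\nu}}$ --- does not work as stated. Concentration is an \emph{asymptotic} statement as $k\to\infty$, whereas you need injectivity for every $k\ge 1$. Moreover, $X_{\boldsymbol{\nu}}$ has real codimension $r-1$ in $X$, so a CR unique continuation argument from such a locus is not automatic. The paper sidesteps this entirely: since $\mathbf{0}\notin\Phi(M)$, each $H(X)^{\hat\mu}_{k\boldsymbol{\nu}}$ is finite-dimensional and decomposes as a \emph{finite} sum $\bigoplus_l H(X)^{\hat\mu}_{k\boldsymbol{\nu},l}$ with $H(X)^{\hat\mu}_{k\boldsymbol{\nu},l}\subset H(X)_l$. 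Each summand extends canonically to a holomorphic function on $A^\vee_0$, so one obtains an isomorphism $H(X)^{\hat\mu}_{k\boldsymbol{\nu}}\cong \mathcal{O}(A^\vee_0)_{k\boldsymbol{\nu}}$. The map $\delta_k$ is then just restriction $\mathcal{O}(A^\vee_0)_{k\boldsymbol{\nu}}\to \mathcal{O}(\tilde{A}^\vee_{\boldsymbol{\nu}})_{k\boldsymbol{\nu}}\cong H^0(N_{\boldsymbol{\nu}},B_{k\boldsymbol{\nu}})$, and injectivity is trivial because $\tilde{A}^\vee_{\boldsymbol{\nu}}$ is open and dense in $A^\vee_0$.

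\textbf{Surjectivity.} Your Szeg\H{o}-projector / Neumann-series proposal is a plausible analytic strategy, but it is not what the paper does, and it would require substantial additional work (e.g.\ controlling the restriction-then-project composition uniformly). The paper's argument is algebraic. First, one shows that $\tilde{A}^\vee_{\boldsymbol{\nu}}=(\pi')^{-1}(\tilde{M}_{\boldsymbol{\nu}})$ where $\tilde{M}_{\boldsymbol{\nu}}$ is the (semi)stable locus for the $T^{r-1}_{\boldsymbol{\nu}^\perp}$-action. Second, the fiber-rotation $S^1$-action descends to a Hamiltonian action on $(N_{\boldsymbol{\nu}},2\eta_{\boldsymbol{\nu}})$ with \emph{strictly positive} moment map; combined with Kodaira--Baily vanishing and the ``quantization commutes with reduction'' theorem of Meinrenken--Sjamaar, this forces $H^0(N_{\boldsymbol{\nu}},B_{k\boldsymbol{\nu}})_l=0$ for $l\le 0$ once $k\gg 0$. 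Hence surjectivity reduces to surjectivity of each
\[
H^0(M,A^{\otimes l})_{k\boldsymbol{\nu}}\longrightarrow H^0(\tilde{M}_{\boldsymbol{\nu}},A^{\otimes l})_{k\boldsymbol{\nu}}
\]
for $l>0$, and this is the classical GIT statement (Guillemin--Sternberg, Sjamaar) that $T^{r-1}_{\boldsymbol{\nu}^\perp}$-invariant sections extend from the semistable locus to all of $M$. Your dimension-count heuristic is consistent with this but does not replace it: matching leading asymptotics does not rule out a fixed finite-dimensional cokernel for all $k$.
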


\begin{cor}
\label{cor:hol euler char}
If $k\gg 0$, 
$$
\dim H(X)^{\hat{\mu}}_{k\,\boldsymbol{\nu}} =\chi \big(N_{\boldsymbol{\nu}},B_{\boldsymbol{\nu}}^{\otimes k}\big).
$$
\end{cor}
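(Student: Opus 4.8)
The plan is to deduce the corollary directly from Theorem~\ref{thm:main}, the only additional input being an asymptotic vanishing theorem for the higher cohomology of high tensor powers of the positive orbifold line bundle $B_{\boldsymbol{\nu}}$. By the holomorphic Euler characteristic we mean
\[
\chi\big(N_{\boldsymbol{\nu}},B_{\boldsymbol{\nu}}^{\otimes k}\big)
=\sum_{i=0}^{d+1-r}(-1)^i\,
\dim H^i\big(N_{\boldsymbol{\nu}},B_{\boldsymbol{\nu}}^{\otimes k}\big),
\]
so it suffices to show that for $k\gg 0$ all the summands with $i\ge 1$ vanish, while the $i=0$ term already equals $\dim H(X)^{\hat{\mu}}_{k\,\boldsymbol{\nu}}$. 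Indeed, part (2) of the theorem provides, for $k\gg 0$, an isomorphism $\delta_k$, whence
\[
\dim H(X)^{\hat{\mu}}_{k\,\boldsymbol{\nu}}
=\dim H^0\big(N_{\boldsymbol{\nu}},B_{\boldsymbol{\nu}}^{\otimes k}\big).
\]

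First I would establish the vanishing $H^i(N_{\boldsymbol{\nu}},B_{\boldsymbol{\nu}}^{\otimes k})=0$ for every $i\ge 1$ and every $k\gg 0$. Since $N_{\boldsymbol{\nu}}$ is a compact complex orbifold and $B_{\boldsymbol{\nu}}$ is a positive orbifold line bundle on it, this is precisely the setting of the Kodaira--Baily vanishing theorem for $V$-manifolds. The standard device is to write
\[
B_{\boldsymbol{\nu}}^{\otimes k}
=K_{N_{\boldsymbol{\nu}}}\otimes
\big(B_{\boldsymbol{\nu}}^{\otimes k}\otimes K_{N_{\boldsymbol{\nu}}}^{-1}\big),
\]
and to observe that, positivity being dominated by high tensor powers of a positive bundle, the twisting factor $B_{\boldsymbol{\nu}}^{\otimes k}\otimes K_{N_{\boldsymbol{\nu}}}^{-1}$ is again a positive orbifold line bundle once $k$ is large enough; the Kodaira--Baily theorem then yields the asserted vanishing. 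Equivalently, one may invoke Baily's orbifold embedding theorem to realize $N_{\boldsymbol{\nu}}$ as a projective orbifold on which $B_{\boldsymbol{\nu}}$ is ample, and then apply Serre vanishing.

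Combining the two steps, for $k\gg 0$ we obtain
\[
\chi\big(N_{\boldsymbol{\nu}},B_{\boldsymbol{\nu}}^{\otimes k}\big)
=\dim H^0\big(N_{\boldsymbol{\nu}},B_{\boldsymbol{\nu}}^{\otimes k}\big)
=\dim H(X)^{\hat{\mu}}_{k\,\boldsymbol{\nu}},
\]
which is the assertion. The main obstacle is essentially nonexistent at the level of the corollary: all the geometric substance has already been absorbed into Theorem~\ref{thm:main}, and the sole remaining point requiring care is that Kodaira-type vanishing be used in the orbifold rather than the smooth category. This is classical after Baily, so the only genuine verification is that the positivity of $B_{\boldsymbol{\nu}}$ furnished by the theorem survives the twist by $K_{N_{\boldsymbol{\nu}}}^{-1}$ for large $k$, which is immediate.
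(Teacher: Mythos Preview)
Your proposal is correct and follows exactly the approach implicit in the paper: the corollary is stated without a separate proof because it is immediate from Theorem~\ref{thm:main}(2) together with the Kodaira--Baily vanishing $H^i(N_{\boldsymbol{\nu}},B_{\boldsymbol{\nu}}^{\otimes k})=0$ for $i>0$ and $k\gg 0$, which the paper records as Corollary~\ref{cor:vanishing kb}. Your justification of the vanishing via the twist by $K_{N_{\boldsymbol{\nu}}}^{-1}$ is the standard one and matches the references the paper cites (\cite{baily}, \cite{bg}).
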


Obviously with no pretense of exhaustiveness, discussions of orbifolds and orbifold line bundles
(also known as $V$-manifolds and line $V$-bundles) 
can be found in \cite{sat1}, \cite{sat2}, \cite{baily},
\cite{k}, \cite{alr}, \cite{bg}; specific treatments of
Hamiltonian actions on symplectic orbifolds can be found
in \cite{lt} and \cite{ms}.

The geometric significance of the Theorem lies in the
relation between the polarized orbifold 
$(N_{\boldsymbol{\nu}},B_{\boldsymbol{\nu}})$
and the \lq prequantum data\rq\, $(A,\,\Phi,\,\boldsymbol{\nu})$. It is therefore
in order to outline how the former is constructed from the latter.
The following
statements will be clarified and proved in \S \ref{sctn:preliminaries}.

Let $\tilde{T}\cong (\mathbb{C}^*)^r$ be the complexification of $T$. Then $\mu^X$ extends 
to an holomorphic line bundle action 
$\tilde{\mu}^{A^\vee}:\tilde{T}\times A^\vee\rightarrow A^\vee$. 
Let $A^\vee_0$ be the complement of the zero section in $A^\vee$, and
let $A^\vee_{\boldsymbol{\nu}}\subset A^\vee_0$ be the inverse image of $M_{\boldsymbol{\nu}}$.
Let $\tilde{A}^\vee_{\boldsymbol{\nu}}:=\tilde{T}\cdot A^\vee_{\boldsymbol{\nu}}$
be its saturation under $\tilde{\mu}^{A^\vee}$. 

Then $\tilde{\mu}^{A^\vee}$ is proper and locally free on $\tilde{A}^\vee_{\boldsymbol{\nu}}$, and 
$N_{\boldsymbol{\nu}}=\tilde{A}^\vee_{\boldsymbol{\nu}}/\tilde{T}$.
Thus the projection
$p_{\boldsymbol{\nu}}:\tilde{A}^\vee_{\boldsymbol{\nu}}\rightarrow N_{\boldsymbol{\nu}}$ 
is a principal $V$-bundle
with structure group $\tilde{T}$ over $N_{\boldsymbol{\nu}}$ \cite{sat2}.

Furthermore, $\chi_{\boldsymbol{\nu}}:T\rightarrow S^1$
extends to a character $\tilde{\chi}_{\boldsymbol{\nu}}:\tilde{T}\rightarrow \mathbb{C}^*$; 
the datum of $p_{\boldsymbol{\nu}}$ and $\tilde{\chi}_{\boldsymbol{\nu}}$ determines the
orbifold line bundle $B_{\boldsymbol{\nu}}$.
Similarly, $B_{\boldsymbol{\nu}}^{\otimes k}$ (or $B_{k\,\boldsymbol{\nu}}$)
denotes the orbifold line bundle associated to 
$p_{\boldsymbol{\nu}}$ and $\tilde{\chi}_{k\,\boldsymbol{\nu}}=
\tilde{\chi}_{\boldsymbol{\nu}}^{k}$.

We can give the following alternative 
algebro-geometric
characterization of $\tilde{A}^{\vee}_{\boldsymbol{\nu}}$.
Let $\boldsymbol{\nu}^\perp\subset \mathbb{R}^r$ be the orthocomplement of
$\boldsymbol{\nu}$ with respect to the standard scalar product, and consider
the (Abelian) subalgebra
$\imath\,\boldsymbol{\nu}^\perp\leqslant \mathfrak{t}$. Let 
$T^{r-1}_{\boldsymbol{\nu}^\perp}\leqslant T$ be the corresponding subtorus,
$\tilde{T}^{r-1}_{\boldsymbol{\nu}^\perp}\leqslant \tilde{T}$ be its
complexification.
The restriction of $\tilde{•\mu}^M$ to $\tilde{T}^{r-1}_{\boldsymbol{\nu}^\perp}$ is 
an holomorphic action $\tilde{•\gamma}^M$ of
$\tilde{T}^{r-1}_{\boldsymbol{\nu}^\perp}$
on $(M,\,J)$, with a built-in complex linearization
$\tilde{\gamma}^{A^\vee}:\tilde{T}^{r-1}_{\boldsymbol{\nu}^\perp}\times A^\vee\rightarrow A^\vee$. 
Let $\tilde{M}_{\boldsymbol{\nu}}\subseteq M$ be the locus of (semi)stable points
of $\tilde{\gamma}^M$; then $\tilde{A}^{\vee}_{\boldsymbol{\nu}}$ is the inverse image of
$\tilde{M}_{\boldsymbol{\nu}}$ in $A^\vee_0$.

Up to a natural isomorphism, an alternative description of $N_{\boldsymbol{\nu}}$ is as follows. 
Let $X_{\boldsymbol{\nu}}:=\pi^{-1}(M_{\boldsymbol{\nu}})$.
Then $T$ acts locally freely on $X_{\boldsymbol{\nu}}$, and 
$N_{\boldsymbol{\nu}}\cong X_{\boldsymbol{\nu}}/T$. 
This description is instrumental in describing the positivity of
$B_{\boldsymbol{\nu}}$ and the K\"{a}hler structure
of $N_{\boldsymbol{\nu}}$.

When $r=1$,
$M=\mathbb{P}^d$, and $A$ is the hyperplane line bundle with the 
standard metric, we have
$X_{\boldsymbol{\nu}}=X=S^{2d+1}$; thus 
the previous construction generalizes the one of weighted projective spaces
(see also the discussions in \cite{pao-lower} and \cite{pao-u2}).

\section{Preliminaries}

\label{sctn:preliminaries}

This section is devoted to a closer description of the geometric setting,
and to the statement and proof of a series of geometric results that will
combine into the proof of Theorem \ref{thm:main}. 

\begin{notn} We shall adopt the following notation and conventions.

\begin{enumerate}
\item If a Lie group $G$ with Lie algebra $\mathfrak{g}$
acts smoothly on a manifold $R$, for any $\xi \in \mathfrak{g}$
we shall denote by $\xi_R\in \mathfrak{X}(R)$ the vector field
on $R$ generated by $\xi$. 
\item If $r\in R$ and $\mathfrak{l}\subseteq \mathfrak{g}$ is a vector subspace, 
we shall set
$$
\mathfrak{l}_R(r):=\big\{\xi_R(r)\,:\,\xi\in \mathfrak{l}\big\}\subseteq T_rR.
$$
\item Given an isomorphism $T\cong (S^1)^r$, we have $\mathfrak{t}\cong\imath\,\mathbb{R}^r$. 
If we identify
the Lie algebra $\tilde{\mathfrak{t}}$ of 
$\tilde{T}\cong \mathbb{R}_+^r\times T$ with 
$\mathbb{C}^r\cong \mathbb{R}^r\oplus\imath\,\mathbb{R}^r$, 
$\mathfrak{t}$ corresponds to the imaginary summand
$\imath\,\mathbb{R}^r$.
For $\mathbf{x}=\begin{pmatrix}
x_1&\cdots&x_r
\end{pmatrix}\in \mathbb{R}^r$, 
we have $e^{\mathbf{x}}:=\begin{pmatrix}
e^{x_1}&\cdots&e^{x_r}
\end{pmatrix}\in \mathbb{R}_+^r\leqslant \tilde{T}^r$,
while $e^{\imath\,\mathbf{x}}:=\begin{pmatrix}
e^{\imath\,x_1}&\cdots&e^{\imath\,x_r}
\end{pmatrix}\in T^r$.
\item We shall equivariantly identify $\mathfrak{t}\cong \mathfrak{t}^\vee$, and view $\Phi$
as $\mathfrak{t}$-valued.
\item If $V$ is any Euclidean vector space and $\epsilon>0$, $V(\epsilon)\subset V$ will denote the
open ball in $V$ centered at the origin and of radius $\epsilon$.
\item $g(\cdot,\cdot):=\omega\big(\cdot,J(\cdot)\big)$ is the Riemannian metric
associated to $\omega$.

\item $J'$ is the complex structure of $A^\vee$.

\item The superscript $\sharp$ will denote horizontal lifts from $M$ to
either $X$ or $A^\vee$, according to the context, and will be applied to
both tangent vectors and vector subspaces of tangent spaces.
\item $\pi:X\rightarrow M$ and $\pi':A^\vee_0\rightarrow M$ are the projections.

\item If $\beta^Z:G\times Z\rightarrow Z$ is an action of the group $G$
on the set $Z$, and if $S\subseteq Z$ is $G$-invariant, we shall often denote by
$\beta^S:G\times S\rightarrow S$ the restricted action. Thus, for example,
$\tilde{T}$ acts on $A^\vee$ by $\tilde{\mu}^{A^\vee}$, on 
$A^\vee_0\subset A^\vee$ by $\tilde{\mu}^{A^\vee_0}$, on
$\tilde{A}^\vee_{\boldsymbol{\nu}}\subseteq A^\vee_0$ by 
$\tilde{\mu}^{\tilde{A}^\vee_{\boldsymbol{\nu}}}$.

\end{enumerate}
\end{notn}

\subsection{The locus $M_{\boldsymbol{\nu}}\subseteq M$}
\label{sctn:Mnu}
Let $\gamma^M:T^{r-1}_{\boldsymbol{\nu}^\perp}\times M\rightarrow M$ be the
action induced by restriction of $\mu^M$. Then $\gamma^M$ is Hamiltonian with respect
to $2\,\omega$, and its moment map 
$\Phi_{\boldsymbol{\nu}^\perp}:M\rightarrow \imath\,\boldsymbol{\nu}^\perp$
is the composition of $\Phi$ with the orthogonal projection 
$\mathfrak{t}\rightarrow \imath\,\boldsymbol{\nu}^\perp$.
Assuming BA, we can draw the following conclusions:
\begin{enumerate}
\item $\mathbf{0}\in \imath\,\boldsymbol{\nu}^\perp$ 
is a regular value of $\Phi_{\boldsymbol{\nu}^\perp}$;
\item $M_{\boldsymbol{\nu}}=\Phi_{\boldsymbol{\nu}^\perp}^{-1}(\mathbf{0})$ is a compact and
connected coisotropic submanifold of $M$, of (real) codimension $r-1$;
\item $\gamma^M$ is locally free along $M_{\boldsymbol{\nu}^\perp}$, that is,
$$
\dim \big(\imath\,\boldsymbol{\nu}^\perp\big)_M(m)=r-1\quad \forall\, m\in M_{\boldsymbol{\nu}^\perp};
$$
\item for every $m\in M_{\boldsymbol{\nu}}$, we have 
$$
T_mM_{\boldsymbol{\nu}}=\big(\imath\,\boldsymbol{\nu}^\perp\big)_M(m)^{\perp_{\omega_m}}
=J_m\left((\imath\,\boldsymbol{\nu}^\perp)_M(m)\right)^{\perp_{g_m}}.
$$
\end{enumerate} 

This implies the following statement. Let us define
\begin{equation}
\label{eqn:Psiepsilon}
\Psi:(\mathbf{x},m)\in(\imath\,\boldsymbol{\nu}^\perp)\times M_{\boldsymbol{\nu}}\mapsto
\tilde{\mu}_{e^{\mathbf{x}}}(m)\in M.
\end{equation}

\begin{lem}
\label{lem:localyl free mnu}
Given Basic Assumption \ref{ba:transv}, the following holds:
\begin{enumerate}
\item $\tilde{\gamma}^M$ is locally free along $M_{\boldsymbol{\nu}}$;
\item for any sufficiently small $\epsilon>0$, $\Psi$ in (\ref{eqn:Psiepsilon})
restricts to a diffeomorphism between 
$(\imath\,\boldsymbol{\nu}^\perp)(\epsilon)\times M_{\boldsymbol{\nu}}$ and an open tubular neighborhood
$U_{\epsilon}$ of $M_{\boldsymbol{\nu}}$ in $M$.
\end{enumerate}

\end{lem}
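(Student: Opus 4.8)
The plan is to deduce both assertions from the four numbered facts about $M_{\boldsymbol{\nu}}$ recorded just before the statement, by a standard ``slice/tubular neighborhood'' argument for the free local action of $\imath\,\boldsymbol{\nu}^\perp$, upgraded to the complexified flow $e^{\mathbf{x}}$.

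First I would prove (1). Recall $\tilde{\gamma}^M$ is the holomorphic action of $\tilde T^{r-1}_{\boldsymbol{\nu}^\perp}\cong \mathbb{R}_+^{r-1}\times T^{r-1}_{\boldsymbol{\nu}^\perp}$ on $M$, whose Lie algebra is $\boldsymbol{\nu}^\perp\oplus\imath\,\boldsymbol{\nu}^\perp$. Fix $m\in M_{\boldsymbol{\nu}}$; by fact (3) the infinitesimal action of the imaginary part $\imath\,\boldsymbol{\nu}^\perp$ at $m$ is already injective, i.e. $\dim(\imath\,\boldsymbol{\nu}^\perp)_M(m)=r-1$. Since $\tilde{\gamma}^M$ is holomorphic, for $\xi\in\boldsymbol{\nu}^\perp$ the generator of the real part satisfies $(\xi)_M(m)=-J_m\big((\imath\xi)_M(m)\big)$ (the real flow direction is $J$ applied to the Hamiltonian flow direction, up to the sign convention fixed in the Notation). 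Hence the full evaluation map $\boldsymbol{\nu}^\perp\oplus\imath\,\boldsymbol{\nu}^\perp\to T_mM$ has image $(\imath\,\boldsymbol{\nu}^\perp)_M(m)\oplus J_m\big((\imath\,\boldsymbol{\nu}^\perp)_M(m)\big)$, and this sum is direct: by fact (3) the two summands have dimension $r-1$ each, and they are transverse because $(\imath\,\boldsymbol{\nu}^\perp)_M(m)$ is an isotropic subspace for $\omega_m$ (being the tangent to the orbit of the locally free Hamiltonian $T^{r-1}_{\boldsymbol{\nu}^\perp}$-action on the zero level $M_{\boldsymbol{\nu}}=\Phi_{\boldsymbol{\nu}^\perp}^{-1}(\mathbf 0)$) while $g_m(\cdot,\cdot)=\omega_m(\cdot,J_m\cdot)$ is positive definite, so $J_m\big((\imath\,\boldsymbol{\nu}^\perp)_M(m)\big)$ meets it trivially. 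Therefore the evaluation map is injective of rank $2(r-1)$, which is exactly local freeness of $\tilde{\gamma}^M$ at $m$; as $m\in M_{\boldsymbol{\nu}}$ was arbitrary, (1) follows.

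Next I would prove (2). The differential of $\Psi$ at a point $(\mathbf 0,m)$ is the linear map $\imath\,\boldsymbol{\nu}^\perp\oplus T_mM_{\boldsymbol{\nu}}\to T_mM$ sending $(\mathbf x,v)\mapsto (\mathbf x)_M^{\mathbb{R}}(m)+v$ where $(\mathbf x)^{\mathbb R}_M(m):=\tfrac{d}{dt}\big|_0\tilde\mu_{e^{t\mathbf x}}(m)=-J_m\big((\mathbf x)_M(m)\big)$ lies in $J_m\big((\imath\,\boldsymbol{\nu}^\perp)_M(m)\big)$. By fact (4), $T_mM_{\boldsymbol{\nu}}=J_m\big((\imath\,\boldsymbol{\nu}^\perp)_M(m)\big)^{\perp_{g_m}}$, so $T_mM_{\boldsymbol{\nu}}$ and $J_m\big((\imath\,\boldsymbol{\nu}^\perp)_M(m)\big)$ are complementary in $T_mM$; together with the injectivity of $\mathbf x\mapsto(\mathbf x)^{\mathbb R}_M(m)$ from part (1) this shows $\mathrm{d}_{(\mathbf 0,m)}\Psi$ is a linear isomorphism. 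Thus $\Psi$ is a local diffeomorphism near $(\imath\,\boldsymbol{\nu}^\perp)\times M_{\boldsymbol{\nu}}$ at parameter $\mathbf 0$, and since $M_{\boldsymbol{\nu}}$ is compact, a standard argument (uniform inverse function theorem along the compact submanifold, plus injectivity for small $\epsilon$) produces $\epsilon>0$ such that $\Psi$ restricts to a diffeomorphism of $(\imath\,\boldsymbol{\nu}^\perp)(\epsilon)\times M_{\boldsymbol{\nu}}$ onto an open tubular neighborhood $U_\epsilon$ of $M_{\boldsymbol{\nu}}$.

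The main obstacle is the injectivity (global, as opposed to merely local) of $\Psi$ on $(\imath\,\boldsymbol{\nu}^\perp)(\epsilon)\times M_{\boldsymbol{\nu}}$ for small $\epsilon$. Local freeness of $\tilde\gamma^M$ only prevents nearby points of a single orbit from colliding; one must also rule out that two points $m_1\neq m_2$ of $M_{\boldsymbol{\nu}}$ get identified by small flow displacements $e^{\mathbf x_1},e^{\mathbf x_2}$. This is handled by the usual compactness contradiction: if it failed there would be sequences $(\mathbf x_j^{(1)},m_j^{(1)})\neq(\mathbf x_j^{(2)},m_j^{(2)})$ with $\mathbf x_j^{(i)}\to\mathbf 0$ and $\Psi$-images equal; passing to subsequences $m_j^{(i)}\to m_\infty\in M_{\boldsymbol{\nu}}$, and the local-diffeomorphism property at $(\mathbf 0,m_\infty)$ forces the two sequences to agree for $j\gg0$, a contradiction. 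I would also note that $U_\epsilon$ is open because $\Psi$ is an open map (being a local diffeomorphism), and that shrinking $\epsilon$ further if necessary keeps $U_\epsilon$ a genuine tubular neighborhood diffeomorphic to the (trivial) normal bundle $(\imath\,\boldsymbol{\nu}^\perp)\times M_{\boldsymbol{\nu}}$ of $M_{\boldsymbol{\nu}}$.
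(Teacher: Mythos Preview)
Your proposal is correct and follows exactly the approach the paper intends: the paper does not write out a proof of this lemma at all, simply stating it as an immediate consequence of the four numbered facts about $M_{\boldsymbol{\nu}}$ listed just before it. You have supplied the standard details (complexified local freeness from the direct sum $(\imath\,\boldsymbol{\nu}^\perp)_M(m)\oplus J_m\big((\imath\,\boldsymbol{\nu}^\perp)_M(m)\big)$, then the inverse function theorem plus compactness for the tubular neighborhood), which is precisely what the paper leaves implicit.

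One small remark: your argument for the transversality $(\imath\,\boldsymbol{\nu}^\perp)_M(m)\cap J_m\big((\imath\,\boldsymbol{\nu}^\perp)_M(m)\big)=(0)$ via isotropy is fine, but fact~(4) gives it to you more directly: since $M_{\boldsymbol{\nu}}$ is $T$-invariant, $(\imath\,\boldsymbol{\nu}^\perp)_M(m)\subseteq T_mM_{\boldsymbol{\nu}}=J_m\big((\imath\,\boldsymbol{\nu}^\perp)_M(m)\big)^{\perp_{g_m}}$, so the intersection is $g_m$-orthogonal to itself. This is the route the paper takes in the parallel situation of Lemma~\ref{lem:local diffeo tildeTXnu} (see~(\ref{eqn:normalbundleXnu})). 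Also, you are right to flag the sign/identification issue in interpreting the first variable of $\Psi$: comparing with the analogous map $\Psi'$ in Lemma~\ref{lem:local diffeo tildeTXnu}, the intended flow is in the non-compact (gradient) direction $J\cdot(\imath\,\boldsymbol{\nu}^\perp)_M$, as you correctly read it.
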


\subsection{The locus $X_{\boldsymbol{\nu}}\subseteq X$ and its saturation
$\tilde{X}_{\boldsymbol{\nu}}$ in $A^\vee$}

Let us set:
\begin{equation}
\label{defn:Xnu}
X_{\boldsymbol{\nu}}:=\pi^{-1}(M_{\boldsymbol{\nu}})\subseteq X.
\end{equation}
If $x\in X_{\boldsymbol{\nu}}$, then in view of (\ref{eqn:xiX})
\begin{equation}
\label{eqn:hor lift}
\big(\imath\,\boldsymbol{\nu}^\perp\big)_X(x)=
\big(\imath\,\boldsymbol{\nu}^\perp\big)_M(m)^\sharp .
\end{equation}

We have the following analogue of Lemma \ref{lem:localyl free mnu}. 

\begin{lem}
\label{lem:local diffeo tildeTXnu}
Given BA \ref{ba:transv}, the following holds:

\begin{enumerate}
\item $\mu^{X}$ is locally free along $X_{\boldsymbol{\nu}}$;
\item for any $x\in X_{\boldsymbol{\nu}}$, 
$$T_xX_{\boldsymbol{\nu}}\cap J'_x\big(\mathfrak{t}_{A^\vee}(x)\big)=(0);$$
\item for all suitably small $\epsilon>0$, the map 
$$
\Psi':(\mathbf{x},x)\in (\imath\,\mathfrak{t})\times X_{\boldsymbol{\nu}}\mapsto \tilde{\mu}^{A^\vee_0}_{e^{\mathbf{x}}}(x)\in 
A^\vee_0
$$
determines a diffemorphism
from $(\imath\,\mathfrak{t})(\epsilon)\times X_{\boldsymbol{\nu}}$ to a tubular neighborhood of
$X_{\boldsymbol{\nu}}$ in $A^\vee$;

\item $\tilde{\mu}^{A^\vee}$ is locally free 
along $X_{\boldsymbol{\nu}}$.
\end{enumerate}

\end{lem}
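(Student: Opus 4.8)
The plan is to lift the statements of Lemma \ref{lem:localyl free mnu} from $M$ to $A^\vee_0$, treating the circle bundle $X$ as the intermediate object and exploiting the splitting of $TA^\vee_0$ into the horizontal distribution, the generator $\partial_\theta$ of fiber rotation, and the radial direction $J'\partial_\theta$. First I would record the infinitesimal structure: for $\xi\in\mathfrak t$, the vector field $\xi_{A^\vee}$ decomposes along $X$ as $\xi_X$ plus a radial component proportional to $\langle\Phi\circ\pi,\xi\rangle\,J'\partial_\theta$ (this is the content of how $\tilde\mu^{A^\vee}$ extends $\mu^X$, to be spelled out in \S\ref{sctn:preliminaries}), while $\xi_X=\xi_M^\sharp-\langle\Phi\circ\pi,\xi\rangle\,\partial_\theta$ by (\ref{eqn:xiX}). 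Hence for $x\in X_{\boldsymbol\nu}$ lying over $m\in M_{\boldsymbol\nu}$, using (\ref{eqn:hor lift}) and BA, the subspace $\mathfrak t_{A^\vee}(x)$ is spanned by $(\imath\boldsymbol\nu^\perp)_M(m)^\sharp$ together with the single extra vector coming from a generator $\xi_0$ with $\langle\Phi(m),\xi_0\rangle\neq0$ (such $\xi_0$ exists since $\mathbf 0\notin\Phi(M)$), whose $\xi_X$-part is $\xi_{0,M}^\sharp-c\,\partial_\theta$ with $c\neq0$.

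For part (1), local freeness of $\mu^X$ along $X_{\boldsymbol\nu}$ amounts to $\dim\mathfrak t_X(x)=r$; this follows because the $r-1$ horizontal lifts $(\imath\boldsymbol\nu^\perp)_M(m)^\sharp$ are independent (item 3 in \S\ref{sctn:Mnu}) and $\xi_{0,X}$ has a nonzero $\partial_\theta$-component, hence is independent of the horizontal block — equivalently, $\mu^X$ covers $\gamma^M$, which is locally free, and adds the circle direction through the nonzero Hamiltonian $\langle\Phi,\xi_0\rangle$. For part (2), I would compute $\mathfrak t_{A^\vee}(x)$ modulo $TX$: the only way an element $\xi_{A^\vee}(x)$ can have a nonzero $J'\partial_\theta$-component is if $\langle\Phi(m),\xi\rangle\neq0$, and then its $\partial_\theta$-component is also nonzero and its horizontal part lies in $(\imath\boldsymbol\nu^\perp)_M(m)^\sharp+\mathbb R\,\xi_{0,M}^\sharp$; intersecting $J'_x(\mathfrak t_{A^\vee}(x))$ with $T_xX_{\boldsymbol\nu}$, a vector in the intersection must be horizontal with horizontal part in $(\imath\boldsymbol\nu^\perp)_M(m)^\sharp$ (since $T_xX_{\boldsymbol\nu}=\ker\alpha$-part over $T_mM_{\boldsymbol\nu}$) and simultaneously equal to $J'$ of something radial-plus-that-block; using $J'\partial_\theta=$ radial and $J'$(horizontal)$=$horizontal identified with $J_m$ on $M$, this forces the vector into $J_m\big((\imath\boldsymbol\nu^\perp)_M(m)\big)^\sharp\cap T_mM_{\boldsymbol\nu}^\sharp$, which is $(0)$ by item 4 of \S\ref{sctn:Mnu}.

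Part (3) is then the inverse function theorem: the differential of $\Psi'$ at $(\mathbf 0,x)$ sends $(\mathbf x,v)\mapsto (\imath\mathbf x)_{A^\vee}(x)+v$, and part (2) together with part (1) shows this is injective — the image of $(\imath\mathfrak t)(\epsilon)$ under the infinitesimal action is a complement to $T_xX_{\boldsymbol\nu}$ inside $T_xA^\vee_0$ by a dimension count ($r$ plus $\dim X_{\boldsymbol\nu}=2d+1-(r-1)=2d+2-r$ gives $2d+2=\dim_{\mathbb R}A^\vee_0$) — so $\Psi'$ is a local diffeomorphism near $X_{\boldsymbol\nu}\times\{\mathbf 0\}$; a standard compactness/tube argument (as in Lemma \ref{lem:localyl free mnu}(2)) upgrades this to a diffeomorphism onto a tubular neighborhood for small $\epsilon$, after possibly shrinking to kill the non-injectivity coming from the compact factor $T$ that does not appear here since we only exponentiate the real directions $\imath\mathfrak t$. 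Finally, part (4) follows from (1) and (3): the full complexified action $\tilde\mu^{A^\vee}$ restricted to a point of $X_{\boldsymbol\nu}$ has isotropy algebra contained in $\mathfrak t$ (the real directions act freely transversally by (3)), and on $\mathfrak t$ it is the isotropy of $\mu^X$, which is discrete by (1). The main obstacle I anticipate is bookkeeping the three-way splitting $TA^\vee_0=Hor\oplus\mathbb R\partial_\theta\oplus\mathbb R\,J'\partial_\theta$ cleanly enough that the transversality computation in part (2) — really an identity between $J'$ on $A^\vee$ and $J$ on $M$ under horizontal lift — is transparent rather than a sign-chasing morass; everything else is a dimension count plus the already-established Lemma \ref{lem:localyl free mnu}.
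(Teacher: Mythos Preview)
Your approach is essentially the same as the paper's: split $\mathfrak t=\imath\boldsymbol\nu^\perp\oplus\mathbb R\,\imath\boldsymbol\nu$, use (\ref{eqn:hor lift}) to see that the $\imath\boldsymbol\nu^\perp$-block of $\mathfrak t_{A^\vee}(x)$ is purely horizontal, observe that $J'$ of this block is the normal bundle of $X_{\boldsymbol\nu}$ in $X$ (by item~4 in \S\ref{sctn:Mnu}), and observe that $J'$ applied to the remaining $\imath\boldsymbol\nu$-direction produces a nonzero $\partial_r$-component, hence lies outside $T_xX$ altogether. The paper carries this out via (\ref{eqn:normalbundleXnu0}), (\ref{eqn:nux0}), (\ref{eqn:Jnux0}); your outline matches.

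There is one genuine slip in your setup. You write that ``for $\xi\in\mathfrak t$, the vector field $\xi_{A^\vee}$ decomposes along $X$ as $\xi_X$ plus a radial component proportional to $\langle\Phi\circ\pi,\xi\rangle\,J'\partial_\theta$''. This is false: since $\mu^{A^\vee}$ is unitary it preserves $X$, so $\xi_{A^\vee}|_X=\xi_X$ is tangent to $X$ and has \emph{no} radial part. The radial component only appears after you apply $J'$: it is $J'_x(\xi_X(x))$ that equals $J_m(\xi_M(m))^\sharp+\langle\Phi(m),\xi\rangle\,\partial_r|_x$ (up to sign), because $J'\partial_\theta$ is radial. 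Your part~(2) computation then proceeds correctly---you end up intersecting $J_m\big((\imath\boldsymbol\nu^\perp)_M(m)\big)^\sharp$ with $T_mM_{\boldsymbol\nu}^\sharp$, exactly as the paper does---so the error does not propagate. But the sentence as written should be corrected, and the parallel phrase in part~(2) (``the only way an element $\xi_{A^\vee}(x)$ can have a nonzero $J'\partial_\theta$-component\ldots'') should read ``the only way $J'_x(\xi_{A^\vee}(x))$ can have a nonzero radial component\ldots''. Once this is fixed, the argument is sound and matches the paper.
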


\begin{proof}
[Proof of Lemma \ref{lem:local diffeo tildeTXnu}]
That $\mu^X$ is locally free on $X_{\boldsymbol{\nu}}$ under the transversality assumption in
BA is proved in \S 2 of \cite{pao-ijm}.

By the discussion in \S \ref{sctn:Mnu},
if $x\in X_{\boldsymbol{\nu}}$ and $m=\pi(x)$ then
$$J_m\big(\imath\,\boldsymbol{\nu}^\perp\big)_M(m)^\sharp\subseteq T_xX$$ 
is the
normal space of $X_{\boldsymbol{\nu}}$ in $X$ at $x$; 
hence given (\ref{eqn:hor lift})
we have
\begin{equation}
\label{eqn:normalbundleXnu0}
T_xX_{\boldsymbol{\nu}}\cap 
J'_x\big(\imath\,\boldsymbol{\nu}^\perp\big)_X(x)=
T_xX_{\boldsymbol{\nu}}\cap 
J_m\big(\imath\,\boldsymbol{\nu}^\perp\big)_M(m)^\sharp =(0).
\end{equation}
Furthermore, by definition of $X_{\boldsymbol{\nu}}$
there exists a smooth function $\lambda_{\boldsymbol{\nu}}:M_{\boldsymbol{\nu}}\rightarrow 
\mathbb{R}_+$ such that 
\begin{equation}
\label{eqn:nux0}
(\imath\,\boldsymbol{\nu})_X(x)=(\imath\,\boldsymbol{\nu})_M(m)^\sharp-
\lambda_{\boldsymbol{\nu}}(m)\,\|\boldsymbol{\nu}\|^2\,
\left.\partial_\theta\right|_x\not\in Hor(X)_x,\quad 
\forall\,x\in X_{\boldsymbol{\nu}}.
\end{equation}
If $r$ denotes the radial coordinate along the fibers of $A^\vee$,
this implies
\begin{equation}
\label{eqn:Jnux0}
J'_x\big((\imath\,\boldsymbol{\nu})_X(x)\big)=J_m\big((\imath\,\boldsymbol{\nu})_M(m)\big)^\sharp+
\lambda_{\boldsymbol{\nu}}(m)\,\|\boldsymbol{\nu}\|^2\,
\left.\partial_r\right|_x\in T_xA^\vee \setminus T_xX.
\end{equation}
The second statement follows from (\ref{eqn:normalbundleXnu0}), (\ref{eqn:nux0}) and (\ref{eqn:Jnux0}).

The third statement is an immediate consequence of the second.

Since $X_{\boldsymbol{\nu}}$ is a $T$-invariant submanifold of $A^\vee$,
$\big(\imath\,\boldsymbol{\nu}^\perp\big)_X(x)\subseteq T_xX_{\boldsymbol{\nu}}$
for any $x\in X_{\boldsymbol{\nu}}$. Hence 
if $x\in X_{\boldsymbol{\nu}}$ and $m=\pi(x)$ then by (\ref{eqn:normalbundleXnu0})
\begin{equation}
\label{eqn:normalbundleXnu}
\big(\imath\,\boldsymbol{\nu}^\perp\big)_X(x)\cap 
J'_x\big(\imath\,\boldsymbol{\nu}^\perp\big)_X(x)=
\big(\imath\,\boldsymbol{\nu}^\perp\big)_M(m)^\sharp\cap 
J_m\big(\imath\,\boldsymbol{\nu}^\perp\big)_M(m)^\sharp =(0).
\end{equation}
Together with the first statement, this implies
$\dim_{\mathbb{C}}\big(\tilde{\mathfrak{t}}_{A^\vee}(x)\big)=r$ for any $x\in X_{\boldsymbol{\nu}}$.

\end{proof}

Let us consider the saturation 
\begin{equation}
\label{eqn:tildeXnu}
\tilde{X}_{\boldsymbol{\nu}}:=\tilde{T}\cdot X_{\boldsymbol{\nu}}\subseteq A^\vee .
\end{equation}

\begin{cor}
\label{cor:open saturation}
$\tilde{X}_{\boldsymbol{\nu}}$
is open in $A^\vee$.
\end{cor}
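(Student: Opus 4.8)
The plan is to deduce openness of $\tilde{X}_{\boldsymbol{\nu}}$ from the local structure result in Lemma \ref{lem:local diffeo tildeTXnu}, combined with the fact that $\tilde{X}_{\boldsymbol{\nu}}$ is by construction invariant under the open mapping $\tilde{\mu}^{A^\vee}$. First I would observe that $A^\vee_0$ is open in $A^\vee$ (it is the complement of the zero section, which is closed), so it suffices to show that $\tilde{X}_{\boldsymbol{\nu}}\subseteq A^\vee_0$ is open in $A^\vee_0$; note $\tilde{X}_{\boldsymbol{\nu}}\subseteq A^\vee_0$ because the $\tilde{T}$-action preserves $A^\vee_0$ (the zero section is $\tilde{T}$-invariant) and $X_{\boldsymbol{\nu}}\subseteq X\subseteq A^\vee_0$.

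Next I would use part (3) of Lemma \ref{lem:local diffeo tildeTXnu}: for small $\epsilon>0$ the map $\Psi'$ is a diffeomorphism from $(\imath\,\mathfrak{t})(\epsilon)\times X_{\boldsymbol{\nu}}$ onto a tubular neighborhood $W_\epsilon$ of $X_{\boldsymbol{\nu}}$ in $A^\vee$. In particular $W_\epsilon$ is an open subset of $A^\vee$, and by construction $W_\epsilon=\{\tilde{\mu}^{A^\vee_0}_{e^{\mathbf{x}}}(x):\mathbf{x}\in(\imath\,\mathfrak{t})(\epsilon),\,x\in X_{\boldsymbol{\nu}}\}\subseteq \tilde{T}\cdot X_{\boldsymbol{\nu}}=\tilde{X}_{\boldsymbol{\nu}}$. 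Thus $W_\epsilon$ is an open neighborhood (inside $A^\vee$) of every point of $X_{\boldsymbol{\nu}}$ that is contained in $\tilde{X}_{\boldsymbol{\nu}}$; so $\tilde{X}_{\boldsymbol{\nu}}$ contains an open neighborhood of $X_{\boldsymbol{\nu}}$.

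Finally I would propagate this neighborhood over the whole orbit set. Let $y\in\tilde{X}_{\boldsymbol{\nu}}$ be arbitrary; by definition $y=\tilde{\mu}^{A^\vee}_{\mathbf{g}}(x)$ for some $\mathbf{g}\in\tilde{T}$ and $x\in X_{\boldsymbol{\nu}}$. Since $\tilde{\mu}^{A^\vee}_{\mathbf{g}}:A^\vee\rightarrow A^\vee$ is a diffeomorphism (with inverse $\tilde{\mu}^{A^\vee}_{\mathbf{g}^{-1}}$), it is an open map, so $\tilde{\mu}^{A^\vee}_{\mathbf{g}}(W_\epsilon)$ is open in $A^\vee$; it contains $y=\tilde{\mu}^{A^\vee}_{\mathbf{g}}(x)$; and it is contained in $\tilde{\mu}^{A^\vee}_{\mathbf{g}}\big(\tilde{T}\cdot X_{\boldsymbol{\nu}}\big)=\tilde{T}\cdot X_{\boldsymbol{\nu}}=\tilde{X}_{\boldsymbol{\nu}}$, using that $\mathbf{g}\,\tilde{T}=\tilde{T}$. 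Hence every point of $\tilde{X}_{\boldsymbol{\nu}}$ has an open neighborhood in $A^\vee$ contained in $\tilde{X}_{\boldsymbol{\nu}}$, so $\tilde{X}_{\boldsymbol{\nu}}$ is open.

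There is no real obstacle here; the content has already been extracted into Lemma \ref{lem:local diffeo tildeTXnu}(3), and the corollary is essentially the standard observation that the saturation of a set admitting a transverse slice, under a group acting by homeomorphisms, is open. The only point requiring a modicum of care is checking that the image of $\Psi'$ genuinely lies in $\tilde{X}_{\boldsymbol{\nu}}$ and that translating the slice neighborhood by a fixed $\mathbf{g}\in\tilde{T}$ keeps us inside $\tilde{X}_{\boldsymbol{\nu}}$, both of which are immediate from $\tilde{T}$-invariance of $\tilde{X}_{\boldsymbol{\nu}}$ and the group law $\mathbf{g}\,\tilde{T}=\tilde{T}$.
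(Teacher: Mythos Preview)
Your proof is correct and is precisely the argument the paper intends: the corollary is stated without proof as an immediate consequence of Lemma~\ref{lem:local diffeo tildeTXnu}(3), and you have simply spelled out the standard deduction that the $\tilde{T}$-saturation of a set admitting a tubular neighborhood of this form is open.
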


\begin{cor}
\label{cor:loc free action Xnu}
If Basic Assumption \ref{ba:transv} holds, then $\mu^X$ is generically free 
on $X_{\boldsymbol{\nu}}$.
\end{cor}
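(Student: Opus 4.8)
The plan is to deduce Corollary \ref{cor:loc free action Xnu} from the generic freeness of $\mu^X$ on $X$ built into Basic Assumption \ref{ba:transv}, using the product structure of the tubular neighbourhood in the third part of Lemma \ref{lem:local diffeo tildeTXnu} to transport one free point onto $X_{\boldsymbol{\nu}}$, and then invoking a routine slice-theorem argument to promote this to a dense open set of free points.

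\emph{Generic freeness on $A^\vee_0$, and a free point on $X_{\boldsymbol{\nu}}$.} First I would observe that, since the structure circle action and $\mu^X$ commute and $T$ preserves the fibrewise norm, $A^\vee_0$ is $T$-equivariantly a product $X\times \mathbb{R}_+$ with $T$ acting trivially on the radial factor; hence $\tilde{\mu}^{A^\vee_0}$ restricted to $T$ is free exactly on $X^{\mathrm{fr}}\times \mathbb{R}_+$, where $X^{\mathrm{fr}}\subseteq X$ is the locus on which $\mu^X$ is free. By Basic Assumption \ref{ba:transv} (after the reduction of $T$ performed there) $X^{\mathrm{fr}}$ is dense and open in $X$, so $X^{\mathrm{fr}}\times \mathbb{R}_+$ is dense and open in $A^\vee_0$. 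Now let $\tilde{U}\subseteq A^\vee_0$ be the tubular neighbourhood of $X_{\boldsymbol{\nu}}$ given by the third part of Lemma \ref{lem:local diffeo tildeTXnu}, so that $\Psi'$ is a diffeomorphism from $(\imath\,\mathfrak{t})(\epsilon)\times X_{\boldsymbol{\nu}}$ onto $\tilde{U}$. Because $X_{\boldsymbol{\nu}}$ is $T$-invariant and $\tilde{T}$ is abelian, each $\tilde{\mu}^{A^\vee_0}_{e^{\mathbf{x}}}$ commutes with every $\mu^X_{\mathbf{t}}$, so $\Psi'$ intertwines the $T$-action on $\tilde{U}$ with the action on $(\imath\,\mathfrak{t})(\epsilon)\times X_{\boldsymbol{\nu}}$ that is trivial on the first factor and equals $\mu^{X_{\boldsymbol{\nu}}}$ on the second; in particular $\mathrm{Stab}_T(\Psi'(\mathbf{x},x))=\mathrm{Stab}_T(x)$ for all $(\mathbf{x},x)$. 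Since $\tilde{U}$ is open in $A^\vee_0$, we may choose $\Psi'(\mathbf{x}_0,x_0)\in \tilde{U}$ with trivial $T$-stabilizer, and then $x_0\in X_{\boldsymbol{\nu}}$ has trivial $T$-stabilizer; thus the free locus $X_{\boldsymbol{\nu}}^{\mathrm{fr}}$ of $\mu^{X_{\boldsymbol{\nu}}}$ is nonempty.

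\emph{Density.} By the first part of Lemma \ref{lem:local diffeo tildeTXnu}, the $T$-action on the compact connected manifold $X_{\boldsymbol{\nu}}$ is locally free, so every stabilizer is finite; by the slice theorem only finitely many subgroups $\{1\}=\Gamma_0,\Gamma_1,\dots,\Gamma_N\leqslant T$ occur as stabilizers, and $X_{\boldsymbol{\nu}}\setminus X_{\boldsymbol{\nu}}^{\mathrm{fr}}=\bigcup_{i=1}^N X_{\boldsymbol{\nu}}^{\Gamma_i}$ is a finite union of closed submanifolds. Each $X_{\boldsymbol{\nu}}^{\Gamma_i}$ is nonempty, closed, and a proper subset of the connected manifold $X_{\boldsymbol{\nu}}$ (it avoids $x_0$), hence of positive codimension and nowhere dense; therefore $X_{\boldsymbol{\nu}}^{\mathrm{fr}}$ is open and dense, which is the assertion of the Corollary. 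Equivalently, one may phrase this last step by saying that $X_{\boldsymbol{\nu}}/T$ is a connected orbifold possessing a regular point, whence its regular locus, and so $X_{\boldsymbol{\nu}}^{\mathrm{fr}}$, is dense.

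The one point that genuinely needs care is the equivariance assertion in the second paragraph: it is precisely here that commutativity of $T$ and the explicit form of $\Psi'$ supplied by Lemma \ref{lem:local diffeo tildeTXnu} enter, and it is this that lets one replace a free point in the open set $\tilde{U}$ by a free point of $X_{\boldsymbol{\nu}}$ itself — a step that is \emph{not} automatic, since $X_{\boldsymbol{\nu}}$ has positive codimension in $X$. The remaining ingredients (the splitting $A^\vee_0\cong X\times\mathbb{R}_+$ and the slice-theorem structure of the non-free locus) are standard.
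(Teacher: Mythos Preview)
Your proof is correct and follows essentially the same idea as the paper's: use that an open neighbourhood of $X_{\boldsymbol{\nu}}$ in $A^\vee_0$ (the paper takes the full saturation $\tilde{X}_{\boldsymbol{\nu}}$ via Corollary~\ref{cor:open saturation}, you take the tubular neighbourhood $\tilde{U}$ from Lemma~\ref{lem:local diffeo tildeTXnu}) must meet the dense free locus of the $T$-action, combined with the observation that $T$-stabilizers are constant along $\tilde{T}$-orbits since $\tilde{T}$ is abelian. The paper phrases this as a one-line contradiction and leaves the density step implicit, whereas you spell out the slice-theorem argument in your second paragraph; otherwise the two arguments coincide.
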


\begin{proof}
[Proof of Corollary \ref{cor:loc free action Xnu}]
If the general $x\in X_{\boldsymbol{\nu}}$ had non-trivial 
stabilizer in $T$, the same would hold of the general 
$\ell\in \tilde{X}_{\boldsymbol{\nu}}$; since the latter is open in
$A^\vee_0$, this contradicts 
the assumption that $\mu^{A^\vee}$ is generically free.
\end{proof}

\begin{cor}
\label{cor:Nnuprime orbifold}
If BA \ref{ba:transv} holds, then 
$N_{\boldsymbol{\nu}}':=X_{\boldsymbol{\nu}}/T$ is a compact orbifold
of real dimension $2\,(d+1-r)$,
and the projection 
\begin{equation}
\label{eqn:projectionprime}
p_{\boldsymbol{\nu}}':X_{\boldsymbol{\nu}}\rightarrow N'_{\boldsymbol{\nu}}
\end{equation}
is a principal $V$-bundle with structure group $T$.
\end{cor}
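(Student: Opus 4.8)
The plan is to verify that $\mu^X$ restricts to a proper, locally free action of $T$ on $X_{\boldsymbol{\nu}}$ and then invoke the standard quotient construction for such actions, which produces an effective orbifold together with a principal $V$-bundle structure on the total space. First I would recall that $T$ preserves $M_{\boldsymbol{\nu}} = \Phi_{\boldsymbol{\nu}^\perp}^{-1}(\mathbf{0})$, hence preserves $X_{\boldsymbol{\nu}} = \pi^{-1}(M_{\boldsymbol{\nu}})$; since $T$ is compact, the action $\mu^{X_{\boldsymbol{\nu}}}$ is automatically proper. Local freeness is exactly part (1) of Lemma \ref{lem:local diffeo tildeTXnu}. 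Next I would observe that $X_{\boldsymbol{\nu}}$ is compact: $M_{\boldsymbol{\nu}}$ is compact by the conclusions drawn in \S\ref{sctn:Mnu} (it is the preimage of a regular value of the proper map $\Phi_{\boldsymbol{\nu}^\perp}$ on the compact $M$), and $\pi$ is a proper $S^1$-bundle projection, so $X_{\boldsymbol{\nu}}$ is a closed subset of the compact $X$. Properness of the action plus compactness of $X_{\boldsymbol{\nu}}$ gives compactness of the quotient $N'_{\boldsymbol{\nu}} = X_{\boldsymbol{\nu}}/T$.

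Then I would carry out the orbifold chart construction. For a proper locally free action of a compact Lie group $T$ of dimension $r$ on a manifold $Z$ of dimension $n$, the orbits are $r$-dimensional, and the slice theorem provides, near each $x \in Z$, a slice $S_x$ through $x$ which is a submanifold of dimension $n - r$ on which the (finite) stabilizer $\Gamma_x := \mathrm{Stab}_T(x)$ acts; the quotient $S_x/\Gamma_x$ is an orbifold chart for $Z/T$ near $[x]$, and the transition maps between overlapping charts are the ones induced by the group action, hence smooth. This yields that $N'_{\boldsymbol{\nu}}$ is an orbifold, with
$$
\dim N'_{\boldsymbol{\nu}} = \dim X_{\boldsymbol{\nu}} - r = \big(2d + 1 - (r-1)\big) - r = 2(d+1-r),
$$
using that $X_{\boldsymbol{\nu}}$ has real dimension $(2d+1) - (r-1)$ since $M_{\boldsymbol{\nu}}$ has real codimension $r-1$ in the $2d$-real-dimensional $M$ and $X_{\boldsymbol{\nu}}$ is an $S^1$-bundle over it. Connectedness of $N'_{\boldsymbol{\nu}}$ follows from that of $X_{\boldsymbol{\nu}}$, which in turn follows from connectedness of $M_{\boldsymbol{\nu}}$ (again from \S\ref{sctn:Mnu}) together with connectedness of the $S^1$-fibers; I should note that after the reduction to a generically free action allowed by BA, the generic stabilizer is trivial, so the orbifold is effective/reduced. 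Finally, that $p'_{\boldsymbol{\nu}}: X_{\boldsymbol{\nu}} \to N'_{\boldsymbol{\nu}}$ is a principal $V$-bundle with structure group $T$ is precisely Satake's observation \cite{sat2} that the quotient map for a proper locally free compact group action has this structure: in the local model the map $S_x \times T \to Z$, $(s,t) \mapsto \mu^{X_{\boldsymbol{\nu}}}_t(s)$, descends to a $\Gamma_x$-equivariant local trivialization $(S_x \times T)/\Gamma_x \to S_x/\Gamma_x$.

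The only genuinely substantive input is Lemma \ref{lem:local diffeo tildeTXnu}(1) (local freeness), which has already been established; the remaining content is the citation of the slice theorem and Satake's $V$-bundle construction, plus the bookkeeping of dimensions and the topological facts (compactness, connectedness) inherited from \S\ref{sctn:Mnu}. Accordingly, I expect no real obstacle here — the main point to be careful about is merely to state clearly which facts about $M_{\boldsymbol{\nu}}$ and the $S^1$-bundle $\pi$ are being transported to $X_{\boldsymbol{\nu}}$, and to record that the generic-freeness normalization in BA makes the quotient orbifold reduced so that the notion of principal $V$-bundle applies in Satake's sense.
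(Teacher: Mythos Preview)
Your proposal is correct and matches the paper's approach: the paper states this corollary without proof, treating it as an immediate consequence of the local freeness in Lemma \ref{lem:local diffeo tildeTXnu}(1), the generic freeness arranged in BA (Corollary \ref{cor:loc free action Xnu}), and the standard quotient/slice construction for proper locally free actions of a compact Lie group (cf.\ \cite{sat2}). Your dimension count and the topological bookkeeping (compactness of $X_{\boldsymbol{\nu}}$ inherited from that of $M_{\boldsymbol{\nu}}$ via the circle bundle $\pi$) are exactly the facts the paper has already recorded in \S\ref{sctn:Mnu} and is implicitly using here.
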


\begin{rem}
\label{rem:Bnu'}
Associated to $p_{\boldsymbol{\nu}}'$ and the character $\chi_{\boldsymbol{\nu}}$ there is
a orbifold complex line bundle $B_{\boldsymbol{\nu}}'$ on $N'_{\boldsymbol{\nu}}$.
\end{rem}

\subsection{The K\"{a}hler structure of $A^\vee_0$}

Let 
$\varrho:A_0^\vee\rightarrow \mathbb{R}$ denote the square norm function
in the Hermitian metric $h$, and set
\begin{equation}
\label{eqn:tildeomega}
\tilde{\omega}:=2\,\mathrm{d}\left(\Im \left(\partial\varrho^{1/2}\right)\right)=
2\,\imath\,\partial\overline{\partial}\left(\varrho^{1/2}\right)
.
\end{equation}
If $\pi':A^\vee_0\rightarrow M$ is the projection, then
\begin{equation}
\label{eqn:tildeomega1}
\tilde{\omega}=2\,\varrho\,^{1/2}\,{\pi'}^*(\omega)+\frac{\imath}{•2\,\varrho^{3/2}}
\,\partial\varrho\wedge \overline{\partial}\varrho. 
\end{equation}

The contact action $\mu^X:T\times X\rightarrow X$ extends to an holomorphic
unitary action $\mu^{A^\vee}:T\times A^\vee_0\rightarrow A^\vee_0$.

\begin{prop}
\label{prop:kahler form Avee}
$\tilde{\omega}$ is a $\mu^{A^\vee}$-invariant exact K\"{a}hler form on $A^\vee_0$.
\end{prop}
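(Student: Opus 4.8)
The plan is to verify the three claimed properties of $\tilde\omega$ — closedness, non-degeneracy (in fact positivity as a $(1,1)$-form compatible with $J'$), and $\mu^{A^\vee}$-invariance — essentially by reading them off the two formulas \eqref{eqn:tildeomega} and \eqref{eqn:tildeomega1}, after first establishing \eqref{eqn:tildeomega1} itself, which is really the computational core. First I would recall that on $A^\vee_0$ the radial coordinate is $r=\varrho^{1/2}$, that the tautological splitting $TA^\vee_0 = Hor \oplus (\text{vertical})$ induced by $\alpha$ is $J'$-invariant, and that in a local holomorphic frame $e$ of $A$ with $|e|^2_h = e^{-\phi}$ one has $\varrho(\zeta e^\vee) = |\zeta|^2 e^{\phi}$ for the dual coordinate $\zeta$; a direct $\partial\bar\partial$ computation of $\varrho^{1/2}$ in these coordinates, using $\Theta = -2\pi i\,\omega$ i.e. $\partial\bar\partial\phi = 2\pi\,\omega$ up to the usual factor, yields \eqref{eqn:tildeomega1}. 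Exactness is then immediate since $\tilde\omega = \mathrm{d}\eta$ with $\eta = 2\,\Im(\partial\varrho^{1/2})$, which also gives $\mathrm{d}\tilde\omega = 0$.

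Next I would check positivity. From \eqref{eqn:tildeomega1}, the first summand $2\varrho^{1/2}\,{\pi'}^*(\omega)$ is positive semidefinite and positive on the horizontal distribution $Hor = \ker\alpha$ (pulled back), since $\omega$ is Kähler on $M$ and $\varrho^{1/2}>0$ on $A^\vee_0$; the second summand $\frac{i}{2\varrho^{3/2}}\,\partial\varrho\wedge\bar\partial\varrho$ is a positive semidefinite $(1,1)$-form that is positive exactly on the complex line spanned by the radial/rotational directions (the fibre direction of $\pi'$) and vanishes on $Hor$. Since these two distributions are complementary and $J'$-invariant, the sum is positive definite; equivalently, $\tilde\omega = 2i\,\partial\bar\partial(\varrho^{1/2})$ with $\varrho^{1/2}$ strictly plurisubharmonic on $A^\vee_0$ — which is just the standard fact that $(A,h)$ positive makes the disc bundle strictly pseudoconvex and the complement of the zero section carries the strictly psh exhaustion $\varrho^{1/2}$ (or $\log\varrho$). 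So $\tilde\omega$ is a Kähler form for $(A^\vee_0, J')$.

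For invariance, the point is that the definition of $\tilde\omega$ uses only $\varrho$ and the complex structure $J'$. The action $\mu^{A^\vee}$ of the compact torus $T$ is holomorphic (it is the restriction to $T$ of the linearization $\tilde\mu^{A^\vee}$ of $\tilde T$, which is holomorphic by construction in \S\ref{sctn:preliminaries}) and unitary, meaning it preserves the Hermitian metric $h^\vee$ on $A^\vee$, hence preserves the norm function $\varrho$. Therefore $\mu^{A^\vee}_{\mathbf t}{}^*\tilde\omega = 2i\,\partial\bar\partial\big((\mu^{A^\vee}_{\mathbf t})^*\varrho^{1/2}\big) = 2i\,\partial\bar\partial(\varrho^{1/2}) = \tilde\omega$, using that pullback by a biholomorphism commutes with $\partial$ and $\bar\partial$.

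The only genuine obstacle is the derivation of \eqref{eqn:tildeomega1}: one must carefully set up the local holomorphic coordinates on $A^\vee_0$, track the relation between $\varrho$, the potential $\phi$ of $h$, and the curvature form, and keep the normalization constants (the factors of $2$ and $2\pi$ fixed in \eqref{eqn:key properties alpha}) consistent. Once that identity is in hand, closedness and invariance are formal, and positivity is a one-line consequence of the Kähler-form structure of the two summands on complementary $J'$-invariant distributions. I would therefore devote the bulk of the argument to the local computation and treat the remaining three assertions as immediate corollaries.
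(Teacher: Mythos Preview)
Your proposal is correct and follows essentially the same line as the paper's proof: invariance comes from $\mu^{A^\vee}$ preserving $\varrho$ and $J'$, closedness/exactness is immediate from the definition, and non-degeneracy (positivity) is read off \eqref{eqn:tildeomega1} via the $J'$-invariant splitting into horizontal and vertical summands. The only difference is emphasis: you propose to derive \eqref{eqn:tildeomega1} explicitly and check positivity, whereas the paper takes \eqref{eqn:tildeomega1} as given and phrases the last step as non-degeneracy, but the arguments are the same.
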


\begin{proof}
Since $\mu^{A^\vee}$ preserves both $\varrho$ and the complex structure,
by its definition $\tilde{\omega}$ is a $\mu^{A^\vee}$-invariant closed $(1,1)$-form.
Thus we need only prove that $\tilde{\omega}$ is non-degenerate.

The
unique connection compatible with both $h$ and the holomorphic structure
determines an invariant decomposition 
\begin{equation}
\label{eqn:generatig fileds Avee}
TA^\vee_0=Hor(A^\vee_0)\oplus Ver(A^\vee_0),
\end{equation}
where
\begin{equation}
\label{eqn:horizontal and vertical}
Hor(A^\vee_0):=\ker(\partial\varrho),\quad Ver(A^\vee_0):=\ker (\mathrm{d}\pi')\subset TA^\vee 
\end{equation}
denote the horizontal and vertical tangent bundles.
Then $Hor(A^\vee_0)$ and $Ver(A^\vee_0)$ are complex vector subbundles
of $TA^\vee_0$, and by (\ref{eqn:tildeomega1}) they are orthogonal for $\tilde{\omega}$.
Furthermore, the first summand on the right hand side of 
(\ref{eqn:tildeomega1}) is symplectic on $Hor(A^\vee_0)$ and vanishes on $Ver(A^\vee_0)$, and conversely for the second summand. Hence $\tilde{\omega}$ is non-degenerate.

\end{proof}


\begin{cor}
\label{cor:ml hamiltonian}
$\mu^{A^\vee}$ is Hamiltonian on $(A^\vee_0,\tilde{\omega})$, with moment map 
\begin{equation}
\label{eqn:tildePhi}
\tilde{\Phi}:=\varrho^{1/2}\cdot\Phi\circ \pi': A^\vee_0\rightarrow \mathfrak{t},
\end{equation}
where $\pi':A^\vee_0\rightarrow M$ is the projection.
\end{cor}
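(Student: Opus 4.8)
The plan is to verify the moment map identity by a direct computation using the structure of $\tilde\omega$ established in Proposition \ref{prop:kahler form Avee}. Write $\tilde\omega = \tilde\omega_{hor} + \tilde\omega_{ver}$ for the two summands in (\ref{eqn:tildeomega1}), and recall that $\mu^{A^\vee}$ extends $\mu^X$ and that over $A^\vee_0$ the generating vector field $\xi_{A^\vee}$ of $\xi \in \mathfrak t$ projects under $\mathrm{d}\pi'$ to $\xi_M$. Since $\varrho$ and the complex structure are $\mu^{A^\vee}$-invariant, the function $\tilde\Phi^\xi := \varrho^{1/2}\,\langle\Phi\circ\pi',\xi\rangle$ is well-defined and smooth on $A^\vee_0$. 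First I would check that $\iota_{\xi_{A^\vee}}\tilde\omega = -\mathrm{d}\tilde\Phi^\xi$ (with the sign convention implicit in (\ref{eqn:xiX}) and the definition of $\Phi$ as the moment map for $2\omega$; here the factor of $2$ in $\tilde\omega = 2\,\mathrm{d}(\Im\,\partial\varrho^{1/2})$ is designed precisely to match the normalization $2\omega$), and then that $\xi \mapsto \tilde\Phi^\xi$ is $T$-equivariant — but since $T$ is abelian and $\Phi$ is already $T$-equivariant (hence $T$-invariant) and $\varrho$ is $\mu^{A^\vee}$-invariant, equivariance of $\tilde\Phi$ is immediate.

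The core is the Cartan-formula identity. I would split it along the decomposition (\ref{eqn:generatig fileds Avee}). Write $\xi_{A^\vee} = \xi_M^\sharp + \xi_{A^\vee}^{ver}$ where $\xi_M^\sharp$ is the horizontal lift of $\xi_M$ and $\xi_{A^\vee}^{ver}$ is vertical. From (\ref{eqn:tildeomega1}), contracting $\tilde\omega$ with $\xi_M^\sharp$ picks out $2\varrho^{1/2}\,{\pi'}^*(\iota_{\xi_M}\omega)$ from the first (horizontal) term, since the second term vanishes on horizontal vectors; because $\Phi$ is the moment map of $\mu^M$ on $(M,2\omega)$, we have $\iota_{\xi_M}(2\omega) = -\mathrm{d}\langle\Phi,\xi\rangle$, so this contributes $-\varrho^{1/2}\,{\pi'}^*\mathrm{d}\langle\Phi,\xi\rangle$. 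Contracting with the vertical part $\xi_{A^\vee}^{ver}$ engages only the second term of (\ref{eqn:tildeomega1}); here I would use that the vertical component of the generating field of a linearized torus action on $A^\vee$ at a point $\ell$ is, up to the complex structure $J'$, the radial field scaled by $\langle\Phi\circ\pi',\xi\rangle$ — this is exactly the content of formulas like (\ref{eqn:nux0})–(\ref{eqn:Jnux0}) — so that $\iota_{\xi_{A^\vee}^{ver}}\tilde\omega = -\mathrm{d}\big(\langle\Phi\circ\pi',\xi\rangle\big)\cdot(\text{radial factor})$ combines with the $\varrho^{1/2}$-derivative to produce the remaining pieces of $-\mathrm{d}\tilde\Phi^\xi$, namely the $\langle\Phi\circ\pi',\xi\rangle\,\mathrm{d}(\varrho^{1/2})$ term. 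Adding the two contributions and using $\mathrm{d}\tilde\Phi^\xi = \varrho^{1/2}\,{\pi'}^*\mathrm{d}\langle\Phi,\xi\rangle + \langle\Phi\circ\pi',\xi\rangle\,\mathrm{d}(\varrho^{1/2})$ gives the claim.

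Alternatively — and this is probably the cleanest route to write up — I would exploit the identity $\tilde\omega = 2\,\mathrm{d}\big(\Im(\partial\varrho^{1/2})\big)$ directly: set $\beta := 2\,\Im(\partial\varrho^{1/2}) = -\imath\big(\partial\varrho^{1/2} - \overline\partial\varrho^{1/2}\big)$, a $\mu^{A^\vee}$-invariant $1$-form with $\tilde\omega = \mathrm{d}\beta$. Then by Cartan's formula $\iota_{\xi_{A^\vee}}\tilde\omega = \mathcal L_{\xi_{A^\vee}}\beta - \mathrm{d}(\iota_{\xi_{A^\vee}}\beta) = -\mathrm{d}(\iota_{\xi_{A^\vee}}\beta)$, using invariance of $\beta$. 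So it suffices to identify $\iota_{\xi_{A^\vee}}\beta$ with $\tilde\Phi^\xi = \varrho^{1/2}\langle\Phi\circ\pi',\xi\rangle$ (up to sign). Evaluating $\beta$ on $\xi_{A^\vee}$: the horizontal lift $\xi_M^\sharp$ lies in $\ker(\partial\varrho)\cap\ker(\overline\partial\varrho)$ (since $\varrho^{1/2}$ is constant along horizontal curves and $\xi_M^\sharp$ is real), so only the vertical part contributes, and on the vertical part $\beta$ restricts to essentially the connection $1$-form scaled by $\varrho^{1/2}$ (concretely, $\beta$ restricts to $2\varrho^{1/2}\cdot\alpha$-type data along the circle bundle $X = \{\varrho = 1\}$, recovering $\alpha(-\partial_\theta) = -1$); combined with $\xi_{A^\vee}$ having vertical part governed by $-\langle\Phi\circ\pi',\xi\rangle\,\partial_\theta$ as in (\ref{eqn:xiX}), one gets $\iota_{\xi_{A^\vee}}\beta = -\varrho^{1/2}\langle\Phi\circ\pi',\xi\rangle$ on all of $A^\vee_0$ by homogeneity in the fiber coordinate.

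The main obstacle is purely bookkeeping: pinning down all sign and normalization conventions consistently — the clockwise fiber rotation giving $\alpha(\partial_\theta) = 1$, the moment map convention $\iota_{\xi_M}(2\omega) = -\mathrm{d}\langle\Phi,\xi\rangle$ versus $+\mathrm{d}$, and the factor of $2$ relating $\tilde\omega$ to $2\omega$ — so that the contracted form matches $\mathrm{d}\tilde\Phi$ exactly rather than up to sign. I would resolve this once and for all in the special case $T = S^1$, $\mu^M$ trivial, $\Phi \equiv \imath$, where $\xi = \imath$ gives $\xi_{A^\vee} = -\partial_\theta$ (the generator of fiber rotation) and $\tilde\Phi = \varrho^{1/2}\cdot\imath$ should be, up to the identification $\mathfrak t\cong\imath\mathbb R$, the standard radial Hamiltonian $\tfrac12\varrho$ or $\varrho^{1/2}$ for rotation — checking this toy case fixes the global sign, after which the general computation above goes through verbatim. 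Equivariance, as noted, is free because $T$ is abelian and everything in sight is already $T$-invariant.
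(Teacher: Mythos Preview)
Your proposal is correct, and your ``alternatively cleaner'' route is precisely the paper's own argument: the paper invokes the general fact that for an exact symplectic form $\eta=-\mathrm{d}\lambda$ with the action preserving $\lambda$, the moment map is $\xi\mapsto\iota(\xi_R)\lambda$, then takes $\lambda=-2\,\Im(\partial\varrho^{1/2})$ (your $-\beta$), uses $\xi_{A^\vee}=\xi_M^\sharp-\langle\Phi\circ\pi',\xi\rangle\,\partial_\theta$, observes $\iota(\xi_M^\sharp)\,\partial\varrho=0$ since $Hor(A^\vee_0)=\ker(\partial\varrho)$, and computes $\iota(\partial_\theta)\,\partial\varrho=\imath\varrho$ to obtain $\tilde\Phi^\xi=\varrho^{1/2}\,(\Phi^\xi\circ\pi')$. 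Your first approach (contracting directly with the explicit two-term formula (\ref{eqn:tildeomega1})) would also work but is more laborious; the sign bookkeeping you flag is indeed the only delicate point, and your suggestion to calibrate on the case $T=S^1$, $\Phi\equiv\imath$ is a sensible way to settle it.
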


\begin{proof}
[Proof of Corollary \ref{cor:ml hamiltonian}]
Given an exact symplectic manifold $(R,\eta)$ with $\eta=-\mathrm{d}\lambda$, and 
a smooth Lie group action $\varsigma:G\times N\rightarrow N$ preserving $\lambda$,
it is well-known that $\varsigma$ is Hamiltonian, with moment map
$\Upsilon:R\rightarrow \mathfrak{g}^\vee$ determined by
the relation
$$
\upsilon^\xi:=\langle\Upsilon,\xi\rangle =\iota(\xi_R)\,\lambda\in \mathcal{C}^\infty(M).
$$
In our setting, $R=A^\vee_0$, $\varsigma=\mu^{A^\vee}$,
$\eta=\tilde{\omega}$ and, in view of (\ref{eqn:tildeomega}),
$$
\lambda=-2\,\left(\Im \left(\partial\varrho^{1/2}\right)\right)=
\imath\,\left(\partial\varrho^{1/2}-\overline{\partial}\varrho^{1/2}\right); 
$$
furthermore, for any $\xi\in \mathfrak{t}$ we have
$$
\xi_{A^\vee}=\xi_M^\sharp-\langle\Phi\circ\pi',\xi\rangle\,
\partial_\vartheta.
$$ 
Since $\xi_{M}^\sharp$ is a section of $Hor(A^\vee_0)$, it follows from
(\ref{eqn:horizontal and vertical}) that
$\iota(\xi_{M}^\sharp)\,\partial\varrho=0$. 
Furthermore, one can verify that 
$\imath(\partial_\theta)\,\partial\rho=\imath\,\rho$.
Putting this together, we conclude that $\mu^{A^\vee}$
is Hamiltonian, and furthermore the component 
$\tilde{•\phi}^\xi=\langle\tilde{\Phi},\xi\rangle$ of the moment map is 
\begin{eqnarray*}
\tilde{\phi}^\xi&=&\imath\cdot\varrho^{-1/2}\,
\iota\left(\xi_{M}^\sharp-(\varphi^\xi\circ \pi')\,\partial _\theta\right)\,\left(\partial\varrho-
\overline{\partial}\varrho\right)\\
&=&(\varphi^\xi\circ \pi')\,\varrho^{1/2}.
\end{eqnarray*}

\end{proof}

Let $\{\cdot,\cdot\}_{A^\vee_0}$ denote by Poisson brackets on 
$(A^\vee_0,\tilde{\omega})$.
Since $\mu^{A^\vee}$
is unitary, in view of (\ref{eqn:tildePhi}) 
we conclude the 
the following.

\begin{cor}
\label{cor:isotropico orbits}
$\{\tilde{\phi}^\xi,\tilde{\phi}^\eta\}$ vanishes,
$\forall\,\xi,\eta\in \mathfrak{t}$. In particular,
the orbits of $\mu^{A^\vee}$ in $A^\vee_0$ are isotropic for $\tilde{\omega}$.
\end{cor}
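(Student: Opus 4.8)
The plan is to deduce the vanishing of $\{\tilde{\phi}^\xi,\tilde{\phi}^\eta\}$ directly from the identity $\tilde{\phi}^\xi = (\varphi^\xi\circ\pi')\cdot\varrho^{1/2}$ established in Corollary \ref{cor:ml hamiltonian}, together with the fact that the base function $\varphi^\xi$ is $\mu^M$-invariant because $T$ is \emph{abelian}. Concretely, the Poisson bracket of the components of a moment map for a Hamiltonian action reproduces the Lie bracket in $\mathfrak{t}$ (up to sign), so $\{\tilde{\phi}^\xi,\tilde{\phi}^\eta\}_{A^\vee_0} = \tilde{\phi}^{[\xi,\eta]} = 0$ since $\mathfrak{t}$ is abelian; this is the quickest route and uses nothing beyond generalities on moment maps for torus actions. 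Alternatively, one can argue infinitesimally: $\{\tilde{\phi}^\xi,\tilde{\phi}^\eta\} = \xi_{A^\vee}(\tilde{\phi}^\eta)$, and since $\mu^{A^\vee}$ covers $\mu^M$ and preserves $\varrho$, the vector field $\xi_{A^\vee}$ differentiates $\varrho^{1/2}$ to zero and differentiates $\varphi^\eta\circ\pi'$ to $(\xi_M(\varphi^\eta))\circ\pi' = \{\varphi^\eta,\varphi^\xi\}_M\circ\pi'$, which again vanishes by commutativity of $T$. Either way the first sentence of the corollary is immediate.

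For the second sentence, I would note that the tangent space to a $\mu^{A^\vee}$-orbit through $\ell\in A^\vee_0$ is spanned by the vectors $\xi_{A^\vee}(\ell)$, $\xi\in\mathfrak{t}$. To see the orbit is isotropic it suffices to show $\tilde{\omega}_\ell\big(\xi_{A^\vee}(\ell),\eta_{A^\vee}(\ell)\big) = 0$ for all $\xi,\eta\in\mathfrak{t}$. But $\iota(\eta_{A^\vee})\tilde{\omega} = -\mathrm{d}\tilde{\phi}^\eta$ by the defining property of the moment map, so $\tilde{\omega}(\xi_{A^\vee},\eta_{A^\vee}) = -\mathrm{d}\tilde{\phi}^\eta(\xi_{A^\vee}) = -\xi_{A^\vee}(\tilde{\phi}^\eta) = -\{\tilde{\phi}^\xi,\tilde{\phi}^\eta\} = 0$ by the first part. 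Hence the restriction of $\tilde{\omega}$ to each orbit vanishes identically, i.e.\ the orbits are isotropic.

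The argument is essentially formal once Corollary \ref{cor:ml hamiltonian} is in hand, so there is no real obstacle; the only point that deserves care is the sign-and-normalization bookkeeping in the Poisson-bracket/moment-map conventions (whether $\{\tilde{\phi}^\xi,\tilde{\phi}^\eta\}$ equals $+\tilde{\phi}^{[\xi,\eta]}$, $-\tilde{\phi}^{[\xi,\eta]}$, or differs by the factor implicit in the paper's choice of $2\,\omega$ versus $\omega$), but since the target Lie algebra $\mathfrak{t}$ is abelian every convention yields $0$, so the conclusion is robust. I would therefore present the proof in the two short steps above, invoking only the unitarity of $\mu^{A^\vee}$, the commutativity of $T$, and the explicit form of $\tilde{\Phi}$ from Corollary \ref{cor:ml hamiltonian}.
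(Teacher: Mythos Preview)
Your proposal is correct and matches the paper's approach: the paper's entire argument is the single sentence preceding the corollary, ``Since $\mu^{A^\vee}$ is unitary, in view of (\ref{eqn:tildePhi}) we conclude the following,'' which is precisely your second (infinitesimal) argument spelled out. Your first route via $\{\tilde\phi^\xi,\tilde\phi^\eta\}=\tilde\phi^{[\xi,\eta]}=0$ is an equally valid shortcut, and your derivation of isotropy from the vanishing Poisson brackets is exactly the standard unwinding the paper leaves implicit.
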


Therefore:

 \begin{cor}
\label{cor:totally real subspace}
For every $\ell\in A^\vee_0$, 
$\mathfrak{t}_{A^\vee}(\ell)\subseteq T_\ell A^\vee$ is totally
real, that is,
$$
\mathfrak{t}_{A^\vee}(\ell)\cap J'_\ell\big(\mathfrak{t}_{A^\vee}(\ell)\big)=(0).
$$
\end{cor}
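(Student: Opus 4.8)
The plan is to deduce Corollary \ref{cor:totally real subspace} from Corollary \ref{cor:isotropico orbits} by combining the isotropy of the torus orbits with the fact that $\tilde\omega$ is a \emph{K\"ahler} form, hence tames the complex structure $J'$. Concretely, recall that for a K\"ahler manifold the metric is $\tilde g(\cdot,\cdot)=\tilde\omega(\cdot,J'\cdot)$, and this is positive definite precisely because $\tilde\omega$ is a positive $(1,1)$-form (Proposition \ref{prop:kahler form Avee} gives non-degeneracy, and the expression \eqref{eqn:tildeomega1} exhibits positivity on the horizontal and vertical summands separately). So the first step is to fix $\ell\in A^\vee_0$ and a nonzero vector $v\in \mathfrak{t}_{A^\vee}(\ell)$, say $v=\xi_{A^\vee}(\ell)$ for some $\xi\in\mathfrak t$, and suppose for contradiction that $v\in J'_\ell\big(\mathfrak{t}_{A^\vee}(\ell)\big)$, i.e. $v=J'_\ell\, w$ with $w=\eta_{A^\vee}(\ell)$ for some $\eta\in\mathfrak t$.

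Next I would evaluate $\tilde\omega_\ell(v,w)$ in two ways. On one hand, since both $v$ and $w$ lie in $\mathfrak{t}_{A^\vee}(\ell)$ and the torus orbit through $\ell$ is isotropic for $\tilde\omega$ by Corollary \ref{cor:isotropico orbits}, we get $\tilde\omega_\ell(v,w)=0$. On the other hand, substituting $v=J'_\ell w$ gives $\tilde\omega_\ell(v,w)=\tilde\omega_\ell(J'_\ell w, w)=-\tilde\omega_\ell(w,J'_\ell w)=-\tilde g_\ell(w,w)$. Hence $\tilde g_\ell(w,w)=0$, which by positive-definiteness of the K\"ahler metric forces $w=0$, and therefore $v=J'_\ell w=0$, contradicting $v\neq 0$. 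This establishes $\mathfrak{t}_{A^\vee}(\ell)\cap J'_\ell\big(\mathfrak{t}_{A^\vee}(\ell)\big)=(0)$.

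I do not expect any serious obstacle here; the statement is essentially the standard fact that an isotropic subspace of a K\"ahler vector space is totally real. The only point requiring a word of care is that the argument uses not merely non-degeneracy of $\tilde\omega$ but its positivity (so that $\tilde g$ is genuinely a metric and not just a nondegenerate symmetric form); this is already guaranteed by the proof of Proposition \ref{prop:kahler form Avee}, where \eqref{eqn:tildeomega1} shows $\tilde\omega$ restricts to a positive form on each of the two $J'$-invariant summands $Hor(A^\vee_0)$ and $Ver(A^\vee_0)$. One should also note that the isotropy in Corollary \ref{cor:isotropico orbits} is exactly the vanishing of $\tilde\omega$ on pairs of fundamental vector fields of the $T$-action, which is what is needed, rather than the a priori weaker statement about $\tilde T$ or about Poisson brackets of arbitrary functions. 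With these observations in place the deduction is immediate.
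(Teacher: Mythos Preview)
Your argument is correct and is precisely the standard linear-algebra deduction the paper has in mind: in the paper the corollary is stated immediately after Corollary~\ref{cor:isotropico orbits} with nothing more than a ``Therefore:'', so you have simply made explicit the well-known fact that an isotropic subspace of a K\"ahler vector space is totally real. Your care about positivity of $\tilde g$ (not just nondegeneracy of $\tilde\omega$) is the right point to note, and the sign computation $\tilde\omega_\ell(J'_\ell w,w)=-\tilde g_\ell(w,w)$ is fine under the convention $\tilde g(\cdot,\cdot)=\tilde\omega(\cdot,J'\cdot)$.
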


By Proposition 1.6 and 
Theorem 1.12 in \cite{sj}, Corollary \ref{cor:ml hamiltonian} has the
following consequences. 

\begin{cor}
\label{cor:holomorphic slice and complexified stabilizer}
For every $\ell\in A^\vee_0$, the following holds:
\begin{enumerate}
\item the stabilizer $\tilde{T}_{\ell}\leqslant \tilde{T}$
of $\ell$ for $\tilde{\mu}^{A^\vee}$ is the complexification of of the stabilizer $T_\ell\leqslant T$
of $\ell$ for $\mu^{A^\vee}$;
\item there exists an holomorphic slice for $\tilde{\mu}^{A^\vee_0}$ at
$\ell$.

\end{enumerate}

\end{cor}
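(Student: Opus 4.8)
The plan is to derive both assertions from the theory of holomorphic slices for Hamiltonian actions on K\"ahler manifolds developed by Sjamaar. By Proposition \ref{prop:kahler form Avee} and Corollary \ref{cor:ml hamiltonian}, $(A^\vee_0,\tilde\omega,J')$ is a K\"ahler manifold carrying a holomorphic and Hamiltonian action of $T$ with moment map $\tilde\Phi$, and this action is the restriction of the holomorphic action $\tilde\mu^{A^\vee}$ of the complexification $\tilde T\cong(\mathbb C^*)^r$; moreover, by Corollaries \ref{cor:isotropico orbits} and \ref{cor:totally real subspace} the $T$-orbits are isotropic, hence totally real, so that $\mathfrak t_{A^\vee}(\ell)\cap J'_\ell\big(\mathfrak t_{A^\vee}(\ell)\big)=(0)$ for every $\ell$. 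This is exactly the setting of Proposition 1.6 and Theorem 1.12 of \cite{sj}, and the proof consists in checking which of their hypotheses have already been verified and then quoting them.

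For (1), the inclusion $(T_\ell)_{\mathbb C}\leqslant\tilde T_\ell$ is automatic, since $T_\ell$ fixes $\ell$ and the orbit map $\tilde t\mapsto\tilde\mu^{A^\vee}_{\tilde t}(\ell)$ is holomorphic. For the opposite inclusion I would work infinitesimally: writing $\tilde{\mathfrak t}=\mathfrak t\oplus\imath\,\mathfrak t$, holomorphy of $\tilde\mu^{A^\vee}$ gives $(\xi+\imath\,\eta)_{A^\vee}(\ell)=\xi_{A^\vee}(\ell)+J'_\ell\big(\eta_{A^\vee}(\ell)\big)$ for $\xi,\eta\in\mathfrak t$, and total reality of $\mathfrak t_{A^\vee}(\ell)$ forces this to vanish only when $\xi_{A^\vee}(\ell)=\eta_{A^\vee}(\ell)=0$, i.e.\ $\xi,\eta\in\mathfrak t_\ell=\mathrm{Lie}(T_\ell)$; hence $\mathrm{Lie}(\tilde T_\ell)=\mathfrak t_\ell\oplus\imath\,\mathfrak t_\ell$. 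At the group level $\tilde T_\ell$ is a closed, hence reductive, subgroup of $(\mathbb C^*)^r$; its maximal compact subgroup lies in $T$ and fixes $\ell$, so it is contained in $T_\ell$, while conversely $T_\ell\leqslant\tilde T_\ell$ is compact. Thus $T_\ell$ is the maximal compact subgroup of the reductive group $\tilde T_\ell$, which together with the Lie-algebra computation gives $\tilde T_\ell=(T_\ell)_{\mathbb C}$. (This is the content of Proposition 1.6 of \cite{sj}.)

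For (2), the total reality of $\mathfrak t_{A^\vee}(\ell)$ shows that $\tilde{\mathfrak t}_{A^\vee}(\ell)=\mathfrak t_{A^\vee}(\ell)\oplus J'_\ell\big(\mathfrak t_{A^\vee}(\ell)\big)$ is a $J'_\ell$-invariant subspace of $T_\ell A^\vee_0$, so the $\tilde T$-orbit through $\ell$ is a complex submanifold in a neighbourhood of $\ell$; this is precisely the infinitesimal hypothesis under which the holomorphic slice theorem, Theorem 1.12 of \cite{sj}, produces a $\tilde T_\ell$-invariant, locally closed complex submanifold $S\ni\ell$ with $T_\ell A^\vee_0=T_\ell S\oplus\tilde{\mathfrak t}_{A^\vee}(\ell)$ such that $\tilde T\times_{\tilde T_\ell}S\to A^\vee_0$ is a biholomorphism onto a $\tilde T$-invariant open neighbourhood of $\tilde T\cdot\ell$. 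Since $T$ is abelian one may, if desired, replace $\tilde\Phi$ by $\tilde\Phi-\tilde\Phi(\ell)$ — still a moment map for $\mu^{A^\vee}$ — in order to put $\ell$ in the zero fibre, which is the case most directly treated in \cite{sj}.

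I do not expect a genuine obstacle here: once $\tilde\omega$ has been shown to be K\"ahler (Proposition \ref{prop:kahler form Avee}) and $\mu^{A^\vee}$ to be Hamiltonian with totally real orbits (Corollaries \ref{cor:ml hamiltonian}--\ref{cor:totally real subspace}), both statements follow formally from \cite{sj}. The only point that deserves to be made explicit is that all the structural hypotheses of Sjamaar's results — K\"ahler form, holomorphic complexified action, Hamiltonian moment map, totally real orbits — are in place at every $\ell\in A^\vee_0$, and that the general point is reduced to the central fibre by the harmless shift above, possible because $T$ is a torus.
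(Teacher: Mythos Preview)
Your proposal is correct and follows exactly the same route as the paper: once $(A^\vee_0,\tilde\omega)$ is K\"ahler and $\mu^{A^\vee}$ Hamiltonian with isotropic (hence totally real) orbits, both claims are consequences of Proposition~1.6 and Theorem~1.12 of \cite{sj}. The paper simply cites these two results without further comment, whereas you supply a helpful sketch of the infinitesimal argument behind Proposition~1.6 and make explicit the shift $\tilde\Phi\mapsto\tilde\Phi-\tilde\Phi(\ell)$ (possible since $T$ is abelian) that places an arbitrary $\ell$ in the zero fibre for Theorem~1.12.
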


Let $A^\vee_{lf}\subseteq A^\vee_0$ be the open subset where $\tilde{\mu}^{A^\vee_0}$
is locally free. It follows from Proposition 1.6 of \cite{sj} and Corollary \ref{cor:isotropico orbits} above that the stabilizer in
$\tilde{T}$ of any $\ell\in A^\vee_{lf}$ is finite and contained in 
$T$.

\begin{defn}
\label{defn:pointwise proper}
Following \cite{sj}, we shall call $\tilde{\mu}^{A^\vee_0}$ \textit{proper at} $\ell\in A^\vee_0$
if for all sequences $(\ell_j)\subset A^\vee_0$ and $(\tilde{t}_j)\subset \tilde{T}$
such that $\ell_j\rightarrow \ell$ and 
$\tilde{\mu}^{A^\vee_0}_{\tilde{t}_j}(\ell_j)$ converges to some point
in $A^\vee_0$, $(\tilde{t}_j)$ is convergent in $\tilde{T}$.

\end{defn}

\begin{rem}
Let $U\subseteq A^\vee_0$ be a $\tilde{T}$-invariant open set. Then:
\begin{enumerate}
\item if $\tilde{\mu}^{A^\vee_0}$
is proper at every $\ell\in U$, then so is \textit{a fortiori} $\tilde{\mu}^{U}$;
\item if $\tilde{\mu}^{U}$ is proper at every $\ell\in U$, $\tilde{\mu}^{U}$
is (globally) proper.
\end{enumerate}

\end{rem}

\begin{cor}
\label{cor:proper action}
Given BA, the following holds:
\begin{enumerate}
\item $\tilde{\mu}^{A^\vee_0}$ is proper on $A^\vee_{lf}$ (that is, 
$\tilde{\mu}^{A^\vee_{lf}}$ is proper);
\item  $\tilde{X}_{\boldsymbol{\nu}}\subseteq A^\vee_{lf}$;
\item $\tilde{\mu}^{A^\vee_0}$ is proper on $\tilde{X}_{\boldsymbol{\nu}}$
(that is, $\tilde{\mu}^{\tilde{X}_{\boldsymbol{\nu}}}$ is proper).
\end{enumerate}

\end{cor}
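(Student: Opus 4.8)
The plan is to deduce all three statements from the general theory of Sjamaar--type holomorphic slices (\cite{sj}), combined with the local structure results already established for $X_{\boldsymbol{\nu}}$. For statement (1), the key point is that properness at a point is, by the results cited before Definition \ref{defn:pointwise proper}, a consequence of the existence of a holomorphic slice together with finiteness of the stabilizer. More precisely, let $\ell\in A^\vee_{lf}$. By Corollary \ref{cor:holomorphic slice and complexified stabilizer} there is a holomorphic slice for $\tilde{\mu}^{A^\vee_0}$ at $\ell$, and since $\ell$ lies in the locally free locus, the discussion immediately following that corollary shows that $\tilde{T}_\ell$ is finite and contained in $T$. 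A holomorphic slice at $\ell$ provides a $\tilde{T}_\ell$-invariant submanifold $S\ni\ell$ such that the natural map $\tilde{T}\times_{\tilde{T}_\ell}S\to A^\vee_0$ is a $\tilde{T}$-equivariant open embedding onto a neighborhood of the orbit $\tilde{T}\cdot\ell$; this is precisely the local model in which properness at $\ell$ is manifest (a convergent sequence $\ell_j\to\ell$ with $\tilde{t}_j\cdot\ell_j$ convergent forces, after passing to the slice coordinates, the $\tilde{t}_j$ to stay in a compact neighborhood of $\tilde{T}_\ell$, hence to subconverge; the finiteness of $\tilde{T}_\ell$ then upgrades this to genuine convergence of the $\tilde{t}_j$ themselves). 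Thus $\tilde{\mu}^{A^\vee_0}$ is proper at every $\ell\in A^\vee_{lf}$, and by part (1) of the Remark preceding this corollary, $\tilde{\mu}^{A^\vee_{lf}}$ is proper; by part (2) of that Remark it is globally proper.

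For statement (2), I would invoke the local structure of $X_{\boldsymbol{\nu}}$ established in Lemma \ref{lem:local diffeo tildeTXnu}. That lemma shows that $\tilde{\mu}^{A^\vee}$ is locally free along $X_{\boldsymbol{\nu}}$; since $A^\vee_{lf}$ is the (open) locally free locus and $\tilde{X}_{\boldsymbol{\nu}}=\tilde{T}\cdot X_{\boldsymbol{\nu}}$ is the $\tilde{T}$-saturation, and since the locally free locus is $\tilde{T}$-invariant, we get $\tilde{X}_{\boldsymbol{\nu}}\subseteq A^\vee_{lf}$ at once: every point of $\tilde{X}_{\boldsymbol{\nu}}$ is of the form $\tilde{t}\cdot x$ with $x\in X_{\boldsymbol{\nu}}$, the stabilizer of $\tilde{t}\cdot x$ is conjugate (hence isomorphic) to that of $x$, and the latter is finite by Lemma \ref{lem:local diffeo tildeTXnu}(4).

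Statement (3) is then immediate: since $\tilde{X}_{\boldsymbol{\nu}}$ is a $\tilde{T}$-invariant open subset of $A^\vee_{lf}$ by (2) and Corollary \ref{cor:open saturation}, the restriction of a proper action to an invariant open subset is proper (again part (1) of the Remark before this corollary, or directly: properness at a point is a local condition, so $\tilde{\mu}^{A^\vee_0}$ being proper at every $\ell\in\tilde{X}_{\boldsymbol{\nu}}$ passes to $\tilde{\mu}^{\tilde{X}_{\boldsymbol{\nu}}}$, and part (2) of the Remark makes it global). I expect the only subtle point to be statement (1), namely extracting genuine (not merely subsequential) properness from the holomorphic slice model; this is where the finiteness of the stabilizer on the locally free locus, together with Proposition~1.6 of \cite{sj}, does the essential work, and it should be spelled out with some care. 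Statements (2) and (3) are then essentially bookkeeping with saturations and openness.
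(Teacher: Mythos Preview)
Your proposal is correct and follows essentially the same approach as the paper. The paper's proof is slightly more economical in two places: for (1) it directly cites Theorem~1.22 of \cite{sj} rather than sketching the slice argument you outline, and for (2) it argues first that the $T$-action (not the $\tilde{T}$-action) is locally free on the saturation $\tilde{X}_{\boldsymbol{\nu}}$---using that $\tilde{T}$ is Abelian so stabilizers are preserved under translation---and then invokes Corollary~\ref{cor:holomorphic slice and complexified stabilizer} to conclude the $\tilde{T}$-stabilizer is finite; your route via Lemma~\ref{lem:local diffeo tildeTXnu}(4) is equally valid and slightly more direct, though your remark about conjugate stabilizers should simply read ``equal'' since $\tilde{T}$ is Abelian.
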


\begin{proof}
[Proof of Corollary \ref{cor:proper action}]
We have remarked that the stabilizer in
$\tilde{T}$ of any $\ell\in A^\vee_{lf}$ coincides with the
stabilizer of $\ell$ in $T$, and therefore it is finite.
In view of Theorem
1.22 of \cite{sj}, $\tilde{\mu}^{A^\vee_0}$ is proper 
at any $\ell\in A^\vee_{lf}$, and therefore it is proper on
$A^\vee_{lf}$. This proves the first statement.

We know that $\mu^{X_{\boldsymbol{\nu}}}$ is locally free; in other words,
$\mu^{A^\vee_0}$ is locally
free along $X_{\boldsymbol{\nu}}$. 
Therefore, $\mu^{A^\vee_0}$ is locally
free on $\tilde{X}_{\boldsymbol{\nu}}=\tilde{T}\cdot X_{\boldsymbol{\nu}}$, 
because $\tilde{T}$ is Abelian. 
Therefore, 
by Corollary \ref{cor:holomorphic slice and complexified stabilizer}, 
the stabilizer of any $\ell\in \tilde{X}_{\boldsymbol{\nu}}$ in $\tilde{T}$
for $\tilde{\mu}^{A^\vee_0}$ is finite, since it 
coincides with the stabilizermof $\ell$ in $T$ for $\mu^{A^\vee_0}$. 
This proves the second statement.

The third statement is a straighforward consequence of the first two.
\end{proof}

The structure $S^1$-action $\rho^X$ extends to the holomorphic action 
$$
\tilde{\rho}^{A^\vee}:(z,\,\ell)\in\mathbb{C}^*\times A^\vee_0\mapsto
z^{-1}\,\ell\in A^\vee_0,
$$
whose orbits are the fibers of $A^\vee_0$ over $M$.

\begin{lem}
\label{lem:C*invariance}
$\tilde{X}_{\boldsymbol{\nu}}$ is $\tilde{\rho}^{A^\vee}$-invariant.
\end{lem}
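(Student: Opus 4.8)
The statement to prove is that $\tilde{X}_{\boldsymbol{\nu}} = \tilde{T}\cdot X_{\boldsymbol{\nu}}$ is invariant under the fiberwise scaling action $\tilde{\rho}^{A^\vee}$. Since $\tilde{X}_{\boldsymbol{\nu}}$ is already $\tilde{T}$-invariant by construction, and $\tilde{\rho}^{A^\vee}$ commutes with $\tilde{\mu}^{A^\vee}$ (both are torus actions on the line bundle, the first coming from the structure group, the second from the lifted torus action, and these are independent), it suffices to show that $\tilde{\rho}^{A^\vee}_z(X_{\boldsymbol{\nu}}) \subseteq \tilde{X}_{\boldsymbol{\nu}}$ for every $z\in\mathbb{C}^*$. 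Writing $z = e^{t}\,e^{\imath\vartheta}$ with $t\in\mathbb{R}$, $e^{\imath\vartheta}\in S^1$, and noting that the $S^1$-part of $\tilde{\rho}^{A^\vee}$ is just $\rho^X$, which preserves $X$ and hence preserves $X_{\boldsymbol{\nu}} = \pi^{-1}(M_{\boldsymbol{\nu}})$, the real content is to handle the radial scaling $\ell\mapsto e^{-t}\ell$.

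\emph{Key step.} The idea is to realize the radial direction as lying inside the $\tilde{T}$-orbit directions along $X_{\boldsymbol{\nu}}$. This is exactly what equation~(\ref{eqn:Jnux0}) gives: for $x\in X_{\boldsymbol{\nu}}$ with $m=\pi(x)$, the vector $J'_x\big((\imath\,\boldsymbol{\nu})_X(x)\big) = J_m\big((\imath\,\boldsymbol{\nu})_M(m)\big)^\sharp + \lambda_{\boldsymbol{\nu}}(m)\,\|\boldsymbol{\nu}\|^2\,\partial_r|_x$ has a nonzero radial component $\lambda_{\boldsymbol{\nu}}(m)\|\boldsymbol{\nu}\|^2 > 0$. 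In $\tilde{\mathfrak{t}}\cong\mathbb{R}^r\oplus\imath\mathbb{R}^r$, the element $J'$ applied to $(\imath\boldsymbol{\nu})_{A^\vee}$ is the infinitesimal generator of the real one-parameter subgroup $s\mapsto e^{s\boldsymbol{\nu}}\in\mathbb{R}_+^r\leqslant\tilde{T}$. So moving $x\in X_{\boldsymbol{\nu}}$ by this subgroup, i.e. applying $\tilde{\mu}^{A^\vee}_{e^{s\boldsymbol{\nu}}}$, produces a curve whose velocity has nonzero radial component together with a horizontal component tangent to $M_{\boldsymbol{\nu}}$ (since $(\imath\boldsymbol{\nu})_M(m)^\sharp$ is horizontal and $M_{\boldsymbol{\nu}}$ is $T$-invariant). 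Thus, for small $s$, $\tilde{\mu}^{A^\vee}_{e^{s\boldsymbol{\nu}}}(x)$ lies over $M_{\boldsymbol{\nu}}$ but at a different radius — precisely the kind of point that, after a fiber rotation and a compensating $\tilde{T}$-translation, can be shown to be $\tilde{\rho}^{A^\vee}$-equivalent to a scaled copy of $X_{\boldsymbol{\nu}}$.

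\emph{Putting it together.} More directly: I would argue that the $\mathbb{R}_+$-orbit of any $x\in X_{\boldsymbol{\nu}}$ under $s\mapsto\tilde{\mu}^{A^\vee}_{e^{s\boldsymbol{\nu}}}(x)$ projects (via $\pi'$) onto an open piece of the one-parameter family $\Psi$ from~(\ref{eqn:Psiepsilon}), and by a flow/ODE comparison — matching the radial component from~(\ref{eqn:Jnux0}) against the generator $-\partial_r$ of $\tilde{\rho}^{A^\vee}$ restricted to a fiber — each point $e^{-t}\,x$ with $t$ in an interval around $0$ equals $\tilde{\mu}^{A^\vee}_{\tilde{t}}(x')$ for suitable $\tilde{t}\in\tilde{T}$ and $x'\in X_{\boldsymbol{\nu}}$. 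Then a connectedness argument on the group $\mathbb{C}^*$: the set $\{z\in\mathbb{C}^* : \tilde{\rho}^{A^\vee}_z(X_{\boldsymbol{\nu}})\subseteq\tilde{X}_{\boldsymbol{\nu}}\}$ is a subgroup of $\mathbb{C}^*$ (using that $\tilde{X}_{\boldsymbol{\nu}}$ is $\tilde{T}$-saturated, so being in $\tilde{X}_{\boldsymbol{\nu}}$ after one scaling lets us iterate), it contains $S^1$, and it contains a neighborhood of $1$ by the local statement; since $\mathbb{C}^*\cong S^1\times\mathbb{R}_+$ and a subgroup of $\mathbb{R}_+$ containing an interval around $1$ is all of $\mathbb{R}_+$, we get $z$ ranging over all of $\mathbb{C}^*$, hence $\tilde{\rho}^{A^\vee}_z(\tilde{X}_{\boldsymbol{\nu}}) = \tilde{X}_{\boldsymbol{\nu}}$.

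\emph{Main obstacle.} The delicate point is the local matching: showing that the radial motion generated by $\tilde{\mu}^{A^\vee}_{e^{s\boldsymbol{\nu}}}$ genuinely sweeps out \emph{all} nearby radii over $M_{\boldsymbol{\nu}}$, not just a curve in some skew direction, so that after correcting by the $T^{r-1}_{\boldsymbol{\nu}^\perp}$-part and a fiber rotation one lands back in $X_{\boldsymbol{\nu}}$ at the desired radius. This is really a transversality bookkeeping argument combining~(\ref{eqn:Jnux0}) with Lemma~\ref{lem:local diffeo tildeTXnu}(3): the map $\Psi'$ being a local diffeomorphism means the $\tilde{T}$-orbit directions together span a complement to $TX_{\boldsymbol{\nu}}$ inside $TA^\vee_0$, and the radial direction $\partial_r$ is one of those complementary directions — so the $\tilde{T}$-saturation of $X_{\boldsymbol{\nu}}$ automatically contains a full neighborhood in the radial direction. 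Once that is clean, the rest is formal group-theoretic propagation.
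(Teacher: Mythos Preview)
Your argument is correct and takes a genuinely different route from the paper's. The paper proves the lemma by fixing $x\in X_{\boldsymbol{\nu}}$ and showing that $C_x:=\{z\in\mathbb{C}^*:z\,x\in\tilde{X}_{\boldsymbol{\nu}}\}$ is both open (since $\tilde{X}_{\boldsymbol{\nu}}$ is open, Corollary~\ref{cor:open saturation}) and closed; the latter step crucially uses the \emph{properness} of $\tilde{\mu}^{A^\vee_0}$ on $A^\vee_{lf}$ (Corollary~\ref{cor:proper action}) together with compactness of $X_{\boldsymbol{\nu}}$ to extract convergent subsequences. You instead bypass properness entirely: openness of $\tilde{X}_{\boldsymbol{\nu}}$ and compactness of $X_{\boldsymbol{\nu}}$ give a uniform $\delta>0$ with $z^{-1}X_{\boldsymbol{\nu}}\subseteq\tilde{X}_{\boldsymbol{\nu}}$ for all $|z-1|<\delta$, and then commutativity of $\tilde{\rho}^{A^\vee}$ with $\tilde{\mu}^{A^\vee}$ plus $\tilde{T}$-saturation of $\tilde{X}_{\boldsymbol{\nu}}$ make the set $S=\{z:\tilde{\rho}^{A^\vee}_z(X_{\boldsymbol{\nu}})\subseteq\tilde{X}_{\boldsymbol{\nu}}\}$ a sub-\emph{semigroup} of $\mathbb{C}^*$ containing a neighborhood of $1$, hence all of $\mathbb{C}^*$. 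This is more elementary than the paper's approach, since it does not require the Hamiltonian/properness machinery of \S 2.3.

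Two minor points of presentation. First, you call $S$ a \emph{subgroup}, but you only verify closure under multiplication; it is a priori only a semigroup. This does not matter: a sub-semigroup of $\mathbb{C}^*$ containing an open neighborhood of $1$ is automatically all of $\mathbb{C}^*$ (the neighborhood is symmetric under $z\mapsto z^{-1}$, so the generated semigroup is an open subgroup, hence everything by connectedness). Second, your ``main obstacle'' paragraph and the detailed analysis via~(\ref{eqn:Jnux0}) are unnecessary: once you invoke Lemma~\ref{lem:local diffeo tildeTXnu}(3), the image of $\Psi'$ is already a full open tubular neighborhood of $X_{\boldsymbol{\nu}}$ inside $\tilde{X}_{\boldsymbol{\nu}}$, and compactness of $X_{\boldsymbol{\nu}}$ immediately gives the uniform $\delta$. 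The explicit tracking of the radial component is not needed --- the tubular neighborhood absorbs all nearby directions at once, radial included.
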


\begin{proof}
[Proof of Lemma \ref{lem:C*invariance}]
Since $\tilde{\mu}^{A^\vee}$ and $\tilde{\rho}^{A^\vee}$ commute, it 
suffices to show that for any $x\in X_{\boldsymbol{\nu}}$
and $z\in \mathbb{C}^*$ we have 
$z\,x\in \tilde{X}_{\boldsymbol{\nu}}$.

Let us set 
$$
C_x:=\left\{z\in \mathbb{C}^*\,:\,z\,x\in \tilde{X}_{\boldsymbol{\nu}}\right\}.
$$
Then $1\in C_x$ and $C_x$ is open in $\mathbb{C}^*$ because
scalar multiplication is continuous and $\tilde{X}_{\boldsymbol{\nu}}$
is open in $A^\vee$ (Corollary \ref{cor:open saturation}).

Suppose $z_\infty\in \mathbb{C}^*$ is a limit point of
$C_x$. Then there exist $z_1,\,z_2,\ldots\in C_x$ such that 
$z_i\rightarrow z_\infty$.
By definition of $C_x$, for any $i=1,2,\ldots$ 
we have $z_i\,x\in \tilde{X}_{\boldsymbol{\nu}}$ for any 
$i=1,2,\ldots$. By definition of $\tilde{X}_{\boldsymbol{\nu}}$, therefore,
there exist $\tilde{t}_i\in \tilde{T}$
and $x_i\in X_{\boldsymbol{\nu}}$ such that
$z_i\,x=\tilde{\mu}^{A^\vee}_{\tilde{t}_i}(x_i)$. Since
$A^\vee_{lf}$ in Corollary \ref{cor:proper action} is clearly
$\mathbb{C}^*$-invariant, we have $z_\infty\,x\in A^\vee_{lf}$. Thus
$$
\tilde{\mu}^{A^\vee}_{\tilde{t}_i}(x_i)=z_i\,x\rightarrow z_\infty\,x\in A^\vee_{lf}.
$$
By Corollary \ref{cor:proper action} and the compactness of
$X_{\boldsymbol{\nu}}$, perhaps replacing $(\tilde{t}_i)$
and $(x_i)$ by subsequences, we may assume that $\tilde{t}_i\rightarrow \tilde{t}_\infty\in \tilde{T}$ and $x_i\rightarrow x_\infty\in X_{\boldsymbol{\nu}}$.
Hence $$
z_\infty\,x=\tilde{\mu}^{A^\vee}_{\tilde{t}_\infty}(x_\infty)\in \tilde{X}_{\boldsymbol{\nu}}\,\Rightarrow\,z_\infty\in C_x.
$$
We conclude that $C_x=\mathbb{C}^*$ for any $x\in X_{\boldsymbol{\nu}}$.
\end{proof}

Let us set, as in the Introduction,
$$
A^\vee_{\boldsymbol{\nu}}:=(\pi')^{-1}(M_{\boldsymbol{\nu}}),\quad
\tilde{A}^\vee_{\boldsymbol{\nu}}:=\tilde{T}\cdot A^\vee_{\boldsymbol{\nu}}.
$$

\begin{cor}
\label{cor:tildeXnu vs tildeAnu}
$\tilde{X}_{\boldsymbol{\nu}}=\tilde{A}^\vee_{\boldsymbol{\nu}}$.
\end{cor}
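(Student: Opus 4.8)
The plan is to prove the two inclusions $\tilde{X}_{\boldsymbol{\nu}}\subseteq \tilde{A}^\vee_{\boldsymbol{\nu}}$ and $\tilde{A}^\vee_{\boldsymbol{\nu}}\subseteq \tilde{X}_{\boldsymbol{\nu}}$ separately, and in each case to reduce to the corresponding statement about $\tilde{T}$-orbits. Since both sets are $\tilde{T}$-saturated by construction, and since $\tilde{T}\cong \mathbb{R}_+^r\times T$ with the $T$-part already built into $X_{\boldsymbol{\nu}}\subseteq A^\vee_{\boldsymbol{\nu}}$, the real content is about the $\mathbb{R}_+^r$-directions, i.e.\ about the action $\tilde{\mu}^{A^\vee}_{e^{\mathbf{x}}}$ for $\mathbf{x}\in \imath\,\mathfrak{t}\cong \mathbb{R}^r$.

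First I would prove $\tilde{X}_{\boldsymbol{\nu}}\subseteq \tilde{A}^\vee_{\boldsymbol{\nu}}$. By $\tilde{T}$-invariance of both sides it suffices to show $X_{\boldsymbol{\nu}}\subseteq \tilde{A}^\vee_{\boldsymbol{\nu}}$, and since $X_{\boldsymbol{\nu}}=\pi^{-1}(M_{\boldsymbol{\nu}})$ while $A^\vee_{\boldsymbol{\nu}}=(\pi')^{-1}(M_{\boldsymbol{\nu}})$, this amounts to checking that each point $x\in X_{\boldsymbol{\nu}}$ lies on the same $A^\vee_0$-fiber as a point of $A^\vee_{\boldsymbol{\nu}}$ — which is immediate, since $x$ itself is in $A^\vee_{\boldsymbol{\nu}}$ (the unit circle bundle sits inside $A^\vee_0$ over the same base locus). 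So this inclusion is essentially tautological: $X_{\boldsymbol{\nu}}\subseteq A^\vee_{\boldsymbol{\nu}}\subseteq \tilde{A}^\vee_{\boldsymbol{\nu}}$, whence $\tilde{X}_{\boldsymbol{\nu}}=\tilde T\cdot X_{\boldsymbol{\nu}}\subseteq \tilde T\cdot A^\vee_{\boldsymbol{\nu}}=\tilde{A}^\vee_{\boldsymbol{\nu}}$.

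The substantive direction is $\tilde{A}^\vee_{\boldsymbol{\nu}}\subseteq \tilde{X}_{\boldsymbol{\nu}}$. Again by $\tilde{T}$-saturation it suffices to show $A^\vee_{\boldsymbol{\nu}}\subseteq \tilde{X}_{\boldsymbol{\nu}}$, i.e.\ that every $\ell\in (\pi')^{-1}(M_{\boldsymbol{\nu}})$ can be moved into $X_{\boldsymbol{\nu}}$ by some element of $\tilde{T}$. Write $\ell = z\,x$ for a unique $x\in X_{\boldsymbol{\nu}}$ and $z\in \mathbb{C}^*$ (using that $\pi'(\ell)\in M_{\boldsymbol{\nu}}$, so the unit vector on its fiber is in $X_{\boldsymbol{\nu}}$); then $\ell = \tilde{\rho}^{A^\vee}_{z^{-1}}(x)$, and Lemma \ref{lem:C*invariance} tells us precisely that $\tilde{X}_{\boldsymbol{\nu}}$ is $\tilde{\rho}^{A^\vee}$-invariant, so $\ell\in \tilde{X}_{\boldsymbol{\nu}}$. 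Thus $A^\vee_{\boldsymbol{\nu}}\subseteq \tilde{X}_{\boldsymbol{\nu}}$, and saturating gives $\tilde{A}^\vee_{\boldsymbol{\nu}}\subseteq \tilde{X}_{\boldsymbol{\nu}}$.

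The main obstacle — really the only one — is making sure the $\mathbb{C}^*$-fiber decomposition $\ell = z\,x$ with $x\in X_{\boldsymbol{\nu}}$ is legitimate; this is exactly where Lemma \ref{lem:C*invariance} does the work, since a priori moving along the fiber out of the unit circle bundle could leave the saturation $\tilde X_{\boldsymbol\nu}$, and the lemma rules that out. Everything else is bookkeeping about $\tilde{T}$-saturations and the containments $X_{\boldsymbol{\nu}}\subseteq A^\vee_{\boldsymbol{\nu}}\subseteq A^\vee_0$. I would therefore present the argument as: (i) note $X_{\boldsymbol{\nu}}\subseteq A^\vee_{\boldsymbol{\nu}}$, hence $\tilde X_{\boldsymbol\nu}\subseteq\tilde A^\vee_{\boldsymbol\nu}$; (ii) conversely, given $\ell\in A^\vee_{\boldsymbol{\nu}}$, factor $\ell=\tilde\rho^{A^\vee}_{z^{-1}}(x)$ with $x\in X_{\boldsymbol{\nu}}$ and invoke Lemma \ref{lem:C*invariance} to conclude $\ell\in\tilde X_{\boldsymbol\nu}$; (iii) saturate under $\tilde T$.
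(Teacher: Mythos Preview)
Your proposal is correct and follows essentially the same approach as the paper: both directions use the tautological inclusion $X_{\boldsymbol{\nu}}\subseteq A^\vee_{\boldsymbol{\nu}}$ for $\tilde{X}_{\boldsymbol{\nu}}\subseteq \tilde{A}^\vee_{\boldsymbol{\nu}}$, and the $\mathbb{C}^*$-invariance of $\tilde{X}_{\boldsymbol{\nu}}$ from Lemma~\ref{lem:C*invariance} (together with $\tilde{T}$-saturation) for the reverse inclusion. Your explicit factorization $\ell=z\,x$ with $x\in X_{\boldsymbol{\nu}}$ just unwinds what the paper phrases as ``$\tilde{X}_{\boldsymbol{\nu}}$ is $\mathbb{C}^*$-invariant and contains $X_{\boldsymbol{\nu}}$, hence $\tilde{X}_{\boldsymbol{\nu}}\supseteq A^\vee_{\boldsymbol{\nu}}$.''
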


\begin{proof}
[Proof of Corollary \ref{cor:tildeXnu vs tildeAnu}]
Since $X_{\boldsymbol{\nu}}\subset A^\vee_{\boldsymbol{\nu}}$, clearly
$\tilde{X}_{\boldsymbol{\nu}}\subseteq\tilde{A}^\vee_{\boldsymbol{\nu}}$.
On the other hand, $\tilde{X}_{\boldsymbol{\nu}}$ is $\mathbb{C}^*$-invariant
by Lemma \ref{lem:C*invariance}
and contains $X_{\boldsymbol{\nu}}$, hence 
$\tilde{X}_{\boldsymbol{\nu}}\supseteq A_{\boldsymbol{\nu}}$. 
Since $\tilde{X}_{\boldsymbol{\nu}}$ is $\tilde{T}$-invariant,
we also have
$\tilde{X}_{\boldsymbol{\nu}}\supseteq \tilde{A}^\vee_{\boldsymbol{\nu}}$.
\end{proof}

It follows from Lemma \ref{lem:C*invariance} that
$\tilde{X}_{\boldsymbol{\nu}}$ is the inverse image of a $\tilde{T}$-invariant open set
of $M$. More precisely, let 
$$
M'_{\boldsymbol{\nu}}:=\tilde{T}\cdot M_{\boldsymbol{\nu}}\subseteq M.
$$
Since $\pi'$ is a submersion and intertwines $\tilde{\mu}^M$ and
$\tilde{\mu}^{A^\vee}$, we conclude the following:

\begin{cor}
\label{cor:Mprimenu}
$M'_{\boldsymbol{\nu}}$ is open in $M$ and 
$\tilde{X}_{\boldsymbol{\nu}}=(\pi')^{-1}(M'_{\boldsymbol{\nu}})$. 

\end{cor}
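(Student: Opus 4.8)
The plan is to deduce both assertions purely formally from the already-established properties of $\tilde{X}_{\boldsymbol{\nu}}$, using that $\pi':A^\vee_0\to M$ is an equivariant surjective submersion. First I would record the elementary set-theoretic identity: since $\pi'$ intertwines $\tilde{\mu}^{A^\vee}$ and $\tilde{\mu}^M$, for every subset $S\subseteq M$ one has $\tilde{T}\cdot(\pi')^{-1}(S)=(\pi')^{-1}\big(\tilde{T}\cdot S\big)$. Indeed, the inclusion $\subseteq$ is immediate from equivariance, and conversely if $\pi'(\ell)=\tilde{\mu}^M_{\tilde t}(s)$ with $s\in S$ then $\pi'\big(\tilde{\mu}^{A^\vee}_{\tilde t^{-1}}(\ell)\big)=s$, so $\ell\in\tilde T\cdot(\pi')^{-1}(S)$.

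Applying this with $S=M_{\boldsymbol{\nu}}$ and recalling $A^\vee_{\boldsymbol{\nu}}=(\pi')^{-1}(M_{\boldsymbol{\nu}})$, I obtain
$$
(\pi')^{-1}(M'_{\boldsymbol{\nu}})=(\pi')^{-1}\big(\tilde{T}\cdot M_{\boldsymbol{\nu}}\big)=\tilde{T}\cdot A^\vee_{\boldsymbol{\nu}}=\tilde{A}^\vee_{\boldsymbol{\nu}}.
$$
By Corollary \ref{cor:tildeXnu vs tildeAnu}, $\tilde{A}^\vee_{\boldsymbol{\nu}}=\tilde{X}_{\boldsymbol{\nu}}$, so $(\pi')^{-1}(M'_{\boldsymbol{\nu}})=\tilde{X}_{\boldsymbol{\nu}}$, which is the second claim. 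For the first claim, $\tilde{X}_{\boldsymbol{\nu}}$ is open in $A^\vee$ by Corollary \ref{cor:open saturation}, and $\pi'|_{A^\vee_0}$ is a surjective submersion, hence an open map; therefore $M'_{\boldsymbol{\nu}}=\pi'\big((\pi')^{-1}(M'_{\boldsymbol{\nu}})\big)=\pi'(\tilde{X}_{\boldsymbol{\nu}})$ is open in $M$, where surjectivity of $\pi'$ (each fiber of $A^\vee_0$ over $M$ being a punctured complex line) is what lets me recover $M'_{\boldsymbol{\nu}}$ from its preimage.

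There is no genuine obstacle here: all the substantive work has been front-loaded into Corollaries \ref{cor:open saturation} and \ref{cor:tildeXnu vs tildeAnu}. The only two points that deserve a word are the interchange $\tilde{T}\cdot(\pi')^{-1}(S)=(\pi')^{-1}(\tilde{T}\cdot S)$, which really does use equivariance of $\pi'$ and not merely continuity, and the surjectivity of $\pi'$ on $A^\vee_0$, used in the last displayed equality.
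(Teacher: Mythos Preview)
Your proof is correct and essentially matches the paper's argument. The paper states the corollary as an immediate consequence of the fact that $\pi'$ is an equivariant submersion together with Lemma~\ref{lem:C*invariance}; you route the same content through Corollary~\ref{cor:tildeXnu vs tildeAnu} (which itself packages the $\mathbb{C}^*$-invariance of $\tilde X_{\boldsymbol{\nu}}$) and make the set-theoretic identity $\tilde T\cdot(\pi')^{-1}(S)=(\pi')^{-1}(\tilde T\cdot S)$ explicit, but the logic is the same.
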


As in the Introduction, let $\tilde{M}_{\boldsymbol{\nu}}\subseteq M$ be the dense open subset
of stable points for $\gamma^M$. Obviously $\tilde{M}_{\boldsymbol{\nu}}$ is
$\tilde{\gamma}^M$-invariant (notation is as in the Introduction and \S \ref{sctn:Mnu}).

\begin{lem}
\label{lem:XnuMnuinv}
$\tilde{M}_{\boldsymbol{\nu}}=M'_{\boldsymbol{\nu}}$.
 \end{lem}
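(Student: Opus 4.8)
The plan is to read $\tilde{M}_{\boldsymbol{\nu}}$ through geometric invariant theory for the torus action $\tilde{\gamma}^M$ of $\tilde{T}^{r-1}_{\boldsymbol{\nu}^\perp}$ on the projective manifold $M$, linearized on the ample bundle $A$ by the restriction of $\tilde{\mu}^{A^\vee}$, and to compare its (semi)stable locus with $M'_{\boldsymbol{\nu}}=\tilde{T}\cdot M_{\boldsymbol{\nu}}$ by means of the moment map $\Phi_{\boldsymbol{\nu}^\perp}$ of \S\ref{sctn:Mnu}. By the consequences of BA recorded in \S\ref{sctn:Mnu} together with Lemma \ref{lem:localyl free mnu}, $\mathbf{0}$ is a regular value of $\Phi_{\boldsymbol{\nu}^\perp}$ and $\gamma^M$ is locally free along $M_{\boldsymbol{\nu}}=\Phi_{\boldsymbol{\nu}^\perp}^{-1}(\mathbf{0})$; hence (see \cite{sj}) the stable and the semistable loci of $\tilde{\gamma}^M$ coincide --- so $\tilde{M}_{\boldsymbol{\nu}}$ is unambiguous --- and, the value $\mathbf{0}$ being regular, every semistable $\tilde{T}^{r-1}_{\boldsymbol{\nu}^\perp}$-orbit actually meets $\Phi_{\boldsymbol{\nu}^\perp}^{-1}(\mathbf{0})=M_{\boldsymbol{\nu}}$. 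This already yields one inclusion: $\tilde{M}_{\boldsymbol{\nu}}=\tilde{T}^{r-1}_{\boldsymbol{\nu}^\perp}\cdot M_{\boldsymbol{\nu}}\subseteq\tilde{T}\cdot M_{\boldsymbol{\nu}}=M'_{\boldsymbol{\nu}}$, since $\tilde{T}^{r-1}_{\boldsymbol{\nu}^\perp}\leqslant\tilde{T}$.

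For the opposite inclusion, first observe that $M_{\boldsymbol{\nu}}=\Phi^{-1}(\mathbb{R}_+\cdot\imath\,\boldsymbol{\nu})$ is $T$-invariant, because $\Phi$ is $T$-invariant ($T$ being Abelian). Since $\boldsymbol{\nu}$ is coprime it spans a primitive rank-one sublattice, hence a closed one-parameter subtorus $\tilde{S}_{\boldsymbol{\nu}}\cong\mathbb{C}^*$ of $\tilde{T}$ with $\tilde{\mathfrak{s}}_{\boldsymbol{\nu}}=\mathbb{C}\cdot\boldsymbol{\nu}$; as $\tilde{\mathfrak{t}}^{r-1}_{\boldsymbol{\nu}^\perp}$ and $\tilde{\mathfrak{s}}_{\boldsymbol{\nu}}$ together span $\tilde{\mathfrak{t}}=\mathbb{C}^r$, we have $\tilde{T}=\tilde{T}^{r-1}_{\boldsymbol{\nu}^\perp}\cdot\tilde{S}_{\boldsymbol{\nu}}$, so $M'_{\boldsymbol{\nu}}=\tilde{T}^{r-1}_{\boldsymbol{\nu}^\perp}\cdot\big(\tilde{S}_{\boldsymbol{\nu}}\cdot M_{\boldsymbol{\nu}}\big)$. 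Because $\tilde{M}_{\boldsymbol{\nu}}$ is $\tilde{T}^{r-1}_{\boldsymbol{\nu}^\perp}$-invariant, it suffices to prove $\tilde{S}_{\boldsymbol{\nu}}\cdot M_{\boldsymbol{\nu}}\subseteq\tilde{M}_{\boldsymbol{\nu}}$, that is, that $\tilde{\mu}^M_g(m)$ is $\tilde{\gamma}^M$-semistable whenever $m\in M_{\boldsymbol{\nu}}$ and $g\in\tilde{S}_{\boldsymbol{\nu}}$. Here I would argue directly with invariant sections: a point $m\in M_{\boldsymbol{\nu}}=\Phi_{\boldsymbol{\nu}^\perp}^{-1}(\mathbf{0})$ is semistable (the elementary direction of the Kempf--Ness correspondence: zero-level points of the moment map are semistable, cf. \cite{sj}), so there are $k\ge 1$ and a $\tilde{T}^{r-1}_{\boldsymbol{\nu}^\perp}$-invariant $\sigma\in H^0(M,A^{\otimes k})$ with $\sigma(m)\ne 0$. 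Now $\tilde{S}_{\boldsymbol{\nu}}\leqslant\tilde{T}$ acts on $H^0(M,A^{\otimes k})$ through the linearization, and since $\tilde{S}_{\boldsymbol{\nu}}$ commutes with $\tilde{T}^{r-1}_{\boldsymbol{\nu}^\perp}$ the translate $g\cdot\sigma$ is again $\tilde{T}^{r-1}_{\boldsymbol{\nu}^\perp}$-invariant, while $(g\cdot\sigma)\big(\tilde{\mu}^M_g(m)\big)=g\cdot\sigma(m)\ne 0$ because $g$ acts invertibly on the fibers of $A^{\otimes k}$. Hence $\tilde{\mu}^M_g(m)$ is semistable, we get $\tilde{S}_{\boldsymbol{\nu}}\cdot M_{\boldsymbol{\nu}}\subseteq\tilde{M}_{\boldsymbol{\nu}}$ and therefore $M'_{\boldsymbol{\nu}}\subseteq\tilde{M}_{\boldsymbol{\nu}}$; combined with the first inclusion this gives $\tilde{M}_{\boldsymbol{\nu}}=M'_{\boldsymbol{\nu}}$.

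The main obstacle is exactly this second inclusion: the GIT-semistable locus of $\tilde{\gamma}^M$ is by construction invariant only under the acting torus $\tilde{T}^{r-1}_{\boldsymbol{\nu}^\perp}$, and one must see that it is in fact invariant under the full complexified torus $\tilde{T}$. The argument above circumvents this by reducing to points of $M_{\boldsymbol{\nu}}$, where semistability is automatic, and transporting the (translation-stable) defining property by the commuting complementary $\mathbb{C}^*$-factor $\tilde{S}_{\boldsymbol{\nu}}$ acting on $H^0(M,A^{\otimes k})$. The points that need care are: verifying that the hypotheses collected in \S\ref{sctn:Mnu} and Lemma \ref{lem:localyl free mnu} (regular value, local freeness along $M_{\boldsymbol{\nu}}$) are exactly what makes ``stable $=$ semistable'' hold and makes the Kempf--Ness correspondence available in the form ``a semistable orbit meets $\Phi_{\boldsymbol{\nu}^\perp}^{-1}(\mathbf{0})$''; and recording that $\tilde{\mu}^{A^\vee}$ is globally defined on $A^\vee$, so that $\tilde{T}$ --- hence $\tilde{S}_{\boldsymbol{\nu}}$ --- does act on each finite-dimensional space $H^0(M,A^{\otimes k})$. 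Everything else is formal.
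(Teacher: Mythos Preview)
Your argument is correct and follows essentially the same route as the paper: both obtain $\tilde{M}_{\boldsymbol{\nu}}=\tilde{T}^{r-1}_{\boldsymbol{\nu}^\perp}\cdot M_{\boldsymbol{\nu}}\subseteq M'_{\boldsymbol{\nu}}$ from the regularity of $\mathbf{0}$ for $\Phi_{\boldsymbol{\nu}^\perp}$, and both deduce the reverse inclusion from the fact that, $\tilde{T}$ being Abelian, the $\tilde{T}$-translate of a $T^{r-1}_{\boldsymbol{\nu}^\perp}$-invariant section of $A^{\otimes k}$ is again $T^{r-1}_{\boldsymbol{\nu}^\perp}$-invariant. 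The only cosmetic difference is that the paper phrases the second step as ``$\tilde{M}_{\boldsymbol{\nu}}$ is $\tilde{T}$-invariant'' directly, whereas you first factor $\tilde{T}=\tilde{T}^{r-1}_{\boldsymbol{\nu}^\perp}\cdot\tilde{S}_{\boldsymbol{\nu}}$ and reduce to $\tilde{S}_{\boldsymbol{\nu}}$-invariance; the underlying computation with invariant sections is identical.
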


\begin{proof}
[Proof of Lemma \ref{lem:XnuMnuinv}]
Since $\mathbf{0}\in \imath\,\boldsymbol{\nu}^\perp$ 
is a regular value of $\Phi_{\boldsymbol{\nu}^\perp}$, 
$\tilde{M}_{\boldsymbol{\nu}}=
\tilde{T}^{r-1}_{\boldsymbol{\nu}^\perp}\cdot M_{\boldsymbol{\nu}}$.
Hence trivially
$\tilde{M}_{\boldsymbol{\nu}}=
\tilde{T}^{r-1}_{\boldsymbol{\nu}^\perp}\cdot M_{\boldsymbol{\nu}}\subseteq 
\tilde{T}\cdot M_{\boldsymbol{\nu}}=M'_{\boldsymbol{\nu}}$.

To prove the converse inclusion it suffices to check that $\tilde{M}_{\boldsymbol{\nu}}$
is $\tilde{T}$-invariant. For $k=1,2,\ldots$, 
let $\tilde{\hat{\mu}}^{(k)}$ be the representation of $\tilde{T}$
on $H^0(M,A^{\otimes k})$ induced by $\tilde{\mu}^{A^\vee}$, and let
$H^0(M,A^{\otimes k})^{T^{r-1}_{\boldsymbol{\nu}^\perp}}\subseteq H^0(M,A^{\otimes k})$
be the subspace of those sections that are invariant under 
$T^{r-1}_{\boldsymbol{\nu}^\perp}$ (equivalently, 
$\tilde{T}^{r-1}_{\boldsymbol{\nu}^\perp}$). Then $m\in \tilde{M}_{\boldsymbol{\nu}}$ if and
only if for some 
$k=1,2,\ldots$
there exists $\sigma\in H^0(M,A^{\otimes k})^{T^{r-1}_{\boldsymbol{\nu}^\perp}}$
such that $\sigma (m)\neq 0$. Since $\tilde{T}$ is Abelian, 
$\tilde{\hat{\mu}}^{(k)}_{\tilde{t}}(\sigma)\in 
H^0(M,A^{\otimes k})^{T^{r-1}_{\boldsymbol{\nu}^\perp}}$ for any $\tilde{t}\in \tilde{T}$;
therefore, if $m\in M$ is stable for $\gamma^M$, then so is $\tilde{\mu}^M_{\tilde{t}}(m)$,
for any $\tilde{t}\in \tilde{T}$.

\end{proof}

In the following, we shall write $\tilde{A}^\vee_{\boldsymbol{\nu}}$ for
$\tilde{X}_{\boldsymbol{\nu}}$. Since $\tilde{\mu}^{A^\vee}$ is holomorphic, 
proper, effective and
locally free on $\tilde{A}^\vee_{\boldsymbol{\nu}}$, we reach the following
conclusion.

\begin{cor}
\label{cor:complex quotient}
If BA \ref{ba:transv} holds, then 
$N_{\boldsymbol{\nu}}:=\tilde{A}^\vee_{\boldsymbol{\nu}}/\tilde{T}$ is a compact and
connected  orbifold
of complex dimension $d+1-r$,
and the projection 
\begin{equation}
\label{eqn:projection}
p_{\boldsymbol{\nu}}:\tilde{A}^\vee_{\boldsymbol{\nu}}\rightarrow N_{\boldsymbol{\nu}}
\end{equation}
is a principal $V$-bundle with structure group $\tilde{T}$.
\end{cor}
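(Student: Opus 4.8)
The plan is to assemble Corollary~\ref{cor:complex quotient} from the properties already established for $\tilde{\mu}^{A^\vee}$ on $\tilde{A}^\vee_{\boldsymbol{\nu}}=\tilde{X}_{\boldsymbol{\nu}}$, using the standard fact that the quotient of a manifold by a proper, locally free Lie group action is an orbifold, and the complex-analytic refinement that such a quotient inherits a complex orbifold structure when the action is holomorphic and the group is a reductive complex group acting by biholomorphisms. Concretely, by Lemma~\ref{lem:local diffeo tildeTXnu}(4) and Corollary~\ref{cor:loc free action Xnu} the action $\tilde{\mu}^{\tilde{A}^\vee_{\boldsymbol{\nu}}}$ is locally free (indeed effective, after the reduction of $T$ built into BA); by Corollary~\ref{cor:proper action}(3) it is proper; and it is holomorphic because $\tilde{\mu}^{A^\vee}$ is. I would first record these three properties in one line, then invoke the slice theorem.

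The technical heart is the local model. For each $\ell\in \tilde{A}^\vee_{\boldsymbol{\nu}}$, Corollary~\ref{cor:holomorphic slice and complexified stabilizer} provides an \emph{holomorphic} slice $S_\ell$ for $\tilde{\mu}^{A^\vee_0}$ at $\ell$, transverse to the $\tilde{T}$-orbit, on which the finite stabilizer $\tilde{T}_\ell=T_\ell$ acts holomorphically; properness guarantees that, after shrinking, $\tilde{T}\cdot S_\ell\cong (\tilde{T}\times S_\ell)/\tilde{T}_\ell$ is a $\tilde{T}$-invariant open neighborhood of the orbit, so that $(\tilde{T}\cdot S_\ell)/\tilde{T}\cong S_\ell/\tilde{T}_\ell$. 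Since $\dim_{\mathbb C}\tilde{A}^\vee_{\boldsymbol{\nu}}=d+1$ (it is open in $A^\vee_0$ by Corollary~\ref{cor:open saturation}) and $\dim_{\mathbb C}\tilde{T}=r$, each slice $S_\ell$ has complex dimension $d+1-r$, and the charts $\big(S_\ell,\,\tilde{T}_\ell,\,S_\ell/\tilde{T}_\ell\big)$ form a complex orbifold atlas for $N_{\boldsymbol{\nu}}$; the compatibility of overlapping charts is the usual consequence of the slice theorem. This simultaneously shows that $p_{\boldsymbol{\nu}}$ is a principal $V$-bundle with structure group $\tilde{T}$ in the sense of \cite{sat2}, the local trivializations being $p_{\boldsymbol{\nu}}^{-1}(S_\ell/\tilde{T}_\ell)\cong (\tilde{T}\times S_\ell)/\tilde{T}_\ell$.

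It remains to check compactness and connectedness of $N_{\boldsymbol{\nu}}$. For compactness I would use the alternative description $N_{\boldsymbol{\nu}}\cong N'_{\boldsymbol{\nu}}=X_{\boldsymbol{\nu}}/T$ from Corollary~\ref{cor:Nnuprime orbifold}: the inclusion $X_{\boldsymbol{\nu}}\hookrightarrow \tilde{A}^\vee_{\boldsymbol{\nu}}$ induces a continuous map $X_{\boldsymbol{\nu}}/T\to \tilde{A}^\vee_{\boldsymbol{\nu}}/\tilde{T}$ which is surjective because $\tilde{A}^\vee_{\boldsymbol{\nu}}=\tilde{T}\cdot X_{\boldsymbol{\nu}}$ and bijective because, by Lemma~\ref{lem:local diffeo tildeTXnu} together with Corollary~\ref{cor:totally real subspace}, two points of $X_{\boldsymbol{\nu}}$ in the same $\tilde{T}$-orbit already lie in the same $T$-orbit (the $\mathbb{R}_+^r$-directions are transverse to $T_xX_{\boldsymbol{\nu}}$, so an element of $\mathbb{R}_+^r\setminus\{1\}$ moves $x$ off $X_{\boldsymbol{\nu}}$ locally, and a connectedness/properness argument promotes this to a global statement). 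Hence $N_{\boldsymbol{\nu}}$ is the continuous image of the compact space $X_{\boldsymbol{\nu}}/T$, so it is compact; connectedness follows from connectedness of $M_{\boldsymbol{\nu}}$ (hence of $X_{\boldsymbol{\nu}}$, an $S^1$-bundle over it). I expect the identification $N_{\boldsymbol{\nu}}\cong X_{\boldsymbol{\nu}}/T$ — i.e.\ proving that the $\tilde{T}$-orbit through a point of $X_{\boldsymbol{\nu}}$ meets $X_{\boldsymbol{\nu}}$ in a single $T$-orbit — to be the one genuinely delicate point; everything else is a direct application of the slice theorem of \cite{sj} and the orbifold-quotient formalism of \cite{sat1,sat2}.
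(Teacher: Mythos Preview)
Your approach is correct and essentially matches the paper's: assemble properness (Corollary~\ref{cor:proper action}), local freeness (Lemma~\ref{lem:local diffeo tildeTXnu}), and holomorphicity, then invoke the holomorphic slice theorem (Corollary~\ref{cor:holomorphic slice and complexified stabilizer}) to get the complex orbifold structure and the principal $V$-bundle description.

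Where you diverge from the paper is in the compactness argument, and there you do more work than necessary. The paper simply observes that $p_{\boldsymbol{\nu}}(X_{\boldsymbol{\nu}})=N_{\boldsymbol{\nu}}$, which is immediate from the \emph{definition} $\tilde{A}^\vee_{\boldsymbol{\nu}}=\tilde{X}_{\boldsymbol{\nu}}=\tilde{T}\cdot X_{\boldsymbol{\nu}}$; since $X_{\boldsymbol{\nu}}$ is compact and $p_{\boldsymbol{\nu}}$ continuous, $N_{\boldsymbol{\nu}}$ is compact. No injectivity is needed at this stage. You instead reach for the full bijection $X_{\boldsymbol{\nu}}/T\to \tilde{A}^\vee_{\boldsymbol{\nu}}/\tilde{T}$, which is Proposition~\ref{prop:psi bijective} in the paper and is established \emph{after} this corollary. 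Your instinct that injectivity is the delicate point is right, but it is not required here; the paper's argument for it (when it comes) is a moment-map gradient-flow computation showing that the flow of $e^{-t\boldsymbol{\xi}}$ with $\boldsymbol{\xi}\in\mathbb{R}^r\setminus\{0\}$ strictly increases $\varrho$, rather than the transversality-plus-connectedness sketch you give. So: drop the bijectivity discussion from this proof and keep only the surjectivity line, which you already have.
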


\begin{proof}
Since $\tilde{T}$ acts properly,
holomorphically and locally freely on $\tilde{X}_{\boldsymbol{\nu}}$,
$N_{\boldsymbol{\nu}}$ is a connected complex orbifold of dimension $d+1-r$.
Furthermore, by definition of $\tilde{X}_{\boldsymbol{\nu}}$,
$p_{\boldsymbol{\nu}}(X_{\boldsymbol{\nu}})=N_{\boldsymbol{\nu}}$. Hence $N_{\boldsymbol{\nu}}$ is compact.

\end{proof}

\begin{rem}
\label{rem:Bnu}
The holomorphic slices in 
Corollary \ref{cor:holomorphic slice and complexified stabilizer} provide local uniformizing charts for $N_{\boldsymbol{\nu}}$.
Associated to $p_{\boldsymbol{\nu}}$ and the character $\tilde{\chi}_{\boldsymbol{\nu}}$ there is
an holomorphic orbifold line bundle $B_{\boldsymbol{\nu}}$ on $N_{\boldsymbol{\nu}}$. 
\end{rem}

\subsection{The isomorphism between $N_{\boldsymbol{\nu}}'$ and $N_{\boldsymbol{\nu}}$}

We shall see that $N_{\boldsymbol{\nu}}'$ has a natural complex structure, and that
the pairs $(N_{\boldsymbol{\nu}}',B_{\boldsymbol{\nu}}')$ and $(N_{\boldsymbol{\nu}},B_{\boldsymbol{\nu}})$
in Corollaries \ref{cor:Nnuprime orbifold} and \ref{cor:complex quotient} are naturally isomorphic
as complex orbifolds and orbifold line bundles.

If $F\subseteq A^\vee_0$ is an holomorphic slice for
$\tilde{\mu}^{A^\vee}$ as in 
Corollary \ref{cor:holomorphic slice and complexified stabilizer},
let $J^F$ be its complex structure. Then $(F,J^F)$ is a complex
submanifold of $(A^\vee_0,J')$, and 
provides a local uniformizing chart for the complex orbifold 
$N_{\boldsymbol{\nu}}$. 

On the other hand, given $x\in X_{\boldsymbol{\nu}}$ 
let $F\subseteq X_{\boldsymbol{\nu}}$ be a slice at $x$ for the
action $\mu^{X_{\boldsymbol{\nu}}}:T\times X_{\boldsymbol{\nu}}\rightarrow X_{\boldsymbol{\nu}}$
induced by $\mu^{X}$. 
The stabilizer $T_x\leqslant T$ of $x$ in $T$
is a finite subgroup of $T$, and 
by Corollary \ref{cor:holomorphic slice and complexified stabilizer} $T_x=\tilde{T}_x$
(the stabilizer in $\tilde{T}$).

If $\epsilon>0$, let $F_{\epsilon}\subseteq F$ be the intersection of $F$ with an open ball 
centered at $x$ and radius $\epsilon$, in the K\"{a}hler metric 
on $A^\vee_0$
associated to $\tilde{\omega}$ in (\ref{eqn:tildeomega1}).

The proof of the following will be omitted.

\begin{prop}
\label{prop:slices TtildeT}
If $\epsilon>0$ is suitably small, $F_{\epsilon}$ is a slice for 
of $\tilde{\mu}^{A^\vee_0}$.
\end{prop}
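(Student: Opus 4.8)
The plan is to show that the $T$-slice $F$ inside $X_{\boldsymbol{\nu}}$, shrunk to a small $\tilde{\omega}$-metric ball $F_\epsilon$, already captures a full complex (hence $\tilde{T}$-) direction transverse to the $\tilde{T}$-orbits, so that it serves as a holomorphic slice for $\tilde{\mu}^{A^\vee_0}$. First I would recall the defining requirements of a slice at $x$ for a proper, locally free action of $\tilde{T}$ (using Corollary~\ref{cor:holomorphic slice and complexified stabilizer} and Corollary~\ref{cor:proper action}, which guarantee that $\tilde{\mu}^{A^\vee_0}$ restricted to $\tilde{X}_{\boldsymbol{\nu}}$ is proper with finite stabilizers equal to $T_x = \tilde{T}_x$): one needs (i) $\tilde{T}_x$-invariance of $F_\epsilon$, (ii) the transversality/dimension condition $T_xA^\vee_0 = \tilde{\mathfrak{t}}_{A^\vee}(x)\oplus T_xF_\epsilon$, and (iii) the local-product property $\tilde{T}\times_{\tilde{T}_x}F_\epsilon\to A^\vee_0$ being a diffeomorphism onto a $\tilde{T}$-invariant neighborhood of the orbit.

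For the infinitesimal statement (ii), I would compute at $x$. Since $F$ is a $T$-slice in $X_{\boldsymbol{\nu}}$ we have $T_xX_{\boldsymbol{\nu}} = \mathfrak{t}_X(x)\oplus T_xF$, and $\mathfrak{t}_X(x) = \mathfrak{t}_{A^\vee}(x)$ because $\mu^X$ is the restriction of $\mu^{A^\vee}$ to $X$. To pass from $T$ to $\tilde{T}$ one must add the ``radial'' imaginary directions $J'_x(\mathfrak{t}_{A^\vee}(x))$. By Corollary~\ref{cor:totally real subspace}, $\mathfrak{t}_{A^\vee}(x)$ is totally real, so $\mathfrak{t}_{A^\vee}(x)\oplus J'_x(\mathfrak{t}_{A^\vee}(x)) = \tilde{\mathfrak{t}}_{A^\vee}(x)$ has real dimension $2r$; and by part (2) of Lemma~\ref{lem:local diffeo tildeTXnu}, $T_xX_{\boldsymbol{\nu}}\cap J'_x(\mathfrak{t}_{A^\vee}(x)) = (0)$, so $J'_x(\mathfrak{t}_{A^\vee}(x))$ is a complement to $T_xX_{\boldsymbol{\nu}}$ in $T_xA^\vee_0$. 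Combining these, $T_xA^\vee_0 = \tilde{\mathfrak{t}}_{A^\vee}(x)\oplus T_xF$, which is exactly (ii) with $F_\epsilon$ in place of $F$ since $T_xF_\epsilon = T_xF$. The $\tilde{T}_x$-invariance in (i) follows because $\tilde{T}_x = T_x$ preserves $X_{\boldsymbol{\nu}}$ (hence $F$ up to shrinking, as $F$ is a $T_x$-invariant slice) and acts by $\tilde{\omega}$-isometries (Proposition~\ref{prop:kahler form Avee}), hence preserves the metric ball $F_\epsilon$.

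The main obstacle — and the reason the authors defer the proof — is upgrading the infinitesimal transversality at $x$ to the genuine slice property (iii) in a neighborhood: the map $\Theta:\tilde{T}\times_{\tilde{T}_x}F_\epsilon\to A^\vee_0$ is a local diffeomorphism near $[e,x]$ by (ii) and the inverse function theorem, but one must show it is \emph{injective} (and a homeomorphism onto an open $\tilde{T}$-saturated set) for small $\epsilon$. Here I would argue by contradiction using properness: if no $\epsilon$ works, one gets sequences $x_j,x_j'\in F_{1/j}$ and $\tilde{t}_j\in\tilde{T}$ with $\tilde{t}_j\notin\tilde{T}_x$ and $\tilde{\mu}^{A^\vee_0}_{\tilde{t}_j}(x_j)=x_j'$; since $x_j,x_j'\to x$, properness of $\tilde{\mu}^{A^\vee_0}$ on $\tilde{X}_{\boldsymbol{\nu}}$ (Corollary~\ref{cor:proper action}) forces $\tilde{t}_j\to\tilde{t}_\infty$ with $\tilde{\mu}^{A^\vee_0}_{\tilde{t}_\infty}(x)=x$, i.e. $\tilde{t}_\infty\in\tilde{T}_x$; then the local diffeomorphism property of $\Theta$ near $[\,\tilde{t}_\infty,x\,]$ (equivalently near $[e,x]$ after translating by $\tilde{t}_\infty^{-1}\in\tilde T_x$) is violated for large $j$, a contradiction. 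That $F_\epsilon$ is a \emph{holomorphic} slice — i.e. a complex submanifold of $(A^\vee_0,J')$ — then follows by noting that, being contained in $X_{\boldsymbol{\nu}}$ with $T_xF$ a $J'$-complement to the complex subspace $\tilde{\mathfrak{t}}_{A^\vee}(x)$, one can (shrinking $\epsilon$) replace $F$ by the image under the holomorphic exponential/slice construction of Corollary~\ref{cor:holomorphic slice and complexified stabilizer}, whose complex structure agrees with $J'$; alternatively one invokes the uniqueness of holomorphic slices up to the action to identify $F_\epsilon$ with a piece of such a slice. I would present the properness argument as the crux and keep the transfer of complex structure brief, citing \cite{sj}.
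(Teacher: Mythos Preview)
Your argument for (i), (ii), and (iii) is correct and is essentially the approach the paper has in mind (and in fact sketches in a draft version): stabilizer equality $\tilde{T}_x=T_x$, infinitesimal transversality $T_xA^\vee_0=\tilde{\mathfrak{t}}_{A^\vee}(x)\oplus T_xF$ from Lemma~\ref{lem:local diffeo tildeTXnu} and Corollary~\ref{cor:totally real subspace}, and injectivity of $\Theta$ for small $\epsilon$ by a properness/contradiction argument using Corollary~\ref{cor:proper action}.

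However, your final paragraph contains a genuine error. The proposition does \emph{not} assert that $F_\epsilon$ is a \emph{holomorphic} slice, and in fact it cannot be one: $F_\epsilon\subset X_{\boldsymbol{\nu}}\subset X$, and $X$ is the strictly pseudoconvex boundary of the unit disc bundle, so it contains no positive-dimensional complex submanifold of $(A^\vee_0,J')$. The paper says this explicitly right after the proposition (``Certainly $F$ is not a complex submanifold of $A^\vee_0$, and in fact it does not contain any complex submanifold of positive dimension''), and the whole point of the construction of $J^F$ in Lemma~\ref{lem:unique JF} and of Lemma~\ref{lem:slice biholomorphism} is to endow $F$ with an \emph{auxiliary} complex structure that is \emph{not} the restriction of $J'$, and then to show this auxiliary structure is compatible with the one on the genuine holomorphic slices. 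Your suggestion to ``replace $F$ by the image under the holomorphic exponential/slice construction'' would produce a different submanifold, not $F_\epsilon$ itself; and ``uniqueness of holomorphic slices up to the action'' does not make $F_\epsilon\subset X$ into a $J'$-complex submanifold. Simply delete that paragraph: the proposition only asks for a smooth slice, and your (i)--(iii) already establish that.
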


Certainly $F$ is not a complex submanifold of $A^\vee_0$, and
in fact it does not contain any complex submanifold of positive dimension.
Nonetheless, there is a natural complex structure $J^F$ on it, that may be described as follows.

If $\ell\in \tilde{A}^\vee_{\boldsymbol{\nu}}$, 
the tangent space to the $\tilde{T}$-orbit of $\ell$, 
$\tilde{\mathfrak{t}}_{A^\vee_0}(\ell)\subseteq T_\ell A^\vee_0$,
is an $r$-dimensional complex subspace; 
let $S_\ell\subset T_\ell A^\vee_0$ be the orthocomplement
of $\tilde{\mathfrak{t}}_{A^\vee_0}(\ell)$ for the Riemannian metric associated to (\ref{eqn:tildeomega1}).
Thus $S_\ell$ is a complex subspace of $T_\ell A^\vee_0$, of dimension $d+1-r$,
and we have a smoothly varying direct sum decomposition 
$T_\ell A^\vee=\tilde{t}_{A^\vee}(\ell)\oplus S_\ell$. 
Globally on $\tilde{A}^\vee_{\boldsymbol{\nu}}$,
this yields a 
vector bundle decomposition $T A^\vee=\tilde{\mathfrak{t}}_{A^\vee}\oplus S$. 
Projecting along $\tilde{\mathfrak{t}}_{A^\vee}$, we obtain a morphism of vector bundles
$\Pi:T A^\vee\rightarrow S$ (on $A^\vee_{\boldsymbol{\nu}}$).

Let $F$ by any slice for 
$\tilde{\mu}^{A^\vee_0}$ in $\tilde{A}^\vee_{\boldsymbol{\nu}}$; in particular, 
by Proposition \ref{prop:slices TtildeT},
$F$ might be a slice for 
$\mu^{X_{\boldsymbol{\nu}}}$. 
At any $\ell\in F$, the restriction of $\Pi_\ell$
is an isomorphism of real vector spaces $\Pi^F_\ell:T_\ell F\rightarrow S_\ell$. 
We may define an almost complex structure $J^F$ on $F$ by
declaring $\Pi^F_\ell$ to be an isomorphism of complex vector spaces for each $\ell\in F$.
If $F$ is an holomorphic slice, $J^F$ clearly coincides with the complex structure of
$F$ as a submaifold of $A^\vee_0$.

It is clear that the same $J^F$ would be defined, if instead of $S$ one had chosen
another complementary complex subundle $S'$ to $\tilde{\mathfrak{t}}_{A^\vee_0}$.
The following characterization does not involve the choice of a specific sub-bundle.

\begin{lem}
\label{lem:unique JF}
If $\ell\in F$ and $v\in T_\ell F$, then $J^F_\ell(v)$ is uniquely determined by the
conditions: 
\begin{itemize}
\item $J^F_\ell(v)\in T_\ell L$;
\item $J^F_\ell(v)-J'_\ell(v)\in \tilde{\mathfrak{t}}_{A^\vee}(\ell)$.
\end{itemize}
\end{lem}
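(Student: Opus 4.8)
The plan is to exhibit the two displayed conditions as a system whose solution exists and is unique, and then to check that $J^F_\ell(v)$ as defined via $\Pi^F_\ell$ actually solves it. First I would observe that the decomposition $T_\ell A^\vee_0 = \tilde{\mathfrak{t}}_{A^\vee}(\ell)\oplus S_\ell$ is a decomposition into $J'_\ell$-invariant subspaces (both summands are complex subspaces, the first by Lemma \ref{lem:local diffeo tildeTXnu}(4) together with the computation $\dim_{\mathbb{C}}\tilde{\mathfrak{t}}_{A^\vee}(\ell)=r$, the second by definition as the orthocomplement of a complex subspace in a Kähler vector space). Writing $v = v_0 + v_S$ accordingly — but here $v\in T_\ell F$, and since $F$ is a slice, $T_\ell F$ is a complement to $\tilde{\mathfrak{t}}_{A^\vee}(\ell)$, so in fact $\Pi^F_\ell(v)=v_S$ and $v_0$ is the $\tilde{\mathfrak{t}}$-component of $v$ — I would then compute $J^F_\ell(v)$: by construction $\Pi^F_\ell$ intertwines $J^F$ on $T_\ell F$ with $J'$ on $S_\ell$, so $\Pi_\ell\big(J^F_\ell(v)\big) = J'_\ell(v_S) = J'_\ell(v) - J'_\ell(v_0)$, where $J'_\ell(v_0)\in \tilde{\mathfrak{t}}_{A^\vee}(\ell)$. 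Hence $J^F_\ell(v) - J'_\ell(v) \in \ker(\Pi_\ell) = \tilde{\mathfrak{t}}_{A^\vee}(\ell)$, which is the second bullet; and $J^F_\ell(v)\in T_\ell F = T_\ell L$ (using the paper's notation $L$ for the slice) by definition, which is the first bullet.

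Next I would prove uniqueness. Suppose $w_1, w_2 \in T_\ell L$ both satisfy $w_i - J'_\ell(v)\in \tilde{\mathfrak{t}}_{A^\vee}(\ell)$. Then $w_1 - w_2 \in T_\ell L$ on one hand, and $w_1 - w_2 = (w_1 - J'_\ell(v)) - (w_2 - J'_\ell(v)) \in \tilde{\mathfrak{t}}_{A^\vee}(\ell)$ on the other. Since $F$ is a slice for $\tilde{\mu}^{A^\vee_0}$ (Proposition \ref{prop:slices TtildeT}), $T_\ell L$ is a vector-space complement to $\tilde{\mathfrak{t}}_{A^\vee}(\ell)$ in $T_\ell A^\vee_0$, so their intersection is $(0)$; hence $w_1 = w_2$. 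Combined with the existence shown above (namely $J^F_\ell(v)$ itself is a solution), this gives the claim. Existence is of course automatic from the first paragraph, so strictly one only needs the transversality $T_\ell L \cap \tilde{\mathfrak{t}}_{A^\vee}(\ell) = (0)$, which is precisely the slice property.

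The only genuine subtlety — and the step I would be most careful about — is making sure that the slice $F$ really does have $T_\ell F$ transverse to the full complexified orbit direction $\tilde{\mathfrak{t}}_{A^\vee}(\ell)$ (as opposed to just the compact orbit direction $\mathfrak{t}_{A^\vee}(\ell)$) when $F$ is taken to be a slice for $\mu^{X_{\boldsymbol{\nu}}}$ rather than for $\tilde{\mu}^{A^\vee_0}$. This is exactly what Proposition \ref{prop:slices TtildeT} guarantees: for $\epsilon$ small, $F_\epsilon$ is simultaneously a slice for both actions, so $T_\ell F$ complements $\tilde{\mathfrak{t}}_{A^\vee}(\ell)$. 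Once that is in hand the argument is purely linear-algebraic and the lemma follows. I would close by remarking that this characterization is manifestly independent of the auxiliary complement $S$, since neither bullet refers to it, which re-proves the observation made just before the statement.
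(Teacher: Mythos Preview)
Your argument is correct and follows essentially the same route as the paper: you verify the second bullet by projecting to $S_\ell$ and using that both summands of $T_\ell A^\vee_0=\tilde{\mathfrak{t}}_{A^\vee}(\ell)\oplus S_\ell$ are $J'_\ell$-invariant, and you obtain uniqueness from the slice transversality $T_\ell F\cap\tilde{\mathfrak{t}}_{A^\vee}(\ell)=(0)$, exactly as the paper does. Your added care about invoking Proposition~\ref{prop:slices TtildeT} to ensure the slice is transverse to the \emph{complexified} orbit, and your closing remark on independence of $S$, are welcome elaborations but do not alter the strategy.
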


\begin{proof}
[Proof of Lemma \ref{lem:unique JF}]
For $v\in T_\ell A^\vee$, let $v_t\in \tilde{\mathfrak{t}}_{A^\vee}(\ell)$
and $v_s\in S_\ell$ be its components. 
As both $\tilde{\mathfrak{t}}_{A^\vee}(\ell)$
and $S_\ell$ are complex subspaces for $J'_\ell$,
$$
J'_\ell (v_s)=J'_\ell(v)_s,\quad J'_\ell (v_t)=J'_\ell(v)_t.
$$

By definition of $J^F$ if $v\in T_\ell A^\vee$ then
$$
J^F_\ell (v)_s=J'_\ell (v_s)=J'_\ell(v)_s.
$$
Hence,
$$
\big(J^F_\ell (v)-J'_\ell (v)\big)_s=J'_\ell(v)_s-J'_\ell (v)_s=0\quad\Rightarrow\quad
J^F_\ell (v)-J'_\ell (v)\in \tilde{\mathfrak{t}}_{A^\vee}(\ell).
$$

Suppose that $I^F_\ell:T_\ell F\rightarrow T_\ell F$ is another operator
such that $I^F_\ell(v)-J'_\ell(v)\in \tilde{\mathfrak{t}}_{A^\vee}(\ell)$
for every $v\in T_\ell F$.
Then (by definition of slice) $\forall\,v\in T_\ell F$ we have
$$
I^F_\ell(v)-J^F_\ell (v)=\left(I^F_\ell(v)-J'_\ell(v)\right)-\left(J^F_\ell(v)-J'_\ell(v)\right)
\in T_\ell F\cap \tilde{\mathfrak{t}}_{A^\vee}(\ell)=(0).
$$

\end{proof}

Consider two slices $F_1,\,F_2\subset \tilde{A}^\vee_{\boldsymbol{\nu}}$ for
$\tilde{\mu}^{A^\vee_0}$ such that $p_{\boldsymbol{\nu}}(F_1)\subseteq p_{\boldsymbol{\nu}}(F_2)$.
Let $\ell_j\in F_j$  be such that $p_{\boldsymbol{\nu}}(\ell_1)=p_{\boldsymbol{\nu}}(\ell_2)$.
Hence there exists $\tilde{t}\in \tilde{T}$ such that 
$\ell_2=\tilde{\mu}^{A^\vee_0}_{\tilde{t}}(\ell_1)$. Perhaps after restricting $F_1$,
we may find a unique $\mathcal{C}^\infty$ function $f:F_1\rightarrow \tilde{\mathfrak{t}}$,
such that $f(\ell_1)=\mathbf{0}$ and 
$\jmath (\ell):=\tilde{\mu}^{A^\vee_0}_{\tilde{t}\, e^{f(\ell)}}(\ell)\in F_2$, for all $\ell\in F_1$. 
Thus $\jmath:F_1\rightarrow F_2$ is an injection in the sense of Satake (\cite{sat1}, \cite{sat2}).

\begin{lem}
\label{lem:slice biholomorphism}
$\jmath:F_1\rightarrow F_2$
is $\left(J^{F_1},J^{F_2}\right)$-holomorphic.
\end{lem}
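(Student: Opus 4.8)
The plan is to reduce the statement to the pointwise characterization of $J^F$ provided by Lemma \ref{lem:unique JF}. Fix $\ell_1\in F_1$ and set $\ell_2=\jmath(\ell_1)\in F_2$, so that $\ell_2=\tilde{\mu}^{A^\vee_0}_{\tilde t\,e^{f(\ell_1)}}(\ell_1)$ for the smooth $\tilde{\mathfrak t}$-valued function $f$ with $f(\ell_1)=\mathbf 0$. Write $\tilde{t}_1 := \tilde t\, e^{f(\ell_1)}\in \tilde T$, so $\ell_2 = \tilde{\mu}^{A^\vee_0}_{\tilde t_1}(\ell_1)$. Because $\tilde{\mu}^{A^\vee_0}$ is holomorphic, the diffeomorphism $L_{\tilde t_1}:=\tilde{\mu}^{A^\vee_0}_{\tilde t_1}$ of $A^\vee_0$ is biholomorphic, i.e. $\mathrm{d}L_{\tilde t_1}\circ J' = J'\circ \mathrm{d}L_{\tilde t_1}$, and it carries the orbit tangent space $\tilde{\mathfrak t}_{A^\vee}(\ell_1)$ isomorphically onto $\tilde{\mathfrak t}_{A^\vee}(\ell_2)$ (this is just equivariance of the infinitesimal action: $\mathrm{d}L_{\tilde t_1}(\xi_{A^\vee}(\ell_1)) = (\mathrm{Ad}_{\tilde t_1}\xi)_{A^\vee}(\ell_2)$, and $\tilde T$ is abelian so $\mathrm{Ad}$ is trivial, giving $\mathrm{d}L_{\tilde t_1}(\xi_{A^\vee}(\ell_1)) = \xi_{A^\vee}(\ell_2)$).

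Next I would relate $\mathrm{d}_{\ell_1}\jmath$ to $\mathrm{d}L_{\tilde t_1}$. Differentiating $\jmath(\ell) = \tilde{\mu}^{A^\vee_0}_{\tilde t\, e^{f(\ell)}}(\ell)$ at $\ell_1$ and using the product rule together with $f(\ell_1)=\mathbf 0$, one gets, for $v\in T_{\ell_1}F_1$,
\begin{equation}
\label{eqn:dj}
\mathrm{d}_{\ell_1}\jmath(v) = \mathrm{d}L_{\tilde t_1}(v) + \big(\mathrm{d}_{\ell_1}f(v)\big)_{A^\vee}(\ell_2),
\end{equation}
so that $\mathrm{d}_{\ell_1}\jmath(v) - \mathrm{d}L_{\tilde t_1}(v)\in \tilde{\mathfrak t}_{A^\vee}(\ell_2)$ for every $v$. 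In other words, modulo the orbit directions, $\mathrm{d}\jmath$ and $\mathrm{d}L_{\tilde t_1}$ agree.

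Now I would verify the two conditions of Lemma \ref{lem:unique JF} for the vector $w := \mathrm{d}_{\ell_1}\jmath\big(J^{F_1}_{\ell_1}(v)\big)$, which will force $w = J^{F_2}_{\ell_2}\big(\mathrm{d}_{\ell_1}\jmath(v)\big)$ and hence prove $\jmath$ is $(J^{F_1},J^{F_2})$-holomorphic. First, $w\in T_{\ell_2}F_2$ since $\jmath$ maps $F_1$ into $F_2$. Second, I must check $w - J'_{\ell_2}\big(\mathrm{d}_{\ell_1}\jmath(v)\big)\in \tilde{\mathfrak t}_{A^\vee}(\ell_2)$. Using \eqref{eqn:dj} twice and the biholomorphy $\mathrm{d}L_{\tilde t_1}\circ J' = J'\circ \mathrm{d}L_{\tilde t_1}$, we compute modulo $\tilde{\mathfrak t}_{A^\vee}(\ell_2)$:
\begin{equation}
\label{eqn:jcheck}
w \equiv \mathrm{d}L_{\tilde t_1}\big(J^{F_1}_{\ell_1}(v)\big) \equiv \mathrm{d}L_{\tilde t_1}\big(J'_{\ell_1}(v)\big) = J'_{\ell_2}\big(\mathrm{d}L_{\tilde t_1}(v)\big) \equiv J'_{\ell_2}\big(\mathrm{d}_{\ell_1}\jmath(v)\big),
\end{equation}
where the middle congruence is precisely the defining property of $J^{F_1}$ from Lemma \ref{lem:unique JF} (namely $J^{F_1}_{\ell_1}(v)-J'_{\ell_1}(v)\in \tilde{\mathfrak t}_{A^\vee}(\ell_1)$) pushed forward by $\mathrm{d}L_{\tilde t_1}$, which sends $\tilde{\mathfrak t}_{A^\vee}(\ell_1)$ into $\tilde{\mathfrak t}_{A^\vee}(\ell_2)$. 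By the uniqueness clause of Lemma \ref{lem:unique JF}, \eqref{eqn:jcheck} gives $w = J^{F_2}_{\ell_2}\big(\mathrm{d}_{\ell_1}\jmath(v)\big)$, i.e. $\mathrm{d}\jmath$ intertwines $J^{F_1}$ and $J^{F_2}$. Since $\ell_1\in F_1$ was arbitrary, $\jmath$ is holomorphic.

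The only genuinely delicate point is the differentiation formula \eqref{eqn:dj}: one must be careful that the "orbit correction term'' lands in $\tilde{\mathfrak t}_{A^\vee}(\ell_2)$ rather than $\tilde{\mathfrak t}_{A^\vee}(\ell_1)$, and that the abelian-ness of $\tilde T$ removes any adjoint twist; everything else is a bookkeeping argument with the characterization of $J^F$ already in hand. I would therefore present \eqref{eqn:dj} with a one-line justification (curve $t\mapsto \tilde{\mu}^{A^\vee_0}_{\tilde t\,e^{f(\gamma(t))}}(\gamma(t))$ for $\gamma$ a curve through $\ell_1$ with $\gamma'(0)=v$, Leibniz rule, $f(\ell_1)=\mathbf 0$) and then run the congruence computation \eqref{eqn:jcheck} as above.
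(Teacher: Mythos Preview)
Your proof is correct and follows essentially the same approach as the paper's: derive the differential formula \eqref{eqn:dj}, then run the congruence chain modulo $\tilde{\mathfrak t}_{A^\vee}(\ell_2)$ using Lemma \ref{lem:unique JF} and the holomorphy of $\tilde{\mu}^{A^\vee_0}_{\tilde t}$. The only cosmetic differences are that the paper writes $\tilde{\mu}^{A^\vee_0}_{\tilde t}$ directly (since $f(\ell_1)=\mathbf 0$ forces $\tilde t_1=\tilde t$) and appends one more congruence $J'_{\ell_2}(\mathrm{d}_{\ell_1}\jmath(v))\equiv J^{F_2}_{\ell_2}(\mathrm{d}_{\ell_1}\jmath(v))$ before concluding, whereas you invoke the uniqueness in Lemma \ref{lem:unique JF} directly---both amount to the same thing.
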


\begin{proof}
[Proof of Lemma \ref{lem:slice biholomorphism}]
By local uniqueness, it suffices to prove that 
$$
\mathrm{d}_{\ell_1}\jmath:(T_{\ell_1}F_1,J^{F_1}_{\ell_1})\rightarrow
(T_{\ell_2}F_2,J^{F_2}_{\ell_2})
$$ 
is $\mathbb{C}$-linear.
If $v\in T_{\ell_1}F$, we have
$$
\mathrm{d}_{\ell_1}f(v)\in \mathfrak{t},\quad 
\mathrm{d}_{\ell_1}f(v)_{A^\vee}\in \mathfrak{X}(A^\vee),\quad 
\mathrm{d}_{\ell_1}f(v)_{A^\vee}(\ell_2)\in \tilde{\mathfrak{t}}_{A^\vee}(\ell_2)\subseteq 
T_{\ell_2}A^\vee ,
$$
and 
\begin{equation}
\label{eqn:differential dj}
\mathrm{d}_{\ell_1} \jmath (v)=
\mathrm{d}_{\ell_1}f(v)_{A^\vee}(\ell_2)+\mathrm{d}_{\ell_1}\tilde{\mu}^{A^\vee_0}_{\tilde{t}}(v).
\end{equation}

If $w,w'\in T_{\ell_2}A^\vee$, we shall write $w\equiv w'$ to mean that 
$w-w'\in \tilde{\mathfrak{t}}_{A^\vee}(\ell_2)$. By (\ref{eqn:differential dj}),
we have
$
\mathrm{d}_{\ell_1} \jmath (v)\equiv \mathrm{d}_{\ell_1}\tilde{\mu}^{A^\vee_0}_{\tilde{t}}(v)$
for every $v\in T_{\ell_1}F_1$.
Replacing $v$ with $J^{F_1}_{\ell_1}(v)$, in view of Lemma \ref{lem:unique JF} 
we obtain
\begin{eqnarray}
\label{eqn:chain for dmuj}
\mathrm{d}_{\ell_1} \jmath \left(J^{F_1}_{\ell_1}(v)\right)&\equiv &
\mathrm{d}_{\ell_1}\tilde{\mu}^{A^\vee_0}_{\tilde{t}}\left(J^{F_1}_{\ell_1}(v)\right)\equiv
\mathrm{d}_{\ell_1}\tilde{\mu}^{A^\vee_0}_{\tilde{t}}\left(J'_{\ell_1}(v)\right)\nonumber\\
&=&J'_{\ell_2}\left(\mathrm{d}_{\ell_1}\tilde{\mu}^{A^\vee_0}_{\tilde{t}}(v)\right)\equiv
J'_{\ell_2}\left(\mathrm{d}_{\ell_1}\jmath (v)\right)\equiv 
J^{F_2}_{\ell_2}\left(\mathrm{d}_{\ell_1}\jmath (v)\right).
\end{eqnarray}

The first and the last vector in (\ref{eqn:chain for dmuj}) belong to 
$T_{\ell_2}F_2$; hence by Lemma \ref{lem:unique JF}
$\mathrm{d}_{\ell_1} \jmath \left(J^{F_1}_{\ell_1}(v)\right)=
J^{F_2}_{\ell_2}\left(\mathrm{d}_{\ell_1}\jmath (v)\right)$, for all
$v\in T_{\ell_1}F_1$.

\end{proof}

In Lemma \ref{lem:slice biholomorphism}, we may assume 
by Corollary \ref{cor:holomorphic slice and complexified stabilizer}
that $F_2$, say, 
is holomorphic; hence Lemma \ref{lem:slice biholomorphism} implies the following.

\begin{cor}
\label{cor:integrable JF}
For any slice $F\subset \tilde{A}^\vee_{\boldsymbol{\nu}}$ for $\tilde{\mu}^{A^\vee_0}$, 
$J^F$ is integrable.
\end{cor}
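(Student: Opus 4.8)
The plan is to reduce integrability of $J^F$ to the biholomorphism statement of Lemma \ref{lem:slice biholomorphism}, using as a fixed reference model the holomorphic slice furnished by Corollary \ref{cor:holomorphic slice and complexified stabilizer}. Fix $\ell\in F$; I want to show $J^F$ is integrable near $\ell$. By Corollary \ref{cor:holomorphic slice and complexified stabilizer}(2), at the point $\tilde\mu^{A^\vee_0}_{\tilde t}(\ell)$ (for a suitable $\tilde t$, or already at $\ell$ itself) there exists a genuine holomorphic slice $F_2$ for $\tilde\mu^{A^\vee_0}$, which by Proposition \ref{prop:slices TtildeT} we may take small enough to overlap correctly; since $F_2$ is a complex submanifold of $(A^\vee_0,J')$, its intrinsic complex structure is integrable, and by the final remark before Corollary \ref{cor:integrable JF} it coincides with $J^{F_2}$ as defined via the splitting $TA^\vee=\tilde{\mathfrak t}_{A^\vee}\oplus S$.

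Next I would arrange that $p_{\boldsymbol{\nu}}(F)\subseteq p_{\boldsymbol{\nu}}(F_2)$, shrinking $F$ around $\ell$ if necessary; this is possible because both project onto neighborhoods of the common image point $p_{\boldsymbol{\nu}}(\ell)$ in $N_{\boldsymbol{\nu}}$. Then Lemma \ref{lem:slice biholomorphism} applies with $F_1=F$, giving a Satake injection $\jmath:F\to F_2$ that is $(J^F,J^{F_2})$-holomorphic, and in particular a diffeomorphism onto its (open) image whose differential is $\mathbb{C}$-linear at every point. Pulling back the integrable complex structure $J^{F_2}$ through the diffeomorphism $\jmath$ yields an integrable complex structure on $F$ which, by $\mathbb{C}$-linearity of $\mathrm{d}\jmath$, is exactly $J^F$. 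Since integrability is a local property and $\ell\in F$ was arbitrary, $J^F$ is integrable on all of $F$.

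The only point requiring a little care — and the place where one must invoke the preceding machinery rather than compute — is the existence of the reference holomorphic slice $F_2$ through (a $\tilde T$-translate of) an arbitrary $\ell\in F$, together with the compatibility of slices needed to apply Lemma \ref{lem:slice biholomorphism}; this is precisely what Corollary \ref{cor:holomorphic slice and complexified stabilizer}, Proposition \ref{prop:slices TtildeT}, and the $\tilde T$-equivariance of the construction provide, so no new argument is needed. The rest is the formal observation that pulling back an integrable structure along a map that is fiberwise $\mathbb{C}$-linear for the source structure reproduces that source structure, hence transports integrability.
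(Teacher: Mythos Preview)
Your proposal is correct and follows essentially the same approach as the paper: take $F_2$ in Lemma \ref{lem:slice biholomorphism} to be a holomorphic slice (supplied by Corollary \ref{cor:holomorphic slice and complexified stabilizer}), so that $J^{F_2}$ is the integrable submanifold structure, and then pull it back through the $(J^F,J^{F_2})$-holomorphic injection $\jmath$. The paper states this in one line; your version spells out the locality and the shrinking step, and the reference to Proposition \ref{prop:slices TtildeT} is unnecessary here but harmless.
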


We may also take $F=F_1=F_2$ be a slice at $\ell\in A^\vee_0$, and consider the 
self-injections of $F$ induced by the stabilizer $T_\ell\leqslant T$ of $\ell$.

\begin{cor}
If $\ell\in A^\vee_{\boldsymbol{\nu}}$ and $F\subset A^\vee_{\boldsymbol{\nu}}$ is a slice
for $\tilde{\mu}^{A^\vee_0}$ at $\ell$, then $\tilde{T}_{\ell}$ acts holomorphically on
$(F,J^F)$.
\end{cor}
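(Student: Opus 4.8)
The plan is to reduce the statement to the infinitesimal level and then invoke Lemma~\ref{lem:slice biholomorphism}. First I would recall that, since $F$ is a slice for $\tilde{\mu}^{A^\vee_0}$ at $\ell$ and $\tilde{T}_\ell=T_\ell$ is finite (Corollary~\ref{cor:holomorphic slice and complexified stabilizer}), each $\tilde{t}\in \tilde{T}_\ell$ restricts to a self-injection of $F$ in the sense of Satake: indeed $\tilde{\mu}^{A^\vee_0}_{\tilde{t}}$ fixes $\ell$ and maps a neighborhood of $\ell$ in $F$ into $F$, because $\tilde{t}$ normalizes $\tilde{T}_\ell$ (the ambient group is Abelian) and preserves the Riemannian metric coming from $\tilde\omega$ only up to the $\tilde T$-action — more precisely one takes $F_1=F_2=F$, $\tilde t\in\tilde T_\ell$, and the correcting function $f$ of Lemma~\ref{lem:slice biholomorphism} with $f(\ell)=\mathbf 0$, so that $\jmath=\jmath_{\tilde t}:F\to F$ is exactly the slice-level representative of the action of $\tilde t$ on $N_{\boldsymbol\nu}$ near $p_{\boldsymbol\nu}(\ell)$.

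Next I would simply apply Lemma~\ref{lem:slice biholomorphism} to $\jmath_{\tilde t}:F\to F$: it asserts that $\jmath_{\tilde t}$ is $(J^F,J^F)$-holomorphic. Since this holds for every $\tilde t\in\tilde T_\ell$, and since the assignment $\tilde t\mapsto\jmath_{\tilde t}$ is (by local uniqueness of the correcting function $f$, already used in the discussion preceding Lemma~\ref{lem:slice biholomorphism}) a group homomorphism from $\tilde T_\ell$ into the group of germs of biholomorphisms of $(F,J^F)$ fixing $\ell$, we conclude that $\tilde T_\ell$ acts holomorphically on $(F,J^F)$. I would also note that $\ell\in A^\vee_{\boldsymbol\nu}$ guarantees $F\subset \tilde A^\vee_{\boldsymbol\nu}$ can be arranged, so that $J^F$ is defined and integrable by Corollary~\ref{cor:integrable JF}, which is what makes \lq holomorphic\rq\ meaningful here.

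The only genuine point requiring care — and the step I expect to be the main obstacle — is verifying that $\jmath_{\tilde t}$ for $\tilde t\in\tilde T_\ell$ really is the germ representative of the $\tilde t$-action on $N_{\boldsymbol\nu}$ and that these germs compose correctly; that is, that the correcting functions satisfy $f_{\tilde t\tilde s}=f_{\tilde t}+f_{\tilde s}\circ\jmath_{\tilde t}$ up to the relation defining injections, so that no spurious element of $\tilde T_\ell$ is introduced and the cocycle condition for a $V$-bundle chart is respected. This is a formal consequence of the defining property $\jmath(\ell)=\tilde\mu^{A^\vee_0}_{\tilde t\,e^{f(\ell)}}(\ell)\in F$ together with uniqueness of $f$ with $f(\text{base point})=\mathbf 0$, but it is the place where one must be slightly attentive rather than invoke a black box. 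Everything else is immediate from the results already established.
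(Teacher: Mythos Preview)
Your proposal is correct and follows essentially the same approach as the paper: the corollary is stated immediately after Lemma~\ref{lem:slice biholomorphism} with the one-line remark that one may take $F_1=F_2=F$ and consider the self-injections induced by elements of the stabilizer $T_\ell=\tilde T_\ell$. One simplification you could make: since a slice is by definition $\tilde T_\ell$-invariant, for $\tilde t\in\tilde T_\ell$ the map $\tilde\mu^{A^\vee_0}_{\tilde t}$ already sends $F$ to $F$, so the correcting function $f$ is identically zero (not merely zero at $\ell$), the injection $\jmath_{\tilde t}$ is just the restriction of $\tilde\mu^{A^\vee_0}_{\tilde t}$ to $F$, and your discussion of the cocycle condition becomes unnecessary.
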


If we apply these considerations to the slices
$F\subseteq X_{\boldsymbol{\nu}}$ for
$\mu^{X_{\boldsymbol{\nu}}}$, we conclude the following.

\begin{cor}
\label{cor:Nnu'complex}
The V-manifold $N_{\boldsymbol{\nu}}'$
in Corollary \ref{cor:Nnuprime orbifold}
is complex.
\end{cor}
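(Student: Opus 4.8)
The plan is to transport the complex structure constructed on slices for $\tilde{\mu}^{A^\vee_0}$ down to $N_{\boldsymbol{\nu}}'$, using that slices for $\mu^{X_{\boldsymbol{\nu}}}$ are, after shrinking, also slices for $\tilde{\mu}^{A^\vee_0}$ (Proposition~\ref{prop:slices TtildeT}). Concretely, for each $x\in X_{\boldsymbol{\nu}}$ pick a slice $F\subseteq X_{\boldsymbol{\nu}}$ at $x$ for $\mu^{X_{\boldsymbol{\nu}}}$; by Proposition~\ref{prop:slices TtildeT}, a shrinking $F_\epsilon$ is a slice at $x$ for $\tilde{\mu}^{A^\vee_0}$ on $\tilde{A}^\vee_{\boldsymbol{\nu}}=\tilde{X}_{\boldsymbol{\nu}}$. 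Hence $F_\epsilon$ carries the almost complex structure $J^{F_\epsilon}$ defined just above, which is integrable by Corollary~\ref{cor:integrable JF}, and on which the finite stabilizer $T_x=\tilde{T}_x$ acts holomorphically by the preceding corollary. Since $p_{\boldsymbol{\nu}}'(F_\epsilon)\cong F_\epsilon/T_x$ is an orbifold chart for $N_{\boldsymbol{\nu}}'$ near $p_{\boldsymbol{\nu}}'(x)$, this equips that chart with a complex structure.

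The next step is to check that these chart-wise complex structures are compatible, i.e.\ that they glue into a genuine complex orbifold structure on $N_{\boldsymbol{\nu}}'$. For this one takes two slices $F_1,F_2\subseteq X_{\boldsymbol{\nu}}$ for $\mu^{X_{\boldsymbol{\nu}}}$ with $p_{\boldsymbol{\nu}}'(F_1)\subseteq p_{\boldsymbol{\nu}}'(F_2)$; since $p_{\boldsymbol{\nu}}'$ and $p_{\boldsymbol{\nu}}$ have the same fibers on $X_{\boldsymbol{\nu}}$ (the $T$-orbits, which equal the $\tilde{T}$-orbits intersected with $X_{\boldsymbol{\nu}}$), after shrinking $F_1,F_2$ to slices $F_{1,\epsilon},F_{2,\epsilon}$ for $\tilde{\mu}^{A^\vee_0}$ we have $p_{\boldsymbol{\nu}}(F_{1,\epsilon})\subseteq p_{\boldsymbol{\nu}}(F_{2,\epsilon})$ as well. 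The transition injection $\jmath:F_{1,\epsilon}\rightarrow F_{2,\epsilon}$ produced via $\tilde{\mu}^{A^\vee_0}$ is then $(J^{F_{1,\epsilon}},J^{F_{2,\epsilon}})$-holomorphic by Lemma~\ref{lem:slice biholomorphism}. Thus the orbifold charts $F_{\epsilon}/T_x$ for $N_{\boldsymbol{\nu}}'$ have holomorphic transition injections and form a complex $V$-manifold atlas.

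Finally, one should record that $N_{\boldsymbol{\nu}}'$ is compact of real dimension $2(d+1-r)$ — already established in Corollary~\ref{cor:Nnuprime orbifold} — and connected, which follows from the connectedness of $M_{\boldsymbol{\nu}}$ (hence of $X_{\boldsymbol{\nu}}$, an $S^1$-bundle over it) and continuity of $p_{\boldsymbol{\nu}}'$. The only point requiring a word of care — and the main (minor) obstacle — is the passage from slices for the compact action $\mu^{X_{\boldsymbol{\nu}}}$ to slices for the complexified action $\tilde{\mu}^{A^\vee_0}$ and back: one must be sure that shrinking a $T$-slice in $X_{\boldsymbol{\nu}}$ really does produce a $\tilde{T}$-slice meeting each nearby $\tilde{T}$-orbit exactly in a $T_x$-orbit, so that the resulting chart is the \emph{same} orbifold chart of $N_{\boldsymbol{\nu}}'$ used in Corollary~\ref{cor:Nnuprime orbifold}; this is exactly the content of Proposition~\ref{prop:slices TtildeT}, so no new argument is needed. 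Everything else is a direct assembly of Proposition~\ref{prop:slices TtildeT}, Lemma~\ref{lem:slice biholomorphism} and Corollary~\ref{cor:integrable JF}.
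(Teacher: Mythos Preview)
Your proposal is correct and follows precisely the approach the paper intends: the sentence preceding Corollary~\ref{cor:Nnu'complex} says exactly to apply Proposition~\ref{prop:slices TtildeT}, Lemma~\ref{lem:slice biholomorphism}, Corollary~\ref{cor:integrable JF}, and the holomorphicity of the stabilizer action to the $\mu^{X_{\boldsymbol{\nu}}}$-slices. One small point: your parenthetical ``the $T$-orbits, which equal the $\tilde{T}$-orbits intersected with $X_{\boldsymbol{\nu}}$'' anticipates the injectivity argument in Proposition~\ref{prop:psi bijective}; you only need the trivial direction (each $T$-orbit lies in a $\tilde{T}$-orbit, so $p'_{\boldsymbol{\nu}}(F_1)\subseteq p'_{\boldsymbol{\nu}}(F_2)$ implies $p_{\boldsymbol{\nu}}(F_1)\subseteq p_{\boldsymbol{\nu}}(F_2)$), and the $T$-injection between slices, having the form $\ell\mapsto\tilde{\mu}^{A^\vee_0}_{t\,e^{f(\ell)}}(\ell)$ with $t\in T\subset\tilde{T}$ and $f$ valued in $\mathfrak{t}\subset\tilde{\mathfrak{t}}$, is already of the type to which Lemma~\ref{lem:slice biholomorphism} applies.
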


Since every $T$-orbit in
$X_{\boldsymbol{\nu}}$ is obviously contained in a unique $\tilde{T}$-orbit in
$\tilde{A}_{\boldsymbol{\nu}}$, there is 
a well-defined map
$$
\psi:\,T\cdot x\in N_{\boldsymbol{\nu}}'
\mapsto \tilde{T}\cdot x\in N_{\boldsymbol{\nu}}.
$$
Let $J^{N_{\boldsymbol{\nu}}'}$ and
$J^{N_{\boldsymbol{\nu}}}$ be the 
orbifold complex structures of 
$N_{\boldsymbol{\nu}}'$ and $N_{\boldsymbol{\nu}}$, respectively.

\begin{prop}
\label{prop:psi bijective}
$\psi$ is an isomorphism 
of complex orbifolds
$\left(N_{\boldsymbol{\nu}}',\,J^{N_{\boldsymbol{\nu}}'}\right)
\rightarrow \left(N_{\boldsymbol{\nu}},\,J^{N_{\boldsymbol{\nu}}}\right)$.
\end{prop}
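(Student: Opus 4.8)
The plan is to check separately that $\psi$ is a bijection of the underlying topological spaces, that it lifts compatibly to the local uniformizing charts, and that in those charts it is biholomorphic. First I would establish bijectivity. Surjectivity is immediate, since $p_{\boldsymbol{\nu}}(X_{\boldsymbol{\nu}})=N_{\boldsymbol{\nu}}$ by the definition of $\tilde X_{\boldsymbol{\nu}}=\tilde T\cdot X_{\boldsymbol{\nu}}$. For injectivity, suppose $x_1,x_2\in X_{\boldsymbol{\nu}}$ lie in the same $\tilde T$-orbit, say $x_2=\tilde\mu^{A^\vee_0}_{e^{\mathbf{x}}\mathbf{t}}(x_1)$ with $\mathbf{x}\in\imath\mathfrak t$ (viewed via the splitting $\tilde{\mathfrak t}=\mathbb R^r\oplus\imath\mathbb R^r$, so the "real" part is $e^{\mathbf y}$ for some $\mathbf y\in\mathbb R^r$) and $\mathbf t\in T$. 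Since $x_1,x_2\in X\subset A^\vee$ both have unit norm and $\tilde\mu^{A^\vee}$ scales the fiber norm by $e^{\langle\Phi\circ\pi',\cdot\rangle}$-type factors under the non-compact directions, one concludes $\mathbf y$ must lie in $\boldsymbol{\nu}^\perp$ — here I would invoke the description of $M_{\boldsymbol{\nu}}=\Phi_{\boldsymbol{\nu}^\perp}^{-1}(\mathbf 0)$ together with the fact from Lemma \ref{lem:localyl free mnu}(2) that the non-$\boldsymbol{\nu}^\perp$ directions move points off $M_{\boldsymbol{\nu}}$ (equivalently, the radial/normal computation \eqref{eqn:nux0}--\eqref{eqn:Jnux0} showing $(\imath\boldsymbol{\nu})_X$ has a genuine $\partial_\theta$-component). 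Hence $e^{\mathbf y}\mathbf t\in T^{r-1}_{\boldsymbol{\nu}^\perp}\cdot T$, and in fact, since norms are preserved, $x_2=\mu^X_{\mathbf t'}(x_1)$ for some $\mathbf t'\in T$; so $x_1,x_2$ are already in the same $T$-orbit, giving injectivity.

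Next I would address the orbifold/chart structure. By Proposition \ref{prop:slices TtildeT}, a sufficiently small slice $F\subseteq X_{\boldsymbol{\nu}}$ for $\mu^{X_{\boldsymbol{\nu}}}$ at a point $x$ is simultaneously a slice for $\tilde\mu^{A^\vee_0}$, and by Corollary \ref{cor:holomorphic slice and complexified stabilizer} the two stabilizers coincide, $T_x=\tilde T_x$. Therefore the very same set $F$, equipped with the complex structure $J^F$ constructed above, serves as a local uniformizing chart both for $N_{\boldsymbol{\nu}}'$ (via $p_{\boldsymbol{\nu}}'$) and for $N_{\boldsymbol{\nu}}$ (via $p_{\boldsymbol{\nu}}$), with the same uniformizing group $T_x=\tilde T_x$. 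Under these identifications $\psi$ is, locally, literally the identity map $F/T_x\to F/\tilde T_x$. Compatibility of charts on overlaps is exactly Lemma \ref{lem:slice biholomorphism} (the injections $\jmath$ are $J^F$-holomorphic), so the collection $\{(F,J^F,T_x)\}$ is a holomorphic orbifold atlas realizing both $N_{\boldsymbol{\nu}}'$ and $N_{\boldsymbol{\nu}}$ at once. This immediately gives that $\psi$ and $\psi^{-1}$ are holomorphic as maps of complex orbifolds, i.e. $\psi$ is an isomorphism.

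The main obstacle I expect is the injectivity argument: one must rule out that two distinct $T$-orbits in $X_{\boldsymbol{\nu}}$ become identified after saturating by the non-compact part $e^{\mathbf y}$ of $\tilde T$. This is where Basic Assumption \ref{ba:transv} is genuinely used — the transversality of $\Phi$ to $\mathbb R_+\cdot\imath\boldsymbol{\nu}$ is precisely what forces the flow in any direction not in $\boldsymbol{\nu}^\perp$ to leave the level set, and the norm-preservation on $X$ then pins down the group element to lie in $T$. Once this is in hand, everything else is bookkeeping: matching the two $V$-bundle structures of Corollaries \ref{cor:Nnuprime orbifold} and \ref{cor:complex quotient} through the common slices, and citing Lemmas \ref{lem:unique JF} and \ref{lem:slice biholomorphism} and Corollary \ref{cor:integrable JF} for the holomorphicity statements. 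I would also remark that, since the construction of $B_{\boldsymbol{\nu}}'$ and $B_{\boldsymbol{\nu}}$ uses the same principal bundle data under this identification together with characters $\chi_{\boldsymbol{\nu}}$, resp. $\tilde\chi_{\boldsymbol{\nu}}$, that restrict to one another, $\psi$ pulls $B_{\boldsymbol{\nu}}$ back to $B_{\boldsymbol{\nu}}'$ — though strictly that is the content of a subsequent statement rather than of this proposition.
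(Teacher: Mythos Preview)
Your overall strategy matches the paper's: establish bijectivity, then use that the same slices $F\subseteq X_{\boldsymbol\nu}$ serve as uniformizing charts for both orbifolds so that $\psi$ is locally the identity. The chart/holomorphicity portion is essentially the paper's argument (invoking Proposition~\ref{prop:slices TtildeT}, Corollary~\ref{cor:holomorphic slice and complexified stabilizer}, and Lemma~\ref{lem:slice biholomorphism}), and is fine.

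The injectivity argument, however, has a genuine gap. You assert that norm-preservation on $X$ forces the non-compact parameter $\mathbf y$ to lie in $\boldsymbol{\nu}^\perp$, and then that ``since norms are preserved'' one obtains $e^{\mathbf y}\mathbf t\in T^{r-1}_{\boldsymbol{\nu}^\perp}\cdot T=T$. Neither step is justified. The derivative of $\varrho$ along $t\mapsto e^{-t\boldsymbol\xi}\cdot x_1$ at $t=0$ is indeed proportional to $\langle\boldsymbol\nu,\boldsymbol\xi\rangle$ when $x_1\in X_{\boldsymbol\nu}$, but this is only an infinitesimal statement and says nothing about $\varrho$ at $t=1$; and even granting $\mathbf y\in\boldsymbol{\nu}^\perp$, the element $e^{\mathbf y}$ still lies in the non-compact factor of $\tilde T$, so you have not shown $\mathbf y=\mathbf 0$. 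Your appeal to Lemma~\ref{lem:localyl free mnu}(2) is likewise only a local tubular-neighbourhood statement and gives no global obstruction.

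The paper's argument is different in substance. Writing $\boldsymbol\xi=\boldsymbol\xi'+a\boldsymbol\nu$ with $a\ge 0$ (after possibly swapping $x_1,x_2$) and setting $\boldsymbol\eta:=\imath\boldsymbol\xi\in\mathfrak t$, one has $-\boldsymbol\xi_{A^\vee}=J'(\boldsymbol\eta_{A^\vee})$, which is the \emph{gradient} field of the Hamiltonian $\tilde\Phi^{\boldsymbol\eta}$ with respect to the K\"ahler form $\tilde\omega$ of Proposition~\ref{prop:kahler form Avee} and Corollary~\ref{cor:ml hamiltonian}. Since $\tilde\Phi^{\boldsymbol\eta}(x_1)=\lambda a\|\boldsymbol\nu\|^2\ge 0$ and the action is locally free at $x_1$, the Hamiltonian is strictly positive along the trajectory for $t>0$; one then computes $-\boldsymbol\xi_{A^\vee}(\varrho)=\tilde\Phi^{\boldsymbol\eta}\cdot r\,\partial_r\varrho>0$ there, whence $\varrho(x_2)>\varrho(x_1)=1$, contradicting $x_2\in X$. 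This handles the case $\boldsymbol\xi\in\boldsymbol{\nu}^\perp$ (i.e.\ $a=0$) simultaneously --- precisely the case your sketch leaves open.

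A smaller point: the paper also verifies explicitly that $\psi$ is a homeomorphism, by exhibiting the bijection $\tilde U\mapsto\tilde U\cap X_{\boldsymbol\nu}$, $U\mapsto\tilde T\cdot U$ between $\tilde T$-invariant opens of $\tilde A^\vee_{\boldsymbol\nu}$ and $T$-invariant opens of $X_{\boldsymbol\nu}$, before passing to charts. You skip this, but it is needed for the orbifold-map statement to be meaningful.
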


\begin{proof}
[Proof of Proposition \ref{prop:psi bijective}]
By Corollary \ref{cor:tildeXnu vs tildeAnu},
any $\tilde{T}$-orbit in $\tilde{A}^\vee_{\boldsymbol{\nu}}$ intersects
$X_{\boldsymbol{\nu}}$; thus
$\psi$ is surjective.

To prove that $\psi$ is injective, suppose by contradiction that 
there exist $x_1,\,x_2\in X_{\boldsymbol{\nu}}$ such that
$x_2\in \tilde{T}\cdot x_1$
(i.e., $\psi (T\cdot x_1)=\psi(T\cdot x_2)$), but $x_2\not\in T\cdot x_1$
(i.e., $T\cdot x_1\neq T\cdot x_2$). 
Perhaps after replacing $x_2$ with another point in $T\cdot x_2$, we may assume that
$x_2=\tilde{\mu}^{A^\vee_0}_{e^{-\boldsymbol{\xi}}}(x_1)$ 
for some $\boldsymbol{\xi}\in \mathbb{R}^r\setminus \{\mathbf{0}\}$.
We may write uniquely $\boldsymbol{\xi}=\boldsymbol{\xi}'+a\,\boldsymbol{\nu}$,
where $\boldsymbol{\xi}'\in \boldsymbol{\nu}^\perp$ and $a\in \mathbb{R}$.
Perhaps interchanging $x_1$ and $x_2$, we may assume without loss that $a\ge 0$.

Let us set
$\boldsymbol{\eta}:=\imath\,\boldsymbol{\xi}\in \mathfrak{t}$. Considering the associated vector fields
$\boldsymbol{\xi}_{A^\vee},\,\boldsymbol{\eta}_{A^\vee}\in \mathfrak{X}(A^\vee)$ we have
$-\boldsymbol{\xi}_{A^\vee}=J'(\boldsymbol{\eta}_{A^\vee})$; hence $-\boldsymbol{\xi}_{A^\vee}$
is the gradient vector field of the Hamiltonian function $\tilde{\Phi}^{\boldsymbol{\eta}}=
\langle\tilde{\Phi},\boldsymbol{\eta}\rangle$, where $\tilde{\Phi}$ is as in (\ref{eqn:tildePhi}).

Since $x_1\in X_{\boldsymbol{\nu}}$, we have $\tilde{\Phi}(x_1)=\imath\,\lambda\,\boldsymbol{\nu}$
for some $\lambda>0$, hence 
$\tilde{\Phi}^{\boldsymbol{\eta}}(x_1)=\lambda\,a\,\|\boldsymbol{\nu}\|^2\ge 0$. 
Since $\tilde{\Phi}^{\boldsymbol{\eta}}$ is strictly increasing along its gradient flow
where the gradient is non-vanishing, 
\begin{equation}
\label{eqn:gradient vector field}
\tilde{\Phi}^{\boldsymbol{\eta}}\left(\tilde{\mu}^{A^\vee_0}_{e^{-t\,\boldsymbol{\xi}}}(x_1)\right)
> \tilde{\Phi}^{\boldsymbol{\eta}}(x_1)\ge 0\quad \forall \,t>0.
\end{equation}

On the other hand, we have
$$
\boldsymbol{\eta}_{A^\vee_0}=\boldsymbol{\eta}_M^\sharp-\tilde{\Phi}^{\boldsymbol{\eta}}\,\partial_\theta
\quad\Rightarrow\quad -\boldsymbol{\xi}_{A^\vee}=
\big(J\boldsymbol{\eta}_M)^\sharp + \tilde{\Phi}^{\boldsymbol{\eta}}\,r\,\partial_r.
$$
Here $r\,\partial_r$ is the generator of the 1-parameter group of diffeomorphisms
$\ell\mapsto e^t\,\ell$. With $\varrho$ as in (\ref{eqn:tildeomega}), 
for every $t>0$ we have
$$
-\boldsymbol{\xi}_{A^\vee}(\varrho)\left(\tilde{\mu}^{A^\vee_0}_{e^{-t\,\boldsymbol{\xi}}}(x_1)\right)
=\tilde{\Phi}^{\boldsymbol{\eta}}
\left(\tilde{\mu}^{A^\vee_0}_{e^{-t\,\boldsymbol{\xi}}}(x_1)  \right)
\,r\,\partial_r\varrho\left(\tilde{\mu}^{A^\vee_0}_{e^{-t\,\boldsymbol{\xi}}}(x_1)\right)>0.
$$
It follows that 
$\varrho\left(\tilde{\mu}^{A^\vee_0}_{e^{-t\,\boldsymbol{\xi}}}(x_1)\right)>\varrho (x_1)=1$ for $t>0$;
taking $t=1$, we conclude that $x_2\not\in X$, a contradiction.
Hence $\psi$ is a bijection. 

Let us verify that $\psi$ is a homeomorphism. 
The open sets of 
$N_{\boldsymbol{\nu}}'$ have the form $U/T$, where $U\subseteq X_{\boldsymbol{\nu}}$
is open and $T$-invariant, and the open sets of $N_{\boldsymbol{\nu}}$ have 
the form $\tilde{U}/\tilde{T}$, where $\tilde{U}\subseteq A^\vee_{\boldsymbol{\nu}}$ is open and
$\tilde{T}$-invariant.
The previous argument shows that each $\tilde{T}$-orbit in $\tilde{A}^\vee_{\boldsymbol{\nu}}$
intersects $X_{\boldsymbol{\nu}}$ in a single 
$T$-orbit.
One can see from this (and the definition 
of $\tilde{A}^\vee_{\boldsymbol{\nu}}$) that there is a
bijection between the family of $\tilde{T}$-invariant open sets $\tilde{U}$ in $\tilde{A}^\vee_{\boldsymbol{\nu}}$ and the family of
$T$-invariant open sets $U$ in $X_{\boldsymbol{\nu}}$ given by
$\tilde{U}\mapsto U:=\tilde{U}\cap X_{\boldsymbol{\nu}}$,
with inverse
$
U\mapsto \tilde{U}:=\tilde{T}\cdot U$.

Given any such $\tilde{U}$, we have
$
\psi^{-1}(\tilde{U}/\tilde{T})=
U/T\subseteq N_{\boldsymbol{\nu}}'$,
implying that $\psi$ is continuous. Similarly, given any such $U$ we have
$
\psi(U/T)=\tilde{U}/\tilde{T}
$,
implying that $\psi$ is open.
Hence $\psi$ is a homeomorphism.

To conclude that $\psi$ is an isomorphism of complex orbifolds,
it suffices to verify that its local expressions in uniformizing charts are biholomorphisms; actually, it suffices to do so for corresponding defining families 
in the sense of \cite{sat1} and \cite{sat2}
that cover $N_{\boldsymbol{\nu}}'$ and 
$N_{\boldsymbol{\nu}}$.
Let $F$ be a slice at $x$ for 
$\mu^{X_{\boldsymbol{\nu}}}$ at some $x\in X_{\boldsymbol{\nu}}$; by Proposition \ref{prop:slices TtildeT}, perhaps after shrinking $F$ if necessary, we may assume that
$F$ is also a slice at $x$ for $\tilde{\mu}^{\tilde{A}^\vee_{\boldsymbol{\nu}}}$. Hence 
$(F,J^F)$ is a
uniformizing chart of both $N_{\boldsymbol{\nu}}'$ and $N_{\boldsymbol{\nu}}$. 
By definition of $\psi$ and the previous considerations, the identity 
$\mathrm{id}_F:F\rightarrow F$ is a local representative map of $\psi$, and it is obviously biholomorphic $(F,J^F)\rightarrow
(F,J^F)$.
\end{proof}

The sheaf of holomorphic functions on $N_{\boldsymbol{\nu}}$ 
is defined equivalently by the $\tilde{T}$-invariant
holomorphic functions on 
$\tilde{A}^\vee_{\boldsymbol{\nu}}$
or the $T_{\ell}$-invariant 
holomorphic functions on the slices
$(F,J^F)$. 
Let us briefly clarify this point.

Since $(F,J^F)$ is generally not a complex submanifold of $(A^\vee,J')$, arbitrary
holomorphic functions on the saturation 
$\tilde{T}\cdot F$ needn't restrict
to holomorphic functions on $(F,J^F)$. However, this does happens 
if we restrict to invariant holomorphic functions.

\begin{defn}
\label{defn:inv hol fctns}
Suppose that $\ell\in A^\vee_{\boldsymbol{\nu}}$ and that $F\subseteq A^\vee_{\boldsymbol{\nu}}$ 
is a slice at $\ell$ for 
$\tilde{\mu}^{\tilde{A}^\vee_{\boldsymbol{\nu}}}$. 
Let us adopt the following notation.

\begin{enumerate}
\item $\mathcal{O}(F)$ is the ring of $J^F$-holomorphic
functions on $F$;
\item $\mathcal{O}(F)^{T_\ell}\subseteq \mathcal{O}(F)$ is the subring of $T_\ell$-invariant functions
in $\mathcal{O}(F)$;
\item $\mathcal{O}(\tilde{T}\cdot F)$ is the ring of $J'$-holomorphic
functions on the saturation of $F$ under
$\tilde{\mu}^{A^\vee}$;
\item $\mathcal{O}(\tilde{T}\cdot F)^{\tilde{T}}\subseteq \mathcal{O}(\tilde{T}\cdot F)$ is the subring
of $\tilde{\mu}^{\tilde{A}^\vee_{\boldsymbol{\nu}}}$-invariant functions.
\end{enumerate}

\end{defn}

Then we have the following, whose prooof will be omitted (see the argument for Proposition \ref{prop:restrictionCRolo}).

\begin{lem}
\label{lem:natural iso inv hol}
In the situation of Definition \ref{defn:inv hol fctns}, restriction yields an isomorphism 
$\mathcal{O}(\tilde{T}\cdot F)^{\tilde{T}}\rightarrow \mathcal{O}(F)^{T_\ell}$.
 
\end{lem}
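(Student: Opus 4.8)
The assertion is that restriction $\mathcal{O}(\tilde{T}\cdot F)^{\tilde{T}}\rightarrow \mathcal{O}(F)^{T_\ell}$ is a ring isomorphism, where $F$ is a slice at $\ell$ for $\tilde{\mu}^{\tilde{A}^\vee_{\boldsymbol{\nu}}}$. I would first record the structural fact underlying everything: since the action is proper and locally free on $\tilde{A}^\vee_{\boldsymbol{\nu}}$ with finite stabilizer $\tilde{T}_\ell=T_\ell$ (Corollary \ref{cor:holomorphic slice and complexified stabilizer}), the saturation $\tilde{T}\cdot F$ is, after possibly shrinking $F$, diffeomorphic to the associated bundle $(\tilde{T}\times F)/T_\ell$, where $T_\ell$ acts on $\tilde{T}$ by right translation and on $F$ by the slice action; equivalently $\tilde{T}\cdot F \cong (\tilde{T}/T_\ell)\times_{?} F$ but more usefully the projection $\tilde{T}\cdot F\to (\tilde{T}\cdot F)/\tilde{T}$ has local sections given by translates of $F$. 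The key analytic input is that $\tilde{\mathfrak{t}}_{A^\vee}(\ell)$ is a \emph{complex} subspace of $T_\ell A^\vee$ that is complementary to $T_\ell F$ (this is exactly the content of the second and fourth statements of Lemma \ref{lem:local diffeo tildeTXnu}, together with Corollary \ref{cor:totally real subspace} applied after complexification), and that $J^F$ was \emph{defined} (Lemma \ref{lem:unique JF}) precisely so that $v\in T_\ell F$ satisfies $J^F_\ell(v)-J'_\ell(v)\in\tilde{\mathfrak{t}}_{A^\vee}(\ell)$.

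\textbf{Step 1: injectivity.} Suppose $g\in\mathcal{O}(\tilde{T}\cdot F)^{\tilde{T}}$ restricts to $0$ on $F$. Since $g$ is $\tilde{T}$-invariant and $\tilde{T}\cdot F = \tilde{\mu}^{A^\vee}(\tilde{T}, F)$, $g$ vanishes identically; this is immediate and requires no holomorphy.

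\textbf{Step 2: the restriction lands in $\mathcal{O}(F)^{T_\ell}$.} Given $g\in\mathcal{O}(\tilde{T}\cdot F)^{\tilde{T}}$, its restriction $g|_F$ is $T_\ell$-invariant because $T_\ell\subseteq\tilde{T}$ preserves $F$ and $g$ is $\tilde{T}$-invariant. The point needing argument is that $g|_F$ is $J^F$-holomorphic. Fix $\ell'\in F$ and $v\in T_{\ell'}F$. I want $\mathrm{d}_{\ell'}(g|_F)(J^F_{\ell'}v)=\imath\,\mathrm{d}_{\ell'}(g|_F)(v)$, i.e. $\mathrm{d}_{\ell'}g(J^F_{\ell'}v)=\imath\,\mathrm{d}_{\ell'}g(v)$. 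Write $J^F_{\ell'}v=J'_{\ell'}v+\zeta$ with $\zeta\in\tilde{\mathfrak{t}}_{A^\vee}(\ell')$ (Lemma \ref{lem:unique JF}). Then $\mathrm{d}_{\ell'}g(J^F_{\ell'}v)=\mathrm{d}_{\ell'}g(J'_{\ell'}v)+\mathrm{d}_{\ell'}g(\zeta)$. The first term is $\imath\,\mathrm{d}_{\ell'}g(v)$ since $g$ is $J'$-holomorphic on the open set $\tilde{T}\cdot F$; the second term vanishes because $g$ is constant along $\tilde{T}$-orbits, so its differential annihilates $\tilde{\mathfrak{t}}_{A^\vee}(\ell')$. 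Hence $g|_F\in\mathcal{O}(F)^{T_\ell}$.

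\textbf{Step 3: surjectivity — the main obstacle.} Given $f\in\mathcal{O}(F)^{T_\ell}$, define $\hat{f}$ on $\tilde{T}\cdot F$ by $\hat{f}(\tilde{\mu}^{A^\vee}_{\tilde{t}}(\ell'))=f(\ell')$ for $\ell'\in F$. This is well defined: if $\tilde{\mu}^{A^\vee}_{\tilde{t}_1}(\ell_1)=\tilde{\mu}^{A^\vee}_{\tilde{t}_2}(\ell_2)$ with $\ell_i\in F$, then by the slice property $\ell_1,\ell_2$ lie in the same $\tilde{T}$-orbit intersected with $F$, so $\ell_2=\tilde{\mu}^{A^\vee}_{t_0}(\ell_1)$ for some $t_0\in T_\ell$ (after shrinking $F$), and $T_\ell$-invariance of $f$ gives $f(\ell_1)=f(\ell_2)$. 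By construction $\hat{f}$ is $\tilde{T}$-invariant and $\hat{f}|_F=f$; it is smooth because $\tilde{T}\cdot F$ fibers smoothly over $F/T_\ell$ with the slice giving a smooth section. The genuine difficulty is showing $\hat{f}$ is $J'$-holomorphic on $\tilde{T}\cdot F$, i.e. $\bar\partial\hat{f}=0$. I would argue infinitesimally: at a point $\ell'\in F$ the tangent space splits $T_{\ell'}A^\vee=\tilde{\mathfrak{t}}_{A^\vee}(\ell')\oplus T_{\ell'}F$, and this is $J'$-\emph{adapted} only for the first summand, not the second — that is why $J^F\neq J'|_{TF}$. By $\tilde{T}$-invariance, $\mathrm{d}_{\ell'}\hat{f}$ vanishes on $\tilde{\mathfrak{t}}_{A^\vee}(\ell')$ and equals $\mathrm{d}_{\ell'}f$ on $T_{\ell'}F$. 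To check $\mathrm{d}_{\ell'}\hat{f}\circ J'_{\ell'}=\imath\,\mathrm{d}_{\ell'}\hat{f}$, take $w\in T_{\ell'}A^\vee$ and decompose $w=\zeta+v$, $\zeta\in\tilde{\mathfrak{t}}_{A^\vee}(\ell')$, $v\in T_{\ell'}F$. Since $\tilde{\mathfrak{t}}_{A^\vee}(\ell')$ is $J'$-stable, $J'_{\ell'}\zeta\in\tilde{\mathfrak{t}}_{A^\vee}(\ell')$ and contributes nothing on either side; so it reduces to $\mathrm{d}_{\ell'}\hat{f}(J'_{\ell'}v)=\imath\,\mathrm{d}_{\ell'}\hat{f}(v)=\imath\,\mathrm{d}_{\ell'}f(v)$. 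Now decompose $J'_{\ell'}v$ using the same splitting: by Lemma \ref{lem:unique JF}, $J'_{\ell'}v = J^F_{\ell'}v - (\text{the }\tilde{\mathfrak{t}}\text{-component})$, with $J^F_{\ell'}v\in T_{\ell'}F$. Hence $\mathrm{d}_{\ell'}\hat{f}(J'_{\ell'}v)=\mathrm{d}_{\ell'}\hat{f}(J^F_{\ell'}v)=\mathrm{d}_{\ell'}f(J^F_{\ell'}v)=\imath\,\mathrm{d}_{\ell'}f(v)$, using that $f$ is $J^F$-holomorphic. This gives $\bar\partial\hat{f}=0$ pointwise on $F$; by $\tilde{T}$-invariance of both $\hat{f}$ and the complex structure $J'$ it then holds at every point of $\tilde{T}\cdot F$, so $\hat{f}\in\mathcal{O}(\tilde{T}\cdot F)^{\tilde{T}}$. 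Finally $\hat{f}|_F=f$ gives surjectivity, and the map is visibly a ring homomorphism, completing the proof.

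I expect Step 3 to be the crux: one must be careful that $F$ is \emph{not} a complex submanifold, so the naive "restrict/extend" intuition fails, and the whole argument hinges on exploiting that $\tilde{\mathfrak{t}}_{A^\vee}$ is $J'$-invariant while $TF$ is not, together with the defining relation for $J^F$ in Lemma \ref{lem:unique JF}. As the statement itself indicates, this parallels the argument to be given for Proposition \ref{prop:restrictionCRolo}, so I would flag that dependency and present only the modifications needed.
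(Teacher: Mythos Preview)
Your proposal is correct and follows essentially the same approach as the paper's intended argument: use the defining relation $J^F_{\ell'}(v)-J'_{\ell'}(v)\in\tilde{\mathfrak{t}}_{A^\vee}(\ell')$ from Lemma \ref{lem:unique JF}, together with the fact that $\tilde{T}$-invariant functions have differential annihilating $\tilde{\mathfrak{t}}_{A^\vee}$, to show both that restriction lands in $\mathcal{O}(F)^{T_\ell}$ and that the $\tilde{T}$-invariant extension of a $J^F$-holomorphic function is $J'$-holomorphic. The paper explicitly defers to the argument for Proposition \ref{prop:restrictionCRolo}, which is precisely the parallel you draw in your closing remark.
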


\subsection{Holomorphic and CR functions 
on $\tilde{A}^\vee_{\boldsymbol{\nu}}$ and
$X_{\boldsymbol{\nu}}$}

$M_{\boldsymbol{\nu}}$ is a CR submanifold 
of $M$, and the maximal complex sub-bundle 
$\mathcal{H}(M_{\boldsymbol{\nu}})\subseteq TM_{\boldsymbol{\nu}}$ has complex dimension
$d+1-r$, and is as follows.
If $m\in M_{\boldsymbol{\nu}}$, 
$(\tilde{t}_{\boldsymbol{\nu}^\perp}^{r-1})_M(m)\subseteq T_mM_{\boldsymbol{\nu}}$
is a complex subspace of dimension $r-1$,
since
$\tilde{\gamma}^M$ is locally free at $m$.
Then
$$
\mathcal{H}(M_{\boldsymbol{\nu}})_m=
(\tilde{t}_{\boldsymbol{\nu}^\perp}^{r-1})_M(m)^{\perp_{h_m}},
$$
where $h_m=g_m-\imath\,\omega_m$ is the Hermitian product on
$T_mM$ associated to the K\"{a}hler metric.

Similarly, $X_{\boldsymbol{\nu}}$ is a CR submanifold 
of $A^\vee$. The maximal complex sub-bundle 
$\mathcal{H}(X_{\boldsymbol{\nu}})\subset TX_{\boldsymbol{\nu}}$ is as follows.
If $x\in X_{\boldsymbol{\nu}}$ and $m=\pi(x)$, then
\begin{equation}
\label{eqnHXnudef}
\mathcal{H}(X_{\boldsymbol{\nu}})_x=\mathcal{H}(M_{\boldsymbol{\nu}})_m^\sharp.
\end{equation}

\begin{defn}
\label{defn:equivariant CRholo}
Let be given $\boldsymbol{\lambda}\in \mathbb{Z}^r$.

For any $\tilde{T}$-invariant open subset
$\tilde{U}\subseteq \tilde{A}^\vee_{\boldsymbol{\nu}}$, let 
$\mathcal{O}(\tilde{U})_{\boldsymbol{\lambda}}$ be the 
ring of holomorphic functions  
$\tilde{S}:
\tilde{U}\rightarrow \mathbb{C}$ such that 
\begin{equation}
\label{eqn:olo equivariante}
\tilde{S}\left(\tilde{\mu}^{A^\vee}
_{\tilde{\mathbf{t}}^{-1}}(\ell)\right)
=\tilde{\chi}_{\boldsymbol{\lambda}}(\tilde{\mathbf{t}})\,
\tilde{S}(\ell)\qquad
(\tilde{\mathbf{t}}\in \tilde{T},\, \ell\in \tilde{U}). 
\end{equation}

For any $T$-invariant open subset
$U\subseteq X_{\boldsymbol{\nu}}$, let
let 
$\mathcal{CR}(U)_{\boldsymbol{\lambda}}$
be the ring of CR functions on $U$ satisfying
\begin{equation}
\label{eqn:CR equivariante}
S\left(\mu^c_{\mathbf{t}^{-1}}(x)\right)=\chi_{\boldsymbol{\lambda}}(\mathbf{t})\,S(x)\qquad 
(\mathbf{t}\in T,\,x\in X_{\boldsymbol{\nu}}).
\end{equation}

\end{defn}

\begin{prop}
\label{prop:restrictionCRolo}
With notation in Definition
\ref{defn:equivariant CRholo}, 
suppose that 
$U=\tilde{U}\cap X_{\boldsymbol{\nu}}$.
Then restriction yields a ring isomorphism 
$\mathcal{O}(\tilde{U})_{\boldsymbol{\lambda}}\rightarrow
\mathcal{CR}(U)_{\boldsymbol{\lambda}}$.

\end{prop}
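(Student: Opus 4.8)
The plan is to exploit the tubular neighborhood structure from Lemma \ref{lem:local diffeo tildeTXnu}: near $X_{\boldsymbol{\nu}}$, the saturation $\tilde{A}^\vee_{\boldsymbol{\nu}}$ looks like $(\imath\,\mathfrak{t})(\epsilon)\times X_{\boldsymbol{\nu}}$ via $\Psi'$, and the $\tilde{T}$-action in these coordinates translates the first factor (infinitesimally) while the $T$-subaction acts on $X_{\boldsymbol{\nu}}$ alone. First I would show the map is well-defined: if $\tilde{S}\in \mathcal{O}(\tilde{U})_{\boldsymbol{\lambda}}$, its restriction $S:=\tilde{S}|_U$ clearly satisfies the equivariance \eqref{eqn:CR equivariante} since $\chi_{\boldsymbol{\lambda}}=\tilde{\chi}_{\boldsymbol{\lambda}}|_T$, so the only content is that $S$ is a CR function on $U$, i.e. $\overline{\partial}_b S=0$ along $\mathcal{H}(X_{\boldsymbol{\nu}})$. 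This follows from \eqref{eqnHXnudef} together with the fact that $\mathcal{H}(X_{\boldsymbol{\nu}})_x$ sits inside the $J'$-complex tangent space of $A^\vee_0$ at $x$ (it is a horizontal lift of a complex subspace of $T_mM$, hence $J'$-invariant and agreeing with $J'$ there); since $\tilde{S}$ is $J'$-holomorphic on $\tilde{U}$, its differential is $\mathbb{C}$-linear on all of $T_x A^\vee_0$, in particular on $\mathcal{H}(X_{\boldsymbol{\nu}})_x$.

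Injectivity is immediate from the unique continuation / density property: $\tilde{U}=\tilde{T}\cdot U$ and a $\tilde{T}$-equivariant holomorphic function vanishing on $U$ vanishes on the whole saturation. The substance is surjectivity: given $S\in \mathcal{CR}(U)_{\boldsymbol{\lambda}}$, I must produce $\tilde{S}\in \mathcal{O}(\tilde{U})_{\boldsymbol{\lambda}}$ restricting to it. The natural candidate is forced by equivariance: for $\ell=\tilde{\mu}^{A^\vee_0}_{\tilde{t}}(x)$ with $x\in U$, set $\tilde{S}(\ell):=\tilde{\chi}_{\boldsymbol{\lambda}}(\tilde{t})^{-1}\,S(x)$. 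One checks this is well-defined using that any two such representations $(\tilde{t},x)$, $(\tilde{t}',x')$ differ by an element of $T$ (because a $\tilde{T}$-orbit meets $X_{\boldsymbol{\nu}}$ in a single $T$-orbit — this is exactly the key fact established in the proof of Proposition \ref{prop:psi bijective}), and $S$ itself is $T$-equivariant with the matching character. Smoothness of $\tilde{S}$ follows from the slice/tubular structure. The remaining and genuinely analytic point is holomorphy of $\tilde{S}$ on $\tilde{U}$.

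The hard part will be verifying $\overline{\partial}\tilde{S}=0$ on the full open set $\tilde{U}\subseteq A^\vee_0$, not merely along $X_{\boldsymbol{\nu}}$. The strategy is: (i) $\tilde{S}$ is annihilated by all the vector fields $\overline{\xi_{A^\vee}}$ — equivalently by $\xi_{A^\vee}-\imath\,J'(\xi_{A^\vee})$ — for $\xi\in \tilde{\mathfrak{t}}$, because $\tilde{S}$ is $\tilde{T}$-equivariant for a \emph{holomorphic} character $\tilde{\chi}_{\boldsymbol{\lambda}}$ (the real and imaginary infinitesimal actions are $J'$-conjugate, so equivariance under the real torus plus the holomorphic extension of the character forces annihilation by the $(0,1)$-parts of the complexified generating fields); (ii) along $X_{\boldsymbol{\nu}}$, $\tilde{S}$ is annihilated by $\overline{\mathcal{H}(X_{\boldsymbol{\nu}})}$ because $S$ is CR and $\mathcal{H}(X_{\boldsymbol{\nu}})_x$ is $J'$-stable inside $T_xA^\vee_0$. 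Now the punchline is a dimension count: at any $x\in X_{\boldsymbol{\nu}}$, by Corollary \ref{cor:totally real subspace} and Lemma \ref{lem:local diffeo tildeTXnu}(2), $\tilde{\mathfrak{t}}_{A^\vee_0}(x)$ is a complex $r$-dimensional subspace meeting $T_xX_{\boldsymbol{\nu}}$ only in the real $\imath\,\boldsymbol{\nu}^\perp$ directions, and together $\mathcal{H}(X_{\boldsymbol{\nu}})_x\oplus \tilde{\mathfrak{t}}_{A^\vee_0}(x)$ spans $T_xA^\vee_0$ over $\mathbb{C}$, of total complex dimension $(d+1-r)+r=d+1=\dim_{\mathbb{C}}A^\vee_0$. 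Hence the antiholomorphic vectors annihilating $\tilde{S}$ span $T^{0,1}_x A^\vee_0$ at every point of $X_{\boldsymbol{\nu}}$, so $\overline{\partial}\tilde{S}=0$ on $X_{\boldsymbol{\nu}}$; then $\tilde{T}$-equivariance propagates this to all of $\tilde{U}=\tilde{T}\cdot U$, since $\tilde{\mu}^{A^\vee_0}_{\tilde{t}}$ is a biholomorphism. This gives $\tilde{S}\in \mathcal{O}(\tilde{U})_{\boldsymbol{\lambda}}$ and completes surjectivity; that restriction and this construction are mutually inverse is then formal.
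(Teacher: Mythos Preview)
Your proof is correct and follows essentially the same approach as the paper's: both define the inverse by the forced formula $\tilde{S}(\tilde{\mu}^{A^\vee_0}_{\tilde{t}}(x))=\tilde{\chi}_{\boldsymbol{\lambda}}(\tilde{t})^{-1}S(x)$, check well-definedness via the fact (from the proof of Proposition~\ref{prop:psi bijective}) that a $\tilde{T}$-orbit meets $X_{\boldsymbol{\nu}}$ in a single $T$-orbit, and establish holomorphy by decomposing $T_xA^\vee_0$ as $\tilde{\mathfrak{t}}_{A^\vee}(x)\oplus\mathcal{H}(X_{\boldsymbol{\nu}})_x$ and using holomorphy of $\tilde{\chi}_{\boldsymbol{\lambda}}$ on the first summand and the CR condition on the second. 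The only cosmetic difference is packaging: the paper transports the check to $\tilde{T}\times X_{\boldsymbol{\nu}}$ via the surjection $\mathcal{F}$ and verifies $\mathbb{C}$-linearity of $\mathrm{d}\hat{S}$ on $\mathcal{K}(\tilde{t},x)=\tilde{\mathfrak{t}}\times\mathcal{H}(X_{\boldsymbol{\nu}})_x$, whereas you work directly at points of $X_{\boldsymbol{\nu}}$ and then propagate by equivariance---same content, slightly different bookkeeping.
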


\begin{cor}
\label{cor:iso holo CR global}
Restriction yields an isomorphism
$\mathcal{O}(\tilde{A}^\vee_{\boldsymbol{\nu}})_{\boldsymbol{\lambda}}\rightarrow
\mathcal{CR}(X_{\boldsymbol{\nu}})_{\boldsymbol{\lambda}}$.
\end{cor}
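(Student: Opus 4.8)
The plan is to deduce the global statement from the local one, Proposition \ref{prop:restrictionCRolo}, by a standard sheaf-theoretic gluing argument, using that the two sides are sections of sheaves that are locally isomorphic via restriction. First I would observe that restriction is well defined: if $\tilde{S}\in \mathcal{O}(\tilde{A}^\vee_{\boldsymbol{\nu}})_{\boldsymbol{\lambda}}$, then $\tilde{S}|_{X_{\boldsymbol{\nu}}}$ is CR on $X_{\boldsymbol{\nu}}$ because $X_{\boldsymbol{\nu}}$ is a CR submanifold of $A^\vee$ (its maximal complex subbundle $\mathcal{H}(X_{\boldsymbol{\nu}})$ being a subbundle of $T^{1,0}A^\vee\oplus\overline{T^{1,0}A^\vee}$ restricted to $X_{\boldsymbol{\nu}}$, by (\ref{eqnHXnudef})), and the equivariance property (\ref{eqn:olo equivariante}) for $\tilde{\mathbf{t}}\in T\subseteq\tilde{T}$ restricts directly to (\ref{eqn:CR equivariante}) since $\mu^X$ is the restriction of $\tilde{\mu}^{A^\vee}$ to $T$ and $X_{\boldsymbol{\nu}}\subseteq A^\vee_{\boldsymbol{\nu}}$ is $T$-invariant. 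So the restriction map $\mathcal{O}(\tilde{A}^\vee_{\boldsymbol{\nu}})_{\boldsymbol{\lambda}}\to \mathcal{CR}(X_{\boldsymbol{\nu}})_{\boldsymbol{\lambda}}$ makes sense.

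For injectivity: suppose $\tilde{S}\in \mathcal{O}(\tilde{A}^\vee_{\boldsymbol{\nu}})_{\boldsymbol{\lambda}}$ restricts to $0$ on $X_{\boldsymbol{\nu}}$. By Corollary \ref{cor:tildeXnu vs tildeAnu} and the definition of $\tilde{X}_{\boldsymbol{\nu}}$, every point of $\tilde{A}^\vee_{\boldsymbol{\nu}}=\tilde{X}_{\boldsymbol{\nu}}$ lies on the $\tilde{T}$-orbit of a point of $X_{\boldsymbol{\nu}}$; combined with the equivariance (\ref{eqn:olo equivariante}), vanishing on $X_{\boldsymbol{\nu}}$ forces vanishing on all of $\tilde{A}^\vee_{\boldsymbol{\nu}}$. (Alternatively, this follows from the injectivity half of Proposition \ref{prop:restrictionCRolo} applied to a cover.) For surjectivity, which is the substantive part, I would take $S\in \mathcal{CR}(X_{\boldsymbol{\nu}})_{\boldsymbol{\lambda}}$ and cover $X_{\boldsymbol{\nu}}$ by $T$-invariant open sets $U_i$ of the form $U_i=\tilde{U}_i\cap X_{\boldsymbol{\nu}}$ with $\tilde{U}_i\subseteq \tilde{A}^\vee_{\boldsymbol{\nu}}$ a $\tilde{T}$-invariant open set (this is possible since $\tilde{A}^\vee_{\boldsymbol{\nu}}=\tilde{T}\cdot X_{\boldsymbol{\nu}}$, so one may take $\tilde{U}_i=\tilde{T}\cdot U_i$ for any $T$-invariant open cover $\{U_i\}$ of $X_{\boldsymbol{\nu}}$). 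By Proposition \ref{prop:restrictionCRolo}, each $S|_{U_i}$ extends uniquely to $\tilde{S}_i\in \mathcal{O}(\tilde{U}_i)_{\boldsymbol{\lambda}}$. On overlaps $\tilde{U}_i\cap \tilde{U}_j$, the two extensions $\tilde{S}_i$ and $\tilde{S}_j$ both restrict to $S$ on $\tilde{U}_i\cap\tilde{U}_j\cap X_{\boldsymbol{\nu}}=U_i\cap U_j$, hence agree there by the uniqueness (injectivity) clause of Proposition \ref{prop:restrictionCRolo} applied to the open set $\tilde{U}_i\cap\tilde{U}_j$ — provided one checks that $\tilde{U}_i\cap\tilde{U}_j$ is again of the admissible form, i.e.\ equals $\tilde{T}\cdot(\tilde{U}_i\cap\tilde{U}_j\cap X_{\boldsymbol{\nu}})$; this follows from the observation used in the proof of Proposition \ref{prop:psi bijective} that each $\tilde{T}$-orbit in $\tilde{A}^\vee_{\boldsymbol{\nu}}$ meets $X_{\boldsymbol{\nu}}$ in a single $T$-orbit, which gives a bijection between $\tilde{T}$-invariant open subsets of $\tilde{A}^\vee_{\boldsymbol{\nu}}$ and $T$-invariant open subsets of $X_{\boldsymbol{\nu}}$ compatible with intersections. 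Therefore the $\tilde{S}_i$ glue to a global holomorphic function $\tilde{S}\in \mathcal{O}(\tilde{A}^\vee_{\boldsymbol{\nu}})$ with $\tilde{S}|_{X_{\boldsymbol{\nu}}}=S$, and since each $\tilde{S}_i$ satisfies (\ref{eqn:olo equivariante}) and these agree, so does $\tilde{S}$; that is, $\tilde{S}\in \mathcal{O}(\tilde{A}^\vee_{\boldsymbol{\nu}})_{\boldsymbol{\lambda}}$.

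The main obstacle, and the only point that genuinely needs care, is the bookkeeping of admissible open sets: one must be sure that the local extension theorem (Proposition \ref{prop:restrictionCRolo}) is being invoked only on open sets $\tilde{U}$ satisfying $\tilde{U}=\tilde{T}\cdot(\tilde{U}\cap X_{\boldsymbol{\nu}})$, and that this class of sets is closed under pairwise intersection so that uniqueness on overlaps is licit. As noted, this is exactly the content of the orbit-intersection fact established inside the proof of Proposition \ref{prop:psi bijective} (each $\tilde{T}$-orbit meets $X_{\boldsymbol{\nu}}$ in one $T$-orbit), so no new work is required beyond citing it. Everything else — that restriction preserves the CR condition, the equivariance, and that gluing of holomorphic functions is holomorphic — is routine. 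One could also phrase the whole argument more slickly by saying that $\mathcal{O}(-)_{\boldsymbol{\lambda}}$ on $\tilde{A}^\vee_{\boldsymbol{\nu}}$ and $\mathcal{CR}(-)_{\boldsymbol{\lambda}}$ on $X_{\boldsymbol{\nu}}$ are sheaves on homeomorphic topological spaces (via $\tilde{U}\leftrightarrow \tilde{U}\cap X_{\boldsymbol{\nu}}$), and Proposition \ref{prop:restrictionCRolo} exhibits an isomorphism of these sheaves; taking global sections then yields the Corollary immediately.
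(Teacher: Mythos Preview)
Your proof is correct, but it is considerably more elaborate than necessary. In the paper the corollary is not given a separate proof because it is simply the special case of Proposition~\ref{prop:restrictionCRolo} obtained by taking $\tilde{U}=\tilde{A}^\vee_{\boldsymbol{\nu}}$ and $U=X_{\boldsymbol{\nu}}$: these are themselves a $\tilde{T}$-invariant open subset of $\tilde{A}^\vee_{\boldsymbol{\nu}}$ and a $T$-invariant open subset of $X_{\boldsymbol{\nu}}$, respectively, and they satisfy $U=\tilde{U}\cap X_{\boldsymbol{\nu}}$ since $X_{\boldsymbol{\nu}}\subset \tilde{A}^\vee_{\boldsymbol{\nu}}$. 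So no gluing argument is needed at all.

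What you have effectively done is recover the global instance of the Proposition by patching together local instances; this is valid, and your observation that the correspondence $\tilde{U}\leftrightarrow \tilde{U}\cap X_{\boldsymbol{\nu}}$ is a bijection compatible with intersections (which you correctly attribute to the orbit-intersection argument in the proof of Proposition~\ref{prop:psi bijective}) is exactly what would be required if one \emph{did} need to glue. But since Proposition~\ref{prop:restrictionCRolo} is stated for arbitrary admissible pairs $(\tilde{U},U)$, including the maximal one, the sheaf-theoretic detour---while a nice illustration of why the local isomorphism upgrades to a sheaf isomorphism---buys nothing here.
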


\begin{proof}
[Proof of Proposition \ref{prop:restrictionCRolo}]
Clearly if $\tilde{S}\in 
\mathcal{O}(\tilde{U})_{\boldsymbol{\lambda}}$
then 
$S:=\left.\tilde{S}\right|_U\in 
\mathcal{CR}(U)_{\boldsymbol{\lambda}}$.
Thus the ring homomorphim in the statement
is well-defined and obviously injective.

To prove surjectivity,
suppose conversely that 
$S\in 
\mathcal{CR}(U)_{\boldsymbol{\lambda}}$.
Let us define 
$\tilde{S}:\tilde{U}=\tilde{T}\cdot U\rightarrow\mathbb{C}$ by setting
\begin{equation}
\label{eqn:holo extension}
\tilde{S}\left(\tilde{\mu}^{A^\vee_0}_{\tilde{\mathbf{t}}}(x)\right)=
\tilde{\chi}_{-\boldsymbol{\lambda}}(\tilde{\mathbf{t}})\,S(x)
=
\tilde{\chi}_{\boldsymbol{\lambda}}
(\tilde{\mathbf{t}})^{-1}\,S(x)\qquad
(\tilde{\mathbf{t}}\in \tilde{T},\quad x\in X_{\boldsymbol{\nu}}). 
\end{equation}
To verify that (\ref{eqn:holo extension}) is well-defined, suppose that 
$\tilde{\mu}^{A^\vee_0}_{\tilde{\mathbf{t}}_1}(x_1)=\tilde{\mu}^{A^\vee_0}_{\tilde{\mathbf{t}}_2}(x_2)$ with $\tilde{\mathbf{t}}_j\in \tilde{T}$
and $x_j\in U$.
By the argument in the proof of Proposition \ref{prop:psi bijective}, $\tilde{\mathbf{t}}_2^{-1}\,\tilde{\mathbf{t}}_1\in T$. Therefore
$$
S(x_2)=\chi_{-\boldsymbol{\lambda}}\left(\tilde{\mathbf{t}}_2^{-1}\,  
\tilde{\mathbf{t}}_1  \right)\,
S(x_1)=
\tilde{\chi}_{-\boldsymbol{\lambda}}\left(\tilde{\mathbf{t}}_2\right)^{-1}\,  
\tilde{\chi}_{-\boldsymbol{\lambda}}\left(\tilde{\mathbf{t}}_1  \right)\,S(x_1).
$$

By construction, $\tilde{S}$ satisfies (\ref{eqn:olo equivariante}) and restricts to
$S$ on $U$.
To prove that $S\mapsto\tilde{S}$
inverts restriction it remains to verify that
$\tilde{S}$ is holomorphic, i.e. that
$\mathrm{d}_\ell \tilde{S}$
is $\mathbb{C}$-linear for any $\ell\in \tilde{U}$. 

By Corollary \ref{cor:tildeXnu vs tildeAnu}, the map
$$
\mathcal{F} :(\tilde{\mathbf{t}},x)\in\tilde{T}\times X_{\boldsymbol{\nu}}\mapsto
\tilde{\mu}^{A^\vee_0}_{\tilde{\mathbf{t}}}(x)\in \tilde{A}^\vee_{\boldsymbol{\nu}}
$$
is surjective. In fact, $\mathcal{F} $ exhibits
$\tilde{A}^\vee_{\boldsymbol{\nu}}$ as the 
quotient of 
$\tilde{T}\times X_{\boldsymbol{\nu}}$
by the free action of $T$ given by
\begin{equation}
\label{eqn:free T action}
\mathbf{t}\cdot \left(\tilde{\mathbf{t}},\,x\right)
:=\left(\tilde{\mathbf{t}}\,\mathbf{t}^{-1},\,\mu^X_{\mathbf{t}}(x)\right).
\end{equation}
Furthermore, $\mathcal{F} (\tilde{T}\times U)=\tilde{U}$.

For every $\tilde{\mathbf{t}}\in \tilde{T}$ let us set 
$$
X_{\boldsymbol{\nu}}^{\tilde{\mathbf{t}}}:=
\mathcal{F}\left(\left\{\tilde{\mathbf{t}}\right\}\times
X_{\boldsymbol{\nu}}\right)=
\tilde{\mu}^{A^\vee}_{\tilde{\mathbf{t}}}\left(
X_{\boldsymbol{\nu}}\right).
$$
Again by the proof of 
Proposition \ref{prop:psi bijective}
we have 
$X_{\boldsymbol{\nu}}^{\tilde{\mathbf{t}}_1}=
X_{\boldsymbol{\nu}}^{\tilde{\mathbf{t}}_2}$
if $\tilde{\mathbf{t}}_1^{-1}\,\tilde{\mathbf{t}}_2\in 
T$, and 
$X_{\boldsymbol{\nu}}^{\tilde{\mathbf{t}}_1}\cap
X_{\boldsymbol{\nu}}^{\tilde{\mathbf{t}}_2}=\emptyset$
otherwise.
Clearly $X_{\boldsymbol{\nu}}^{\tilde{\mathbf{t}}}$
is a CR submanifold of $A^\vee$, 
and its CR
bundle $\mathcal{H}(X_{\boldsymbol{\nu}}^{\tilde{\mathbf{t}}})$ is as follows. If
 $\ell=\mathcal{F}(\tilde{\mathbf{t}},x)\in X_{\boldsymbol{\nu}}^{\tilde{\mathbf{t}}}$,
then 
 $$
 \mathcal{H}(X_{\boldsymbol{\nu}}^{\tilde{\mathbf{t}}})_\ell
 =\mathrm{d}_x
 \tilde{\mu}^{A^\vee}_{\tilde{\mathbf{t}}}
 \big(
\mathcal{H}(X_{\boldsymbol{\nu}})_x\big).
$$

If $\tilde{\mathbf{t}}\in \tilde{T}$, let us
identify $T_{\tilde{\mathbf{t}}}\tilde{T}\cong
\tilde{•\mathfrak{t}}$ in the standard manner.
For $(\tilde{\mathbf{t}},\,x)\in 
\tilde{T}\times X_{\boldsymbol{\nu}}$, 
let us consider the vector subspace
$$
\mathcal{K}(\tilde{\mathbf{t}},x):=
\tilde{\mathfrak{t}}\times \mathcal{H}(X_{\boldsymbol{\nu}})_x
\subseteq T_{\tilde{\mathbf{t}}}\tilde{T}\times 
T_xX_{\boldsymbol{\nu}}
\cong T_{(\tilde{\mathbf{t}},x)}
(\tilde{T}\times X_{\boldsymbol{\nu}}).
$$

The distribution 
$\mathcal{K}\subseteq T(\tilde{T}\times X_{\boldsymbol{\nu}})$ is invariant under 
(\ref{eqn:free T action}) and
is naturally a complex vector bundle;
furthermore,
$\mathrm{d}\mathcal{F}$
yields an isomorphism
of complex vector bundles 
$\mathcal{K}\rightarrow \mathcal{F}^*(T\tilde{A}^\vee_{\boldsymbol{\nu}})$.
More explicitly,
if $\ell=\mathcal{F} (\tilde{\mathbf{t}},x)$
then 
\begin{equation}
\label{eqn:dFtxiso}
\left.\mathrm{d}_{(\tilde{\mathbf{t}},x)}\mathcal{F} \right|
_{\mathcal{K}(\tilde{t},x)}:\tilde{\mathfrak{t}}\times \mathcal{H}(X_{\boldsymbol{\nu}})_x\rightarrow
\tilde{\mathfrak{t}}_{A^\vee}(\ell)
\oplus \mathcal{H}(X_{\boldsymbol{\nu}}^{\tilde{\mathbf{t}}})_\ell =T_\ell A^\vee
\end{equation}
is an isomorphism of complex vector spaces,
respecting the direct sum decompositions on both sides.

Given $S\in \mathcal{CR}(U)_{\boldsymbol{\lambda}}$, let us consider
the complex function $\hat{S}$ on
$\tilde{T}\times X_{\boldsymbol{\nu}}$
given by
\begin{equation}
\label{eqn:defnShat}
\hat{S}\left(\tilde{\mathbf{t}},\,x\right):=
\tilde{\chi}_{\boldsymbol{\nu}}
(\tilde{\mathbf{t}})^{-1}\,S(x).
\end{equation}
Then 
$\hat{S}=\tilde{S}\circ \mathcal{F} $.

Let us assume that $\ell=\mathcal{F} (\tilde{\mathbf{t}},x)$.
We have
$$
\left.\mathrm{d}_{(\tilde{\mathbf{t}},x)}\hat{S}\right|
_{\mathcal{H}(\tilde{\mathbf{t}},x)}
=\mathrm{d}_{\ell}\tilde{S}\circ 
\left.\mathrm{d}_{(\tilde{\mathbf{t}},x)}\mathcal{F} 
\right|
_{\mathcal{K}(\tilde{\mathbf{t}},x)}:
\mathcal{K}(\tilde{\mathbf{t}},x)\rightarrow\mathbb{C}.
$$
Hence to prove that $\mathrm{d}_{\ell}\tilde{S}:
T_\ell A^\vee_{\boldsymbol{\nu}}
\rightarrow \mathbb{C}$
is $\mathbb{C}$-linear it suffices to show
that 
$\mathrm{d}_{(\tilde{\mathbf{t}},x)}\hat{S}$
is $\mathbb{C}$-linear on
$\mathcal{K}_{(\tilde{\mathbf{t}},x)}
=\tilde{•\mathfrak{t}}\times \mathcal{H}(X_{\boldsymbol{\nu}})_x$; 
to do so, in turn it is sufficient to verify 
$\mathbb{C}$-linearity on each summand
$\tilde{\mathfrak{t}}$ and 
$\mathcal{H}(X_{\boldsymbol{\nu}})_x$
separately.
This follows from
(\ref{eqn:defnShat}), since $\tilde{\chi}_{\boldsymbol{{\nu}}}$ 
is holomorphic (implying $\mathbb{C}$-linearity on the first summand) and $S$ is CR 
(implying $\mathbb{C}$-linearity on the second summand).

\end{proof}

\subsection{The orbifold line bundles $B_{\boldsymbol{\nu}}$ and $B'_{\boldsymbol{\nu}}$}

We have seen that the restrictions of $\mu^X$ to $X_{\boldsymbol{\nu}}$ 
and of $\tilde{\mu}^{A^\vee}$ to
$\tilde{A}^\vee_{\boldsymbol{\nu}}$
are locally free, effective and proper actions of $T$ and $\tilde{T}$, respectively, and that the corresponding
quotients $N_{\boldsymbol{\nu}}':=X_{\boldsymbol{\nu}}/T$ and 
$N_{\boldsymbol{\nu}}:=\tilde{A}^\vee_{\boldsymbol{\nu}}/\tilde{T}$ are naturally isomorphic
complex orbifolds. Furthermore, the 
projections
$p_{\boldsymbol{\nu}}':X_{\boldsymbol{\nu}}
\rightarrow N_{\boldsymbol{\nu}}'$
and $p_{\boldsymbol{\nu}}:
\tilde{A}^\nu_{\boldsymbol{\nu}}
\rightarrow N_{\boldsymbol{\nu}}$
are principal $V$-bundles with structure
group $T$ and $\tilde{T}$, respectively.

Associated to the characters $\chi_{\boldsymbol{\nu}}:T\rightarrow S^1$ 
and $\tilde{\chi}_{\boldsymbol{\nu}}:\tilde{T}\rightarrow \mathbb{C}^*$,
we have $1$-dimensional representations of $T$ and $\tilde{T}$, respectively; 
we shall
denote either one by $\mathbb{C}_{\boldsymbol{\nu}}$. 
The product actions
$\mu^{X\times \mathbb{C}_{\boldsymbol{\nu}}}$
and $\tilde{\mu}^{A^\vee\times \mathbb{C}_{\boldsymbol{\nu}}}$ are therefore also locally free, effective and proper
on $X_{\boldsymbol{\nu}}\times \mathbb{C}_{\boldsymbol{\nu}}$ 
and $\tilde{A}^\vee_{\boldsymbol{\nu}}\times \mathbb{C}_{\boldsymbol{\nu}}$
respectively. 
Hence the quotients
$B_{\boldsymbol{\nu}}':=X_{\boldsymbol{\nu}}\times_{T}\mathbb{C}_{\boldsymbol{\nu}}$
and
$B_{\boldsymbol{\nu}}:=\tilde{A}^\vee_{\boldsymbol{\nu}}\times_{\tilde{T}}\mathbb{C}_{\boldsymbol{\nu}}$
are orbifold line bundles on
$N_{\boldsymbol{\nu}}'$ and
$N_{\boldsymbol{\nu}}$. Let us denote by 
\begin{equation}
\label{eqn:orbibundles}
P_{\boldsymbol{\nu}}':B_{\boldsymbol{\nu}}'\rightarrow N_{\boldsymbol{\nu}}'\quad
\text{and}\quad
P_{\boldsymbol{\nu}}:B_{\boldsymbol{\nu}}\rightarrow N_{\boldsymbol{\nu}}
\end{equation}
the respective projections.

\begin{lem}
\label{lem:fetta prodotto}
Suppose $x\in X_{\boldsymbol{\nu}}$
and let $F\subseteq X_{\boldsymbol{\nu}}$
be a slice for the restriction of $\mu^X$ to $X_{\boldsymbol{\nu}}$. Then
$F\times \mathbb{C}_{\boldsymbol{\nu}}$ is a slice
at $(x,0)$
for the 
restriction of $\mu^{X\times \mathbb{C}_{\boldsymbol{\nu}}}$
to $X_{\boldsymbol{\nu}}\times \mathbb{C}_{\boldsymbol{\nu}}$. 
The collection of all these slices yields a defining family
for $B_{\boldsymbol{\nu}}'$.

Similarly, suppose
$\ell\in \tilde{A}^\vee_{\boldsymbol{\nu}}$
and
let $F\subseteq \tilde{A}^\vee_{\boldsymbol{\nu}}$ be a slice at
$\ell$ for the restriction of 
$\tilde{\mu}^{A^\vee}$
to $\tilde{A}^\vee_{\boldsymbol{\nu}}$.
Then  
$F\times \mathbb{C}_{\boldsymbol{\nu}}$ is a slice
at $(\ell,0)$
for the 
restriction of $\tilde{•\mu}^{A^\vee\times \mathbb{C}_{\boldsymbol{\nu}}}$
to $\tilde{A}^\vee_{\boldsymbol{\nu}}\times \mathbb{C}_{\boldsymbol{\nu}}$. 
The collection of all these slices yields a defining family
for $B_{\boldsymbol{\nu}}$.

\end{lem}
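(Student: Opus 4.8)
The plan is to verify directly that $F\times\mathbb{C}_{\boldsymbol{\nu}}$ satisfies the defining conditions of a slice at $(x,0)$ (resp.\ at $(\ell,0)$) for the product action, relying on two elementary observations. First, since $0$ is fixed by every linear action, the stabilizer of $(x,0)$ for $\mu^{X\times\mathbb{C}_{\boldsymbol{\nu}}}$ coincides with the stabilizer $T_x\leqslant T$ of $x$ for $\mu^X$; likewise $\tilde{T}_{(\ell,0)}=\tilde{T}_\ell$ in the complexified case. Second, for every $\mathbf{t}\in T$ the map $z\mapsto\chi_{\boldsymbol{\nu}}(\mathbf{t})\,z$ is a bijection of $\mathbb{C}$ (a rotation), and for every $\tilde{\mathbf{t}}\in\tilde{T}$ the map $z\mapsto\tilde{\chi}_{\boldsymbol{\nu}}(\tilde{\mathbf{t}})\,z$ is a bijection of $\mathbb{C}$ (a nonzero scaling, since $\tilde{\chi}_{\boldsymbol{\nu}}$ is $\mathbb{C}^*$-valued); hence saturations of products behave multiplicatively in the second factor.

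With these in hand I would check the slice axioms for $F\times\mathbb{C}_{\boldsymbol{\nu}}$ in turn. It is $T_x$-invariant, because $F$ is and $T_x$ acts on $\mathbb{C}_{\boldsymbol{\nu}}$ linearly through $\chi_{\boldsymbol{\nu}}|_{T_x}$. It is transverse to the $T$-orbit of $(x,0)$: that orbit lies in $X_{\boldsymbol{\nu}}\times\{0\}$, so its tangent space at $(x,0)$ is $\mathfrak{t}_X(x)\times\{0\}$, and since $F$ is a slice $T_xF$ is a complement to $\mathfrak{t}_X(x)$ in $T_xX_{\boldsymbol{\nu}}$, whence $\big(T_xF\times\mathbb{C}\big)\oplus\big(\mathfrak{t}_X(x)\times\{0\}\big)=T_xX_{\boldsymbol{\nu}}\times\mathbb{C}=T_{(x,0)}\big(X_{\boldsymbol{\nu}}\times\mathbb{C}_{\boldsymbol{\nu}}\big)$. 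Its $T$-saturation is
$$
T\cdot\big(F\times\mathbb{C}_{\boldsymbol{\nu}}\big)=\big\{\big(\mu^X_{\mathbf{t}}(y),\,\chi_{\boldsymbol{\nu}}(\mathbf{t})\,z\big):\mathbf{t}\in T,\ y\in F,\ z\in\mathbb{C}\big\}=(T\cdot F)\times\mathbb{C},
$$
an open neighbourhood of the orbit of $(x,0)$ because $T\cdot F$ is an open neighbourhood of the orbit of $x$. Finally, if $\mu^{X\times\mathbb{C}_{\boldsymbol{\nu}}}_{\mathbf{t}}\big(F\times\mathbb{C}_{\boldsymbol{\nu}}\big)\cap\big(F\times\mathbb{C}_{\boldsymbol{\nu}}\big)\neq\emptyset$, then in particular $\mu^X_{\mathbf{t}}(F)\cap F\neq\emptyset$, so $\mathbf{t}\in T_x$ since $F$ is a slice. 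This proves the first assertion; the second follows by the same argument with $\big(T,\mu^X,X_{\boldsymbol{\nu}},\chi_{\boldsymbol{\nu}}\big)$ replaced by $\big(\tilde{T},\tilde{\mu}^{A^\vee},\tilde{A}^\vee_{\boldsymbol{\nu}},\tilde{\chi}_{\boldsymbol{\nu}}\big)$, and the two descriptions are compatible because, by Proposition \ref{prop:slices TtildeT}, a single $F$ may be chosen as a slice for both actions.

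For the claim about defining families, I would observe that as $F$ ranges over slices for $\mu^{X_{\boldsymbol{\nu}}}$ the quotients $F/T_x$ form a covering family of uniformizing charts of $N_{\boldsymbol{\nu}}'$ (every point of $X_{\boldsymbol{\nu}}$ admits a slice), and over each one $\big(F\times\mathbb{C}_{\boldsymbol{\nu}}\big)/T_x$ is a uniformizing chart of $B_{\boldsymbol{\nu}}'=X_{\boldsymbol{\nu}}\times_T\mathbb{C}_{\boldsymbol{\nu}}$, the orbifold projection being induced by the first-factor projection. It then remains to see that the Satake injections of this family are compatible: if $\jmath:F_1\to F_2$ is the injection induced by $p_{\boldsymbol{\nu}}'(F_1)\subseteq p_{\boldsymbol{\nu}}'(F_2)$, of the form $\jmath(y)=\mu^X_{\mathbf{g}(y)}(y)$ with $\mathbf{g}:F_1\to T$ smooth, then $(y,z)\mapsto\big(\jmath(y),\,\chi_{\boldsymbol{\nu}}(\mathbf{g}(y))\,z\big)$ is an injection $F_1\times\mathbb{C}_{\boldsymbol{\nu}}\to F_2\times\mathbb{C}_{\boldsymbol{\nu}}$ covering $\jmath$, linear on fibres, and these lifts inherit the cocycle relations from the $\jmath$'s; this is precisely the datum of the line $V$-bundle associated to $p_{\boldsymbol{\nu}}'$ and the character $\chi_{\boldsymbol{\nu}}$, that is, of $B_{\boldsymbol{\nu}}'$ — and identically for $B_{\boldsymbol{\nu}}$ using $\tilde{T}$, $\tilde{\mu}^{A^\vee}$ and $\tilde{\chi}_{\boldsymbol{\nu}}$. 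I expect the slice axioms to be immediate once the two observations above are in place; the only mildly delicate step is this last bookkeeping with Satake's defining families and the compatibility of the lifted injections.
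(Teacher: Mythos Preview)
Your proposal is correct and follows essentially the same approach as the paper's proof: both verify the stabilizer equality $T_{(x,0)}=T_x$, transversality to the orbits, and the ``return condition'' that $\mu^{X\times\mathbb{C}_{\boldsymbol{\nu}}}_{\mathbf{t}}(F\times\mathbb{C}_{\boldsymbol{\nu}})\cap(F\times\mathbb{C}_{\boldsymbol{\nu}})\neq\emptyset$ forces $\mathbf{t}\in T_x$ by projecting to the first factor. Your write-up is in fact more detailed than the paper's terse argument, particularly in spelling out the Satake injections for the defining-family claim, which the paper leaves implicit.
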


\begin{proof}
[Proof of Lemma \ref{lem:fetta prodotto}]
Let us consider the former statement, the proof of the latter being similar.

Since $F\times \mathbb{C}_{\boldsymbol{\nu}}$ is transverse
to the $T$-orbits in $X_{\boldsymbol{\nu}}\times \mathbb{C}_{\boldsymbol{\nu}}$,
the map
$h:T\times (F\times \mathbb{C}_{\boldsymbol{\nu}})
\rightarrow X_{\boldsymbol{\nu}}\times \mathbb{C}_{\boldsymbol{\nu}}$
induced by the diagonal action is a local diffeomorphism onto the open 
saturation $T\cdot (F\times \mathbb{C}_{\boldsymbol{\nu}})\subseteq 
X_{\boldsymbol{\nu}}\times \mathbb{C}$.

Clearly we have the equality of stabilizers $T_{(x,0)}=T_x$.
Furthermore, suppose $(y,w)\in F\times \mathbb{C}_{\boldsymbol{\nu}}$, $\mathbf{t}\in T$.
Then 
$$\mu^{X\times \mathbb{C}_{\boldsymbol{\nu}}}_{\mathbf{t}}(y,w)=
\left(\mu^{X}_{\mathbf{t}}(y),
\chi_{\boldsymbol{\nu}}(\mathbf{t})\,w\right)
\in F\times \mathbb{C}_{\boldsymbol{\nu}}
$$
if and only if $\mu^X_{\mathbf{t}}(y)\in F$, that is,
if and only if $\mathbf{t}\in T_x$.
Hence $h$ descends to a diffeomorphism
$$
\overline{h}:
\big(T\times (F\times \mathbb{C}_{\boldsymbol{\nu}})\big)/T_x\rightarrow
T\cdot (F\times \mathbb{C}_{\boldsymbol{\nu}}),
$$
where $T_x$ acts antidiagonally on $T\times (F\times \mathbb{C}_{\boldsymbol{\nu}})$.

\end{proof}

\begin{cor}
\label{cor:local quotient}
$F\times \mathbb{C}_{\boldsymbol{\nu}}$ with the diagonal action of $T_x$
uniformizes
the open set $(F\times \mathbb{C}_{\boldsymbol{\nu}})/T_x\subseteq B_{\boldsymbol{\nu}'}$.
The collection of all these uniformizing charts is a defnining family for the 
orbifold line bundle $B'_{\boldsymbol{\nu}}$.
A similar statement holds fo $B_{\boldsymbol{\nu}}$.
\end{cor}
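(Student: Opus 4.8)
The plan is to read the Corollary as the translation of Lemma~\ref{lem:fetta prodotto} into the language of orbifold uniformizing systems, so that its proof amounts to a formal invocation of the slice description of quotients of proper, locally free actions, with no new computation required.

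First I would recall the general mechanism already used to endow $N_{\boldsymbol{\nu}}'$ and $N_{\boldsymbol{\nu}}$ with their orbifold structures in Corollaries~\ref{cor:Nnuprime orbifold} and \ref{cor:complex quotient}: if a Lie group $G$ acts properly and locally freely on a manifold $R$, then for each $r\in R$ a slice $F$ at $r$ (automatically with finite stabilizer $G_r$) furnishes a uniformizing chart for $R/G$, namely $F$ together with the action of $G_r$ on it and the quotient map $F\rightarrow F/G_r$, and $F/G_r$ is canonically identified, via the orbit map, with an open neighborhood of $[r]$ in $R/G$.

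I would then apply this to $R:=X_{\boldsymbol{\nu}}\times \mathbb{C}_{\boldsymbol{\nu}}$ with $G:=T$ acting by $\mu^{X\times \mathbb{C}_{\boldsymbol{\nu}}}$. By the first part of Lemma~\ref{lem:fetta prodotto}, $F\times \mathbb{C}_{\boldsymbol{\nu}}$ is a slice at $(x,0)$, and its proof records the equality of stabilizers $T_{(x,0)}=T_x$; hence $F\times \mathbb{C}_{\boldsymbol{\nu}}$ with the diagonal $T_x$-action is a uniformizing chart. Since the $T$-action is diagonal and $\mathbb{C}_{\boldsymbol{\nu}}$ is a full line, the saturation satisfies $T\cdot(F\times \mathbb{C}_{\boldsymbol{\nu}})=(T\cdot F)\times \mathbb{C}_{\boldsymbol{\nu}}$, so the quotient $(F\times \mathbb{C}_{\boldsymbol{\nu}})/T_x$ is canonically identified with $\big(P_{\boldsymbol{\nu}}'\big)^{-1}\big(p_{\boldsymbol{\nu}}'(F)\big)\subseteq B_{\boldsymbol{\nu}}'$. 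Letting $x$ range over $X_{\boldsymbol{\nu}}$, the sets $p_{\boldsymbol{\nu}}'(F)$ cover $N_{\boldsymbol{\nu}}'$, so these charts cover $B_{\boldsymbol{\nu}}'$.

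Finally, to conclude that they form a defining family in the sense of Satake I would verify the compatibility axiom: for two such slices $F_1\times \mathbb{C}_{\boldsymbol{\nu}}$ and $F_2\times \mathbb{C}_{\boldsymbol{\nu}}$ with overlapping images, the required injections are the products of the injections $F_1\rightarrow F_2$ constructed in the discussion preceding Lemma~\ref{lem:slice biholomorphism} (translation by a smoothly varying element of $T$) with the map on the $\mathbb{C}_{\boldsymbol{\nu}}$-factor given by rescaling by the corresponding value of $\chi_{\boldsymbol{\nu}}$; equivariance for the stabilizers and the cocycle relation then follow exactly as in that discussion. The statement for $B_{\boldsymbol{\nu}}$ is obtained verbatim by replacing $(T,X_{\boldsymbol{\nu}})$ with $(\tilde{T},\tilde{A}^\vee_{\boldsymbol{\nu}})$ and $\mu^{X\times \mathbb{C}_{\boldsymbol{\nu}}}$ with $\tilde{\mu}^{A^\vee\times \mathbb{C}_{\boldsymbol{\nu}}}$, invoking the second half of Lemma~\ref{lem:fetta prodotto}. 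The only point needing genuine care---and the nearest thing to an obstacle---is that two overlapping slices meet in a single orbit of the relevant finite stabilizer, so that the transition injections are well defined; but this is precisely the injectivity established in the proof of Proposition~\ref{prop:psi bijective}, so everything else is formal.
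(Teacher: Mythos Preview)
Your proposal is correct and follows the same approach as the paper, which in fact states the Corollary without proof, treating it as an immediate consequence of Lemma~\ref{lem:fetta prodotto}; you have simply made explicit the standard passage from ``slice'' to ``uniformizing chart'' that the paper leaves implicit. One minor remark: the well-definedness of the transition injections (your final paragraph) is already contained in the proof of Lemma~\ref{lem:fetta prodotto} itself---namely the computation showing that $\mu^{X\times\mathbb{C}_{\boldsymbol{\nu}}}_{\mathbf{t}}(y,w)\in F\times\mathbb{C}_{\boldsymbol{\nu}}$ forces $\mathbf{t}\in T_x$---rather than in Proposition~\ref{prop:psi bijective}, which concerns the comparison of $T$- and $\tilde{T}$-orbits.
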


Given the complex structure $J^F$ on each $F$ (Lemma \ref{lem:unique JF}), we obtain a product
complex structure on $F\times \mathbb{C}_{\boldsymbol{\nu}}$. Hence both
$B_{\boldsymbol{\nu}}'$ and $B_{\boldsymbol{\nu}}$ are complex orbifolds
of complex dimension $d+2-r$.

Since any $T$-orbit in $X_{\boldsymbol{\nu}}\times\mathbb{C}$ is contained in a unique
$\tilde{T}$-orbit in $\tilde{A}^\vee_{\boldsymbol{\nu}}\times\mathbb{C}$, there is
a natural continuous map $\tilde{\psi}:B_{\boldsymbol{\nu}}'\rightarrow B_{\boldsymbol{\nu}}$.
The proof of Proposition \ref{prop:psi bijective} can be adapted to
yield the following:

\begin{prop}
\label{prop:tildepsi bijective}
$\tilde{\psi}$ is an isomorphism of complex orbifolds, and
$\psi\circ P'_{\boldsymbol{\nu}}=P_{\boldsymbol{\nu}}\circ \tilde{\psi}$.
\end{prop}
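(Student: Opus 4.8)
The plan is to imitate the proof of Proposition \ref{prop:psi bijective} almost verbatim, exploiting the fact that $B'_{\boldsymbol{\nu}}$ and $B_{\boldsymbol{\nu}}$ are constructed exactly as $N'_{\boldsymbol{\nu}}$ and $N_{\boldsymbol{\nu}}$, but starting from the $T$- and $\tilde{T}$-spaces $X_{\boldsymbol{\nu}}\times \mathbb{C}_{\boldsymbol{\nu}}$ and $\tilde{A}^\vee_{\boldsymbol{\nu}}\times \mathbb{C}_{\boldsymbol{\nu}}$ in place of $X_{\boldsymbol{\nu}}$ and $\tilde{A}^\vee_{\boldsymbol{\nu}}$. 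First I would note that the product actions $\mu^{X\times \mathbb{C}_{\boldsymbol{\nu}}}$ and $\tilde{\mu}^{A^\vee\times \mathbb{C}_{\boldsymbol{\nu}}}$ commute with projection to the first factor, which intertwines $\mu^X$ and $\tilde{\mu}^{A^\vee}$; hence the intersection-in-a-single-orbit property for $\tilde{T}$-orbits meeting $X_{\boldsymbol{\nu}}$ (established inside the proof of Proposition \ref{prop:psi bijective}) lifts to the statement: every $\tilde{T}$-orbit in $\tilde{A}^\vee_{\boldsymbol{\nu}}\times \mathbb{C}_{\boldsymbol{\nu}}$ meets $X_{\boldsymbol{\nu}}\times \mathbb{C}_{\boldsymbol{\nu}}$ in a single $T$-orbit. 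This is because if $\tilde{t}=e^{\boldsymbol{\xi}}\,e^{\imath\boldsymbol{\theta}}$ moves $(x_1,w_1)$ into $(x_2,w_2)$ with both $x_j\in X_{\boldsymbol{\nu}}$, the first-factor statement forces $\boldsymbol{\xi}\in \boldsymbol{\nu}^\perp$ (equivalently $e^{\boldsymbol{\xi}}\in T$ after the argument with $\varrho$), so $\tilde{t}\in T$. Surjectivity and injectivity of $\tilde{\psi}$ follow immediately, as does the identity $\psi\circ P'_{\boldsymbol{\nu}}=P_{\boldsymbol{\nu}}\circ\tilde{\psi}$, since both projections are induced by projection to the first factor.

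Next I would check that $\tilde{\psi}$ is a homeomorphism, by the same bookkeeping as in Proposition \ref{prop:psi bijective}: $\tilde{T}$-invariant open sets of $\tilde{A}^\vee_{\boldsymbol{\nu}}\times \mathbb{C}_{\boldsymbol{\nu}}$ correspond bijectively, via intersection with $X_{\boldsymbol{\nu}}\times \mathbb{C}_{\boldsymbol{\nu}}$ and saturation, to $T$-invariant open sets there, because of the single-orbit property just established and the openness of $\tilde{A}^\vee_{\boldsymbol{\nu}}\times \mathbb{C}_{\boldsymbol{\nu}}$; this gives continuity and openness of $\tilde{\psi}$ simultaneously. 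Finally, to see that $\tilde{\psi}$ is biholomorphic it suffices to exhibit, for a defining family, local representatives that are biholomorphisms. By Lemma \ref{lem:fetta prodotto} and Corollary \ref{cor:local quotient}, for $x\in X_{\boldsymbol{\nu}}$ a slice $F\subseteq X_{\boldsymbol{\nu}}$ for $\mu^X$ (which by Proposition \ref{prop:slices TtildeT} may be shrunk to a slice for $\tilde{\mu}^{A^\vee_0}$) yields $F\times \mathbb{C}_{\boldsymbol{\nu}}$, with the diagonal $T_x=\tilde{T}_x$-action and the product complex structure $J^F\times J_{\mathbb{C}}$, as a uniformizing chart for both $B'_{\boldsymbol{\nu}}$ and $B_{\boldsymbol{\nu}}$. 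As in Proposition \ref{prop:psi bijective}, the identity $\mathrm{id}_{F\times \mathbb{C}_{\boldsymbol{\nu}}}$ is then a local representative of $\tilde{\psi}$, and it is trivially holomorphic for the product structure on both sides.

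The one point that deserves slightly more care — and which I expect to be the main (albeit minor) obstacle — is verifying that the complex structure induced on $F\times\mathbb{C}_{\boldsymbol{\nu}}$ when it is regarded as a slice for $\tilde{\mu}^{A^\vee\times\mathbb{C}_{\boldsymbol{\nu}}}$ (via the recipe of Lemma \ref{lem:unique JF} applied in $A^\vee_0\times\mathbb{C}$) really is the product $J^F\times J_{\mathbb{C}}$, rather than something twisted. This follows because the $\tilde{T}$-orbit directions in $A^\vee_0\times\mathbb{C}_{\boldsymbol{\nu}}$ are the graphs over $\tilde{\mathfrak{t}}_{A^\vee}(\ell)$ determined by the derivative of $\tilde{\chi}_{\boldsymbol{\nu}}$, which is $\mathbb{C}$-linear, so the complementary-subspace construction splits compatibly with the product; alternatively one invokes Lemma \ref{lem:unique JF} directly, noting that a holomorphic slice for $\tilde{\mu}^{A^\vee}$ times $\mathbb{C}$ is a holomorphic slice for the product action. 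Granting this identification, everything else is a mechanical transcription of the earlier argument, and the proof is complete.
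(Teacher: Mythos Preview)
Your proposal is correct and follows exactly the route the paper itself indicates: the paper does not give a separate proof but simply states that the proof of Proposition \ref{prop:psi bijective} can be adapted, and you have carried out precisely that adaptation, invoking Lemma \ref{lem:fetta prodotto}, Corollary \ref{cor:local quotient}, and Proposition \ref{prop:slices TtildeT} at the appropriate points. One small wording slip: the first-factor argument from Proposition \ref{prop:psi bijective} actually forces $\boldsymbol{\xi}=\mathbf{0}$ (not merely $\boldsymbol{\xi}\in\boldsymbol{\nu}^\perp$), but your conclusion $\tilde{t}\in T$ is the right one and the rest is unaffected.
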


\subsection{The orbifold circle bundle $Y_{\boldsymbol{\nu}}$}

We need an alternative description of $B_{\boldsymbol{\nu}}'$.
Consider the intermediate quotient 
$$
Y_{\boldsymbol{\nu}}:=X_{\boldsymbol{\nu}}/T^{r-1}_{\boldsymbol{\nu}^\perp};
$$
then $Y_{\boldsymbol{\nu}}$ is  compact orbifold, of (real) dimension
$2\,(d+1-r)+1$, and the integrable and invariant CR structure on $X_{\boldsymbol{\nu}}$
descends to an integrable CR structure on $Y_{\boldsymbol{\nu}}$.
We shall denote by $\mathcal{H}(Y_{\boldsymbol{\nu}})$ the CR bundle of
$Y_{\boldsymbol{\nu}}$.

Let
$T^1_{\boldsymbol{\nu}}\leqslant T$ be the connected compact subgroup
of $T$ associated to the Lie subalgebra 
$\mathrm{span}(\imath\,\boldsymbol{\nu})\subseteq \mathfrak{t}$.
Given that $\boldsymbol{\nu}$ is coprime,
we have a Lie group isomorphism
\begin{equation}
\label{eqn:kappanu}
\kappa_{\boldsymbol{\nu}} : e^{\imath\,\vartheta}\in S^1\mapsto e^{\imath\,\vartheta\,\boldsymbol{\nu}}:=
\left(e^{\imath\,\vartheta\,\nu_1},\ldots,e^{\imath\,\vartheta\,\nu_r}\right)\in T^1_{\boldsymbol{\nu}}.
\end{equation}
Let us set 
$\overline{T}^1_{\boldsymbol{\nu}}:=T/T^{r-1}_{\boldsymbol{\nu}^\perp}\cong
T^1_{\boldsymbol{\nu}}/(T^1_{\boldsymbol{\nu}}\cap T^{r-1}_{\boldsymbol{\nu}^\perp})$.

Suppose $x\in X_{\boldsymbol{\nu}}$, and let
$F\subseteq X_{\boldsymbol{\nu}}$ be a slice at $x$ for the
restriction of $\gamma^X$ to $X_{\boldsymbol{\nu}}$.
We can view $T\times F$ as a uniformizing chart
for the smooth orbifold $X_{\boldsymbol{\nu}}$, with uniformized open set
$T\cdot F=(T\times F)/T_x$.
Then
$\overline{T}^1_{\boldsymbol{\nu}}\times F$ is a uniformizing chart for $Y_{\boldsymbol{\nu}}$,
covering the open set $(T\cdot F)/T^{r-1}_{\boldsymbol{\nu}^\perp}$.

Explicitly, $T_x$ act effectively on $\overline{T}^1_{\boldsymbol{\nu}}\times F$
by
$$
t_0\cdot \left(\overline{t},f\right):=\left(\overline{t}\,\overline{t}_0^{-1},\mu^X_{t_0}(f)\right),
$$
where for any $t\in T$ we have set
$\overline{t}=t\,T^{r-1}_{\boldsymbol{\nu}^\perp}\in \overline{T}^1_{\boldsymbol{\nu}}$.
%
%
Then the map 
$$
\gamma: (\overline{t},f)\in \overline{T}^1_{\boldsymbol{\nu}}\times 
F\mapsto T^{r-1}_{\boldsymbol{\nu}^\perp}\cdot \mu^X_t(f)\in 
(T\cdot F)/T^{r-1}_{\boldsymbol{\nu}^\perp}\subseteq Y_{\boldsymbol{\nu}}
$$
induces a homeomorphism
$(T\cdot F)/T^{r-1}_{\boldsymbol{\nu}^\perp}=(\overline{T}^1\times F)/T_\ell$.
Letting $F$ vary, we obtain a defining family for $Y_{\boldsymbol{\nu}}$.

Furthermore, $\overline{T}^1_{\boldsymbol{\nu}}$ acts effectively on 
$Y_{\boldsymbol{\nu}}$, and 
$N'_{\boldsymbol{\nu}}=Y_{\boldsymbol{\nu}}/\overline{T}^1_{\boldsymbol{\nu}}$;
let $\sigma_{\boldsymbol{\nu}}:Y_{\boldsymbol{\nu}}\rightarrow N'_{\boldsymbol{\nu}}$
be the projection.
For each slice $F\subseteq X_{\boldsymbol{\nu}}$, as above, the local representation of
$\sigma_{\boldsymbol{\nu}}$ is the projection $\overline{T}^1_{\boldsymbol{\nu}}\times F
\rightarrow F$. 
Thus $Y_{\boldsymbol{\nu}}$ is a principal $V$-bundle over $N'_{\boldsymbol{\nu}}$, with
structure group $\overline{T}^1_{\boldsymbol{\nu}}$.

Being trivial on $T^{r-1}_{\boldsymbol{\nu}^\perp}$,
 $\chi_{\boldsymbol{\nu}}$ descends to a character 
 $\chi'_{\boldsymbol{\nu}}:\overline{T}^1_{\boldsymbol{\nu}}\rightarrow S^1$.
 
\begin{lem}
\label{lem:chinu'iso}
Given that $\boldsymbol{\nu}$ is coprime,
$\chi'_{\boldsymbol{\nu}}$
is a Lie group isomorphism.
\end{lem}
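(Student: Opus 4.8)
The plan is to reduce the statement to a one-variable computation by pulling everything back along the isomorphism $\kappa_{\boldsymbol{\nu}}$ of (\ref{eqn:kappanu}), and then to pin down the finite group $T^1_{\boldsymbol{\nu}}\cap T^{r-1}_{\boldsymbol{\nu}^\perp}$ exactly. First I would compute the composite character on $S^1$,
\[
\chi_{\boldsymbol{\nu}}\circ\kappa_{\boldsymbol{\nu}}\big(e^{\imath\,\vartheta}\big)=\prod_{j=1}^r\big(e^{\imath\,\vartheta\,\nu_j}\big)^{\nu_j}=e^{\imath\,\|\boldsymbol{\nu}\|^2\,\vartheta},
\]
so that $\chi_{\boldsymbol{\nu}}|_{T^1_{\boldsymbol{\nu}}}:T^1_{\boldsymbol{\nu}}\rightarrow S^1$ is surjective and its kernel is $\kappa_{\boldsymbol{\nu}}\big(\big\{e^{2\pi\imath\,k/\|\boldsymbol{\nu}\|^2}:k\in\mathbb{Z}\big\}\big)$, a cyclic group of order $\|\boldsymbol{\nu}\|^2$. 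Since $\chi_{\boldsymbol{\nu}}$ is trivial on $T^{r-1}_{\boldsymbol{\nu}^\perp}$, the inclusion $T^1_{\boldsymbol{\nu}}\cap T^{r-1}_{\boldsymbol{\nu}^\perp}\subseteq\ker\big(\chi_{\boldsymbol{\nu}}|_{T^1_{\boldsymbol{\nu}}}\big)$ is immediate.

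The crux is the reverse inclusion, and it is precisely here that coprimality of $\boldsymbol{\nu}$ is used. Writing $T^{r-1}_{\boldsymbol{\nu}^\perp}=\exp\big(\imath\,\boldsymbol{\nu}^\perp\big)$ and using that $\mathbf{x}\mapsto e^{\imath\,\mathbf{x}}$ has kernel $2\pi\,\mathbb{Z}^r$, the point $\kappa_{\boldsymbol{\nu}}\big(e^{2\pi\imath\,k/\|\boldsymbol{\nu}\|^2}\big)=e^{\imath\,(2\pi k/\|\boldsymbol{\nu}\|^2)\,\boldsymbol{\nu}}$ lies in $\exp\big(\imath\,\boldsymbol{\nu}^\perp\big)$ if and only if $(k/\|\boldsymbol{\nu}\|^2)\,\boldsymbol{\nu}\in\boldsymbol{\nu}^\perp+\mathbb{Z}^r$. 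Since $\boldsymbol{\nu}$ is coprime there is $\mathbf{a}\in\mathbb{Z}^r$ with $\langle\boldsymbol{\nu},\mathbf{a}\rangle=1$, from which one gets $\boldsymbol{\nu}^\perp+\mathbb{Z}^r=\{\mathbf{x}\in\mathbb{R}^r:\langle\boldsymbol{\nu},\mathbf{x}\rangle\in\mathbb{Z}\}$; as $\langle\boldsymbol{\nu},(k/\|\boldsymbol{\nu}\|^2)\,\boldsymbol{\nu}\rangle=k\in\mathbb{Z}$, the required membership holds for every $k$. Hence $T^1_{\boldsymbol{\nu}}\cap T^{r-1}_{\boldsymbol{\nu}^\perp}=\ker\big(\chi_{\boldsymbol{\nu}}|_{T^1_{\boldsymbol{\nu}}}\big)$.

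To conclude, note that under the identification $\overline{T}^1_{\boldsymbol{\nu}}\cong T^1_{\boldsymbol{\nu}}/\big(T^1_{\boldsymbol{\nu}}\cap T^{r-1}_{\boldsymbol{\nu}^\perp}\big)$ the character $\chi'_{\boldsymbol{\nu}}$ is exactly the homomorphism induced by $\chi_{\boldsymbol{\nu}}|_{T^1_{\boldsymbol{\nu}}}$ on the quotient by its own kernel; it is therefore injective, and it is surjective by the first step. A bijective continuous homomorphism between the compact Lie groups $\overline{T}^1_{\boldsymbol{\nu}}$ and $S^1$ is automatically an isomorphism of Lie groups. (Equivalently: the characters of $\overline{T}^1_{\boldsymbol{\nu}}$ are the $\chi_{\boldsymbol{\lambda}}$ with $\boldsymbol{\lambda}\in\mathbb{R}\,\boldsymbol{\nu}\cap\mathbb{Z}^r$, which equals $\mathbb{Z}\,\boldsymbol{\nu}$ by coprimality, so $\chi'_{\boldsymbol{\nu}}$ generates the infinite cyclic character group of a $1$-torus and is thus an isomorphism onto $S^1$.) The only substantive point is the lattice identity of the middle paragraph; without coprimality the intersection $T^1_{\boldsymbol{\nu}}\cap T^{r-1}_{\boldsymbol{\nu}^\perp}$ would be strictly smaller than $\ker\big(\chi_{\boldsymbol{\nu}}|_{T^1_{\boldsymbol{\nu}}}\big)$ and $\chi'_{\boldsymbol{\nu}}$ would merely be a non-trivial covering of $S^1$.
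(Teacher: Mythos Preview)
Your proof is correct and follows essentially the same approach as the paper: both reduce via $\kappa_{\boldsymbol{\nu}}$ to showing $\ker\big(\chi_{\boldsymbol{\nu}}|_{T^1_{\boldsymbol{\nu}}}\big)=T^1_{\boldsymbol{\nu}}\cap T^{r-1}_{\boldsymbol{\nu}^\perp}$, and both use a B\'ezout vector $\mathbf{a}\in\mathbb{Z}^r$ with $\langle\boldsymbol{\nu},\mathbf{a}\rangle=1$ to exhibit each kernel element as the exponential of something in $\boldsymbol{\nu}^\perp$. Your formulation via the lattice identity $\boldsymbol{\nu}^\perp+\mathbb{Z}^r=\{\mathbf{x}:\langle\boldsymbol{\nu},\mathbf{x}\rangle\in\mathbb{Z}\}$ is a clean repackaging of the paper's elementwise computation $\vartheta_j\,\boldsymbol{\nu}-2\pi j\,\mathbf{k}\in\boldsymbol{\nu}^\perp$.
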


\begin{proof}
[Proof of Lemma \ref{lem:chinu'iso}]
Since 
$\overline{T}^1_{\boldsymbol{\nu}}\cong 
T^1_{\boldsymbol{\nu}}/\left(T^1_{\boldsymbol{\nu}}\cap T^{r-1}_{\boldsymbol{\nu}^\perp}\right)$,
the statement is equivalent to the equality
\begin{equation}
\label{eqn:kernel chi nu}
\ker(\left.\chi_{\boldsymbol{\nu}}\right|_{T^1_{\boldsymbol{\nu}}})
=T^1_{\boldsymbol{\nu}}\cap T^{r-1}_{\boldsymbol{\nu}^\perp};
\end{equation}
since clearly $T^{r-1}_{\boldsymbol{\nu}^\perp}\subseteq \ker(\chi_{\boldsymbol{\nu}})$,
we need only prove that $\ker(\left.\chi_{\boldsymbol{\nu}}\right|_{T^1_{\boldsymbol{\nu}}})
\subseteq T^1_{\boldsymbol{\nu}}\cap T^{r-1}_{\boldsymbol{\nu}^\perp}$.

Since $\boldsymbol{\nu}$ is coprime, there exists 
$\mathbf{k}=\begin{pmatrix}
k_1&\cdots&k_r
\end{pmatrix} \in \mathbb{Z}^r$ such that
$\langle\boldsymbol{\nu},\mathbf{k}\rangle=\sum_{j=1}^r k_j\,\nu_j=1$. 

Let $\kappa_{\boldsymbol{\nu}}$ be as in (\ref{eqn:kappanu}). 
Then 
\begin{equation}
\label{eqn:compositionknu}
\chi_{\boldsymbol{\nu}}\circ \kappa_{\boldsymbol{\nu}}
\left( e^{\imath\,\vartheta}\right)  =
\chi_{\boldsymbol{\nu}}\left( e^{\imath\,\vartheta\,\boldsymbol{\nu}}  \right)
=e^{\imath\,\vartheta\,\|\boldsymbol{\nu}\|^2}  \quad \left(e^{\imath\,\vartheta}\in S^1\right).
\end{equation}

Hence if $e^{\imath\,\vartheta\,\boldsymbol{\nu}}\in \ker(\chi_{\boldsymbol{\nu}})$, then
we may assume $\vartheta=\vartheta_j:=2\,\pi\,j/\|\boldsymbol{\nu}\|^2$ for some $j=0,\ldots,\|\boldsymbol{\nu}\|^2-1$.
We have 
$$
\langle\vartheta_j\,\boldsymbol{\nu},\boldsymbol{\nu}\rangle =
\frac{2\,\pi\,j}{\|\boldsymbol{\nu}\|^2} \,\langle\,\boldsymbol{\nu},\boldsymbol{\nu}\rangle
=2\,\pi\,j =2\,\pi\,j \,\langle \mathbf{k},\boldsymbol{\nu}\rangle,
$$
so that 
$
\vartheta_j\,\boldsymbol{\nu}-2\,\pi\,j\,\mathbf{k}\in \boldsymbol{\nu}^\perp
$.
Thus
$$
e^{\imath\,\vartheta_j\,\boldsymbol{\nu}}=
e^{\imath\,[\vartheta_j\,\boldsymbol{\nu}-2\,\pi\,j\,\mathbf{k}]}
\in T^1_{\boldsymbol{\nu}}\cap T^{r-1}_{\boldsymbol{\nu}^\perp} .
$$

\end{proof}

Since $k_{\boldsymbol{\nu}}$ is an isomorphism, 
(\ref{eqn:compositionknu}) implies
the following.

\begin{cor}
\label{cor:cardinalityinters}
Assuming that $\boldsymbol{\nu}$ is coprime, 
$$
\left|T^{r-1}_{\boldsymbol{\nu}^\perp}\cap T^1_{\boldsymbol{\nu}}\right|=
\|\boldsymbol{\nu}\|^2.
$$
\end{cor}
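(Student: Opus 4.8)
The plan is to read the cardinality directly off the explicit formula (\ref{eqn:compositionknu}), after reducing the intersection to a kernel by means of (\ref{eqn:kernel chi nu}). First I would recall from the proof of Lemma \ref{lem:chinu'iso} the identity
$$
T^1_{\boldsymbol{\nu}}\cap T^{r-1}_{\boldsymbol{\nu}^\perp}=\ker\big(\left.\chi_{\boldsymbol{\nu}}\right|_{T^1_{\boldsymbol{\nu}}}\big),
$$
so that it suffices to compute the order of this kernel. Since $\kappa_{\boldsymbol{\nu}}:S^1\rightarrow T^1_{\boldsymbol{\nu}}$ in (\ref{eqn:kappanu}) is a Lie group isomorphism (coprimality of $\boldsymbol{\nu}$), it restricts to a bijection between $\ker\big(\chi_{\boldsymbol{\nu}}\circ\kappa_{\boldsymbol{\nu}}\big)$ and $\ker\big(\left.\chi_{\boldsymbol{\nu}}\right|_{T^1_{\boldsymbol{\nu}}}\big)$; hence these two groups have the same cardinality.

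Next I would invoke (\ref{eqn:compositionknu}), which identifies $\chi_{\boldsymbol{\nu}}\circ\kappa_{\boldsymbol{\nu}}$ with the power map $e^{\imath\,\vartheta}\mapsto e^{\imath\,\vartheta\,\|\boldsymbol{\nu}\|^2}$ on $S^1$. Because $\boldsymbol{\nu}\in\mathbb{Z}^r\setminus\{\mathbf{0}\}$, the exponent $\|\boldsymbol{\nu}\|^2=\sum_{j=1}^r\nu_j^2$ is a positive integer, so the kernel of this power map is precisely the group of $\|\boldsymbol{\nu}\|^2$-th roots of unity, namely
$$
\big\{\,e^{2\pi\imath\,j/\|\boldsymbol{\nu}\|^2}\ :\ j=0,\ldots,\|\boldsymbol{\nu}\|^2-1\,\big\},
$$
which has exactly $\|\boldsymbol{\nu}\|^2$ elements. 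Combining this with the preceding paragraph yields $\big|T^{r-1}_{\boldsymbol{\nu}^\perp}\cap T^1_{\boldsymbol{\nu}}\big|=\|\boldsymbol{\nu}\|^2$.

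Since every step is either a direct computation or transport of cardinality along an already-established isomorphism, there is no genuine obstacle here; the only point deserving attention is to make sure that the identification (\ref{eqn:kernel chi nu}) from Lemma \ref{lem:chinu'iso} — whose proof already consumed the coprimality hypothesis — is in force, so that no further appeal to coprimality beyond the isomorphism $\kappa_{\boldsymbol{\nu}}$ is needed. Equivalently, one may package the same computation by observing that the composite of $\kappa_{\boldsymbol{\nu}}$ with the quotient map $T^1_{\boldsymbol{\nu}}\rightarrow\overline{T}^1_{\boldsymbol{\nu}}$ followed by $\chi'_{\boldsymbol{\nu}}$ is again the degree $\|\boldsymbol{\nu}\|^2$ power map on $S^1$, whose kernel is $T^1_{\boldsymbol{\nu}}\cap T^{r-1}_{\boldsymbol{\nu}^\perp}$; this is the route I would take if one prefers to avoid re-citing (\ref{eqn:kernel chi nu}) explicitly.
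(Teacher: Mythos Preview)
Your argument is correct and is exactly the paper's approach: the sentence preceding the corollary says ``Since $\kappa_{\boldsymbol{\nu}}$ is an isomorphism, (\ref{eqn:compositionknu}) implies the following,'' which is precisely your computation of the kernel of the degree-$\|\boldsymbol{\nu}\|^2$ endomorphism of $S^1$, transported via $\kappa_{\boldsymbol{\nu}}$ and identified with the intersection through (\ref{eqn:kernel chi nu}).
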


Given the isomorphism
$\chi'_{\boldsymbol{\nu}}: \overline{T}^1_{\boldsymbol{\nu}}\cong S^1$, 
we shall view $Y_{\boldsymbol{\nu}}$ as a
principal $V$-bundle over $N_{\boldsymbol{\nu}}'$ with structure group $S^1$.
Let us denote by 
\begin{equation}
\label{eqn:defnrhoYnu}
\sigma^{Y_{\boldsymbol{\nu}}}:S^1\times Y_{\boldsymbol{\nu}}\rightarrow
Y_{\boldsymbol{\nu}}
\end{equation}
the corresponding action.

Let 
\begin{equation}
\label{eqn:defnQnu}
Q_{\boldsymbol{\nu}}:X_{\boldsymbol{\nu}}\rightarrow Y_{\boldsymbol{\nu}}
\end{equation}
be the projection. Then $U$ is a $T$-invariant open subset of $X_{\boldsymbol{\nu}}$
if and only if its image $Q_{\boldsymbol{\nu}}(U)$ is 
a $\overline{T}^1_{\boldsymbol{\nu}}\cong 
S^1$-invariant open subset of $Y_{\boldsymbol{\nu}}$.
It follows (recall the proof of Proposition \ref{prop:psi bijective})
that there is a bijective correspondence between 
$\tilde{T}$-invariant open subsets $\tilde{U}\subseteq\tilde{A}^\vee_{\boldsymbol{\nu}}$, 
$T$-invariant open subsets $U\subseteq X_{\boldsymbol{\nu}}$,
$S^1$-invariant open subsets $\overline{U}\subseteq Y_{\boldsymbol{\nu}}$, given by
$U;=\tilde{U}\cap X_{\boldsymbol{\nu}}$, $\overline{U}:=Q_{\boldsymbol{\nu}}(U)$.

The character $\chi'_{k\,\boldsymbol{\nu}}=(\chi_{\boldsymbol{\nu}}')^k:\overline{T}^1_{\boldsymbol{\nu}}
\rightarrow S^1$
corresponds to the endomorphism $\chi_k:g\in S^1\mapsto g^k\in S^1$. 
Let us denote by $\mathcal{CR}(Y_{\boldsymbol{\nu}})$ the collection of all
CR functions on $Y_{\boldsymbol{\nu}}$, and for any
$k\in \mathbb{Z}$ let us set
\begin{equation}
\label{eqn:kth isotype Ynu}
\mathcal{CR}(Y_{\boldsymbol{\nu}})_k=
\left\{f\in \mathcal{CR}(Y_{\boldsymbol{\nu}})\,:\,
f\circ \sigma^{Y_{\boldsymbol{\nu}}}_{e^{-\imath\,\theta}}=e^{\imath\,k\,\theta}\,f,\quad
\forall\,e^{\imath\,\theta}\in S^1\right\}.
\end{equation}

Using that the CR structure of 
$Y_{\boldsymbol{\nu}}$ is obtained by descending the invariant
CR structure of $X_{\boldsymbol{\nu}}$,
we can complement Proposition \ref{prop:restrictionCRolo} and Corollary
\ref{cor:iso holo CR global} by the following isomorphisms induced by pull-back:
\begin{equation}
\label{eqn:restrictionCRolo}
\mathcal{O}\big(\tilde{U}\big)_{k\,\boldsymbol{\nu}}\cong
\mathcal{CR}(U)_{k\,\boldsymbol{\nu}}
\cong\mathcal{CR}\left( \overline{U}  \right)_k.
\end{equation}

Letting $H^0(N_{\boldsymbol{\nu}},B_{k\,\boldsymbol{\nu}})$ denote the
space of holomorphic sections of the orbifold line bundle $B_{k\,\boldsymbol{\nu}}$,
we conclude that
\begin{equation}
\label{eqn:restrictionCRolosections}
H^0(N_{\boldsymbol{\nu}},B_{k\,\boldsymbol{\nu}})
\cong \mathcal{O}\big(\tilde{A}^\vee_{\boldsymbol{\nu}}\big)_{k\,\boldsymbol{\nu}}\cong
\mathcal{CR}(X_{\boldsymbol{\nu}})_{k\,\boldsymbol{\nu}}
\cong\mathcal{CR}\left( Y_{\boldsymbol{\nu}}  \right)_k.
\end{equation}

\subsection{The induced K\"{a}hler structure of $N_{\boldsymbol{\nu}}$}

We shall see that $P_{\boldsymbol{\nu}}:B_{\boldsymbol{\nu}}\rightarrow N_{\boldsymbol{\nu}}$ is a positive holomorphic $V$-line bundle.
In view of Propositions \ref{prop:psi bijective} and \ref{prop:tildepsi bijective} we may equivalently
consider $P'_{\boldsymbol{\nu}}:B'_{\boldsymbol{\nu}}\rightarrow N'_{\boldsymbol{\nu}}$.
With $\alpha$ as in (\ref{eqn:key properties alpha}), let 
$\alpha^{X_{\boldsymbol{\nu}}}:=\jmath_{\boldsymbol{\nu}}^*(\alpha)$, where
\begin{equation}
\label{eqn:defnjmathnu}
\jmath_{\boldsymbol{\nu}}:X_{\boldsymbol{\nu}}\hookrightarrow X
\end{equation}
is the inclusion.
Then $\alpha^{X_{\boldsymbol{\nu}}}$ is $T$-invariant, and by definition of 
$X_{\boldsymbol{\nu}}$ for any 
$\boldsymbol{\xi}\in \boldsymbol{\nu}^\perp$ we have
$$
\iota\big((\imath\,\boldsymbol{\xi})_{X_{\boldsymbol{\nu}}}\big)\,\alpha^{X_{\boldsymbol{\nu}}}
=\jmath_{\boldsymbol{\nu}}^*\big( \iota\big((\imath\,\boldsymbol{\xi})_{X}\big)\,\alpha   \big)=
-\left\langle \Phi,\imath\,\boldsymbol{\xi}\right\rangle \circ \jmath_{\boldsymbol{\nu}}= 0.
$$
Hence $\alpha^{X_{\boldsymbol{\nu}}}$ is the pull-back of an orbifold 1-form
$\alpha^{Y_{\boldsymbol{\nu}}}$ on $Y_{\boldsymbol{\nu}}$.  
Similarly, being $T$-invariant, $\Phi$ descends to a smooth function 
$\overline{\Phi} : Y_{\boldsymbol{\nu}} \rightarrow \mathfrak{t}^\vee$; hence
$\Phi^{\boldsymbol{\nu}}=\langle\Phi,\,\imath\,\boldsymbol{\nu}\rangle$
descends to a smooth function 
$\overline{\Phi}^{\boldsymbol{\nu}}:Y_{\boldsymbol{\nu}} \rightarrow \mathbb{R}$.

Clearly, 
\begin{equation}
\label{eqn:iotaYnu}
\iota\big((\imath\,\boldsymbol{\nu})_{Y_{\boldsymbol{\nu}}}\big)\,
\alpha^{Y_{\boldsymbol{\nu}}}=-\overline{\Phi}^{\boldsymbol{\nu}}.
\end{equation}
Let us define
$$
\beta_{\boldsymbol{\nu}}:=\frac{\|\boldsymbol{\nu}\|^2}{\overline{\Phi}^{\boldsymbol{\nu}}}\,
\alpha^{Y_{\boldsymbol{\nu}}},
$$
and let $-\delta^{Y_{\boldsymbol{\nu}}}\in \mathfrak{X}(Y_{\boldsymbol{\nu}})$
be the infintesimal generator of $\sigma^{Y_{\boldsymbol{\nu}}}$ in (\ref{eqn:defnrhoYnu}).
Thus by Corollary \ref{cor:cardinalityinters}
\begin{equation}
\label{eqn:generator sigma}
-\delta^{Y_{\boldsymbol{\nu}}}=\frac{1}{\|\boldsymbol{\nu}\|^2}\,(\imath\,\boldsymbol{\nu})_{Y_{\boldsymbol{\nu}}}.
\end{equation}
Given (\ref{eqn:generator sigma}) and (\ref{eqn:iotaYnu}),
we conclude the following.

\begin{cor}
\label{cor:betaYnuconnection}
$\beta_{\boldsymbol{\nu}}$ is $\sigma^{Y_{\boldsymbol{\nu}}}$-invariant, and
$\beta_{\boldsymbol{\nu}}(\delta^{Y_{\boldsymbol{\nu}}})=1$.
\end{cor}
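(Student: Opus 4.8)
The plan is to derive both assertions directly from the identities already in hand for $\delta^{Y_{\boldsymbol{\nu}}}$ and $\alpha^{Y_{\boldsymbol{\nu}}}$, namely (\ref{eqn:generator sigma}) and (\ref{eqn:iotaYnu}), together with the fact that $\alpha^{Y_{\boldsymbol{\nu}}}$ and $\overline{\Phi}^{\boldsymbol{\nu}}$ are obtained by descent from $T$-invariant data on $X_{\boldsymbol{\nu}}$.

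I would first dispose of the normalization. By (\ref{eqn:generator sigma}) we have $\delta^{Y_{\boldsymbol{\nu}}}=-\|\boldsymbol{\nu}\|^{-2}\,(\imath\,\boldsymbol{\nu})_{Y_{\boldsymbol{\nu}}}$, so, using the definition of $\beta_{\boldsymbol{\nu}}$ and then (\ref{eqn:iotaYnu}),
\[
\beta_{\boldsymbol{\nu}}\big(\delta^{Y_{\boldsymbol{\nu}}}\big)
=\frac{\|\boldsymbol{\nu}\|^{2}}{\overline{\Phi}^{\boldsymbol{\nu}}}\,
\alpha^{Y_{\boldsymbol{\nu}}}\big(\delta^{Y_{\boldsymbol{\nu}}}\big)
=-\frac{1}{\overline{\Phi}^{\boldsymbol{\nu}}}\,
\iota\big((\imath\,\boldsymbol{\nu})_{Y_{\boldsymbol{\nu}}}\big)\,\alpha^{Y_{\boldsymbol{\nu}}}
=-\frac{1}{\overline{\Phi}^{\boldsymbol{\nu}}}\,\big(-\overline{\Phi}^{\boldsymbol{\nu}}\big)=1 .
\]
This computation is legitimate because $\overline{\Phi}^{\boldsymbol{\nu}}$ is nowhere zero on $Y_{\boldsymbol{\nu}}$: on $M_{\boldsymbol{\nu}}$ one has $\Phi=\imath\,\lambda\,\boldsymbol{\nu}$ with $\lambda>0$ by Basic Assumption \ref{ba:transv}, whence $\Phi^{\boldsymbol{\nu}}=\lambda\,\|\boldsymbol{\nu}\|^{2}>0$; the same positivity is what makes $\beta_{\boldsymbol{\nu}}$ well defined as an orbifold $1$-form in the first place, and it would be worth recording this remark just before the corollary.

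For the invariance of $\beta_{\boldsymbol{\nu}}$ under $\sigma^{Y_{\boldsymbol{\nu}}}$, I would observe that it suffices to prove that the numerator $\|\boldsymbol{\nu}\|^{2}\,\alpha^{Y_{\boldsymbol{\nu}}}$ and the nowhere vanishing denominator $\overline{\Phi}^{\boldsymbol{\nu}}$ are separately $\sigma^{Y_{\boldsymbol{\nu}}}$-invariant. Now $Q_{\boldsymbol{\nu}}^{*}\big(\alpha^{Y_{\boldsymbol{\nu}}}\big)=\alpha^{X_{\boldsymbol{\nu}}}$ and $\overline{\Phi}^{\boldsymbol{\nu}}\circ Q_{\boldsymbol{\nu}}=\Phi^{\boldsymbol{\nu}}\circ\pi\circ\jmath_{\boldsymbol{\nu}}$, and both of these pull-backs to $X_{\boldsymbol{\nu}}$ are $T$-invariant: the first because $\mu^{X}$ preserves $\alpha$ (equivalently, because $\alpha^{X_{\boldsymbol{\nu}}}$ was already noted to be $T$-invariant), the second because the moment map of a torus action is invariant (the coadjoint action being trivial). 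The projection $Q_{\boldsymbol{\nu}}:X_{\boldsymbol{\nu}}\rightarrow Y_{\boldsymbol{\nu}}$ intertwines the $T$-action on $X_{\boldsymbol{\nu}}$ with the residual $\overline{T}^{1}_{\boldsymbol{\nu}}$-action on $Y_{\boldsymbol{\nu}}$, and $\sigma^{Y_{\boldsymbol{\nu}}}$ is precisely that action transported through the isomorphism $\chi'_{\boldsymbol{\nu}}:\overline{T}^{1}_{\boldsymbol{\nu}}\cong S^{1}$ of Lemma \ref{lem:chinu'iso}; hence $T$-invariance upstairs forces $\sigma^{Y_{\boldsymbol{\nu}}}$-invariance of $\alpha^{Y_{\boldsymbol{\nu}}}$ and $\overline{\Phi}^{\boldsymbol{\nu}}$ downstairs, and therefore of $\beta_{\boldsymbol{\nu}}$. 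Alternatively, and perhaps more economically, one checks $\mathcal{L}_{\delta^{Y_{\boldsymbol{\nu}}}}\beta_{\boldsymbol{\nu}}=0$ infinitesimally: since $\delta^{Y_{\boldsymbol{\nu}}}$ is a constant multiple of $(\imath\,\boldsymbol{\nu})_{Y_{\boldsymbol{\nu}}}$, which is $Q_{\boldsymbol{\nu}}$-related to $(\imath\,\boldsymbol{\nu})_{X_{\boldsymbol{\nu}}}$, it is enough to note $\mathcal{L}_{(\imath\,\boldsymbol{\nu})_{X_{\boldsymbol{\nu}}}}\alpha^{X_{\boldsymbol{\nu}}}=0$ and $(\imath\,\boldsymbol{\nu})_{X_{\boldsymbol{\nu}}}\big(\Phi^{\boldsymbol{\nu}}\circ\pi\big)=0$ (the latter being a Poisson self-bracket $\{\Phi^{\boldsymbol{\nu}},\Phi^{\boldsymbol{\nu}}\}$), and then push the vanishing Lie derivative down through $Q_{\boldsymbol{\nu}}$.

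I do not anticipate a genuine obstacle here: the corollary is essentially a repackaging of (\ref{eqn:generator sigma}) and (\ref{eqn:iotaYnu}). The only point that calls for a word of care is the well-definedness of the quotient defining $\beta_{\boldsymbol{\nu}}$ and the invariance of that quotient, both of which reduce to the positivity of $\overline{\Phi}^{\boldsymbol{\nu}}$ on $Y_{\boldsymbol{\nu}}$ --- guaranteed by the definition of $M_{\boldsymbol{\nu}}$ under Basic Assumption \ref{ba:transv} --- and to the compatibility of $Q_{\boldsymbol{\nu}}$ with the two torus actions.
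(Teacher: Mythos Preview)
Your proposal is correct and follows exactly the approach the paper takes: the corollary is stated there as an immediate consequence of (\ref{eqn:generator sigma}) and (\ref{eqn:iotaYnu}), with the invariance implicit in the fact that $\alpha^{Y_{\boldsymbol{\nu}}}$ and $\overline{\Phi}^{\boldsymbol{\nu}}$ descend from $T$-invariant data on $X_{\boldsymbol{\nu}}$. If anything, you have supplied more detail than the paper does.
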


Hence $\beta_{\boldsymbol{\nu}}$ is a connection 1-form for the principal
$V$-bundle $P'_{\boldsymbol{\nu}}:B'_{\boldsymbol{\nu}}\rightarrow N'_{\boldsymbol{\nu}}$.
Explicitly, 
\begin{eqnarray}
\label{eqn:dbetanuexpl}
\mathrm{d}\beta_{\boldsymbol{\nu}}=
\|\boldsymbol{\nu}\|^2\,\left[\frac{1}{\overline{\Phi}^{\boldsymbol{\nu}}}\,
\mathrm{d}\alpha^{Y_{•\boldsymbol{\nu}}}
-\frac{1}{(\overline{\Phi}^{\boldsymbol{\nu}})^2}\,
\mathrm{d}\overline{\Phi}^{\boldsymbol{\nu}}\wedge \alpha^{Y_{•\boldsymbol{\nu}}}
\right],
\end{eqnarray}
and one can also verify
that $\iota(\delta^{Y_{\boldsymbol{\nu}}})\,\mathrm{d}\beta_{\boldsymbol{\nu}}=0$ by direct inspection 
using (\ref{eqn:generator sigma}) and (\ref{eqn:dbetanuexpl}).
Furthermore, the kernel of $\beta_{\boldsymbol{\nu}}$ 
is the CR bundle of $Y_{\boldsymbol{\nu}}$:
$$
\ker (\beta_{\boldsymbol{\nu}})=\ker(\alpha^{Y_{\boldsymbol{\nu}}})
=\mathcal{H}(Y_{\boldsymbol{\nu}}).
$$
Hence we reach the following conclusion.
Let $\pi_{\boldsymbol{\nu}}:Y_{\boldsymbol{\nu}}\rightarrow N'_{\boldsymbol{\nu}}$
be the projection.

\begin{lem}
\label{lem:11formetanu}
There exists a $(1,1)$-form $\eta_{\boldsymbol{\nu}}'$ on $N_{\boldsymbol{\nu}}'$
such that $\mathrm{d}(\beta_{\boldsymbol{\nu}})
=2\,\pi_{\boldsymbol{\nu}}^*(\eta_{\boldsymbol{\nu}}')$.
\end{lem}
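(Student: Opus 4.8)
The plan is to produce $\eta_{\boldsymbol{\nu}}'$ by descending the curvature $\mathrm{d}\beta_{\boldsymbol{\nu}}$ through the principal $V$-bundle $\pi_{\boldsymbol{\nu}}\colon Y_{\boldsymbol{\nu}}\to N_{\boldsymbol{\nu}}'$, and then to obtain the type $(1,1)$ condition by pulling back to $X_{\boldsymbol{\nu}}$, restricting to a slice, and using Lemma \ref{lem:unique JF} to compare $J^F$ with the CR structure.

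\emph{Descent.} By Corollary \ref{cor:betaYnuconnection} the form $\beta_{\boldsymbol{\nu}}$ is $\sigma^{Y_{\boldsymbol{\nu}}}$-invariant, hence so is the closed form $\mathrm{d}\beta_{\boldsymbol{\nu}}$; moreover $\iota(\delta^{Y_{\boldsymbol{\nu}}})\,\mathrm{d}\beta_{\boldsymbol{\nu}}=0$, as noted after Corollary \ref{cor:betaYnuconnection}. An invariant, horizontal form on the total space of a principal $V$-bundle is basic: on a uniformizing chart $S^1\times F\to F$ coming from a slice $F\subseteq X_{\boldsymbol{\nu}}$ for $\mu^{X_{\boldsymbol{\nu}}}$ it is the pull-back of a closed $2$-form on $F$, and these local forms are $T_x$-equivariant and compatible with the Satake injections, so they glue to a closed orbifold $2$-form on $N_{\boldsymbol{\nu}}'$; I define $\eta_{\boldsymbol{\nu}}'$ to be one half of it, so that $\mathrm{d}\beta_{\boldsymbol{\nu}}=2\,\pi_{\boldsymbol{\nu}}^*(\eta_{\boldsymbol{\nu}}')$, the factor $2$ being chosen to match $\mathrm{d}\alpha=2\,\pi^*(\omega)$.

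\emph{Reduction to a slice.} Pull everything back to $X_{\boldsymbol{\nu}}$ via $Q_{\boldsymbol{\nu}}$: with $\beta_{\boldsymbol{\nu}}^{X_{\boldsymbol{\nu}}}:=Q_{\boldsymbol{\nu}}^*(\beta_{\boldsymbol{\nu}})=(\|\boldsymbol{\nu}\|^2/\Phi^{\boldsymbol{\nu}})\,\alpha^{X_{\boldsymbol{\nu}}}$ we have $\mathrm{d}\beta_{\boldsymbol{\nu}}^{X_{\boldsymbol{\nu}}}=2\,(p_{\boldsymbol{\nu}}')^*(\eta_{\boldsymbol{\nu}}')$. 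For $\xi\in\mathfrak{t}$ one computes $\iota(\xi_X)\,\beta_{\boldsymbol{\nu}}^{X_{\boldsymbol{\nu}}}=-\|\boldsymbol{\nu}\|^2\,\langle\Phi\circ\pi,\xi\rangle/\Phi^{\boldsymbol{\nu}}$, which is constant on $X_{\boldsymbol{\nu}}$ (it equals $-\|\boldsymbol{\nu}\|^2$ for $\xi=\imath\,\boldsymbol{\nu}$ and $0$ for $\xi\in\imath\,\boldsymbol{\nu}^\perp$, by the very definition of $M_{\boldsymbol{\nu}}$); since $\beta_{\boldsymbol{\nu}}^{X_{\boldsymbol{\nu}}}$ is $T$-invariant, Cartan's formula then gives $\iota(\xi_X)\,\mathrm{d}\beta_{\boldsymbol{\nu}}^{X_{\boldsymbol{\nu}}}=0$, so $\mathrm{d}\beta_{\boldsymbol{\nu}}^{X_{\boldsymbol{\nu}}}$ annihilates $\mathfrak{t}_{A^\vee}(x)$ at every $x\in X_{\boldsymbol{\nu}}$. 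Now fix $x\in X_{\boldsymbol{\nu}}$ and a slice $F\subseteq X_{\boldsymbol{\nu}}$ at $x$ for $\mu^{X_{\boldsymbol{\nu}}}$, with inclusion $\jmath_F\colon F\hookrightarrow X_{\boldsymbol{\nu}}$; $(F,J^F)$ is a local uniformizing chart for $(N_{\boldsymbol{\nu}}',J^{N_{\boldsymbol{\nu}}'})$ with uniformizing map $p_{\boldsymbol{\nu}}'|_F$, so $(p_{\boldsymbol{\nu}}'|_F)^*(\eta_{\boldsymbol{\nu}}')=\tfrac12\,\jmath_F^*(\mathrm{d}\beta_{\boldsymbol{\nu}}^{X_{\boldsymbol{\nu}}})$, and it remains to prove that $\jmath_F^*(\mathrm{d}\beta_{\boldsymbol{\nu}}^{X_{\boldsymbol{\nu}}})$ is of type $(1,1)$ for $J^F$.

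\emph{The compatibility step --- the main obstacle.} Writing $m=\pi(x)$, a dimension count together with the descriptions in \S\ref{sctn:Mnu} and in $(\ref{eqnHXnudef})$ gives a direct sum decomposition $T_xX_{\boldsymbol{\nu}}=\mathfrak{t}_{A^\vee}(x)\oplus\mathcal{H}(X_{\boldsymbol{\nu}})_x$ (an element of the intersection is killed by $\alpha^{X_{\boldsymbol{\nu}}}$, hence lies in $(\imath\,\boldsymbol{\nu}^\perp)_M(m)^\sharp$, hence vanishes since $\mathcal{H}(M_{\boldsymbol{\nu}})_m\perp(\imath\,\boldsymbol{\nu}^\perp)_M(m)$); here $\mathcal{H}(X_{\boldsymbol{\nu}})_x=\mathcal{H}(M_{\boldsymbol{\nu}})_m^\sharp$ is $J'$-invariant with CR structure $J'|_{\mathcal{H}(X_{\boldsymbol{\nu}})_x}$. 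Let $P\colon T_xX_{\boldsymbol{\nu}}\to\mathcal{H}(X_{\boldsymbol{\nu}})_x$ be the projection along $\mathfrak{t}_{A^\vee}(x)$, and for $v\in T_xF$ write $v=v_t+v_h$ accordingly. By Proposition \ref{prop:slices TtildeT} we may take $F$ to be a slice for $\tilde{\mu}^{A^\vee_0}$ as well, so Lemma \ref{lem:unique JF} applies: $J^F_x(v)=J'_x(v)+\tau$ with $\tau\in\tilde{\mathfrak{t}}_{A^\vee}(x)=\mathfrak{t}_{A^\vee}(x)\oplus J'_x(\mathfrak{t}_{A^\vee}(x))$ (the sum being direct by Corollary \ref{cor:totally real subspace}). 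Splitting $\tau$ and using that $J'_x(\mathfrak{t}_{A^\vee}(x))\cap T_xX_{\boldsymbol{\nu}}=(0)$ (Lemma \ref{lem:local diffeo tildeTXnu}(2)) forces $J^F_x(v)=J'_x(v_h)+(\text{an element of }\mathfrak{t}_{A^\vee}(x))$, whence $P(J^F_x v)=J'_x(v_h)=J'_x(Pv)$; that is, $P$ is complex linear. Finally, on $\mathcal{H}(X_{\boldsymbol{\nu}})_x\subseteq\ker\alpha^{X_{\boldsymbol{\nu}}}$ the $\mathrm{d}\Phi^{\boldsymbol{\nu}}\wedge\alpha^{X_{\boldsymbol{\nu}}}$-term of $\mathrm{d}\beta_{\boldsymbol{\nu}}^{X_{\boldsymbol{\nu}}}$ drops out and one is left with $(2\|\boldsymbol{\nu}\|^2/\Phi^{\boldsymbol{\nu}})\,\jmath_{\boldsymbol{\nu}}^*\pi^*(\omega)$, a positive multiple of the K\"ahler form and so of type $(1,1)$ for $J'|_{\mathcal{H}(X_{\boldsymbol{\nu}})_x}$; combining this with $\mathrm{d}\beta_{\boldsymbol{\nu}}^{X_{\boldsymbol{\nu}}}(\mathfrak{t}_{A^\vee}(x),\,\cdot\,)=0$ and the complex linearity of $P$ gives, for $v,w\in T_xF$,
$$\mathrm{d}\beta_{\boldsymbol{\nu}}^{X_{\boldsymbol{\nu}}}(J^F v,J^F w)=\mathrm{d}\beta_{\boldsymbol{\nu}}^{X_{\boldsymbol{\nu}}}(PJ^F v,PJ^F w)=\mathrm{d}\beta_{\boldsymbol{\nu}}^{X_{\boldsymbol{\nu}}}(J'Pv,J'Pw)=\mathrm{d}\beta_{\boldsymbol{\nu}}^{X_{\boldsymbol{\nu}}}(Pv,Pw)=\mathrm{d}\beta_{\boldsymbol{\nu}}^{X_{\boldsymbol{\nu}}}(v,w).$$
This is the crux; the dimension counts, the orbifold basicness argument, and the explicit splitting of $\tau$ are routine, and I expect the only genuinely delicate point to be exactly this passage from $J^F$ (defined via the $\tilde{T}$-orbits in $A^\vee_0$) to the CR structure of $X_{\boldsymbol{\nu}}$, which is what Lemma \ref{lem:unique JF} and Corollary \ref{cor:totally real subspace} are invoked for.
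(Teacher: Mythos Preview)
Your argument is correct. The descent step, the dimension count for $T_xX_{\boldsymbol{\nu}}=\mathfrak{t}_{A^\vee}(x)\oplus\mathcal{H}(X_{\boldsymbol{\nu}})_x$, the use of Lemma~\ref{lem:unique JF} together with Lemma~\ref{lem:local diffeo tildeTXnu}(2) to show $P(J^F v)=J'(Pv)$, and the final chain of equalities are all sound.

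The paper, however, takes a noticeably shorter route. It does not work on slices for $\mu^{X_{\boldsymbol{\nu}}}$ at all for the $(1,1)$ claim; instead it works directly on $Y_{\boldsymbol{\nu}}$ and uses the key observation recorded just before the Lemma: $\ker(\beta_{\boldsymbol{\nu}})=\mathcal{H}(Y_{\boldsymbol{\nu}})$. That is, the horizontal distribution of the principal $V$-bundle $\pi_{\boldsymbol{\nu}}:Y_{\boldsymbol{\nu}}\to N_{\boldsymbol{\nu}}'$ \emph{coincides} with the CR bundle of $Y_{\boldsymbol{\nu}}$. Since the orbifold complex structure $J^{N_{\boldsymbol{\nu}}'}$ is precisely the one obtained by descending the CR structure of $Y_{\boldsymbol{\nu}}$ (equivalently of $X_{\boldsymbol{\nu}}$), checking that $\eta_{\boldsymbol{\nu}}'$ is $(1,1)$ reduces immediately to checking that $\mathrm{d}\beta_{\boldsymbol{\nu}}$ restricted to $\mathcal{H}(Y_{\boldsymbol{\nu}})$ is $J'$-invariant; this is exactly the computation~(\ref{eqn:dbetanuexplup1}) (carried out in the proof of Proposition~\ref{prop:orbifold kahler}), and your final step lands on the same formula. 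What your approach buys is an explicit verification, in a uniformizing chart $(F,J^F)$, of how the abstract complex structure $J^F$ relates to the ambient CR structure; what the paper's approach buys is that this comparison is unnecessary, because the choice of connection makes the horizontal lift and the CR lift the same map.
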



We shall denote by $\eta_{\boldsymbol{\nu}}$ the corresponding form on $N_{\boldsymbol{\nu}}$.
With the notation of Proposition \ref{prop:psi bijective}, we have the following.

\begin{prop}
\label{prop:orbifold kahler}
$(N'_{\boldsymbol{\nu}},J^{N_{\boldsymbol{\nu}}'}, \eta_{\boldsymbol{\nu}}')$ 
and $(N_{\boldsymbol{\nu}},J^{N_{\boldsymbol{\nu}}}, \eta_{\boldsymbol{\nu}})$ 
are isomorphic K\"{a}hler orbifolds.
In particular, $(Y_{\boldsymbol{\nu}},\,\beta_{\boldsymbol{\nu}})$ is a 
contact orbifold.
\end{prop}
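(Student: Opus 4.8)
The plan is to reduce the statement to a single pointwise positivity computation, carried out upstairs on $X_{\boldsymbol{\nu}}$ and involving only the K\"{a}hler form $\omega$ restricted to the CR bundle $\mathcal{H}(M_{\boldsymbol{\nu}})$. Since $\psi$ is already an isomorphism of complex orbifolds by Proposition \ref{prop:psi bijective}, and $\eta_{\boldsymbol{\nu}}$ is by construction the form on $N_{\boldsymbol{\nu}}$ corresponding to $\eta'_{\boldsymbol{\nu}}$ under $\psi$, the assertion that the two triples are isomorphic K\"{a}hler orbifolds amounts to showing that $\eta'_{\boldsymbol{\nu}}$ is a \emph{positive} $(1,1)$-form on $(N'_{\boldsymbol{\nu}},J^{N_{\boldsymbol{\nu}}'})$. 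Its type is $(1,1)$ by Lemma \ref{lem:11formetanu}, and $\mathrm{d}\eta'_{\boldsymbol{\nu}}=0$ follows from $\pi_{\boldsymbol{\nu}}^*(\mathrm{d}\eta'_{\boldsymbol{\nu}})=\tfrac12\,\mathrm{d}^2\beta_{\boldsymbol{\nu}}=0$ together with the fact that $\pi_{\boldsymbol{\nu}}$ is a submersion; so positivity is the only real point.

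First I would pull $\beta_{\boldsymbol{\nu}}$ back along $Q_{\boldsymbol{\nu}}$ of \eqref{eqn:defnQnu}, obtaining on $X_{\boldsymbol{\nu}}$ the genuine $1$-form $\beta^{X_{\boldsymbol{\nu}}}:=Q_{\boldsymbol{\nu}}^*(\beta_{\boldsymbol{\nu}})=\bigl(\|\boldsymbol{\nu}\|^2/\Phi^{\boldsymbol{\nu}}\bigr)\,\alpha^{X_{\boldsymbol{\nu}}}$, where $\Phi^{\boldsymbol{\nu}}=\langle\Phi\circ\pi,\imath\,\boldsymbol{\nu}\rangle$ is strictly positive on $X_{\boldsymbol{\nu}}$ by \eqref{eqn:nux0}. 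Since $p_{\boldsymbol{\nu}}'=\pi_{\boldsymbol{\nu}}\circ Q_{\boldsymbol{\nu}}$, Lemma \ref{lem:11formetanu} gives $(p_{\boldsymbol{\nu}}')^*(\eta'_{\boldsymbol{\nu}})=\tfrac12\,\mathrm{d}\beta^{X_{\boldsymbol{\nu}}}$. Writing $\mathrm{d}\beta^{X_{\boldsymbol{\nu}}}=\mathrm{d}\bigl(\|\boldsymbol{\nu}\|^2/\Phi^{\boldsymbol{\nu}}\bigr)\wedge\alpha^{X_{\boldsymbol{\nu}}}+\bigl(\|\boldsymbol{\nu}\|^2/\Phi^{\boldsymbol{\nu}}\bigr)\,\mathrm{d}\alpha^{X_{\boldsymbol{\nu}}}$ and restricting to the CR distribution $\mathcal{H}(X_{\boldsymbol{\nu}})=\mathcal{H}(M_{\boldsymbol{\nu}})^\sharp\subseteq\ker(\alpha^{X_{\boldsymbol{\nu}}})$ (see \eqref{eqnHXnudef}), the first summand drops out because it carries the factor $\alpha^{X_{\boldsymbol{\nu}}}$, which annihilates horizontal vectors; by \eqref{eqn:key properties alpha} one is left, for $v,w\in\mathcal{H}(M_{\boldsymbol{\nu}})_m$ and $x\in X_{\boldsymbol{\nu}}$ with $\pi(x)=m$, with
$$\mathrm{d}\beta^{X_{\boldsymbol{\nu}}}_x\bigl(v^\sharp,w^\sharp\bigr)=\frac{2\,\|\boldsymbol{\nu}\|^2}{\Phi^{\boldsymbol{\nu}}(x)}\,\omega_m(v,w).$$

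Next I would check that, under the isomorphism $\mathrm{d}p_{\boldsymbol{\nu}}'\colon\mathcal{H}(X_{\boldsymbol{\nu}})_x=\mathcal{H}(M_{\boldsymbol{\nu}})_m^\sharp\to T_{p_{\boldsymbol{\nu}}'(x)}N'_{\boldsymbol{\nu}}$, the orbifold complex structure $J^{N_{\boldsymbol{\nu}}'}$ of Corollary \ref{cor:Nnu'complex} corresponds to the restriction of $J_m$ to the complex subspace $\mathcal{H}(M_{\boldsymbol{\nu}})_m$. For this I would use the characterization in Lemma \ref{lem:unique JF}: for a slice $F\subseteq X_{\boldsymbol{\nu}}$ at $x$ and $v\in\mathcal{H}(M_{\boldsymbol{\nu}})_m$, compatibility of the connection of $A^\vee_0$ with $J'$ gives $J'_x(v^\sharp)=(J_mv)^\sharp$, which again lies in $\mathcal{H}(X_{\boldsymbol{\nu}})_x$; subtracting its $\mathfrak{t}_{X_{\boldsymbol{\nu}}}(x)$-component (an element of $\mathfrak{t}_{A^\vee}(x)\subseteq\tilde{\mathfrak{t}}_{A^\vee}(x)$), Lemma \ref{lem:unique JF} forces $J^F_x$ to send the $T_xF$-part of $v^\sharp$ to the $T_xF$-part of $(J_mv)^\sharp$, and projecting to $N'_{\boldsymbol{\nu}}$ yields $J^{N_{\boldsymbol{\nu}}'}\bigl(\mathrm{d}p_{\boldsymbol{\nu}}'(v^\sharp)\bigr)=\mathrm{d}p_{\boldsymbol{\nu}}'\bigl((J_mv)^\sharp\bigr)$. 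Combining this with the displayed formula and $\omega_m(v,J_mv)=g_m(v,v)>0$, we get $\eta'_{\boldsymbol{\nu}}\bigl(u,J^{N_{\boldsymbol{\nu}}'}u\bigr)=\bigl(\|\boldsymbol{\nu}\|^2/\Phi^{\boldsymbol{\nu}}(x)\bigr)\,g_m(v,v)>0$ for $u=\mathrm{d}p_{\boldsymbol{\nu}}'(v^\sharp)\neq\mathbf{0}$, so $\eta'_{\boldsymbol{\nu}}$ is K\"{a}hler; transporting through $\psi$ gives the first assertion. For the last one, $\beta_{\boldsymbol{\nu}}$ is a connection $1$-form with $\ker(\beta_{\boldsymbol{\nu}})=\mathcal{H}(Y_{\boldsymbol{\nu}})$ by Corollary \ref{cor:betaYnuconnection}, and $\mathrm{d}\beta_{\boldsymbol{\nu}}=2\,\pi_{\boldsymbol{\nu}}^*(\eta'_{\boldsymbol{\nu}})$; since $\eta'_{\boldsymbol{\nu}}$ is now nondegenerate, $(\eta'_{\boldsymbol{\nu}})^{\wedge(d+1-r)}$ is a volume form on $N'_{\boldsymbol{\nu}}$, whence $\beta_{\boldsymbol{\nu}}\wedge(\mathrm{d}\beta_{\boldsymbol{\nu}})^{\wedge(d+1-r)}$ is a volume form on $Y_{\boldsymbol{\nu}}$, i.e. $\beta_{\boldsymbol{\nu}}$ is contact.

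I expect the main obstacle to be not a single deep step but the bookkeeping of the third paragraph: one must reconcile the abstractly defined orbifold complex structure $J^{N_{\boldsymbol{\nu}}'}$ (built from the slice structures $J^F$) with the naive horizontal lift of $J|_{\mathcal{H}(M_{\boldsymbol{\nu}})}$, keeping straight the chain of locally free quotients $X_{\boldsymbol{\nu}}\to Y_{\boldsymbol{\nu}}\to N'_{\boldsymbol{\nu}}$, the splitting $T_xX_{\boldsymbol{\nu}}=\mathcal{H}(X_{\boldsymbol{\nu}})_x\oplus\mathfrak{t}_{X_{\boldsymbol{\nu}}}(x)$, and the (easy but needed) fact that at points of $X$ the horizontal lift with respect to $\alpha$ agrees with the one with respect to the connection of $A^\vee_0$, so that the pointwise positivity on $M_{\boldsymbol{\nu}}$ really does transfer to $N'_{\boldsymbol{\nu}}$.
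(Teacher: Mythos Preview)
Your argument is correct and follows essentially the same route as the paper: reduce to positivity of $\eta'_{\boldsymbol{\nu}}$, pull $\mathrm{d}\beta_{\boldsymbol{\nu}}$ back to $X_{\boldsymbol{\nu}}$ via $Q_{\boldsymbol{\nu}}$, observe that on $\mathcal{H}(X_{\boldsymbol{\nu}})_x=\mathcal{H}(M_{\boldsymbol{\nu}})_m^\sharp$ the only surviving term is $\bigl(2\|\boldsymbol{\nu}\|^2/\Phi^{\boldsymbol{\nu}}\bigr)\,\omega_m$, and conclude from $\Phi^{\boldsymbol{\nu}}>0$ and the K\"{a}hler property of $\omega$. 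Your treatment is in fact more explicit than the paper's in two places: you spell out, via Lemma~\ref{lem:unique JF}, why the abstractly defined $J^{N'_{\boldsymbol{\nu}}}$ really corresponds under $\mathrm{d}p'_{\boldsymbol{\nu}}$ to the restriction of $J_m$ to $\mathcal{H}(M_{\boldsymbol{\nu}})_m$ (the paper takes this for granted, simply writing $J'_x(v^\sharp)=J_m(v)^\sharp$), and you give a separate verification of the contact condition on $(Y_{\boldsymbol{\nu}},\beta_{\boldsymbol{\nu}})$, which the paper leaves as an immediate consequence.
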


\begin{proof}
[Proof of Proposition \ref{prop:orbifold kahler}]
It suffices to prove that $(N'_{\boldsymbol{\nu}},J^{N_{\boldsymbol{\nu}}'}, \eta_{\boldsymbol{\nu}}')$ is a K\"{a}hler orbifold, since the other
statements follow directly.

The uniformized tangent space of $Y_{\boldsymbol{\nu}}$ splits as the direct sum
$V(Y_{\boldsymbol{\nu}})\oplus H(Y_{\boldsymbol{\nu}})$, where
$V(Y_{\boldsymbol{\nu}})$ is the tangent space to the orbits of
$\sigma^{Y_{\boldsymbol{\nu}}}$. 
To check that $\eta_{\boldsymbol{\nu}}$ is K\"{a}hler,
it suffices therefore to verify that the restriction of
$\mathrm{d}\beta_{\boldsymbol{\nu}}$
to $H(Y_{\boldsymbol{\nu}})$ is compatible with the complex structure. 
In view of (\ref{eqn:dbetanuexpl}) and $T$-invariance, we need only check that the 
form
\begin{eqnarray}
\label{eqn:dbetanuexplup}
Q_{\boldsymbol{\nu}}^*(\mathrm{d}\beta_{\boldsymbol{\nu}})=
\|\boldsymbol{\nu}\|^2\,\jmath_{\boldsymbol{\nu}}^*\left(\frac{1}{\Phi^{\boldsymbol{\nu}}}\,
\mathrm{d}\alpha
-\frac{1}{(\Phi^{\boldsymbol{\nu}})^2}\,
\mathrm{d}\Phi^{\boldsymbol{\nu}}\wedge \alpha
\right),
\end{eqnarray}
where $Q_{\boldsymbol{\nu}}$ is as in (\ref{eqn:defnQnu}) and $\jmath_{\boldsymbol{\nu}}$
as in (\ref{eqn:defnjmathnu}), is compatible with the complex structure
of the CR bundle $\mathcal{H}(X_{\boldsymbol{\nu}})$.

Suppose $x\in X_{\boldsymbol{\nu}}$ and let $m:=\pi(x)\in M_{\boldsymbol{\nu}}$.
The general vector in $\mathcal{H}( X_{\boldsymbol{\nu}})_x$ has the form
$v^\sharp$ for some $v\in \mathcal{H}(M_{\boldsymbol{\nu}})_m$
(see (\ref{eqnHXnudef})), and then
$J'_x(v^\sharp)=J_m(v)^\sharp$. By 
(\ref{eqn:dbetanuexplup}), for any $v,w\in \mathcal{H}(M_{\boldsymbol{\nu}})_m$
\begin{eqnarray}
\label{eqn:dbetanuexplup1}
Q_{\boldsymbol{\nu}}^*(\mathrm{d}\beta_{\boldsymbol{\nu}})_x(v^\sharp,w^\sharp)&=&
\frac{\|\boldsymbol{\nu}\|^2}{\Phi^{\boldsymbol{\nu}}(m)}\,
\mathrm{d}_x\alpha(v^\sharp,w^\sharp)=
\frac{\|\boldsymbol{\nu}\|^2}{\Phi^{\boldsymbol{\nu}}(m)}\,
2\,\omega_m(v,w).
\end{eqnarray}
The statement follows, since $\Phi^{\boldsymbol{\nu}}(m)>0$ by definition of
$M_{\boldsymbol{\nu}}$, $\mathcal{H}_m(M_{\boldsymbol{\nu}})\subseteq T_mM$
is a complex subspace, and $\omega$ is K\"{a}hler.

\end{proof}

\begin{cor}
\label{cor:polarized kahler orbifold}
$(N_{\boldsymbol{\nu}},B_{\boldsymbol{\nu}})$ is polarized K\"{a}hler orbifold.
\end{cor}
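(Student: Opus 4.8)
The plan is to transfer the question to the primed model $\big(N'_{\boldsymbol{\nu}},B'_{\boldsymbol{\nu}}\big)$, where the relevant differential-geometric data have already been assembled, and then carry the conclusion back via the isomorphisms $\psi$ and $\tilde{\psi}$ of Propositions \ref{prop:psi bijective} and \ref{prop:tildepsi bijective}. By Proposition \ref{prop:orbifold kahler}, $\big(N'_{\boldsymbol{\nu}},J^{N_{\boldsymbol{\nu}}'},\eta'_{\boldsymbol{\nu}}\big)$ is a K\"{a}hler orbifold and $\psi$ identifies it with $\big(N_{\boldsymbol{\nu}},J^{N_{\boldsymbol{\nu}}},\eta_{\boldsymbol{\nu}}\big)$; moreover $\tilde{\psi}$ identifies $B'_{\boldsymbol{\nu}}$ with $B_{\boldsymbol{\nu}}$ compatibly with the bundle projections. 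So it remains only to produce an orbifold Hermitian metric on $B'_{\boldsymbol{\nu}}$ whose Chern curvature equals $-2\pi\,\imath\,\eta'_{\boldsymbol{\nu}}$: since $\eta'_{\boldsymbol{\nu}}$ is K\"{a}hler, this exhibits $B'_{\boldsymbol{\nu}}$ as a positive holomorphic orbifold line bundle, hence shows that $\big(N'_{\boldsymbol{\nu}},B'_{\boldsymbol{\nu}}\big)$ is a polarized K\"{a}hler orbifold.

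First I would make explicit that $Y_{\boldsymbol{\nu}}$ stands to $\big(N'_{\boldsymbol{\nu}},B'_{\boldsymbol{\nu}}\big)$ exactly as $X$ stands to $(M,A)$. Since $\chi_{\boldsymbol{\nu}}$ is trivial on $T^{r-1}_{\boldsymbol{\nu}^\perp}$, quotienting $X_{\boldsymbol{\nu}}\times\mathbb{C}_{\boldsymbol{\nu}}$ first by $T^{r-1}_{\boldsymbol{\nu}^\perp}$ and then by the residual $\overline{T}^1_{\boldsymbol{\nu}}$ exhibits $B'_{\boldsymbol{\nu}}=X_{\boldsymbol{\nu}}\times_T\mathbb{C}_{\boldsymbol{\nu}}$ as the orbifold line bundle associated with the principal $V$-bundle $\sigma_{\boldsymbol{\nu}}\colon Y_{\boldsymbol{\nu}}\to N'_{\boldsymbol{\nu}}$ through $\chi'_{\boldsymbol{\nu}}$, which is a Lie group isomorphism $\overline{T}^1_{\boldsymbol{\nu}}\cong S^1$ by Lemma \ref{lem:chinu'iso}. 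Thus $Y_{\boldsymbol{\nu}}$ is the unit circle $V$-bundle sitting inside $\big(B'_{\boldsymbol{\nu}}\big)^\vee$, with $\sigma^{Y_{\boldsymbol{\nu}}}$ acting by (clockwise) fibre rotation, and the weight decomposition in (\ref{eqn:restrictionCRolosections}) is the exact counterpart of $H(X)_k\cong H^0(M,A^{\otimes k})$.

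Next I would invoke the connection data already recorded: $\beta_{\boldsymbol{\nu}}$ is a connection $1$-form for $\sigma_{\boldsymbol{\nu}}$ with $\beta_{\boldsymbol{\nu}}\big(\delta^{Y_{\boldsymbol{\nu}}}\big)=1$ (Corollary \ref{cor:betaYnuconnection}), its exterior derivative $\mathrm{d}\beta_{\boldsymbol{\nu}}$ is twice the pull-back of $\eta'_{\boldsymbol{\nu}}$ (Lemma \ref{lem:11formetanu}), and $\ker(\beta_{\boldsymbol{\nu}})=\mathcal{H}(Y_{\boldsymbol{\nu}})$ is the integrable, invariant CR bundle descended from $X_{\boldsymbol{\nu}}$. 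These are precisely the properties of $\alpha$ in (\ref{eqn:key properties alpha}) together with the compatibility of $\ker\alpha$ with the holomorphic structure; hence, reading the standard correspondence between principal $S^1$-connections and Chern connections of Hermitian holomorphic line bundles inside each local uniformizing chart --- where $Y_{\boldsymbol{\nu}}$, $\beta_{\boldsymbol{\nu}}$ and the CR bundle are pulled back from honest manifolds, and the finite stabilizers act by CR automorphisms and by isometries --- one gets that the Hermitian metric on $\big(B'_{\boldsymbol{\nu}}\big)^\vee$ for which $Y_{\boldsymbol{\nu}}$ is the unit circle bundle is an orbifold Hermitian metric with Chern curvature $+2\pi\,\imath\,\eta'_{\boldsymbol{\nu}}$; dualizing gives an orbifold Hermitian metric on $B'_{\boldsymbol{\nu}}$ with curvature $-2\pi\,\imath\,\eta'_{\boldsymbol{\nu}}$. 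Since $\eta'_{\boldsymbol{\nu}}$ is K\"{a}hler by Proposition \ref{prop:orbifold kahler}, this metric is positive, and transporting along $\psi$ and $\tilde{\psi}$ delivers the corollary for $\big(N_{\boldsymbol{\nu}},B_{\boldsymbol{\nu}}\big)$.

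The step I expect to be the main obstacle --- bookkeeping rather than a genuine difficulty --- is checking that the Hermitian metric and the curvature identity descend \emph{compatibly} to the orbifold: that on overlaps the Satake injections intertwine the local connection forms and metrics, so that a bona fide orbifold Hermitian metric and a global curvature $(1,1)$-form are obtained. All of this follows from the $T$- and $\tilde{T}$-equivariance built into every object above, exactly as in the proofs of Lemmas \ref{lem:slice biholomorphism} and \ref{lem:fetta prodotto}; the only analytic input, namely positivity of $\eta'_{\boldsymbol{\nu}}$ on the CR bundle, was already discharged in the proof of Proposition \ref{prop:orbifold kahler}.
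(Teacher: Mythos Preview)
Your proposal is correct and follows essentially the same approach as the paper. In fact, the paper gives no separate proof of this corollary at all: it merely remarks that the notation is as in Chapter 4 of \cite{bg}, treating the statement as immediate from Proposition~\ref{prop:orbifold kahler}, Corollary~\ref{cor:betaYnuconnection}, and Lemma~\ref{lem:11formetanu}; your write-up simply unpacks the details (identifying $Y_{\boldsymbol{\nu}}$ as the unit circle $V$-bundle of $(B'_{\boldsymbol{\nu}})^\vee$, reading $\beta_{\boldsymbol{\nu}}$ as the Chern connection form, and transporting via $\psi$, $\tilde{\psi}$) that the paper leaves implicit.
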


Here notation is as in Chaper 4 of \cite{bg}. By the Kodaira-Baily Vanishing Theorem
(\cite{baily}, \cite{bg}), we obtain the following conclusion.

\begin{cor}
\label{cor:vanishing kb}
$H^i(N_{\boldsymbol{\nu}},B_{k\,\boldsymbol{\nu}})=0$, $\forall\,i>0,\, k\gg 0$.
\end{cor}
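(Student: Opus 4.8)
The plan is to invoke the orbifold version of the Kodaira vanishing theorem, due to Baily, in the form recalled in Chapter 4 of \cite{bg}. By Corollary \ref{cor:polarized kahler orbifold}, $(N_{\boldsymbol{\nu}},B_{\boldsymbol{\nu}})$ is a compact polarized K\"{a}hler orbifold: $N_{\boldsymbol{\nu}}$ is a compact complex orbifold and $B_{\boldsymbol{\nu}}$ is a positive holomorphic orbifold line bundle, whose first Chern class is represented by (a positive multiple of) the K\"{a}hler form $\eta_{\boldsymbol{\nu}}$ of Proposition \ref{prop:orbifold kahler}. The Kodaira--Baily vanishing theorem then asserts that for any positive holomorphic orbifold line bundle $L$ on $N_{\boldsymbol{\nu}}$ one has $H^i\big(N_{\boldsymbol{\nu}},\mathcal{O}(K_{N_{\boldsymbol{\nu}}})\otimes L\big)=0$ for all $i>0$, where $K_{N_{\boldsymbol{\nu}}}$ denotes the canonical orbifold line bundle.

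To deduce the asymptotic statement, I would write $B_{k\,\boldsymbol{\nu}}=B_{\boldsymbol{\nu}}^{\otimes k}$ as $\mathcal{O}(K_{N_{\boldsymbol{\nu}}})\otimes L_k$ with $L_k:=K_{N_{\boldsymbol{\nu}}}^{-1}\otimes B_{\boldsymbol{\nu}}^{\otimes k}$, and check that $L_k$ is positive for $k\gg 0$. Indeed, since $B_{\boldsymbol{\nu}}$ carries an orbifold Hermitian metric whose curvature is a positive multiple of $\eta_{\boldsymbol{\nu}}$, while $K_{N_{\boldsymbol{\nu}}}^{-1}$ carries some fixed orbifold Hermitian metric with curvature a fixed orbifold $2$-form, the curvature of the induced metric on $L_k$ is this fixed form plus $k$ times a positive multiple of $\eta_{\boldsymbol{\nu}}$; by compactness of $N_{\boldsymbol{\nu}}$ this is everywhere positive once $k$ is large enough. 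Applying Kodaira--Baily vanishing to $L=L_k$ then yields $H^i\big(N_{\boldsymbol{\nu}},B_{k\,\boldsymbol{\nu}}\big)=0$ for all $i>0$ and all $k\gg 0$.

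There is no serious obstacle here beyond bookkeeping: the only point requiring a little care is that all the line bundles involved — in particular $K_{N_{\boldsymbol{\nu}}}$ and $K_{N_{\boldsymbol{\nu}}}^{-1}$ — are genuine orbifold line bundles, so that the curvature computation and the vanishing theorem are applied in the orbifold (i.e.\ $V$-manifold) category throughout; this is exactly the content of \cite{baily}, \cite{bg} together with the K\"{a}hler structure established in Proposition \ref{prop:orbifold kahler}. Alternatively, one could quote directly the asymptotic, Serre-type vanishing for positive orbifold line bundles, which gives $H^i(N_{\boldsymbol{\nu}},B_{\boldsymbol{\nu}}^{\otimes k})=0$ for $i>0$ and $k\gg 0$ without the intermediate twist by $K_{N_{\boldsymbol{\nu}}}$; either route closes the argument.
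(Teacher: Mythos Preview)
Your proposal is correct and follows the same approach as the paper: the paper simply invokes the Kodaira--Baily Vanishing Theorem (\cite{baily}, \cite{bg}) after establishing in Corollary~\ref{cor:polarized kahler orbifold} that $(N_{\boldsymbol{\nu}},B_{\boldsymbol{\nu}})$ is a polarized K\"{a}hler orbifold. Your write-up is in fact more detailed than the paper's, which states the corollary as an immediate consequence without spelling out the twist by $K_{N_{\boldsymbol{\nu}}}$.
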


\subsection{An Hamiltonian circle action on $N_{\boldsymbol{\nu}}$}

The action $\rho^X:S^1\times X\rightarrow X$ with infinitesimal generator
$-\partial_\theta$ in (\ref{eqn:key properties alpha}) is the contact lift of the
trivial circle action on $M$ corresponding to the moment map $\Phi=1$
(recall (\ref{eqn:xiX})).
We shall see that $\rho^X$ determines a contact action
on $(Y_{\boldsymbol{\nu}},\,\beta_{\boldsymbol{\nu}})$ and
an holomorphic Hamiltonian action on $(N_{\boldsymbol{\nu}}',J^{N'_{\boldsymbol{\nu}}},2\,\eta_{\boldsymbol{\nu}}')$,
such that the former is the contact lift of the latter by
(the orbifold version of) the procedure in
(\ref{eqn:xiX}), when we regard 
$Y_{\boldsymbol{\nu}}$ as an orbifold circle bundle on $X_{\boldsymbol{\nu}}$.

Clearly, $\rho^X$ commutes with $\mu^X$. Hence $\rho^X$ 
leaves $X_{\boldsymbol{\nu}}$ invariant and determines a restricted
action $\rho^{X_{\boldsymbol{\nu}}}:S^1\times X_{\boldsymbol{\nu}}\rightarrow X_{\boldsymbol{\nu}}$.
For the same reason $\rho^{X_{\boldsymbol{\nu}}}$ passes to the quotients
$Y_{\boldsymbol{\nu}}$ and $N_{\boldsymbol{\nu}}$. In other words, 
$\rho^{X_{\boldsymbol{\nu}}}$ descends to actions 
$\rho^{Y_{\boldsymbol{\nu}}}:S^1\times Y_{\boldsymbol{\nu}}\rightarrow Y_{\boldsymbol{\nu}}$ and 
$\rho^{N'_{\boldsymbol{\nu}}}:S^1\times N'_{\boldsymbol{\nu}}\rightarrow N'_{\boldsymbol{\nu}}$,
so that the projections 
$Q_{\boldsymbol{\nu}}:X_{\boldsymbol{\nu}}\rightarrow Y_{\boldsymbol{\nu}}$
and $\pi_{\boldsymbol{\nu}}:Y_{\boldsymbol{\nu}}\rightarrow N'_{\boldsymbol{\nu}}$
are equivariant.

In particular, if $-\partial_\theta^{X_{\boldsymbol{\nu}}}$ is the restriction of
$-\partial_\theta$ to $X_{\boldsymbol{\nu}}$, $-\partial_\theta^{Y_{\boldsymbol{\nu}}}$ 
is the infinitesimal generator of $\rho^{Y_{\boldsymbol{\nu}}}$, and 
$-\partial_\theta^{N_{\boldsymbol{\nu}}}$ 
is the infinitesimal generator of $\rho^{N_{\boldsymbol{\nu}}}$, then 
$\partial_\theta^{X_{\boldsymbol{\nu}}}$ and $\partial_\theta^{Y_{\boldsymbol{\nu}}}$
are $Q_{\boldsymbol{\nu}}$-related, and similarly $\partial_\theta^{Y_{\boldsymbol{\nu}}}$ and $\partial_\theta^{N_{\boldsymbol{\nu}}}$
are $\pi_{\boldsymbol{\nu}}$-related.

\begin{lem}
\label{lem:rhoNnuhamiltonian}
$\rho^{N'_{\boldsymbol{\nu}}}$ is Hamiltonian on 
$(N'_{\boldsymbol{\nu}},2\,\eta'_{\boldsymbol{\nu}})$, with moment map
$\|\boldsymbol{\nu}\|^2/\overline{\Phi}^{\boldsymbol{\nu}}+c$, for any $c\in \mathbb{R}$.
\end{lem}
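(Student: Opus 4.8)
The plan is to carry out the argument one level down, on the orbifold circle bundle $\pi_{\boldsymbol{\nu}}:Y_{\boldsymbol{\nu}}\rightarrow N'_{\boldsymbol{\nu}}$, where it becomes the orbifold instance of the familiar fact that a connection-preserving circle action on a prequantum bundle descends to a Hamiltonian action whose moment map is obtained by contracting the connection $1$-form with the generating vector field. Here $\beta_{\boldsymbol{\nu}}$ plays the role of the prequantum form, $\mathrm{d}\beta_{\boldsymbol{\nu}}=2\,\pi_{\boldsymbol{\nu}}^*(\eta'_{\boldsymbol{\nu}})$ by Lemma~\ref{lem:11formetanu}, and $\rho^{Y_{\boldsymbol{\nu}}}$ covers $\rho^{N'_{\boldsymbol{\nu}}}$; the candidate moment map will be minus the value of $\beta_{\boldsymbol{\nu}}$ on the generator of $\rho^{Y_{\boldsymbol{\nu}}}$.

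First I would record the relevant invariance. As $\rho^X$ preserves $\alpha$ and covers the trivial action on $M$, it preserves $\alpha$ and $\Phi^{\boldsymbol{\nu}}\circ\pi$, and it commutes with $\mu^X$; restricting to $X_{\boldsymbol{\nu}}$ and descending through $Q_{\boldsymbol{\nu}}$, the action $\rho^{Y_{\boldsymbol{\nu}}}$ preserves $\alpha^{Y_{\boldsymbol{\nu}}}$ and $\overline{\Phi}^{\boldsymbol{\nu}}$, hence also $\beta_{\boldsymbol{\nu}}$, and it commutes with $\sigma^{Y_{\boldsymbol{\nu}}}$. Write $\xi^{Y}:=-\partial_\theta^{Y_{\boldsymbol{\nu}}}$ for the generator of $\rho^{Y_{\boldsymbol{\nu}}}$ and set $f:=\beta_{\boldsymbol{\nu}}(\xi^{Y})$. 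From $\alpha(\partial_\theta)=1$ in (\ref{eqn:key properties alpha}) one gets $\alpha^{Y_{\boldsymbol{\nu}}}(\xi^{Y})=-1$, so $f=-\|\boldsymbol{\nu}\|^2/\overline{\Phi}^{\boldsymbol{\nu}}$; since $\overline{\Phi}^{\boldsymbol{\nu}}$ is $\sigma^{Y_{\boldsymbol{\nu}}}$-invariant, $f$ descends to the function $\overline{f}=-\|\boldsymbol{\nu}\|^2/\overline{\Phi}^{\boldsymbol{\nu}}$ on $N'_{\boldsymbol{\nu}}$.

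Next, Cartan's formula together with $\mathcal{L}_{\xi^{Y}}\beta_{\boldsymbol{\nu}}=0$ gives $\iota(\xi^{Y})\,\mathrm{d}\beta_{\boldsymbol{\nu}}=-\mathrm{d}f$. Since $\xi^{Y}$ is $\pi_{\boldsymbol{\nu}}$-related to the generator $-\partial_\theta^{N_{\boldsymbol{\nu}}}$ of $\rho^{N'_{\boldsymbol{\nu}}}$ and $\mathrm{d}\beta_{\boldsymbol{\nu}}=2\,\pi_{\boldsymbol{\nu}}^*(\eta'_{\boldsymbol{\nu}})$, this says $\pi_{\boldsymbol{\nu}}^*\big(\iota(-\partial_\theta^{N_{\boldsymbol{\nu}}})(2\,\eta'_{\boldsymbol{\nu}})\big)=-\pi_{\boldsymbol{\nu}}^*(\mathrm{d}\overline{f})$; because $\pi_{\boldsymbol{\nu}}$ is an (orbifold) submersion, $\pi_{\boldsymbol{\nu}}^*$ is injective on differential forms, whence $\iota(-\partial_\theta^{N_{\boldsymbol{\nu}}})(2\,\eta'_{\boldsymbol{\nu}})=\mathrm{d}(-\overline{f})=\mathrm{d}\big(\|\boldsymbol{\nu}\|^2/\overline{\Phi}^{\boldsymbol{\nu}}\big)$. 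An entirely analogous push-down of $\mathcal{L}_{\xi^{Y}}(\mathrm{d}\beta_{\boldsymbol{\nu}})=0$ shows that $\rho^{N'_{\boldsymbol{\nu}}}$ preserves $2\,\eta'_{\boldsymbol{\nu}}$, which is symplectic by Proposition~\ref{prop:orbifold kahler}. With the sign convention of Corollary~\ref{cor:ml hamiltonian}, this exhibits $\rho^{N'_{\boldsymbol{\nu}}}$ as Hamiltonian on $(N'_{\boldsymbol{\nu}},2\,\eta'_{\boldsymbol{\nu}})$ with moment map $\|\boldsymbol{\nu}\|^2/\overline{\Phi}^{\boldsymbol{\nu}}+c$, the constant $c\in\mathbb{R}$ being the usual additive ambiguity for a circle action.

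I do not anticipate a genuine obstacle here; the one point requiring care is the orbifold bookkeeping, i.e.\ performing each of the steps above inside the uniformizing charts $\overline{T}^1_{\boldsymbol{\nu}}\times F$ of the previous subsections, in which $\alpha^{Y_{\boldsymbol{\nu}}}$, $\beta_{\boldsymbol{\nu}}$, $\overline{\Phi}^{\boldsymbol{\nu}}$ and both circle actions pull back from the honest manifold $X_{\boldsymbol{\nu}}$, so that the computation is literally the one carried out on $X_{\boldsymbol{\nu}}$; a secondary chore is to keep all signs consistent with (\ref{eqn:key properties alpha}), (\ref{eqn:xiX}) and Corollary~\ref{cor:betaYnuconnection}. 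As a cross-check, one may verify the identity $\xi^{Y}=(-\partial_\theta^{N_{\boldsymbol{\nu}}})^{\sharp}-\big(\|\boldsymbol{\nu}\|^2/\overline{\Phi}^{\boldsymbol{\nu}}\big)\,\delta^{Y_{\boldsymbol{\nu}}}$, where the horizontal lift is taken with respect to $\beta_{\boldsymbol{\nu}}$ and $-\delta^{Y_{\boldsymbol{\nu}}}$ is the generator of $\sigma^{Y_{\boldsymbol{\nu}}}$: this is exactly the orbifold form of the prequantization prescription (\ref{eqn:xiX}) for the moment datum $\|\boldsymbol{\nu}\|^2/\overline{\Phi}^{\boldsymbol{\nu}}$, from which the Hamiltonian statement is immediate.
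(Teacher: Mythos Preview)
Your argument is correct and is essentially the same as the paper's: both reduce the statement to the contraction identity $\iota(-\partial_\theta)\,\mathrm{d}\beta_{\boldsymbol{\nu}}=\mathrm{d}\big(\|\boldsymbol{\nu}\|^2/\overline{\Phi}^{\boldsymbol{\nu}}\big)$, verified on (the pull-back to) $X_{\boldsymbol{\nu}}$. The only cosmetic difference is that the paper computes this contraction directly from the explicit formula (\ref{eqn:dbetanuexplup}) for $Q_{\boldsymbol{\nu}}^*(\mathrm{d}\beta_{\boldsymbol{\nu}})$, whereas you obtain it from Cartan's formula together with the $\rho^{Y_{\boldsymbol{\nu}}}$-invariance of $\beta_{\boldsymbol{\nu}}$ and the evaluation $\beta_{\boldsymbol{\nu}}(-\partial_\theta^{Y_{\boldsymbol{\nu}}})=-\|\boldsymbol{\nu}\|^2/\overline{\Phi}^{\boldsymbol{\nu}}$; the two computations are interchangeable.
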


\begin{proof}
[Proof of Lemma \ref{lem:rhoNnuhamiltonian}]
By $T$-invariance of all terms involved, 
and the previous remark about the correlations of the generating vector fields,
we need only prove that
$$
-\iota\left(\partial_\theta^{X_{\boldsymbol{\nu}}}\right)\,Q_{\boldsymbol{\nu}}^*(\mathrm{d}\beta_{\boldsymbol{\nu}})=\mathrm{d}\left(\|\boldsymbol{\nu}\|^2/\Phi^{\boldsymbol{\nu}}\circ \jmath_{\boldsymbol{\nu}}\right),
$$
where $\jmath_{\boldsymbol{\nu}}$ is as in (\ref{eqn:defnjmathnu})
and $Q_{\boldsymbol{\nu}}^*(\mathrm{d}\beta_{\boldsymbol{\nu}})$ as in (\ref{eqn:dbetanuexplup}).
We have on a neighborhood of $X_{\boldsymbol{\nu}}$:
$$
-\iota\left(\partial_\theta\right)\,\|\boldsymbol{\nu}\|^2\,\left(\frac{1}{\Phi^{\boldsymbol{\nu}}}\,
\mathrm{d}\alpha
-\frac{1}{(\Phi^{\boldsymbol{\nu}})^2}\,
\mathrm{d}\Phi^{\boldsymbol{\nu}}\wedge \alpha
\right)=-\,\frac{\|\boldsymbol{\nu}\|^2}{(\Phi^{\boldsymbol{\nu}})^2}\,
\mathrm{d}\Phi^{\boldsymbol{\nu}}=\mathrm{d}\left(\frac{\|\boldsymbol{\nu}\|^2}{\Phi^{\boldsymbol{\nu}}}\right),
$$
establishing the claim.
\end{proof}

Thus $-\partial_\theta^{N'_{\boldsymbol{\nu}}}$
is a Hamiltonian vector field on $(N'_{\boldsymbol{\nu}},\,2\,\eta'_{\boldsymbol{\nu}})$, and
every choice of $c\in \mathbb{R}$ in Lemma \ref{lem:rhoNnuhamiltonian}
determines a contact lift $-\widetilde{•\partial_\theta^{N'_{\boldsymbol{\nu}}}}$
(implicitly depending on $c$) to $(Y_{\boldsymbol{\nu}},\,\beta_{\boldsymbol{\nu}})$ 
of $-\partial_\theta^{N'_{\boldsymbol{\nu}}}$, as in
(\ref{eqn:xiX}). 

Here $Y_{\boldsymbol{\nu}}$ plays the role of
$X$, $\beta_{\boldsymbol{\nu}}$ the role of $\alpha$, and 
$-\partial_\theta^{N'_{\boldsymbol{\nu}}}$ the one of 
$\xi_M$.
The role of $-\partial_\theta$ (the infinitesimal generator of $\rho^X$) is played 
by $-\delta^{Y_{\boldsymbol{\nu}}}$
(the infinitesimal generator of $\sigma^{Y_{\boldsymbol{\nu}}}$).

We need to determine the \lq correct choice\rq\, of $c$ that determines 
$\rho^{Y_{\boldsymbol{\nu}}}$ as the contact lift of 
$\rho^{N'_{\boldsymbol{\nu}}}$.

\begin{lem}
\label{lem:c=0lift}
We have 
$\widetilde{•\partial_\theta^{N'_{\boldsymbol{\nu}}}}=
\partial_\theta^{Y_{\boldsymbol{\nu}}}$ if and  only if $c=0$.

\end{lem}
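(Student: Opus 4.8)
The plan is to express both sides of the claimed identity in terms of the splitting $TY_{\boldsymbol{\nu}}=\ker\beta_{\boldsymbol{\nu}}\oplus\mathbb{R}\,\delta^{Y_{\boldsymbol{\nu}}}$ determined by the connection $1$-form $\beta_{\boldsymbol{\nu}}$ (Corollary~\ref{cor:betaYnuconnection}), and to observe that the two vector fields differ precisely by $c\,\delta^{Y_{\boldsymbol{\nu}}}$. First I would spell out the orbifold analogue of the construction (\ref{eqn:xiX}), with $Y_{\boldsymbol{\nu}}$, $\beta_{\boldsymbol{\nu}}$, $-\delta^{Y_{\boldsymbol{\nu}}}$ and $-\partial_\theta^{N'_{\boldsymbol{\nu}}}$ in the roles of $X$, $\alpha$, $-\partial_\theta$ and $\xi_M$; feeding in the moment map $\|\boldsymbol{\nu}\|^2/\overline{\Phi}^{\boldsymbol{\nu}}+c$ of Lemma~\ref{lem:rhoNnuhamiltonian}, this yields
$$
\widetilde{\partial_\theta^{N'_{\boldsymbol{\nu}}}}=\bigl(\partial_\theta^{N'_{\boldsymbol{\nu}}}\bigr)^{\sharp}+\Bigl(\tfrac{\|\boldsymbol{\nu}\|^2}{\overline{\Phi}^{\boldsymbol{\nu}}}+c\Bigr)\,\delta^{Y_{\boldsymbol{\nu}}},
$$
where $\sharp$ now denotes the $\beta_{\boldsymbol{\nu}}$-horizontal lift; in particular $\beta_{\boldsymbol{\nu}}\bigl(\widetilde{\partial_\theta^{N'_{\boldsymbol{\nu}}}}\bigr)=\|\boldsymbol{\nu}\|^2/\overline{\Phi}^{\boldsymbol{\nu}}+c$. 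All of this takes place on the orbifold $Y_{\boldsymbol{\nu}}$, but since every object in sight is $T^{r-1}_{\boldsymbol{\nu}^\perp}$-invariant it may equivalently be checked upstairs on $X_{\boldsymbol{\nu}}$ after pulling back along $Q_{\boldsymbol{\nu}}$.

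Next I would compute $\beta_{\boldsymbol{\nu}}\bigl(\partial_\theta^{Y_{\boldsymbol{\nu}}}\bigr)$. Because $\partial_\theta^{X_{\boldsymbol{\nu}}}$ and $\partial_\theta^{Y_{\boldsymbol{\nu}}}$ are $Q_{\boldsymbol{\nu}}$-related, $\alpha^{X_{\boldsymbol{\nu}}}=Q_{\boldsymbol{\nu}}^*\bigl(\alpha^{Y_{\boldsymbol{\nu}}}\bigr)$, and $\alpha^{X_{\boldsymbol{\nu}}}\bigl(\partial_\theta^{X_{\boldsymbol{\nu}}}\bigr)=\bigl(\alpha(\partial_\theta)\bigr)\circ\jmath_{\boldsymbol{\nu}}=1$ by (\ref{eqn:key properties alpha}), one gets $\alpha^{Y_{\boldsymbol{\nu}}}\bigl(\partial_\theta^{Y_{\boldsymbol{\nu}}}\bigr)=1$, hence $\beta_{\boldsymbol{\nu}}\bigl(\partial_\theta^{Y_{\boldsymbol{\nu}}}\bigr)=\|\boldsymbol{\nu}\|^2/\overline{\Phi}^{\boldsymbol{\nu}}$ by the definition of $\beta_{\boldsymbol{\nu}}$. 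Since moreover $\partial_\theta^{Y_{\boldsymbol{\nu}}}$ is $\pi_{\boldsymbol{\nu}}$-related to $\partial_\theta^{N'_{\boldsymbol{\nu}}}$, its $\ker\beta_{\boldsymbol{\nu}}$-component in the above splitting is exactly $\bigl(\partial_\theta^{N'_{\boldsymbol{\nu}}}\bigr)^{\sharp}$, so $\partial_\theta^{Y_{\boldsymbol{\nu}}}=\bigl(\partial_\theta^{N'_{\boldsymbol{\nu}}}\bigr)^{\sharp}+\bigl(\|\boldsymbol{\nu}\|^2/\overline{\Phi}^{\boldsymbol{\nu}}\bigr)\,\delta^{Y_{\boldsymbol{\nu}}}$.

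Subtracting the two expressions gives $\widetilde{\partial_\theta^{N'_{\boldsymbol{\nu}}}}-\partial_\theta^{Y_{\boldsymbol{\nu}}}=c\,\delta^{Y_{\boldsymbol{\nu}}}$, and since $\delta^{Y_{\boldsymbol{\nu}}}$ is nowhere vanishing — it is a nonzero multiple of $(\imath\,\boldsymbol{\nu})_{Y_{\boldsymbol{\nu}}}$ by (\ref{eqn:generator sigma}), equivalently the generator of the locally free structure $S^1$-action of the principal $V$-bundle $\sigma_{\boldsymbol{\nu}}$ — this difference vanishes identically if and only if $c=0$. I do not expect a serious obstacle here; the only point that needs care is keeping the sign conventions aligned between (\ref{eqn:xiX}), the moment-map normalization of Lemma~\ref{lem:rhoNnuhamiltonian}, and the paper's convention that $-\widetilde{\partial_\theta^{N'_{\boldsymbol{\nu}}}}$ (rather than $\widetilde{\partial_\theta^{N'_{\boldsymbol{\nu}}}}$) is the contact lift of $-\partial_\theta^{N'_{\boldsymbol{\nu}}}$, together with making the orbifold version of (\ref{eqn:xiX}) precise — which, as noted, reduces to the smooth statement on $X_{\boldsymbol{\nu}}$.
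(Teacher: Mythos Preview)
Your argument is correct and is essentially the same as the paper's: both decompose $\partial_\theta^{Y_{\boldsymbol{\nu}}}$ into its $\beta_{\boldsymbol{\nu}}$-horizontal and vertical parts and compare with the Kostant lift formula. The only cosmetic difference is that the paper obtains the decomposition by rewriting $(\imath\boldsymbol{\nu})_{X_{\boldsymbol{\nu}}}$ via (\ref{eqn:xiX}) on $X_{\boldsymbol{\nu}}$ and pushing down (its equations (\ref{eqn:relation on Xu})--(\ref{eqn:relation on Nu})), whereas you compute the vertical component directly from $\alpha(\partial_\theta)=1$, which is a bit quicker; note that the paper denotes the $\beta_{\boldsymbol{\nu}}$-horizontal lift by $\natural$ rather than $\sharp$.
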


Given an (orbifold) vector field $V$ on $N_{\boldsymbol{\nu}}$, we shall denote by
$V^\natural\in \mathfrak{X}(Y_{\boldsymbol{\nu}})$ its horizontal lift to
$Y_{\boldsymbol{\nu}}$ with respect to $\beta_{\boldsymbol{\nu}}$.

\begin{proof}
[Proof of Lemma \ref{lem:c=0lift}]
On $X_{\boldsymbol{\nu}}$ we have by (\ref{eqn:xiX})
\begin{equation}
\label{eqn:relation on Xu}
\frac{1}{\|\boldsymbol{\nu}\|^2}\,\boldsymbol{\nu}_{X_{\boldsymbol{\nu}}}
=\frac{1}{\|\boldsymbol{\nu}\|^2}\,\boldsymbol{\nu}_{M_{\boldsymbol{\nu}}}^\sharp
-\frac{\Phi^{\boldsymbol{\nu}}}{\|\boldsymbol{\nu}\|^2}\,\partial_\theta^{X_{\boldsymbol{\nu}}}.
\end{equation}
Here $\boldsymbol{\nu}_{M_{\boldsymbol{\nu}}}$ is the restriction of 
$\boldsymbol{\nu}_M$ to $M_{\boldsymbol{\nu}}$ (a vector field on $M_{\boldsymbol{\nu}}$), and
$\boldsymbol{\nu}_{M_{\boldsymbol{\nu}}}^\sharp$ is its horizontal lift to 
$X_{\boldsymbol{\nu}}$.

Given that $\rho^X$ and $\mu^X$ commute, $[\boldsymbol{\nu}_X,\partial_\theta]=0$ on $X$;
this implies  
$[\boldsymbol{\nu}_M^\sharp,\boldsymbol{\nu}_X]=[\boldsymbol{\nu}_M^\sharp,\partial_\theta]=0$.
Furthermore, one has $[\boldsymbol{\nu}_X,\boldsymbol{\gamma}_X]=0$
for every $\boldsymbol{\gamma}\in \mathfrak{t}$, and this implies also
$[\boldsymbol{\nu}_M^\sharp,\boldsymbol{\gamma}_X]=0$.

Being horizontal and $T^{r-1}_{\boldsymbol{\nu}^\perp}$-invariant, 
$\boldsymbol{\nu}_{M_{\boldsymbol{\nu}}}^\sharp/\Phi^{\boldsymbol{\nu}}$ is 
$\pi_{\boldsymbol{\nu}}$-related to a horizontal vector field on $Y_{\boldsymbol{\nu}}$;
the latter is $\sigma^{Y^{\boldsymbol{\nu}}}$-invariant by the above, and therefore it is the 
horizontal lift $-\upsilon^\natural$ to $Y_{\boldsymbol{\nu}}$ 
of a vector field $-\upsilon$ on $N_{\boldsymbol{\nu}}$.

Multiplying both sides of (\ref{eqn:relation on Xu}) by 
$\|\boldsymbol{\nu}\|^2/\Phi^{\boldsymbol{\nu}}$ and then pushing down  
to $Y_{\boldsymbol{\nu}}$ we obtain

\begin{equation}
\label{eqn:relation on Yu}
\upsilon^\natural-\frac{\|\boldsymbol{\nu}\|^2}{\overline{•\Phi^{\boldsymbol{\nu}}}}\,\delta^{Y_{\boldsymbol{\nu}}}=-\frac{1}{\overline{\Phi^{\boldsymbol{\nu}}}}\,\boldsymbol{\nu}_{M_{\boldsymbol{\nu}}}^\sharp-\frac{\|\boldsymbol{\nu}\|^2}{\overline{•\Phi^{\boldsymbol{\nu}}}}\,\delta^{Y_{\boldsymbol{\nu}}}
=
-\partial_\theta^{Y_{\boldsymbol{\nu}}},
\end{equation}
and pushing down to $N_{\boldsymbol{\nu}}$ this yields
\begin{equation}
\label{eqn:relation on Nu}
\upsilon=
-\partial_\theta^{N_{\boldsymbol{\nu}}}.
\end{equation}
In view of Lemma \ref{lem:rhoNnuhamiltonian}, (\ref{eqn:relation on Nu}) 
implies that $\upsilon$ is the Hamiltonian vector field
on $(N'_{\boldsymbol{\nu}},\,2\,\eta'_{\boldsymbol{\nu}})$ of $\|\boldsymbol{\nu}\|^2/\overline{\Phi^{\boldsymbol{\nu}}}+c$;
then (\ref{eqn:relation on Yu}) implies that 
$-\partial_\theta^{Y_{\boldsymbol{\nu}}}$ is its
contact lift corresponding to $c=0$. It is clear that any other choice of $c$
yields a different lift.

\end{proof}

In the following, we shall identify the pairs 
$(N'_{\boldsymbol{\nu}},B'_{k\,\boldsymbol{\nu}})
\cong (N_{\boldsymbol{\nu}},B_{k\,\boldsymbol{\nu}})$.

\subsection{The Fourier decomposition of 
$\mathcal{O}(\tilde{A}^\vee_{\boldsymbol{\nu}})_{k\,\boldsymbol{\nu}}$}

Consider the holomorphic action 
$$
\rho^{A^\vee_0}:(e^{\imath\,\theta},\,\ell)\in S^1\times A^\vee_0\rightarrow
e^{-\imath\,\theta}\,\ell\in A^\vee_0.
$$
Thus $\rho^{A^\vee_0}$ extends $\rho^X$.
Similarly, let $\mu^{A^\vee_0}:T\times A^\vee_0\rightarrow
A^\vee_0$ be the holomorphic action extending $\mu^X$.
Clearly, $\rho^{A^\vee_0}$ and
$\mu^{A^\vee_0}$ commute.

The dense open subset 
$\tilde{A}^\vee_{\boldsymbol{\nu}}\subseteq A^\vee_{0}$
is invariant under both $\rho^{A^\vee_0}$ and $\mu^{A^\vee_0}$, which
therefore restrict to commuting holomorphic actions 
$\rho^{\tilde{A}^\vee_{\boldsymbol{\nu}}}$ and 
$\mu^{\tilde{A}^\vee_{\boldsymbol{\nu}}}$ on
$\tilde{A}^\vee_{\boldsymbol{\nu}}$.

Therefore, $\rho^{\tilde{A}^\vee_{\boldsymbol{\nu}}}$ and
$\mu^{\tilde{A}^\vee_{\boldsymbol{\nu}}}$ determine commuting
representations $\hat{\rho}^{\tilde{A}^\vee_{\boldsymbol{\nu}}}$ of $S^1$
and $\hat{\mu}^{\tilde{A}^\vee_{\boldsymbol{\nu}}}$ of $T$
on the space $\mathcal{O}(\tilde{A}^\vee_{\boldsymbol{\nu}})$
of holomorphic functions on $\tilde{A}^\vee_{\boldsymbol{\nu}}$,
given by
\begin{equation*}
\hat{\rho}^{\tilde{A}^\vee_{\boldsymbol{\nu}}}_{e^{\imath\theta}}(s):=
s\circ \rho^{\tilde{A}^\vee_{\boldsymbol{\nu}}}_{e^{-\imath\theta}},\quad
\hat{\mu}^{\tilde{A}^\vee_{\boldsymbol{\nu}}}_{\mathbf{t}}(s):=
s\circ \mu^{\tilde{A}^\vee_{\boldsymbol{\nu}}}_{\mathbf{t}^{-1}}\qquad \big(s\in 
\mathcal{O}(\tilde{A}^\vee_{\boldsymbol{\nu}}),\,
e^{\imath\theta}\in S^1,\,\mathbf{t}\in T.
\big).
\end{equation*}

For every $l\in \mathbb{Z}$ and $\boldsymbol{\lambda}\in \mathbb{Z}^r$, 
let $\mathcal{O}(\tilde{A}^\vee_{\boldsymbol{\nu}})_l$
and $\mathcal{O}(\tilde{A}^\vee_{\boldsymbol{\nu}})_{\boldsymbol{\lambda}}$
be the $l$-th and 
$\boldsymbol{\lambda}$-th
isotypical components of $\mathcal{O}(\tilde{A}^\vee_{\boldsymbol{\nu}})$, respectively, 
for $\hat{\rho}^{\tilde{A}^\vee_{\boldsymbol{\nu}}}$ and $\hat{\mu}^{\tilde{A}^\vee_{\boldsymbol{\nu}}}$,
respectively.
Hence 
$\hat{\rho}^{\tilde{A}^\vee_{\boldsymbol{\nu}}}$ restricts to a subrepresentation
on $\mathcal{O}(\tilde{A}^\vee_{\boldsymbol{\nu}})_{\boldsymbol{\lambda}}$. 
In particular, for every $k=1,2,\ldots$ the vector space
$\mathcal{O}(\tilde{A}^\vee_{\boldsymbol{\nu}})_{k\,\boldsymbol{\nu}}$
is finite dimensional by (\ref{eqn:restrictionCRolosections}), and
we have an $S^1\times T$-equivariant decomposition 
\begin{equation}
\label{eqn:equivariant S1T}
\mathcal{O}(\tilde{A}^\vee_{\boldsymbol{\nu}})_{k\,\boldsymbol{\nu}}=
\bigoplus_{l\in \mathbb{Z}}\mathcal{O}(\tilde{A}^\vee_{\boldsymbol{\nu}})_{k\,\boldsymbol{\nu},\,l},
\end{equation}
where $\mathcal{O}(\tilde{A}^\vee_{\boldsymbol{\nu}})_{k\,\boldsymbol{\nu},\,l}=
\mathcal{O}(\tilde{A}^\vee_{\boldsymbol{\nu}})_{k\,\boldsymbol{\nu}}\cap \mathcal{O}(\tilde{A}^\vee_{\boldsymbol{\nu}})_{l}$.
Since
the isomorphisms in (\ref{eqn:restrictionCRolosections}) are by construction
$S^1$-equivariant, (\ref{eqn:equivariant S1T}) may be interpreted in terms 
of $H^0(N_{\boldsymbol{\nu}},B_{k\,\boldsymbol{\nu}})$:
\begin{equation}
\label{eqn:splitting H^0S1}
H^0(N_{\boldsymbol{\nu}},B_{k\,\boldsymbol{\nu}})
=\bigoplus_{l\in \mathbb{Z}}H^0(N_{\boldsymbol{\nu}},B_{k\,\boldsymbol{\nu}})_l.
\end{equation}

\begin{lem}
\label{lem:kgg0vanishing}
If $k\gg 0$, 
$H^0(N_{\boldsymbol{\nu}},B_{k\,\boldsymbol{\nu}})_l=0$ for all $l\le 0$.

\end{lem}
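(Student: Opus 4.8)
The plan is to transport the statement, via the identifications built up in the preliminary subsections, to a statement about CR functions on the compact contact orbifold $Y_{\boldsymbol{\nu}}$, and then to run a maximum‑principle argument powered by the strict positivity of the moment map along $M_{\boldsymbol{\nu}}$.

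\emph{Step 1: reduction to $Y_{\boldsymbol{\nu}}$.} Combining the isomorphisms (\ref{eqn:restrictionCRolosections}) with the $S^1$‑equivariance recorded in (\ref{eqn:splitting H^0S1}), the summand $H^0(N_{\boldsymbol{\nu}},B_{k\,\boldsymbol{\nu}})_l$ is carried onto the space of CR functions $f$ on $Y_{\boldsymbol{\nu}}$ that are simultaneously of weight $k$ for $\sigma^{Y_{\boldsymbol{\nu}}}$ (as in (\ref{eqn:kth isotype Ynu})) and of weight $l$ for $\rho^{Y_{\boldsymbol{\nu}}}$; writing $\delta^{Y_{\boldsymbol{\nu}}}$ and $\partial_\theta^{Y_{\boldsymbol{\nu}}}$ for the infinitesimal generators of $\sigma^{Y_{\boldsymbol{\nu}}}$ and $\rho^{Y_{\boldsymbol{\nu}}}$, these conditions read $\delta^{Y_{\boldsymbol{\nu}}}(f)=\imath\,k\,f$ and $\partial_\theta^{Y_{\boldsymbol{\nu}}}(f)=\imath\,l\,f$. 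So it suffices to show that no such $f\not\equiv 0$ exists when $l\le 0$ — in fact for every $k\ge 1$, which is more than what is claimed. The crucial input is Lemma \ref{lem:c=0lift}: rewriting (\ref{eqn:relation on Yu}) and abbreviating $\overline{\Phi}_N:=\|\boldsymbol{\nu}\|^2/\overline{\Phi}^{\boldsymbol{\nu}}$, one has
\[
\partial_\theta^{Y_{\boldsymbol{\nu}}}\;=\;\bigl(\partial_\theta^{N_{\boldsymbol{\nu}}}\bigr)^{\natural}\;+\;\overline{\Phi}_N\,\delta^{Y_{\boldsymbol{\nu}}},
\]
where $(\cdot)^{\natural}$ is the $\beta_{\boldsymbol{\nu}}$‑horizontal lift, and $\overline{\Phi}_N>0$ everywhere because $\overline{\Phi}^{\boldsymbol{\nu}}$ descends $\Phi^{\boldsymbol{\nu}}=\langle\Phi,\imath\,\boldsymbol{\nu}\rangle$, which is $>0$ on $M_{\boldsymbol{\nu}}$ by the very definition of $M_{\boldsymbol{\nu}}$. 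Applying this vector‑field identity to $f$ gives $\bigl(\partial_\theta^{N_{\boldsymbol{\nu}}}\bigr)^{\natural}(f)=\imath\,(l-k\,\overline{\Phi}_N)\,f$.

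\emph{Step 2: the maximum principle.} Assume $f\not\equiv 0$. As $Y_{\boldsymbol{\nu}}$ is compact, $|f|^2$ attains its maximum at some $q$, with $|f(q)|^2>0$, so $f(q)\ne 0$. Now $V:=\bigl(\partial_\theta^{N_{\boldsymbol{\nu}}}\bigr)^{\natural}$ is a section of $\mathcal{H}(Y_{\boldsymbol{\nu}})=\ker\beta_{\boldsymbol{\nu}}$, and the CR operator $J$ of $Y_{\boldsymbol{\nu}}$ lifts $J^{N_{\boldsymbol{\nu}}}$ along $\beta_{\boldsymbol{\nu}}$‑horizontal lifts (as in the construction of the Kähler structure of $N_{\boldsymbol{\nu}}$, cf. the proof of Proposition \ref{prop:orbifold kahler}), so $JV=\bigl(J^{N_{\boldsymbol{\nu}}}\partial_\theta^{N_{\boldsymbol{\nu}}}\bigr)^{\natural}$ is again a section of $\mathcal{H}(Y_{\boldsymbol{\nu}})$. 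Since $f$ is CR, $\mathrm{d}f$ is $\mathbb{C}$‑linear on $\mathcal{H}(Y_{\boldsymbol{\nu}})$, whence $(JV)(f)=\imath\,V(f)=-(l-k\,\overline{\Phi}_N)\,f$, a real‑valued‑function multiple of $f$; therefore
\[
(JV)\bigl(|f|^2\bigr)=(JV)(f)\,\overline{f}+f\,\overline{(JV)(f)}=-2\,(l-k\,\overline{\Phi}_N)\,|f|^2 .
\]
Evaluating at $q$, where $\mathrm{d}|f|^2$ vanishes because $q$ is an interior maximum, yields $(l-k\,\overline{\Phi}_N(q))\,|f(q)|^2=0$; as $f(q)\ne 0$ and $k\ge 1$, this forces $l=k\,\overline{\Phi}_N(q)>0$, contradicting $l\le 0$. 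Hence $H^0(N_{\boldsymbol{\nu}},B_{k\,\boldsymbol{\nu}})_l=0$ for all $l\le 0$ and all $k\ge 1$, a fortiori for $k\gg 0$.

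\emph{Orbifold bookkeeping and the main obstacle.} All operations above are local and differential‑geometric, so they are carried out in uniformizing charts, where $Y_{\boldsymbol{\nu}}$ is a smooth CR manifold, the actions $\sigma^{Y_{\boldsymbol{\nu}}},\rho^{Y_{\boldsymbol{\nu}}}$ and the identity (\ref{eqn:relation on Yu}) lift, and a representative of a section of $B_{k\,\boldsymbol{\nu}}$ is an honest CR function; maximizing $|f|^2$ over the compact underlying space and then passing to a chart about the maximizer is harmless. The point that needs genuine care — and the place where I expect the real work to sit — is matching sign and normalization conventions: one must check that "weight $k$ for $\sigma^{Y_{\boldsymbol{\nu}}}$'' and "weight $l$ for $\rho^{Y_{\boldsymbol{\nu}}}$'' do translate into $\delta^{Y_{\boldsymbol{\nu}}}(f)=\imath k f$ and $\partial_\theta^{Y_{\boldsymbol{\nu}}}(f)=\imath l f$ consistently with the orientation of (\ref{eqn:relation on Yu}) and of (\ref{eqn:defnrhoYnu}); whatever the signs, the identity obtained at the maximizer is $|l|=k\,\overline{\Phi}_N(q)$ with $\overline{\Phi}_N(q)>0$, which rules out $l\le 0$. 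I would also note that, by Lemmas \ref{lem:rhoNnuhamiltonian} and \ref{lem:c=0lift}, this is simply the orbifold incarnation — with $r=1$ and a strictly positive moment map $\overline{\Phi}_N$ — of the vanishing $H(X)^{\hat{\mu}}_{k\,\boldsymbol{\nu}}=0$ for $k\le 0$ recalled from \cite{pao-ijm}; a purely asymptotic variant for $k\gg 0$ could alternatively be extracted from the equivariant Kodaira–Baily embedding of $(N_{\boldsymbol{\nu}},B_{k\,\boldsymbol{\nu}})$ (Corollary \ref{cor:polarized kahler orbifold}), whose $\rho^{N_{\boldsymbol{\nu}}}$‑weights concentrate in $k\cdot\overline{\Phi}_N(N_{\boldsymbol{\nu}})\subset k\cdot(0,+\infty)$.
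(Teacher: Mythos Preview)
Your argument is correct and takes a genuinely different route from the paper's. The paper invokes Corollary~2.11 of Meinrenken--Sjamaar \cite{ms} to conclude that the virtual character $\mathrm{RR}(N_{\boldsymbol{\nu}},B_{k\,\boldsymbol{\nu}})$ has only positive Fourier modes (because the moment map $\|\boldsymbol{\nu}\|^2/\overline{\Phi}^{\boldsymbol{\nu}}$ is everywhere positive), and then appeals to Kodaira--Baily vanishing (Corollary~\ref{cor:vanishing kb}) to identify $\mathrm{RR}$ with $H^0$ for $k\gg 0$. Your maximum-principle argument on $Y_{\boldsymbol{\nu}}$ is more elementary and self-contained: it avoids both external results and actually proves the stronger statement valid for every $k\ge 1$, not just $k\gg 0$. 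The paper's route has the conceptual advantage of placing the lemma in the ``quantization commutes with reduction'' framework; yours has the advantage of being a direct computation requiring nothing beyond the contact geometry already set up in \S\ref{sctn:preliminaries}.

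One caveat: your closing hedge that ``whatever the signs, the identity obtained at the maximizer is $|l|=k\,\overline{\Phi}_N(q)$, which rules out $l\le 0$'' is not right as stated, since $|l|>0$ only excludes $l=0$. The signs genuinely matter here, and you should commit to them. Fortunately your signed computation is correct: with the paper's conventions (eqs.~(\ref{eqn:kth isotype Ynu}), (\ref{eqn:generator sigma}), (\ref{eqn:relation on Yu})) one indeed gets $\delta^{Y_{\boldsymbol{\nu}}}f=\imath k f$, $\partial_\theta^{Y_{\boldsymbol{\nu}}}f=\imath l f$, hence $V(f)=\imath(l-k\overline{\Phi}_N)f$ and $(JV)(|f|^2)=-2(l-k\overline{\Phi}_N)|f|^2$, giving $l=k\,\overline{\Phi}_N(q)>0$ at the maximizer. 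So drop the hedge and keep the signed conclusion.
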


\begin{proof}
[Proof of Lemma \ref{lem:kgg0vanishing}]
In the terminology of \cite{ms},
the datum of the Hamiltonian action
$\rho^{N_{\boldsymbol{\nu}}}$, with moment map 
$\|\boldsymbol{\nu}\|^2/\overline{\Phi^{\boldsymbol{\nu}}}$,
makes $B_{k\,\boldsymbol{\nu}}$ into a prequantum $S^1$-equivariant
orbibundle, hence into a moment line bundle.
By Corollary 2.11 of \cite{ms}, and given that 
$\|\boldsymbol{\nu}\|^2/\overline{\Phi^{\boldsymbol{\nu}}}>0$,
we conclude that the Fourier decomposition of 
$\mathrm{RR}(N_{\boldsymbol{\nu}},B_{k\,\boldsymbol{\nu}})$ 
(viewed as a virtual character of $S^1$)
has the form
\begin{equation}
\label{eqn:RRpositivecontr}
\mathrm{RR}(N_{\boldsymbol{\nu}},B_{k\,\boldsymbol{\nu}})=
\sum_{l>0}\mathrm{RR}(N_{\boldsymbol{\nu}},B_{k\,\boldsymbol{\nu}})_l\cdot\chi_l,
\end{equation}
where where $\chi_l(e^{\imath\,\theta})=e^{\imath\,l\,\theta}$.
In view of Corollary
\ref{cor:vanishing kb} this means that, as a representation of $S^1$,
\begin{equation}
\label{eqn:splitting H^0poscontr}
H^0(N_{\boldsymbol{\nu}},B_{k\,\boldsymbol{\nu}})
=\bigoplus_{l>0}H^0(N_{\boldsymbol{\nu}},B_{k\,\boldsymbol{\nu}})_l\qquad 
\forall\,k\gg 0.
\end{equation}

\end{proof}

By the $S^1$-equivariance 
in (\ref{eqn:restrictionCRolosections}), we can now sharpen 
(\ref{eqn:equivariant S1T}) as follows.

\begin{cor}
\label{cor:positive-contribOA}
If $k\gg 0$, then
\begin{equation}
\label{eqn:equivariant S1Tpos}
\mathcal{O}(\tilde{A}^\vee_{\boldsymbol{\nu}})_{k\,\boldsymbol{\nu}}=
\bigoplus_{l>0}\mathcal{O}(\tilde{A}^\vee_{\boldsymbol{\nu}})_{k\,\boldsymbol{\nu},\,l}.
\end{equation}

\end{cor}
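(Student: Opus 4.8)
The plan is to obtain the statement directly from Lemma \ref{lem:kgg0vanishing}, by transporting its vanishing assertion across the chain of isomorphisms (\ref{eqn:restrictionCRolosections}) and exploiting their $S^1$-equivariance.

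First I would recall that the isomorphisms $\mathcal{O}(\tilde{A}^\vee_{\boldsymbol{\nu}})_{k\,\boldsymbol{\nu}}\cong \mathcal{CR}(X_{\boldsymbol{\nu}})_{k\,\boldsymbol{\nu}}\cong \mathcal{CR}(Y_{\boldsymbol{\nu}})_k\cong H^0(N_{\boldsymbol{\nu}},B_{k\,\boldsymbol{\nu}})$ of (\ref{eqn:restrictionCRolosections}) are, by construction, equivariant for the circle actions $\rho^{\tilde{A}^\vee_{\boldsymbol{\nu}}}$, $\rho^{X_{\boldsymbol{\nu}}}$, $\rho^{Y_{\boldsymbol{\nu}}}$, $\rho^{N'_{\boldsymbol{\nu}}}$ on the respective spaces; these actions all commute with the relevant torus actions and descend compatibly. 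Hence they match the weight decomposition (\ref{eqn:equivariant S1T}) of $\mathcal{O}(\tilde{A}^\vee_{\boldsymbol{\nu}})_{k\,\boldsymbol{\nu}}$ term by term with the decomposition (\ref{eqn:splitting H^0S1}) of $H^0(N_{\boldsymbol{\nu}},B_{k\,\boldsymbol{\nu}})$, giving for every $l\in\mathbb{Z}$ an isomorphism $\mathcal{O}(\tilde{A}^\vee_{\boldsymbol{\nu}})_{k\,\boldsymbol{\nu},\,l}\cong H^0(N_{\boldsymbol{\nu}},B_{k\,\boldsymbol{\nu}})_l$.

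Next I would invoke Lemma \ref{lem:kgg0vanishing}: for $k\gg 0$ one has $H^0(N_{\boldsymbol{\nu}},B_{k\,\boldsymbol{\nu}})_l=0$ whenever $l\le 0$, that is (\ref{eqn:splitting H^0poscontr}) holds. Through the isomorphism of the previous step this forces $\mathcal{O}(\tilde{A}^\vee_{\boldsymbol{\nu}})_{k\,\boldsymbol{\nu},\,l}=0$ for all $l\le 0$ and all $k\gg 0$; substituting this back into (\ref{eqn:equivariant S1T}) leaves exactly (\ref{eqn:equivariant S1Tpos}).

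I do not expect a genuine obstacle here: all the analytic and geometric content has already been extracted — Corollary \ref{cor:vanishing kb} supplies the Kodaira--Baily vanishing, and Corollary 2.11 of \cite{ms} together with the positivity $\|\boldsymbol{\nu}\|^2/\overline{\Phi^{\boldsymbol{\nu}}}>0$ enters Lemma \ref{lem:kgg0vanishing}. The only point requiring care is the bookkeeping of the circle actions along (\ref{eqn:restrictionCRolosections}): one must check that the index $l$ is consistently the $\rho$-weight on each term, which is precisely what was built in when (\ref{eqn:splitting H^0S1}) was derived, and for which Lemma \ref{lem:c=0lift} records the exact correspondence between $\rho^{Y_{\boldsymbol{\nu}}}$ and the induced Hamiltonian circle action on $N_{\boldsymbol{\nu}}$.
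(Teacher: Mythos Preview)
Your proposal is correct and follows exactly the paper's own argument: the corollary is stated immediately after Lemma~\ref{lem:kgg0vanishing} with only the remark that the $S^1$-equivariance in (\ref{eqn:restrictionCRolosections}) allows one to sharpen (\ref{eqn:equivariant S1T}), which is precisely what you spell out. Your additional remarks about Lemma~\ref{lem:c=0lift} and the ingredients behind Lemma~\ref{lem:kgg0vanishing} correctly identify the bookkeeping needed for the equivariance, so there is nothing to add.
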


\section{Proof of Theorem \ref{thm:main}}

We can now give the proof of
Theorem \ref{thm:main}. First, however, let us consider the following
statement. 

\begin{lem}
\label{lem:isoHknuOknu}
For every $\boldsymbol{\lambda}\in \mathbb{Z}$, restriction yields an isomorphism
$\mathcal{O}(A^\vee_0)_{\boldsymbol{\lambda}}\cong H(X)^{\hat{\mu}}_{\boldsymbol{\lambda}}$.

\end{lem}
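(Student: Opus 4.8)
The plan is to reduce the statement to the classical graded identification $H(X)_l\cong H^0(M,A^{\otimes l})$ recalled in the Introduction and then pass to $T$-isotypical components; the only genuinely analytic ingredient will be the finite-dimensionality of $H(X)^{\hat\mu}_{\boldsymbol\lambda}$ guaranteed by BA.

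First I would set up the fibrewise picture. Write $\mathcal O(A^\vee_0)_l$ for the $l$-th isotypical component of $\mathcal O(A^\vee_0)$ under the structure circle action $\rho^{A^\vee_0}$ — equivalently, the holomorphic functions on $A^\vee_0$ homogeneous of degree $l$ along the fibres of $\pi'$. Then $\mathcal O(A^\vee_0)_l\cong H^0(M,A^{\otimes l})$ for $l\ge 0$ (and $\mathcal O(A^\vee_0)_l=(0)$ for $l<0$, since $A$ is positive and $M$ is compact and connected), the isomorphism sending $t\in H^0(M,A^{\otimes l})$ to the tautological function $\widehat t(\ell):=\langle\ell^{\otimes l},t(\pi'\ell)\rangle$. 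Restriction to $X=\partial D$ then recovers the familiar identification $\mathcal O(A^\vee_0)_l\xrightarrow{\sim}H(X)_l\cong H^0(M,A^{\otimes l})$. Since $\mu^{A^\vee_0}$ extends $\mu^X$, covers $\mu^M$ and is fibrewise linear, the assignment $t\mapsto\widehat t$ intertwines the $T$-actions, so restriction to $X$ is $T$-equivariant; hence for every weight it carries the $\boldsymbol\lambda$-isotype $(\mathcal O(A^\vee_0)_l)_{\boldsymbol\lambda}$ isomorphically onto $(H(X)_l)_{\boldsymbol\lambda}:=H(X)_l\cap H(X)^{\hat\mu}_{\boldsymbol\lambda}$.

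Next I would assemble these over $l$. The restriction map $\mathcal O(A^\vee_0)\to L^2(X)$ is injective: on the connected manifold $A^\vee_0$ a nonzero holomorphic function has zero set a proper complex-analytic subset, of real codimension at least two, which cannot contain the real hypersurface $X$. Fixing $\boldsymbol\lambda$, the hypothesis $\mathbf 0\notin\Phi(M)$ in BA makes $H(X)^{\hat\mu}_{\boldsymbol\lambda}$ finite-dimensional (\S 2 of \cite{pao-ijm}); since $\hat\rho^X$ and $\hat\mu^X$ commute, $H(X)^{\hat\mu}_{\boldsymbol\lambda}=\bigoplus_{l\in F}(H(X)_l)_{\boldsymbol\lambda}$ for a \emph{finite} set $F\subseteq\mathbb Z_{\ge 0}$. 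Given $f\in\mathcal O(A^\vee_0)_{\boldsymbol\lambda}$, I would write $f|_X=\sum_{l\in F}s_l$ with $s_l\in(H(X)_l)_{\boldsymbol\lambda}$, lift each $s_l$ to $\widehat t_l\in(\mathcal O(A^\vee_0)_l)_{\boldsymbol\lambda}$ as above, and observe that $f-\sum_{l\in F}\widehat t_l$ restricts to $0$ on $X$ and hence vanishes; thus $f=\sum_{l\in F}\widehat t_l$. This yields $\mathcal O(A^\vee_0)_{\boldsymbol\lambda}=\bigoplus_{l\in F}(\mathcal O(A^\vee_0)_l)_{\boldsymbol\lambda}$ and shows that restriction maps it isomorphically onto $\bigoplus_{l\in F}(H(X)_l)_{\boldsymbol\lambda}=H(X)^{\hat\mu}_{\boldsymbol\lambda}$, as desired (in particular $\mathcal O(A^\vee_0)_{\boldsymbol\lambda}$ is finite-dimensional).

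The main obstacle is precisely the finiteness of $F$, i.e. $\dim H(X)^{\hat\mu}_{\boldsymbol\lambda}<\infty$, which is where $\mathbf 0\notin\Phi(M)$ is essential: absent this, the natural candidate preimage of $s=\sum_l s_l$ is the series $\sum_l\widehat t_l$, which converges only on the open disc bundle and need not extend holomorphically to all of $A^\vee_0$, so restriction would fail to be surjective onto the Hardy isotype. Everything else — the fibrewise description of $\mathcal O(A^\vee_0)_l$ and its vanishing for $l<0$, the $T$-equivariance of $t\mapsto\widehat t$, and the codimension argument for injectivity — is routine.
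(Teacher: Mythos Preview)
Your proof is correct and follows essentially the same line as the paper's: injectivity via the observation that a nonzero holomorphic function on $A^\vee_0$ cannot vanish on the real hypersurface $X$, and surjectivity by using $\mathbf 0\notin\Phi(M)$ to write $H(X)^{\hat\mu}_{\boldsymbol\lambda}$ as a finite sum of pieces $H(X)^{\hat\mu}_{\boldsymbol\lambda,l}$, each of which lifts to $\mathcal O(A^\vee_0)_l$ in the $\boldsymbol\lambda$-isotype. The only difference is packaging: you additionally deduce the finite $l$-decomposition of $\mathcal O(A^\vee_0)_{\boldsymbol\lambda}$ itself, whereas the paper simply lifts each $s\in H(X)^{\hat\mu}_{\boldsymbol\lambda,l}$ directly and checks equivariance.
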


\begin{proof}
[Proof of Lemma \ref{lem:isoHknuOknu}]
Clearly, restriction yields a morphism 
$\zeta_{\boldsymbol{\lambda}}:\mathcal{O}(A^\vee_0)_{\boldsymbol{\lambda}}
\rightarrow H(X)^{\hat{\mu}}_{\boldsymbol{\lambda}}$.
If $f\in \mathcal{O}(A^\vee_0)$ is non-zero, then the locus where its differential
vanishes has real codimension $\ge 2$; if it vanishes on $X$, therefore, $f=0$.
Hence $\zeta_{\boldsymbol{\lambda}}$ is injective.

Since by assumption $\mathbf{0}\not\in \Phi(M)$, we have
$\dim H(X)^{\hat{\mu}}_{\boldsymbol{\lambda}}<+\infty$ for every $\boldsymbol{\lambda}•$.
Hence we have a finite direct sum 
$$
H(X)^{\hat{\mu}}_{\boldsymbol{\lambda}}
=\bigoplus_{l=a(\boldsymbol{\lambda})}^{b(\boldsymbol{\lambda})}H(X)^{\hat{\mu}}_{\boldsymbol{\lambda},l},
$$
where 
$$0\le a(\boldsymbol{\lambda})\le b(\boldsymbol{\lambda})<+\infty,\qquad
H(X)^{\hat{\mu}}_{\boldsymbol{\lambda},l}
:=H(X)^{\hat{\mu}}_{\boldsymbol{\lambda}}\cap 
H(X)_{l}.
$$
Hence, to verify that $\zeta_{\boldsymbol{\lambda}}$ is surjective, 
it suffices to show that any $s\in H(X)^{\hat{\mu}}_{\boldsymbol{\lambda},l}$
is the restriction of some $\tilde{s}\in \mathcal{O}(A^\vee_0)_{\boldsymbol{\lambda}}$.
Any $s\in H(X)^{\hat{\mu}}_{l}$ is the restriction of an holomorphic homogeneous function
of degree $l$, $\tilde{s}\in \mathcal{O}(A^\vee_0)_l$.
Since $\rho^{A^\vee_0}$ and $\gamma^{A^\vee_0}$ commute, one sees that
$\tilde{s}$ is in the $\boldsymbol{\lambda}$-th isotype for 
$T$, and therefore for $\tilde{T}$ as well. Hence $\zeta_{\boldsymbol{\lambda}}$ is surjective.

\end{proof}

\begin{proof}
[Proof of Theorem \ref{thm:main}]
By Lemma \ref{lem:isoHknuOknu}, for every $k=1,2,\ldots$
we have a natural equivariant injective linear map
\begin{equation}
\label{eqn:equivlinearinj}
F_{k\,\boldsymbol{\nu}}:=res_{k\,\boldsymbol{\nu}}\circ \zeta_{k\,\boldsymbol{\nu}}^{-1} :
H(X)^{\hat{\mu}}_{k\,\boldsymbol{\nu}}\rightarrow 
\mathcal{O}(\tilde{A}^\vee_{\boldsymbol{\nu}})_{k\,\boldsymbol{\nu}}\cong
H^0(N_{\boldsymbol{\nu}},B_{k\,\boldsymbol{\nu}}),
\end{equation}
where $res_{k\,\boldsymbol{\nu}}:
\mathcal{O}(A^\vee_{0})_{k\,\boldsymbol{\nu}}\rightarrow
\mathcal{O}(\tilde{A}^\vee_{\boldsymbol{\nu}})_{k\,\boldsymbol{\nu}}$
denotes restriction, and is obviously injective since $\tilde{A}^\vee_{\boldsymbol{\nu}}$
is open and dense in 
$A^\vee_{0}$; this proves the first statement of Theorem \ref{thm:main}.

To prove the second statement, it suffices to verify that 
$res_{k\,\boldsymbol{\nu}}$ is surjective for $k\gg 0$.
We have for some $c(k,\boldsymbol{\nu}),\,d(k,\boldsymbol{\nu})\in \mathbb{Z}$
with $c(k,\boldsymbol{\nu})\le d(k,\boldsymbol{\nu})$:
$$
\mathcal{O}(\tilde{A}^\vee_{\boldsymbol{\nu}})_{k\,\boldsymbol{\nu}}
=\bigoplus_{l=c(k,\boldsymbol{\nu})}^{d(k,\boldsymbol{\nu})}
\mathcal{O}(\tilde{A}^\vee_{\boldsymbol{\nu}})_{k\,\boldsymbol{\nu},l};
$$ 
hence 
$$
res_{k\,\boldsymbol{\nu}}=\bigoplus_{l=c(k,\boldsymbol{\nu})}^{d(k,\boldsymbol{\nu})}
res_{k\,\boldsymbol{\nu},l},
$$
where 
$$
res_{k\,\boldsymbol{\nu},l}:\mathcal{O}(A^\vee_{0})_{k\,\boldsymbol{\nu},l}\rightarrow
\mathcal{O}(\tilde{A}^\vee_{\boldsymbol{\nu}})_{k\,\boldsymbol{\nu},l}
$$
and we need to check that $res_{k\,\boldsymbol{\nu},l}$ is surjective for every 
$l=c(k,\boldsymbol{\nu}),\ldots,d(k,\boldsymbol{\nu})$
and $k\gg 0$.

By Corollary \ref{cor:positive-contribOA}, we may assume that $c(k,\boldsymbol{\nu})>0$.
Furthermore, 
by Lemma \ref{lem:XnuMnuinv}
$\tilde{A}^\vee_{\boldsymbol{\nu}}=(\pi')^{-1}(\tilde{M}_{\boldsymbol{\nu}})$ 
and therefore
$res_{k\,\boldsymbol{\nu},l}$ may canonically  reinterpreted in terms of 
the restriction of holomorphic sections:
\begin{equation}
\label{eqn:resnuklrexpressed}
\widetilde{res}_{k\,\boldsymbol{\nu},l}:
H^0\left( M,A^{\otimes l}   \right)_{k\,\boldsymbol{\nu}}\rightarrow
H^0\big( \tilde{M}_{\boldsymbol{\nu}},A^{\otimes l}   \big)_{k\,\boldsymbol{\nu}}.
\end{equation}

Hence we are reduced to proving that $\widetilde{res}_{k\,\boldsymbol{\nu},l}$
in (\ref{eqn:resnuklrexpressed}) is surjective for all $l>0$. 

Suppose $s\in H^0\left( M,A^{\otimes l}   \right)$. Then $s\in 
H^0\left( M,A^{\otimes l}   \right)_{k\,\boldsymbol{\nu}}$ if and only if the
following two conditions hold:
\begin{enumerate}
\item $s$ is $\gamma^X$-invariant, i.e., 
$s\in H^0(M,A^{\otimes l} )^{ T^{r-1}_{\boldsymbol{\nu}^\perp}}$;
\item for any $e^{\imath\,\vartheta}\in S^1$,
$$
\hat{\mu}_{e^{\imath\,\vartheta\,\boldsymbol{\nu}}}(s)=
e^{\imath\,k\,\|\boldsymbol{\nu}\|^2\,\vartheta}\,s.
$$
\end{enumerate}

In other words, we can identify $H^0\left( M,A^{\otimes l}   \right)_{k\,\boldsymbol{\nu}}$
with the $k\,\|\boldsymbol{\nu}\|^2$-isotypical component for the representation of
$T^1_{\boldsymbol{\nu}}\cong S^1$ on $H^0(M,A^{\otimes l} )^{ T^{r-1}_{\boldsymbol{\nu}^\perp}}$.
The same considerations apply to $H^0\big( \tilde{M}_{\boldsymbol{\nu}},A^{\otimes l}   \big)_{k\,\boldsymbol{\nu}}$.
We shall express this by writing
$$
H^0( M,A^{\otimes l}   )_{k\,\boldsymbol{\nu}}=
H^0(M,A^{\otimes l} )^{ T^{r-1}_{\boldsymbol{\nu}^\perp}}
_{k\,\|\boldsymbol{\nu}\|^2},\quad
H^0( \tilde{M}_{\boldsymbol{\nu}},A^{\otimes l}   )_{k\,\boldsymbol{\nu}}=
H^0(\tilde{M}_{\boldsymbol{\nu}},A^{\otimes l} )^{ T^{r-1}_{\boldsymbol{\nu}^\perp}}
_{k\,\|\boldsymbol{\nu}\|^2}.
$$

It is well-known that the restriction map
\begin{equation}
\label{eqn:resnuklrexpressed1}
f_{l,\boldsymbol{\nu}}:
H^0\left( M,A^{\otimes l}   \right)^{ T^{r-1}_{\boldsymbol{\nu}^\perp}}\rightarrow
H^0\big( \tilde{M}_{\boldsymbol{\nu}},A^{\otimes l}   \big)^{ T^{r-1}_{\boldsymbol{\nu}^\perp}}
\end{equation}
is an isomorphism (\S 5 of \cite{gs}, Theorem 2,18 of \cite{sj}), and it is clearly
$T^1_{\boldsymbol{\nu}}$-equivariant. The claim follows, since
$\widetilde{res}_{k\,\boldsymbol{\nu},l}$ is the restriction of
$f_{l,\boldsymbol{\nu}}$ to the
$k\,\|\boldsymbol{\nu}\|^2$-isotypical component, hence by equivariance it induces an 
isomorphism
$H^0(M,A^{\otimes l} )^{ T^{r-1}_{\boldsymbol{\nu}^\perp}}
_{k\,\|\boldsymbol{\nu}\|^2}\cong
H^0(\tilde{M}_{\boldsymbol{\nu}},A^{\otimes l} )^{ T^{r-1}_{\boldsymbol{\nu}^\perp}}
_{k\,\|\boldsymbol{\nu}\|^2}$.
\end{proof}


\begin{thebibliography}{Dillo99}








\bibitem[ALR]{alr} A. Adem, J. Leida, Y. Ruan, 
{\em Orbifolds and stringy topology}, 
Cambridge Tracts in Mathematics, \textbf{171}, Cambridge University Press, Cambridge, 2007


\bibitem[B]{baily}  W. L. Baily,
{\em On the imbedding of V-manifolds in projective space}, 
Amer. J. Math. \textbf{79} (1957), 403–-430



\bibitem[BG]{bg} C. P. Boyer, K. 
Galicki, {\em Sasakian geometry},
Oxford Mathematical Monographs. Oxford University Press, Oxford, 2008


\bibitem[Ca]{cam} S. Camosso, {\em 
Scaling asymptotics of Szeg\"{o} kernels under commuting Hamiltonian actions}, 
Ann. Mat. Pura Appl. (4) \textbf{195} (2016), no. 6, 2027–-2059

\bibitem[Gr]{grauert} H. Grauert, {\em \"{U}ber Modifikationen und exzeptionelle analytische Mengen}, 
 Math. Ann. \textbf{146} (1962), 331-–368


\bibitem[GS]{gs} V. Guillemin, S. Sternberg {\em Geometric quantization and multiplicities of group representations}, Invent. Math. \textbf{•67} (1982), no. 3, 515–538





\bibitem[Ka]{k} T. Kawasaki, {\em 
The Riemann-Roch theorem for complex V-manifolds}, Osaka Math. J. \textbf{16} (1979), no. 1, 151–-159


\bibitem[Ko]{ko}
B. Kostant, {\em Quantization and unitary representations. 
I. Prequantization}, Lectures in modern analysis and applications, III, pp. 87--208. 
Lecture Notes in Math., Vol. \textbf{170}, 
Springer, Berlin, 1970

\bibitem[LT]{lt}
E. Lerman, S. Tolman {\em Hamiltonian torus actions on symplectic orbifolds and toric varieties}, 
Trans. Amer. Math. Soc. \textbf{349} (1997), no. 10, 4201–4230


\bibitem[MS]{ms} E. Meinrenken, R. Sjamaar, {\em 
Singular reduction and quantization}, Topology \textbf{38} (1999), no. 4, 699–-762

\bibitem[P1]{pao-ijm} R. Paoletti, {\em Asymptotics of Szeg\"{o} kernels under Hamiltonian
torus actions}, Israel Journal of Mathematics \textbf{191} (2012), no. 1, 363--403
DOI: 10.1007/s11856-011-0212-4



\bibitem[P2]{pao-lower} R. Paoletti, {\em Lower-order asymptotics for Szeg\"{o} and Toeplitz kernels under Hamiltonian circle actions},
Recent advances in algebraic geometry, 321--369, 
London Math. Soc. Lecture Note Ser., \textbf{417}, Cambridge Univ. Press, Cambridge, 2015

\bibitem[P3]{pao-u2} R. Paoletti, {\em 
Conic reductions for Hamitonian actions of $U(2)$ and its maximal torus},
arXiv:2002.08105 

\bibitem[S1]{sat1} I. Satake, {\em 
On a generalization of the notion of manifold}, 
Proc. Nat. Acad. Sci. U.S.A. \textbf{•42} (1956), 359–363


\bibitem[S2]{sat2} I. Satake, {\em 
The Gauss-Bonnet theorem for V-manifolds},
J. Math. Soc. Japan \textbf{9} (1957), 464–492

\bibitem[Sj]{sj} 
R. Sjamaar, {\em Holomorphic slices, symplectic reduction and multiplicities of representations},
Ann. of Math. (2) \textbf{•141} (1995), no. 1, 87–-129

\bibitem[St]{st} S. Sternberg, {\em Group theory and physics}, 
Cambridge University Press, Cambridge, 1994



\end{thebibliography}
\end{document}